\numberwithin{equation}{section}
\setlist{leftmargin=3\parindent,labelindent=3\parindent}
\setlist[enumerate]{%
  leftmargin=3\parindent,%
  align=left,%
  labelwidth=3\parindent,%
  labelsep=0pt%
}
\setlist[enumerate,1]{%
  label={\normalfont (\thesection.\arabic{equation})}, ref={\normalfont \thesection.\arabic{equation}},
  resume%
}
\newtheorem{thm}[equation]{Theorem}
\newtheorem{cor}[equation]{Corollary}
\newtheorem{lem}[equation]{Lemma}
\newtheorem{claim}[equation]{Claim}
\newtheorem{conj}[equation]{Conjecture}
\newtheorem{obs}[equation]{Observation}
\theoremstyle{definition}
\newtheorem{defn}[equation]{Definition}
\newtheorem{ex}[equation]{Example}
\newtheorem{const}[equation]{Construction}
\theoremstyle{remark}
\DeclareMathOperator{\Hom}{Hom}
\DeclareMathOperator{\rad}{rad}
\DeclareMathOperator{\rng}{rng}
\DeclareMathOperator{\tr}{tr}
\DeclareMathOperator{\as}{as}
\DeclareMathOperator{\st}{st}
\DeclareTextCompositeCommand{\v}{OT1}{l}{l\nobreak\hspace{-.1em}'}
\title{Sidorenko-Type Inequalities for Pairs of Trees}
\author{Natalie Behague\thanks{Research supported by a PIMS Postdoctoral Fellowship. E-mail: \texttt{Natalie.Behague@warwick.ac.uk}. Current Affiliation: Mathematics Institute, University of Warwick, Coventry, UK.}${\text{ }}^{\text{,}}$\footnotemark[6]
\and
Gabriel Crudele\thanks{E-mail: {\tt gabrielcrudele1@gmail.com}. Current Affiliation: Department of Mathematics and Statistics, McGill University, Montr\'eal, Canada.}${\text{ }}^{\text{,}}$\footnotemark[6]
\and
Jonathan A. Noel\thanks{Research supported by NSERC Discovery Grant RGPIN-2021-02460, NSERC Early Career Supplement DGECR-2021-00024 and a Start-Up Grant from the University of Victoria. E-mail: {\tt noelj@uvic.ca}.}${\text{ }}^{\text{,}}$\thanks{Current Affiliation: Department of Mathematics and Statistics, University of Victoria, Victoria, Canada.} 
\and 
Lina M. Simbaqueba\thanks{Research supported by a Mitacs Globalink Research Internship. E-mail: {\tt lmsimbaquebam@uvic.ca}.}${\text{ }}^{\text{,}}$\footnotemark[4]${\text{ }}^{\text{,}}$\thanks{This research was completed while the first and second authors were affiliated with the Department of Mathematics and Statistics, University of Victoria, Victoria, Canada and the fourth author was affiliated with Departmento de Matem\'aticas, Universidad Nacional de Colombia, Bogot\'a, Colombia. } }
\begin{document}

\maketitle

\begin{abstract}
Given two non-empty graphs $H$ and $T$, write $H\succcurlyeq T$ to mean that $t(H,G)^{|E(T)|}\geq t(T,G)^{|E(H)|}$ for every graph $G$, where $t(\cdot,\cdot)$ is the homomorphism density function. We obtain various necessary and sufficient conditions for two trees $H$ and $T$ to satisfy $H\succcurlyeq T$ and determine all such pairs on at most 8 vertices. This extends results of Leontovich and Sidorenko from the 1980s and 90s. Our approach applies an information-theoretic technique to reduce the problem of showing that $H\succcurlyeq T$ for two forests $H$ and $T$ to solving a linear program of Kopparty and Rossman~\cite{KoppartyRossman11}. We also characterize trees $H$ which satisfy $H\succcurlyeq S_k$ or $H\succcurlyeq P_4$, where $S_k$ is the $k$-vertex star and $P_4$ is the $4$-vertex path and resolve a problem of Csikv\'ari and Lin~\cite{CsikvariLin15}. 
\end{abstract}

\section{Introduction}

A \emph{homomorphism} from a (simple undirected) graph $H$ to a graph $G$ is a function $\varphi:V(H)\to V(G)$ such that, for every $uv\in E(H)$, we have $\varphi(u)\varphi(v)\in E(G)$. We write $\varphi:H\to G$ to mean that $\varphi$ is a homomorphism from $H$ to $G$. Given a homomorphism $\varphi:H\to G$ and an edge $e=uv\in E(H)$, we let $\varphi(e)$ denote the edge $\varphi(u)\varphi(v)$ of $G$. Define $\Hom(H,G)$ to be the set of all homomorphisms from $H$ to $G$ and $\hom(H,G)$ to be the number of such homomorphisms. The \emph{homomorphism density} of $H$ in $G$ is defined to be
\[t(H,G):=\frac{\hom(H,G)}{v(G)^{v(H)}}\]
where, for any graph $H$, we let $v(H):=|V(H)|$ and $e(H):=|E(H)|$. It can be equivalently interpreted as the probability that a random function from $V(H)$ to $V(G)$ is a homomorphism. The homomorphism density function is of fundamental importance throughout extremal combinatorics and, especially, in the theory of graph limits~\cite{Lovasz12}. 

We study a binary relation defined in terms of the homomorphism density function. Given two non-empty\footnote{Throughout the paper, a \emph{non-empty} graph is a graph whose edge set is non-empty. Our only reason for restricting our attention to non-empty graphs is to avoid expressions that may evaluate to $0^0$.} graphs $H$ and $T$, we write $H\succcurlyeq T$ to mean that $t(H,G)^{e(T)}\geq t(T,G)^{e(H)}$ for every graph $G$. Note that, whenever such an inequality holds, it is tight. For example, for any fixed $p\in [0,1]$ and any graph $H$, we have $\lim_{n\to\infty}t(H,G(n,p)) = p^{e(H)}$ with probability one, where $G(n,p)$ is the standard Erd\H{o}s--R\'enyi random graph. Technically, the relation $\succcurlyeq$ fails to be antisymmetric on the set of all non-empty graphs since $t(H\sqcup H,G) = t(H,G)^2$ for any graphs $H$ and $G$, where $H\sqcup H$ denotes the disjoint union of two copies of $H$. However, we show that $\succcurlyeq$ is a partial order on the set of non-empty connected graphs. This is derived from a classical result on counts of homomorphisms; it is likely that this theorem has been observed earlier, but we have been unable to find a reference for it.

\begin{thm}
\label{th:poset}
The relation $\succcurlyeq$ is a partial order on the set of non-empty connected graphs. 
\end{thm}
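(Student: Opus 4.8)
The plan is to check the three defining properties of a partial order. Reflexivity is immediate since $t(H,G)^{e(H)}\geq t(H,G)^{e(H)}$ for all $G$, so $H\succcurlyeq H$. For transitivity, suppose $H\succcurlyeq T$ and $T\succcurlyeq S$. Since every homomorphism density lies in $[0,1]$ and $x\mapsto x^{k}$ is nondecreasing on $[0,\infty)$ for each integer $k\geq 1$, I would raise $t(H,G)^{e(T)}\geq t(T,G)^{e(H)}$ to the power $e(S)$ and $t(T,G)^{e(S)}\geq t(S,G)^{e(T)}$ to the power $e(H)$, concatenate to get $t(H,G)^{e(T)e(S)}\geq t(S,G)^{e(T)e(H)}$, and then take $e(T)$-th roots (legitimate as $e(T)\geq 1$) to obtain $t(H,G)^{e(S)}\geq t(S,G)^{e(H)}$ for every $G$, i.e.\ $H\succcurlyeq S$.

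The substance is antisymmetry, and this is exactly where connectedness is needed — on all non-empty graphs the relation is not antisymmetric, since $H\sqcup H\succcurlyeq H$ and $H\succcurlyeq H\sqcup H$. So let $H$ and $T$ be non-empty connected graphs with $H\succcurlyeq T$ and $T\succcurlyeq H$; then $t(H,G)^{e(T)}=t(T,G)^{e(H)}$ for every graph $G$, and the goal is to recover first $v(H),e(H)$ and then $H$ itself from this data by feeding in well-chosen test graphs. I would first apply the identity to $G=\overline{K_N}\sqcup G_0$ for an arbitrary graph $G_0$: because $H$ is connected and has an edge, every homomorphism from $H$ into $\overline{K_N}\sqcup G_0$ has image inside a component of $G_0$, so $\hom(H,\overline{K_N}\sqcup G_0)=\hom(H,G_0)$ and hence $t(H,\overline{K_N}\sqcup G_0)=\hom(H,G_0)/(N+v(G_0))^{v(H)}$. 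Taking $G_0$ to be a sufficiently large complete graph and examining the dependence on $N$ forces $v(H)\,e(T)=v(T)\,e(H)$, after which matching the remaining constant forces $\hom(H,G_0)^{e(T)}=\hom(T,G_0)^{e(H)}$; re-running the argument for an arbitrary $G_0$ then shows this identity holds for every graph $G_0$ (both sides vanish when $\hom(H,G_0)=0$).

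Next I would pin down $e(H)=e(T)$ using a second test family: let $G$ be the disjoint union of $j$ copies of a fixed large complete graph $K_a$. Connectedness gives $\hom(H,G)=j\,\hom(H,K_a)$, hence $t(H,G)=j^{1-v(H)}\,t(H,K_a)$, and substituting into $t(H,G)^{e(T)}=t(T,G)^{e(H)}$ and cancelling the $K_a$-contributions (nonzero for $a$ large, and equal by the previous step together with $v(H)e(T)=v(T)e(H)$) leaves $j^{(1-v(H))e(T)}=j^{(1-v(T))e(H)}$ for every $j\geq 1$; comparing exponents and using $v(H)e(T)=v(T)e(H)$ once more yields $e(H)=e(T)$, and then also $v(H)=v(T)$. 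Now $\hom(H,G_0)^{e(T)}=\hom(T,G_0)^{e(H)}$ with $e(H)=e(T)\geq 1$ forces $\hom(H,G_0)=\hom(T,G_0)$ for every graph $G_0$, at which point I would conclude $H\cong T$ by the classical fact that a graph is determined up to isomorphism by the function $G_0\mapsto\hom(\,\cdot\,,G_0)$ — equivalently, that these functions, indexed by isomorphism classes, are linearly independent, which follows from the triangular relationship between homomorphism counts, injective-homomorphism counts, and induced-subgraph counts.

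The two test-graph computations are routine bookkeeping. The step I expect to require the most care is the last one: locating a reference for, or else recording a short self-contained proof of, the statement that $\hom(H,\,\cdot\,)=\hom(T,\,\cdot\,)$ implies $H\cong T$, and being careful that all of the preceding reductions are applied over the class of \emph{all} graphs $G$, not merely connected ones — indeed, it is precisely the disconnected test graphs $\overline{K_N}\sqcup G_0$ and the disjoint copies of $K_a$ that make the argument work, and the relation $\succcurlyeq$ is genuinely symmetric in $H$ and $T$ when only complete graphs are tested.
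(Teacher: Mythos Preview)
Your argument is correct. Reflexivity and transitivity match the paper's proof essentially verbatim. For antisymmetry you take a somewhat different route from the paper: rather than probing with the two test families $\overline{K_N}\sqcup G_0$ and $j\cdot K_a$ to first extract the numerical relations $v(H)e(T)=v(T)e(H)$ and $e(H)=e(T)$, the paper observes in one stroke that the identity $t(H,G)^{e(T)}=t(T,G)^{e(H)}$ can be rewritten, using $\hom(K_1,G)=v(G)$ and multiplicativity of $\hom$ over disjoint unions, as
\[
\hom\bigl((e(T)\cdot H)\sqcup (v(T)e(H)\cdot K_1),\,G\bigr)=\hom\bigl((e(H)\cdot T)\sqcup (v(H)e(T)\cdot K_1),\,G\bigr)
\]
for every $G$, and then invokes the same classical linear-independence fact (Lemma~\ref{lem:linIndep}) once to conclude these two disjoint unions are isomorphic; connectedness of $H$ and $T$ then forces $H\cong T$ directly. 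Both approaches rest on the same lemma in the end; the paper's is shorter and avoids the intermediate arithmetic, while yours makes the use of connectedness (in computing $\hom(H,\overline{K_N}\sqcup G_0)$ and $\hom(H,j\cdot K_a)$) more explicit and arrives at the sharper conclusion $\hom(H,\cdot)=\hom(T,\cdot)$ along the way.
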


Many important results and open problems in extremal combinatorics can be phrased in terms of the relation $\succcurlyeq$. For example, the famous Sidorenko Conjecture~\cite{Sidorenko89} asserts that $H\succcurlyeq K_2$ for every non-empty bipartite graph $H$. Most of the known cases of Sidorenko's Conjecture are proved by deriving inequalities of the form $H\succcurlyeq T$ where $T$ is known to satisfy the conjecture either by induction or by an earlier result, and then using transitivity of $\succcurlyeq$; see, e.g.~\cite{ConlonLee21,Szegedy15,ConlonKimLeeLee18}. One of the important precursors to Sidorenko's Conjecture is the well-known Blakley--Roy Inequality, which implies Sidorenko's Conjecture for paths~\cite{MulhollandSmith59,AtkinsonWattersonMoran60,BlakleyRoy65}. An early result on Sidorenko's Conjecture is that it is true for forests~\cite{Sidorenko91}.

\begin{thm}[Sidorenko~\cite{Sidorenko91}]
\label{th:Sid}
If $H$ is a non-empty forest, then $H\succcurlyeq K_2$. 
\end{thm}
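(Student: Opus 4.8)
The plan is to prove the inequality first for trees and then bootstrap to arbitrary forests via the multiplicativity $t(H_1\sqcup H_2,G)=t(H_1,G)\,t(H_2,G)$. For a tree $T$, since $e(K_2)=1$, $v(T)=e(T)+1$, and $t(K_2,G)=2e(G)/n^2$ where $n:=v(G)$, the assertion $T\succcurlyeq K_2$ unwinds to the purely combinatorial inequality
\[
\hom(T,G)\ \geq\ \frac{(2e(G))^{e(T)}}{n^{\,e(T)-1}}\qquad\text{for every graph }G.
\]
If $e(G)=0$ both sides vanish, so assume $e(G)\geq 1$.

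I would prove this by an entropy counting argument --- the information-theoretic method alluded to in the introduction, in its simplest incarnation (writing $H(\cdot)$ for Shannon entropy). Let $\pi$ be the uniform probability distribution on the set of the $2e(G)$ ordered pairs $(x,y)$ with $xy\in E(G)$, so that $H(\pi)=\log\bigl(2e(G)\bigr)$ exactly. Because $G$ is undirected, the two marginals of $\pi$ coincide, both equal to $\nu$ given by $\nu(x)=d_G(x)/2e(G)$, so $\nu$ is stationary for the transition kernel $x\mapsto\pi(\cdot\mid x)$. Root $T$ at an arbitrary vertex $r$ and sample a random map $\varphi\colon V(T)\to V(G)$ by setting $\varphi(r)\sim\nu$ and then, scanning the remaining vertices in an order in which each follows its parent, drawing $\varphi(v)$ from $\pi(\cdot\mid\varphi(\mathrm{parent}(v)))$. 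Every edge of $T$ is then mapped to an edge of $G$, so $\varphi$ is always a homomorphism, and stationarity forces $\varphi(v)\sim\nu$ for every $v\in V(T)$.

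By the chain rule, $H(\varphi)=H(\nu)+\sum_{v\neq r}H\bigl(\varphi(v)\mid\varphi(\mathrm{parent}(v))\bigr)$. Since, given $\varphi(\mathrm{parent}(v))$, the coordinate $\varphi(v)$ is conditionally independent of all earlier coordinates, and $(\varphi(\mathrm{parent}(v)),\varphi(v))\sim\pi$ with first marginal $\nu$, each of these $e(T)$ summands equals $H(\pi)-H(\nu)=\log(2e(G))-H(\nu)$, whence
\[
H(\varphi)\;=\;H(\nu)\;+\;e(T)\bigl(\log(2e(G))-H(\nu)\bigr)\;=\;e(T)\log(2e(G))-\bigl(e(T)-1\bigr)H(\nu).
\]
As $\varphi$ is supported on $\Hom(T,G)$ we have $\hom(T,G)\geq 2^{H(\varphi)}$, and since $H(\nu)\leq\log n$ while $e(T)-1\geq 0$, this yields exactly $\hom(T,G)\geq(2e(G))^{e(T)}/n^{\,e(T)-1}$. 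Finally, if $H$ is a forest with tree components $T_1,\dots,T_c$ (any isolated vertices contribute a factor of $1$ to both sides), then $t(H,G)=\prod_i t(T_i,G)\geq\prod_i t(K_2,G)^{e(T_i)}=t(K_2,G)^{e(H)}$, i.e.\ $H\succcurlyeq K_2$.

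The one genuinely clever choice --- and the step I expect to be the crux --- is taking the building block $\pi$ to be \emph{uniform}: this makes $H(\pi)=\log(2e(G))$ with no slack, which is precisely what renders the bound tight (it is attained in the limit by $G=G(n,p)$, $n\to\infty$). The remaining points are routine but must be checked in sequence: that $\nu$ is stationary, so that $\varphi(v)\sim\nu$ for every $v$; and that $(\varphi(\mathrm{parent}(v)),\varphi(v))\sim\pi$ for each non-root $v$, so that every per-edge conditional entropy equals the same quantity $\log(2e(G))-H(\nu)$ and the telescoping closes --- which in turn relies on the first and second marginals of $\pi$ being equal, i.e.\ on $G$ being undirected. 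By contrast, the tempting leaf-deletion induction --- delete a leaf $v$ with neighbour $u$ and write $\hom(T,G)=\sum_{\varphi'\in\Hom(T-v,G)}d_G(\varphi'(u))$ --- stalls, because under a uniformly random homomorphism of $T-v$ the image of $u$ need not be distributed proportionally to degree, so one cannot lower-bound the sum directly; the entropy argument circumvents this entirely.
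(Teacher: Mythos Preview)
Your proof is correct. The paper does not prove this theorem separately (it is cited to Sidorenko~\cite{Sidorenko91}), but your argument is exactly the $T=K_2$ instance of the entropy machinery in Section~\ref{sec:entropy}: the random-walk homomorphism you build is the $\psi_\varphi$ constructed in the proof of Lemma~\ref{lem:LP}, and the uniform weighting on the two maps from a tree $H$ to $K_2$ certifies $LP(H,K_2)=1$.
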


Erd\H{o}s and Simonovits~\cite{ErdosSimonovits82} conjectured that $P_{n+2}\succcurlyeq P_{n}$ for all $n\geq2$, where $P_n$ denotes the path with $n$ vertices. When $n$ is odd, this follows from the following more general result of  Godsil (see~\cite{ErdosSimonovits82}).

\begin{thm}[Godsil]
\label{th:Godsil}
If $n$ is odd, then $P_n\succcurlyeq P_m$ for all $3\leq m\leq n$. 
\end{thm}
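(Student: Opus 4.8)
The plan is to exploit the classical fact that homomorphism counts of paths are governed by the spectrum of the adjacency matrix. Let $G$ be a graph with adjacency matrix $A$, and note that a homomorphism from $P_k$ to $G$ is exactly a walk of length $k-1$, so $\hom(P_k,G)=\mathbf{1}^{\top}A^{k-1}\mathbf{1}$, where $\mathbf{1}$ is the all-ones vector. Since $A$ is real symmetric, fix an orthonormal eigenbasis with $Au_i=\lambda_iu_i$ and set $c_i:=u_i^{\top}\mathbf{1}$; then $\hom(P_k,G)=\sum_ic_i^2\lambda_i^{\,k-1}$ and $\sum_ic_i^2=\|\mathbf{1}\|^2=v(G)$. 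Hence, if $\nu$ denotes the probability measure on $\mathbb{R}$ giving mass $c_i^2/v(G)$ to $\lambda_i$ (merging equal eigenvalues), we obtain $t(P_k,G)=v(G)^{1-k}\int\lambda^{k-1}\,d\nu$.

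I would then substitute this into the inequality $t(P_n,G)^{m-1}\ge t(P_m,G)^{n-1}$ defining $P_n\succcurlyeq P_m$. A short computation shows that the powers of $v(G)$ on the two sides agree (both equal $v(G)^{m+n-1-mn}$), so they cancel and the statement reduces to the purely analytic claim
\[\Bigl(\int\lambda^{n-1}\,d\nu\Bigr)^{m-1}\ \ge\ \Bigl(\int\lambda^{m-1}\,d\nu\Bigr)^{n-1}\]
for every finitely supported probability measure $\nu$ on $\mathbb{R}$, given that $n-1$ is even and $1\le m-1\le n-1$ (in fact $m-1\ge 2$).

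To establish this, I would chain three elementary steps. First, because $n-1$ is even, $\bigl(\int\lambda^{m-1}\,d\nu\bigr)^{n-1}=\bigl|\int\lambda^{m-1}\,d\nu\bigr|^{n-1}\le\bigl(\int|\lambda|^{m-1}\,d\nu\bigr)^{n-1}$, by the triangle inequality for integrals together with monotonicity of $x\mapsto x^{n-1}$ on $[0,\infty)$. Second, the power-mean (i.e.\ $L^p$-norm) inequality on a probability space, applied to $\lambda\mapsto|\lambda|$ with exponents $m-1\le n-1$, gives $\bigl(\int|\lambda|^{m-1}\,d\nu\bigr)^{n-1}\le\bigl(\int|\lambda|^{n-1}\,d\nu\bigr)^{m-1}$. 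Third, since $n-1$ is even, $\int|\lambda|^{n-1}\,d\nu=\int\lambda^{n-1}\,d\nu$, which closes the chain and proves the reduced inequality, hence the theorem.

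The hypothesis that $n$ is odd enters precisely in the two places where $n-1$ is used as an even exponent: it makes $\int\lambda^{n-1}\,d\nu$ nonnegative and equal to the corresponding absolute moment, and it is exactly here that the argument must fail for even $n$. I do not expect a genuine obstacle; the only points needing care are the bookkeeping of the $v(G)$-exponents in the reduction, a precise statement of $L^p$-monotonicity in the exact form required, and the degenerate cases (e.g.\ $\nu$ a point mass, or $\int\lambda^{m-1}\,d\nu=0$), all of which are routine.
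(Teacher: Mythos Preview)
Your proof is correct. The spectral reduction $\hom(P_k,G)=\mathbf{1}^{\top}A^{k-1}\mathbf{1}=\sum_i c_i^2\lambda_i^{k-1}$ is exactly the right starting point, the $v(G)$-exponents do cancel as you computed, and the three-step chain (absolute value via evenness of $n-1$, $L^p$-monotonicity on a probability space, then $|\lambda|^{n-1}=\lambda^{n-1}$) is valid and handles all degenerate cases.

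As for comparison: the paper does not give its own proof of this statement. It quotes Theorem~\ref{th:Godsil} as a known result of Godsil (via \cite{ErdosSimonovits82}) and uses it as input when filling in the tables in Section~\ref{sec:smallTrees}. Your argument is in fact the classical spectral proof associated with Godsil, so you have reconstructed the intended approach rather than found an alternative. The paper's own machinery (the linear program $LP(H,T)$ and the entropy-based Lemma~\ref{lem:LP}) is not deployed to reprove this; the closest it comes is the worked example $P_6\succcurlyeq P_4$ and the remark that \cite{BlekhermanRaymond20} extends that idea to Sa\u{g}lam's Theorem~\ref{th:Saglam}.
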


The remaining cases of the conjecture of Erd\H{o}s and Simonovits~\cite{ErdosSimonovits82} were settled by Sa\u{g}lam~\cite{Saglam18}; for an alternative proof using ideas that are closely related to the approach of this paper,  see~\cite{BlekhermanRaymond20}.

\begin{thm}[Sa\u{g}lam~\cite{Saglam18}]
\label{th:Saglam}
$P_{n+2}\succcurlyeq P_n$ for all $n\geq2$. 
\end{thm}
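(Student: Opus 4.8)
\noindent\emph{Proof strategy.} Since $e(P_{n+2})=n+1$ and $e(P_n)=n-1$, the statement asks us to show that $t(P_{n+2},G)^{\,n-1}\ge t(P_n,G)^{\,n+1}$ for every graph $G$, and we may assume $e(G)\ge 1$, since otherwise both sides are $0$. My first step would be the standard spectral reduction: writing $B:=A(G)/v(G)$ for the normalised adjacency matrix (a real symmetric matrix) and $u:=\mathbf 1/\sqrt{v(G)}$ (a unit vector), one has $t(P_{r+1},G)=\langle u,B^{r}u\rangle$, so that $m_r:=t(P_{r+1},G)=\int_{-1}^{1}\lambda^{r}\,d\mu(\lambda)$, where $\mu$ is the spectral measure of $B$ with respect to $u$ — a probability measure supported in $[-1,1]$, because $\|A(G)\|_{\mathrm{op}}<v(G)$. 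Here $m_0=1$ and $m_r>0$ for every $r$ (a graph with an edge has walks of every length), so the goal becomes the purely analytic moment inequality $m_{n+1}^{\,n-1}\ge m_{n-1}^{\,n+1}$, i.e.\ $\tfrac{1}{n+1}\log m_{n+1}\ge\tfrac{1}{n-1}\log m_{n-1}$.

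Next I would dispatch the easy cases. If $n$ is odd then $n-1$ and $n+1$ are even, so $m_{n-1}^{1/(n-1)}=\|\lambda\|_{L^{n-1}(\mu)}$ and $m_{n+1}^{1/(n+1)}=\|\lambda\|_{L^{n+1}(\mu)}$ are honest $L^p$-norms on a probability space, and monotonicity of $L^p$-norms in $p$ yields the inequality immediately (this also re-proves Theorem~\ref{th:Godsil} in the special case of paths). If $n=2$ the claim is exactly $t(P_4,G)\ge t(K_2,G)^{3}$, which is Theorem~\ref{th:Sid} applied to the forest $P_4$. Thus the real content is the case of even $n\ge 4$.

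For even $n$ the exponents $n\pm 1$ are odd, $\lambda\mapsto\lambda^{\,n-1}$ is no longer $|\lambda|^{\,n-1}$, and the $L^p$ argument breaks: negative eigenvalues of $B$ boost the even moments but may cancel in the odd moments, so no bound using a single moment at a time can suffice. The plan is to bring in the global constraints on $\mu$ that such bounds discard — namely that $(m_r)_{r\ge0}$ is a Hamburger moment sequence, so every Hankel matrix $(m_{i+j})_{0\le i,j\le d}$ is positive semidefinite, together with the localising inequalities $m_{2k}-m_{2k+2}=\int(1-\lambda^2)\lambda^{2k}\,d\mu\ge0$ coming from $\mathrm{supp}(\mu)\subseteq[-1,1]$ — and to combine them with the free monotonicity among the even moments to bootstrap the odd-index inequality $m_{2j+3}^{\,2j+1}\ge m_{2j+1}^{\,2j+3}$. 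This is precisely the content of Sa\u{g}lam's near-log-convexity theorem for measured heat~\cite{Saglam18}; an alternative route, in the spirit of the present paper, is to produce an explicit sum-of-squares / Positivstellensatz certificate for the polynomial inequality $t(P_{n+2},G)^{\,n-1}-t(P_n,G)^{\,n+1}\ge0$ in the edge-indicator variables of $G$, which is exactly the linear/semidefinite-programming viewpoint used elsewhere here and in~\cite{BlekhermanRaymond20}.

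The hard part is this transfer from even to odd moments. In the odd-$n$ case a one-line $L^p$-comparison closes the argument; in the even-$n$ case one must first isolate a finite-dimensional semidefinite relaxation — bounded-degree Hankel conditions plus the localiser $1-\lambda^2\ge0$ — whose feasibility certifies $m_{2j+3}^{\,2j+1}\ge m_{2j+1}^{\,2j+3}$, and then prove this feasibility uniformly in $j$, i.e.\ exhibit a single certificate (or a uniformly bounded-degree family of them) valid for all $n$. Getting the even-to-odd bootstrap to go through is where Sa\u{g}lam's proof — and any proof along the lines of this paper — has to do its genuine work.
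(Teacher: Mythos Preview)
The paper does not prove Theorem~\ref{th:Saglam}; it is quoted as a result of Sa\u{g}lam~\cite{Saglam18}, with a pointer to~\cite{BlekhermanRaymond20} for an alternative argument in the linear-programming framework of this paper (the $P_6\succcurlyeq P_4$ example following Lemma~\ref{lem:LP} is precisely the base case of that construction). So there is no in-paper proof to compare your proposal against.

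That said, your proposal is an outline rather than a proof, and you say as much. The spectral reduction to the moment inequality $m_{n+1}^{\,n-1}\ge m_{n-1}^{\,n+1}$ is correct and standard, and the odd-$n$ case via $L^p$-monotonicity and the case $n=2$ are handled cleanly. The gap is exactly where you locate it: for even $n\ge 4$ you invoke ``Sa\u{g}lam's near-log-convexity theorem'', which \emph{is} the theorem under discussion, so this step is circular; and the alternative you sketch---an explicit SOS/LP certificate as in~\cite{BlekhermanRaymond20}---is described but not constructed. If you want a self-contained proof in the spirit of this paper, the concrete task is to exhibit, for each $n$, a weighting $w$ on $\Hom(P_{n+2},P_n)$ witnessing that the value of $LP(P_{n+2},P_n)$ equals $n-1$, and then appeal to Lemma~\ref{lem:LP}; Blekherman and Raymond~\cite{BlekhermanRaymond20} do exactly this by generalising the three-homomorphism pattern of the $P_6\to P_4$ example.
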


Kopparty and Rossman~\cite{KoppartyRossman11} introduced the ``homomorphism domination exponent'' of two graphs $H$ and $T$ to be the maximum $c>0$ such that $\hom(H,G)\geq \hom(T,G)^c$ for all graphs $G$. In~\cite{BlekhermanRaymond22}, Blekherman and Raymond introduce a  ``tropicalization'' technique and apply it to prove various inequalities of a similar flavour. Stoner~\cite{Stoner22+} investigated the related problem of determining the largest exponent $c$ such that $t(H,G)\geq t(T,G)^c$ for every graph $G$. Clearly, $H\succcurlyeq T$ if and only if this maximum exponent is $e(H)/e(T)$. Very recently, Conlon and Lee~\cite{ConlonLee23+} initiated the study of graphs $H$ such that $H\succcurlyeq T$ for every subgraph $T$ of $H$, motivated by connections to the notion of weakly norming graphs~\cite{ConlonLee17,Hatami10,Sidorenko20,GarbeHladkyLee22,Kral+19}.  In particular, they showed that every perfect $d$-regular tree has this property.

Our main focus in this paper is on the restriction of $\succcurlyeq$ to the set of all trees. Hoffman~\cite{Hoffman67} proved that $S_k\succcurlyeq P_k$ for all $k\geq2$, where $S_k$ is the $k$-vertex star. A generalization of this, due to Sidorenko~\cite{Sidorenko85}, determines the relation $\succcurlyeq$ on pairs of trees $H$ and $T$ with $v(H)=v(T)$ obtained from paths by adding pendant vertices at the end. The problem of determining $\succcurlyeq$ for general trees on the same number of vertices was considered by Leontovich~\cite{Leontovich89} back in 1989, who proved several general results and determined the poset restricted to $6$-vertex trees. Further results of a similar flavour were obtained by Sidorenko~\cite{Sidorenko94}; together with the earlier theorems, this determined the poset for $7$-vertex trees apart from six pairs $(H,T)$ for which it remained unknown  whether $H\succcurlyeq T$. We obtain the full structure of this poset on the set of all trees on at most 8 vertices. We note that it would not be hard, in principle, to use the ideas in this paper to compute the relation $\succcurlyeq$ for all trees on at most a slightly larger number of vertices (say, 9 or 10). However, the sheer number of pairs of trees would make it challenging to present such a result in writing.

\begin{thm}
The restriction of $\succcurlyeq$ to the set of trees on at most 8 vertices is fully described by the tables in Section~\ref{sec:smallTrees}. 
\end{thm}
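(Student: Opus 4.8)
The plan is to combine the structural results of the paper into a finite but large case analysis, organized by number of vertices. First I would fix notation: list all trees on $2,3,\ldots,8$ vertices (there are $1,1,1,2,3,6,11,23$ of them respectively, for a total of $48$ trees), so that the relation $\succcurlyeq$ restricted to this set is a finite poset on $48$ elements, which can be displayed as a Hasse-type table for each fixed vertex count together with the ``cross-level'' comparisons. By Theorem~\ref{th:poset}, $\succcurlyeq$ is a genuine partial order on this set, so it suffices to determine, for each ordered pair $(H,T)$ of distinct trees in the list, whether $H\succcurlyeq T$, and transitivity then lets us reduce the amount of verification to a generating set of relations and a generating set of non-relations.

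The core of the argument splits into two halves. For the \emph{positive} direction, I would invoke the linear-programming reduction obtained from the Kopparty--Rossman information-theoretic technique (developed earlier in the paper): for each candidate pair $(H,T)$ of forests, $H\succcurlyeq T$ is implied by feasibility of an explicit finite linear program whose variables are entropies of projections of a random homomorphism and whose constraints are Shannon-type inequalities together with the combinatorial structure of $H$ and $T$. Thus every relation $H\succcurlyeq T$ in the tables is certified by exhibiting a feasible solution (equivalently, a dual certificate), and many of these follow already from Theorems~\ref{th:Sid}, \ref{th:Godsil}, \ref{th:Saglam}, Hoffman's $S_k\succcurlyeq P_k$, Sidorenko's and Leontovich's results, plus transitivity and the $S_k$- and $P_4$-characterizations stated in the abstract. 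For the \emph{negative} direction, to show $H\not\succcurlyeq T$ it suffices to produce a single graph $G$ (or a convergent sequence, i.e. a graphon) with $t(H,G)^{e(T)}<t(T,G)^{e(H)}$; here I would use small explicit graphs --- complete bipartite graphs $K_{a,b}$, complete graphs minus a matching, blow-ups of short paths, and random-like graphon constructions --- and, where a clean finite witness is awkward, an asymptotic analysis of $t(H,G(n,p))$-type quantities or a perturbation argument around the constant graphon. Each non-relation in the tables is then justified by naming the witnessing graph and the (routine) inequality it satisfies.

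The main obstacle I anticipate is twofold. First, \emph{bookkeeping}: with $48$ trees there are on the order of a couple thousand ordered pairs, and although transitivity and the vertex-count grading cut this down enormously, correctly identifying a minimal certifying set of positive relations and a minimal set of separating witnesses --- and verifying there are no ``gaps'' where neither $H\succcurlyeq T$ nor $T\succcurlyeq H$ has been resolved --- is delicate and must be done with computer assistance (solving the LPs exactly over the rationals, and evaluating homomorphism counts exactly). Second, a handful of \emph{genuinely hard pairs}: the six $7$-vertex pairs left open by Leontovich and Sidorenko, and presumably several new $8$-vertex pairs, will not yield to off-the-shelf inequalities and will each require either a bespoke feasible solution to the linear program (possibly after strengthening it with extra auxiliary vertices/variables) or a carefully tuned graphon witness on the negative side. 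My plan for these is to treat them individually in Section~\ref{sec:smallTrees}, presenting the LP dual certificate or the explicit separating graphon for each, and to remark that the same machinery extends in principle to $9$- or $10$-vertex trees but that the number of trees makes a written presentation impractical.
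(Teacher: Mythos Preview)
Your overall plan---reduce to a finite case analysis, certify positive relations via the linear program, and certify negatives separately---matches the paper's architecture. However, two points deserve correction or sharpening.

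First, a small bookkeeping slip: the count is $47$, not $48$. Your list $1,1,1,2,3,6,11,23$ appears to include a phantom tree (perhaps on one vertex, which is empty and excluded); the paper's list is $T_1,\dots,T_{47}$.

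Second, and more substantively, your treatment of the \emph{negative} direction diverges from the paper and is significantly less efficient. You propose to refute each candidate relation $H\succcurlyeq T$ by hunting for an explicit witness graph or graphon. The paper instead exploits the fact that, for forests, the primal LP and its dual are an \emph{exact} characterization: $H\succcurlyeq T$ holds if and only if the value of $LP(H,T)$ equals $e(T)$ (Lemmas~\ref{lem:LP} and~\ref{lem:DLP} plus strong duality). So a non-relation is certified by a feasible point of the \emph{dual} LP---a weighting $y:V(T)\cup E(T)\to\mathbb{R}_{\ge 0}$ satisfying the homomorphism constraints---with objective value strictly below $e(T)$. This is what Appendix~\ref{app:Dualcertificates} records. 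The witness graph $G_N$ in the proof of Lemma~\ref{lem:DLP} is built mechanically from such a $y$, so one never searches for graphs by hand. Your phrase ``feasible solution (equivalently, a dual certificate)'' in the positive direction also conflates the two sides: primal certificates (weightings $w$ on $\Hom(H,T)$) establish $H\succcurlyeq T$; dual certificates (weightings $y$) establish $H\not\succcurlyeq T$.

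Relatedly, the paper does not attack most negative cases by solving the dual LP directly. It first develops a battery of necessary conditions---Lemma~\ref{lem:sigma} ($e/\sigma$), Lemma~\ref{lem:e/v} and Corollary~\ref{cor:biggerTreeIsBigger} ($e/v$), Lemma~\ref{lem:alpha} ($e/(v-\alpha)$), Lemma~\ref{lem:radius} ($e/\rad$), the degree-sequence and edge-degree lemmas of Leontovich and Sidorenko, and Lemma~\ref{lem:lambda} and Theorem~\ref{th:nearStar} on leaf structure---each of which is itself proved by exhibiting a simple dual-feasible $y$. These dispose of the vast majority of non-relations in the tables; only a residual handful need bespoke dual certificates. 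Your plan would benefit from building and citing these general obstructions rather than treating each negative pair as a fresh graphon-construction problem.
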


We also classify the trees $H$ which satisfy $H\succcurlyeq S_k$ for $k\geq3$ or $H\succcurlyeq P_4$. For a bipartite graph $H$, let $\sigma(H)$ be the minimum of $|A|$ over all bipartitions $(A,B)$ of $H$. 

\begin{thm}
\label{th:star}
Let $k\geq3$ and let $H$ be a non-empty tree. Then $H\succcurlyeq S_k$ if and only if $e(H)\geq (k-1)\sigma(H)$.
\end{thm}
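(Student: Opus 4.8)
Throughout, recall that $\hom(S_k,G)=\sum_{x\in V(G)}d(x)^{k-1}$, where $d(x)$ denotes the degree of $x$ in $G$, and that $e(H)=v(H)-1$ since $H$ is a tree. Unwinding the definition of $\succcurlyeq$ shows that $H\succcurlyeq S_k$ is equivalent to
$$\hom(H,G)^{\,k-1}\ \ge\ \hom(S_k,G)^{\,e(H)}\cdot v(G)^{\,k-v(H)}\qquad\text{for every graph }G.\tag{$\ast$}$$
The plan is to prove the two implications separately; essentially all the difficulty is in the ``if'' direction.

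\emph{Only if.} I would argue the contrapositive: assume $e(H)<(k-1)\,\sigma(H)$, substitute $G=K_{1,M}$ into $(\ast)$, and let $M\to\infty$. Since $H$ is connected and bipartite with parts of sizes $\sigma(H)$ and $v(H)-\sigma(H)$, every homomorphism $H\to K_{1,M}$ sends one part onto the centre of $K_{1,M}$ and maps the other part arbitrarily onto the $M$ leaves — because the preimage of the centre and its complement are both independent in $H$ and so form the unique bipartition of $H$ — whence $\hom(H,K_{1,M})=M^{\,v(H)-\sigma(H)}+M^{\,\sigma(H)}$, and likewise $\hom(S_k,K_{1,M})=M^{k-1}+M$. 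The left side of $(\ast)$ then grows like $M^{(v(H)-\sigma(H))(k-1)}$ and the right side like $M^{(k-1)(v(H)-1)+k-v(H)}$; subtracting the exponents gives $(k-1)\sigma(H)-e(H)>0$, so $(\ast)$ fails for all large $M$ and $H\not\succcurlyeq S_k$.

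\emph{If.} Now assume $e(H)\ge(k-1)\,\sigma(H)$; in particular $v(H)\ge k$, so the exponent $k-v(H)$ in $(\ast)$ is $\le 0$. Let $(A,B)$ be the bipartition of $H$ with $|A|=\sigma(H)$. By the reduction developed in this paper (the Kopparty--Rossman entropy technique applied to the pair $(H,S_k)$), it suffices to produce, for every $G$ on $n$ vertices, a random homomorphism $\Phi\colon H\to G$ whose Shannon entropy satisfies
$$h(\Phi)\ \ge\ \frac{e(H)}{k-1}\log\hom(S_k,G)+\frac{k-v(H)}{k-1}\log n\ =\ \log n+e(H)\log p,$$
where $p:=\bigl(n^{-1}\sum_{x}d(x)^{k-1}\bigr)^{1/(k-1)}$, so that $\hom(S_k,G)=n\,p^{\,k-1}$; indeed $h(\Phi)\le\log\hom(H,G)$ then yields $(\ast)$ upon exponentiating. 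I would build $\Phi$ by rooting $H$ at a vertex of the smaller part $A$ and drawing the images of the vertices top-down, choosing the image of each non-root vertex from a suitably power-tilted distribution on the $G$-neighbourhood of its parent's image — for $H=S_k$ this is just the uniform distribution on $\Hom(S_k,G)$, and for regular $G$ it is a uniform random walk. Equivalently and more transparently, one expands $\hom(H,G)$ as an iterated sum over the rooted tree and applies H\"older's inequality level by level, with exponents dictated by the $H$-degrees of the vertices of $A$; in small cases this collapses to iterated Cauchy--Schwarz, e.g.\ $\hom(P_5,G)=\sum_{x}\bigl(\sum_{y\sim x}d(y)\bigr)^2\ge n^{-1}\bigl(\sum_{x}d(x)^2\bigr)^2$.

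\emph{Where the difficulty lies.} The crux is choosing the conditional distributions along the tree — equivalently, exhibiting an optimal solution of the linear program output by the reduction — so that the images of the $\sigma(H)$ vertices of $A$ retain enough of the ``degree-$p^{k-1}$'' weighting needed to recover the factor $\hom(S_k,G)^{e(H)}$, while still landing in the neighbourhoods of their parents' already-chosen images. Heuristically, reconstructing the star around each vertex of $A$ costs $k-1$ ``units of degree'', so $(k-1)\sigma(H)$ units are forced and the remaining $e(H)-(k-1)\sigma(H)\ge 0$ can be spent freely; making this bookkeeping rigorous and uniform over all $G$, including those far from regular, is precisely the LP-feasibility claim, and its optimality is exactly what the ``only if'' part certifies. (The restriction $k\ge3$ merely excludes the degenerate case $S_2=K_2$, where $(\ast)$ holds for every tree by Sidorenko's Theorem~\ref{th:Sid}.)
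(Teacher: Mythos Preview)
Your ``only if'' direction is correct and is essentially an explicit instantiation of the paper's Lemma~\ref{lem:sigma}: the graph $K_{1,M}$ you test against is precisely what the dual-LP construction of Lemma~\ref{lem:DLP} produces from the weighting $y(\text{centre})=e(H)/\sigma(H)$, $y=0$ elsewhere, and your exponent comparison $(k-1)\sigma(H)-e(H)>0$ is the same computation.

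Your ``if'' direction, however, is not a proof but a plan, and the step you yourself flag as ``where the difficulty lies'' is never carried out. You correctly identify that what is needed is a feasible point for $LP(H,S_k)$ of value $e(S_k)$ (equivalently, a random $\Phi\colon H\to G$ with the stated entropy lower bound), but you neither write one down nor verify it; the phrases ``suitably power-tilted distribution'' and ``H\"older level by level with exponents dictated by the $H$-degrees of the vertices of $A$'' are not made precise, and for trees whose $A$-vertices have unequal degrees it is not clear how your sketch would proceed. The missing observation is that the required LP solution is far simpler than you anticipate: for each leaf $v$ of $S_k$, put weight $1/e(H)$ on the single homomorphism $H\to S_k$ sending all of $A$ to the centre $x$ and all of $B$ to $v$. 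Then each edge $xv$ receives total weight $e(H)\cdot(1/e(H))=1$, the centre receives $(k-1)\sigma(H)/e(H)\le 1$ by hypothesis, and each leaf receives $(v(H)-\sigma(H))/e(H)\le 1$ since $\sigma(H)\ge 1$; invoking Lemma~\ref{lem:LP} finishes the proof. This is exactly the paper's argument, packaged as Lemma~\ref{lem:orient} applied with the fractional orientation of $S_k$ that points every edge away from the centre --- a two-line check rather than the delicate bookkeeping you envisage.
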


\begin{thm}
\label{th:P4}
Let $H$ be a tree. Then $H\succcurlyeq P_4$ if and only if $H$ has at least four vertices.
\end{thm}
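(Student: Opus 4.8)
The proof naturally splits into the two implications: the ``only if'' direction is an explicit computation, while the ``if'' direction combines the linear‑programming reduction of this paper with a case split on $\operatorname{diam}(H)$. For the ``only if'' direction, suppose $H$ is a tree with $v(H)\le 3$, so that $H\in\{K_2,P_3\}$ and hence $e(H)\le 2$. For an integer $m\ge 2$, let $G_m$ be the disjoint union of a complete graph $K_m$ and one isolated vertex, so $v(G_m)=m+1$. Every homomorphism from a connected graph into $G_m$ has image inside the copy of $K_m$, and since a tree $F$ satisfies $\hom(F,K_m)=m(m-1)^{e(F)}$, we obtain $\hom(H,G_m)=m(m-1)^{e(H)}$ and $\hom(P_4,G_m)=m(m-1)^{3}$. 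Substituting these into the definition of $t(\cdot,\cdot)$ and using $v(H)=e(H)+1$, one checks that $t(H,G_m)^{3}\ge t(P_4,G_m)^{e(H)}$ is equivalent to $(m+1)^{e(H)-3}\ge m^{e(H)-3}$, which fails for every $m\ge 2$ since $e(H)-3<0$. Hence $H\not\succcurlyeq P_4$.

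For the ``if'' direction, let $H$ be a tree with $n:=v(H)\ge 4$; the task is to prove $t(H,G)^{3}\ge t(P_4,G)^{n-1}$ for every graph $G$. When $\operatorname{diam}(H)=2$, the tree $H$ is the star $S_n$, and I would conclude by transitivity (Theorem~\ref{th:poset}): Theorem~\ref{th:star} with $k=4$ gives $S_n\succcurlyeq S_4$ because $e(S_n)=n-1\ge 3=3\sigma(S_n)$, while $S_4\succcurlyeq P_4$ is Hoffman's inequality~\cite{Hoffman67}, equivalently the edgewise estimate $\hom(S_4,G)=\sum_v d_G(v)^{3}\ge 2\sum_{uv\in E(G)}d_G(u)d_G(v)=\hom(P_4,G)$, whose right‑hand side also equals $\sum_v d_G(v)^3$ and which follows from $2d_G(u)d_G(v)\le d_G(u)^2+d_G(v)^2$ summed over the edges. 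Composing yields $S_n\succcurlyeq S_4\succcurlyeq P_4$.

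The substantive case is $\operatorname{diam}(H)\ge 3$, and here I would run the information‑theoretic reduction of this paper. It suffices to build a probability distribution $\mu$ supported on $\Hom(H,G)$ whose Shannon entropy is at least $\tfrac{n-1}{3}\log\hom(P_4,G)+\tfrac{4-n}{3}\log v(G)$: since $\log\hom(H,G)$ dominates the entropy of any distribution on $\Hom(H,G)$, rearranging this estimate gives exactly $t(H,G)^{3}\ge t(P_4,G)^{n-1}$. Such a $\mu$ is obtained by taking the uniform distribution on $\Hom(P_4,G)$ — a Markov random field on $P_4$ — as a source, laying copies of it along a longest path of $H$, and splicing the remaining pendant edges of $H$ onto this backbone; the entropy bookkeeping then collapses to a linear program whose variables are the weights of the gadgets used and whose constraints enforce consistency of the vertex‑ and edge‑marginals. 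The main obstacle, and the real content of this direction, is to certify that this linear program is feasible for \emph{every} tree of diameter at least $3$. The plan for that would be an induction on $n$: root $H$ at an endpoint of a longest path, delete a leaf of maximum depth to obtain a tree $H'$ on $n-1\ge 4$ vertices (so that $H'\succcurlyeq P_4$ already holds, by induction or by the star case), and extend a feasible packing of $H'$ across the reinstated pendant edge. The delicate step will be preserving feasibility when that pendant is attached at or near a high‑degree vertex, where the ``path‑like'' and ``star‑like'' parts of the packing must be balanced against one another; already the five‑vertex double broom $B_{1,2}$ shows that the bound cannot simply be deduced from the Blakley--Roy inequality $P_4\succcurlyeq K_2$. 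The base case $n=4$ is $S_4\succcurlyeq P_4$ together with the trivial $P_4\succcurlyeq P_4$.
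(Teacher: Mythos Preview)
Your ``only if'' direction and the star case are fine (the sentence about Hoffman's inequality is garbled---the phrase ``whose right-hand side also equals $\sum_v d_G(v)^3$'' refers to the wrong expression---but the underlying estimate $2d_G(u)d_G(v)\le d_G(u)^2+d_G(v)^2$ summed over edges does give $\hom(P_4,G)\le\hom(S_4,G)$, which is what you need). The paper handles ``only if'' via Corollary~\ref{cor:biggerTreeIsBigger} rather than an explicit test family, but your argument with $K_m$ plus an isolated vertex is a valid alternative.

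The genuine gap is the case $\operatorname{diam}(H)\ge 3$. You have not proved anything there: you describe an inductive plan (delete a leaf, inherit feasibility for $LP(H',P_4)$, extend across the pendant edge) but do not carry it out, and you yourself flag ``the delicate step'' as unresolved. There is no mechanism in your sketch for turning a feasible weighting of $\Hom(H',P_4)$ into one for $\Hom(H,P_4)$; adding a leaf to $H$ changes every vertex constraint in \eqref{eq:Vconstraint} (since each $\mu_\varphi(v)$ grows), and the total weight must shrink because $e(H)$ increases, so a naive extension of weights typically violates the constraints. Nothing in the proposal addresses this.

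The paper's proof is structurally quite different and does not use induction on $v(H)$ at all. It argues by contradiction: assuming a counterexample $H$, it produces explicit primal weightings on $\Hom(H,P_4)$ that force $\sigma(H)=v(H)/2$ (Claim~\ref{claim:veryBalanced}), then uses the tables of Section~\ref{sec:smallTrees} to rule out $v(H)\le 8$ (Claim~\ref{claim:big}), then another explicit weighting to force $H$ to have at most three leaves (Claim~\ref{claim:3Leaves}). At that point $H$ is either a path (handled by Theorems~\ref{th:Godsil} and~\ref{th:Saglam}) or a spider with exactly three legs; for the latter the paper writes down a concrete three-term weighting and checks \eqref{eq:Econstraint}--\eqref{eq:Vconstraint} by hand. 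None of this structure-narrowing appears in your proposal, and the explicit weightings are the real content of the argument.
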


Intuitively, the poset given by $\succcurlyeq$ on the set of all $k$-vertex trees for small $k$ tends to have ``star-like'' trees near the top and ``path-like'' trees near the bottom. In fact, as proven by Sidorenko~\cite[Theorem~1.2]{Sidorenko94}, for all $k$, the unique maximal element is $S_k$. This result has found applications in studying noise sensitivity of boolean functions~\cite{GopalanServedioWigderson16} and analyzing clustering coefficients in complex networks~\cite{Kurauskas22};  alternative proofs of Sidorenko's result appear in~\cite{CsikvariLin14,LevinPeres17,LuchtrathMonch24+}. Right below $S_k$ is the tree $S_k'$ obtained from $S_{k-1}$ by subdividing one edge; more precisely, every $k$-vertex tree $H$ apart from $S_k$ satisfies $H\preccurlyeq S_k'\preccurlyeq S_k$~\cite{Sidorenko94}. For $k\leq 6$, the unique minimal element is $P_k$, as is discussed in~\cite{Leontovich89,Sidorenko94}. However, Leontovich~\cite{Leontovich89} showed that there is a $7$-vertex tree $H$ with $H\not\succcurlyeq P_7$ and so $P_7$ is not the unique minimal element among $7$-vertex trees; see also~\cite[Remark~4.13]{CsikvariLin14}. Our next theorem says that, somewhat surprisingly, there are some very ``star-like'' trees $H$ which satisfy $H\not\succcurlyeq P_k$. We say that a tree $H$ is a \emph{near-star} if it can be obtained from a star by subdividing edges in such a way that each edge of the star is subdivided at most once.   

\begin{thm}
\label{th:pathsNotUnique}
Let $H$ be a $k$-vertex near-star with $\ell$ leaves. If $\frac{k+1}{2}\leq \ell\leq k-3$, then $H\not\succcurlyeq P_k$.
\end{thm}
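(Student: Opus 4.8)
The plan is to exhibit, for each admissible pair $(k,\ell)$, a graph $G$ with $t(H,G)<t(P_k,G)$; since $e(H)=e(P_k)=k-1$, producing even one such $G$ is exactly what is needed to conclude $H\not\succcurlyeq P_k$. By compactness of the graphon space (and since densities into weighted graphs are approximated by densities into ordinary graphs), it suffices to find such a $G$ among step functions, i.e.\ among weighted graphs on boundedly many vertices. For a weighted graph $W$ with vertex classes of measures $\mu_1,\dots,\mu_d$ and symmetric weights $w_{ab}$, and for a tree $T$, one computes $t(T,W)$ by rooting $T$ and iterating the transfer operator $(\mathcal Wf)_a=\sum_b\mu_b w_{ab}f_b$: rooting the near-star $H$ at its centre $c$ gives $t(H,W)=\sum_a\mu_a D_a^{\,\ell-j}S_a^{\,j}$, where $D=\mathcal W\mathbf 1$, $S=\mathcal W^2\mathbf 1$ and $j=k-\ell-1$ is the number of subdivided legs (so in our range $\ell-j\ge 2$ and $j\ge 2$); rooting $P_k$ at its centre gives $t(P_k,W)=\sum_a\mu_a R_a^2$ with $R=\mathcal W^{(k-1)/2}\mathbf 1$ when $k$ is odd, with an analogous edge-rooted expression when $k$ is even.

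Identifying the right $W$ is the crux, and it is genuinely delicate, since on ``typical'' hosts the near-star beats $P_k$. Indeed $H$ has exactly $\binom{\ell-1}{2}$ more pairs of vertices at distance two --- equivalently, more cherries --- than $P_k$ does, and a short expansion of the relevant densities shows that this forces $t(H,G)\ge t(P_k,G)$ for $G$ close to the constant graphon, for $G(n,p)$ in the large-$n$ limit, and --- by direct calculation --- for complete bipartite hosts, for stars, and for long paths. Moreover, for any rank-one host the log-convexity of the moment sequence $p\mapsto\mathbb E_\mu[g^p]$ forces $t(H,W)\ge t(P_k,W)$, so the witness must be a higher-rank host whose subdominant eigenvalue is not negligible. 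As the transfer operator of a bipartite graphon has $|\lambda_2|=|\lambda_1|$, a natural candidate is a \emph{near-bipartite} host: there the sign-changing second eigenfunction contributes to $D$ and $S$ at full strength, opening the possibility of pushing $t(H,W)$ below $t(P_k,W)$. Concretely, I would start from a biregular bipartite host --- in the border case ($k$ odd, $\ell=\frac{k+1}{2}$) some $K_{s,t}$ already satisfies $t(H,W)=t(P_k,W)$ exactly --- and perturb it by placing a small amount of weight inside one side, tuning the class sizes and the perturbation strength as functions of $k$ and $\ell$.

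It then remains to verify $t(H,W)<t(P_k,W)$ for the chosen $W$, uniformly over $\frac{k+1}{2}\le\ell\le k-3$. Writing $\mathbf 1=c_1\phi_1+c_2\phi_2$ in the eigenbasis of the (two-dimensional) transfer operator turns both sides into explicit polynomials in $\lambda_1,\lambda_2$, in the coordinates of $\phi_1,\phi_2$, and in the exponents $\ell-j$ and $j$; the difference $t(P_k,W)-t(H,W)$ vanishes to first order at the unperturbed host, so the task reduces to extracting its first non-vanishing term as the perturbation parameter tends to $0$ and checking that it is positive. This sign analysis is the hard part: one must also treat the two parities of $k$ separately and make sure nothing degenerates at the endpoints $\ell=\frac{k+1}{2}$ and $\ell=k-3$, which is precisely where $\ell-j$ or $j$ drops to $2$. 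As an alternative that avoids guessing $W$ by hand, one can feed $(H,P_k)$ into the linear-programming reformulation of $\succcurlyeq$ used elsewhere in this paper and show that the relevant program is infeasible --- equivalently, exhibit a dual (``tropical'') certificate --- which is again a single inequality to be certified as a function of $k$ and $\ell$.
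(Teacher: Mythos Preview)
Your proposal is not a proof; it is a sketch of two possible approaches, neither of which you carry out. In the first approach you propose to find a weighted graph $W$ with $t(H,W)<t(P_k,W)$, but you never actually construct $W$: you only say you ``would start from'' a biregular bipartite host and perturb, and that ``it then remains to verify'' the inequality, while conceding that ``this sign analysis is the hard part.'' None of the verification is done, no explicit perturbation is written down, and the dependence on $k$ and $\ell$ is never tracked. As you yourself note, the sign is genuinely delicate here (rank-one hosts and many natural candidates go the wrong way), so merely asserting that a perturbation of some $K_{s,t}$ works is not enough: the entire content of the theorem lies in that verification.

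The paper takes precisely the route you mention only in your last sentence: it constructs an explicit dual certificate for $DLP(H,P_k)$. Concretely, it writes down a function $y$ on $V(P_k)\cup E(P_k)$ supported on the first four vertices $a,b,c,d$ of the path and the edges $ab,bc,cd$, with values depending on three carefully chosen small parameters $\gamma,\delta,\varepsilon$; checks that the objective $\sum y(v)+\sum y(e)=k-1+\varepsilon-\delta<k-1$; and then verifies the constraint $\sum_v\mu_\varphi(v)y(v)+\sum_e\mu_\varphi(e)y(e)\ge e(H)$ for every homomorphism $\varphi:H\to P_k$ by a case analysis on where the centre of $H$ is sent (to $b$, to $c$, or elsewhere) and whether any non-leaf is sent to $a$. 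This uses the structure of the near-star in an essential way (the centre has $q\ge 2$ leaf neighbours and $p\ge 2$ non-leaf neighbours, with $p+q=\ell$ and $p=k-\ell-1$), and the parameters are chosen so that the three cases each close up. If you want to complete your write-up, the concrete thing to do is to produce such a $y$ and carry out that case analysis, rather than to search for a graphon witness.
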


Csikv\'ari and Lin~\cite[Problem~5.2]{CsikvariLin15} asked for the smallest $f(k)\geq 2$ such that every tree $H$ with $k$ vertices and at least $f(k)$ leaves satisfies $H\succcurlyeq P_k$ and also asked whether $f(k)\leq 4$ holds for all $k$. We can now give an exact formula for $f(k)$ for all $k\geq2$, thereby resolving both questions. If $2\leq k\leq 6$, then $H\succcurlyeq P_k$ for all $k$-vertex trees, as was shown in~\cite{Leontovich89}, and so $f(k)=2$. If $k\geq7$, then Theorem~\ref{th:pathsNotUnique} implies that $f(k)\geq k-2$. The only $k$-vertex trees with at least $k-2$ leaves are $S_k$ and $S_k'$; the facts that $S_k\succcurlyeq P_k$ and $S_k'\succcurlyeq P_k$ seem to have been first proven in~\cite{Hoffman67} and~\cite{Sidorenko89}, respectively. Thus, $f(k)=k-2$ for $k\geq7$. 

The rest of the paper is organized as follows. We start in  Section~\ref{sec:basics} by proving Theorem~\ref{th:poset} and describing the linear programs, which are based on more general linear programs from~\cite{KoppartyRossman11}, that are key to proving most of the results in this paper. In the same section, we also prove that a dual-feasible point for which the dual objective function is less than $e(T)$ can be used to certify that $H\not\succcurlyeq T$ for graphs $H$ and $T$. In Section~\ref{sec:entropy}, we use standard facts from information theory to show that, if $H$ and $T$ are forests such that the value of primal linear program from the previous section is $e(T)$, then $H\succcurlyeq T$. We note that the proofs of these results on the linear program and its dual are included for completeness, as they can be derived directly from the results of~\cite{KoppartyRossman11}. As a basic application of the linear programming approach, in Section~\ref{sec:star}, we prove a few basic necessary and sufficient conditions for $H\succcurlyeq T$ to hold and use them to prove Theorem~\ref{th:star}. We then build upon these ideas to prove Theorem~\ref{th:P4} in Section~\ref{sec:P4}. In Section~\ref{sec:nec}, we provide additional necessary conditions (some old and some new) and prove Theorem~\ref{th:pathsNotUnique}. 
Finally, in Section~\ref{sec:smallTrees}, we use the results built up in the paper, together with some ad hoc constructions, to determine the full structure of the poset $\succcurlyeq$ restricted to trees on at most 8 vertices.  The data required to verify the ad hoc constructions is given in Appendices~\ref{app:Primalcertificates} and~\ref{app:Dualcertificates}.  A list of all such trees is provided in Appendix~\ref{app:smallTrees} for reference.
We conclude the paper in Section~\ref{sec:conclusion} with a few closing remarks and open problems.

\section{Basic Properties and Linear Programs}
\label{sec:basics}

We begin with the proof of Theorem~\ref{th:poset}; i.e. we show that $\succcurlyeq$ is a partial order on the set of all non-empty connected graphs. The proof that $\succcurlyeq$ is antisymmetric will apply the following lemma, which we have borrowed from~\cite[Corollary~5.45]{Lovasz12}. The ideas behind its proof go back to the work of Erd\H{o}s, Lov\'asz and Spencer~\cite{ErdosLovaszSpencer79}.

\begin{lem}[See~{\cite[Corollary~5.45]{Lovasz12}}]
\label{lem:linIndep}
The functions $\hom(F,\cdot)$ are linearly independent, where $F$ ranges over all graphs up to isomorphism. 
\end{lem}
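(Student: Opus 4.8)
The plan is to reduce linear independence of the functions $\hom(F,\cdot)$ to linear independence of the functions $\mathrm{inj}(F,\cdot)$, where $\mathrm{inj}(F,G)$ denotes the number of injective homomorphisms from $F$ to $G$, and then to prove the latter by a short triangularity argument. The bridge between the two is the classical identity
\[
\hom(F,G) \;=\; \sum_{\theta} \mathrm{inj}(F/\theta,\, G),
\]
where $\theta$ ranges over all partitions of $V(F)$ no block of which contains both endpoints of an edge of $F$, and $F/\theta$ is the resulting (simple, loopless) quotient graph. This follows by sorting each $\varphi\in\Hom(F,G)$ according to the partition $\{\varphi^{-1}(x):x\in V(G)\}$ of $V(F)$: since $G$ has no loops, this partition has no monochromatic edge, and the homomorphisms inducing a given such partition $\theta$ are in bijection with the injective homomorphisms $F/\theta\to G$ (every such $\varphi$ factors uniquely through the quotient map as an injective homomorphism out of $F/\theta$, and conversely). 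As $F/\theta$ has $|\theta|\le v(F)$ vertices, with equality exactly when $\theta$ is discrete (whence $F/\theta=F$), this identity has the shape $\hom(F,\cdot)=\mathrm{inj}(F,\cdot)+(\text{a finite sum of }\mathrm{inj}(F',\cdot)\text{ with }v(F')<v(F))$.

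Fix an arbitrary $n$ and work inside the span of the functions $\hom(F,\cdot)$ and $\mathrm{inj}(F,\cdot)$ over all isomorphism types $F$ with $v(F)\le n$. Ordering graphs by number of vertices (refined to a total order arbitrarily), the identity above exhibits the change of basis from the $\mathrm{inj}$ family to the $\hom$ family as triangular with unit diagonal, hence invertible; so the two families have the same span, and it suffices to prove that the functions $\mathrm{inj}(F,\cdot)$ are linearly independent. Suppose $\sum_{i=1}^{m} c_i\,\mathrm{inj}(F_i,\cdot)\equiv 0$ with the $F_i$ pairwise non-isomorphic and some $c_i\neq 0$. Among the indices with $c_i\neq 0$, pick $j$ with $v(F_j)$ minimum and, subject to that, $e(F_j)$ minimum, and evaluate the relation at $G=F_j$. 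Terms with $v(F_i)>v(F_j)$ vanish since there is no injective map of a larger vertex set into a smaller one; terms with $v(F_i)<v(F_j)$ vanish since $c_i=0$. For the surviving indices $v(F_i)=v(F_j)$, so any injective homomorphism $F_i\to F_j$ is a bijection, hence maps $E(F_i)$ injectively into $E(F_j)$ and forces $e(F_i)\le e(F_j)$; combined with $e(F_i)\ge e(F_j)$ this gives $e(F_i)=e(F_j)$, and then a bijective homomorphism is an isomorphism. Thus $\mathrm{inj}(F_i,F_j)=0$ unless $F_i\cong F_j$, i.e.\ $i=j$, and the relation collapses to $0=c_j\,\mathrm{inj}(F_j,F_j)=c_j\,|\Aut(F_j)|$, contradicting $c_j\neq 0$.

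The one thing that needs care is the first step: checking the quotient identity (in particular that only partitions with no monochromatic edge contribute, so that the quotients really are simple graphs) and confirming that it yields an invertible change of basis between the $\hom$ and $\mathrm{inj}$ families on each finite level; after that the argument is the two-line triangularity computation above. An equivalent route is to invoke M\"obius inversion over the partition lattice of $V(F)$ to write each $\mathrm{inj}(F,\cdot)$ as an explicit finite integer combination of the $\hom(F',\cdot)$ with $v(F')\le v(F)$, which makes the reduction purely formal; I would present whichever version is shortest to write out rigorously.
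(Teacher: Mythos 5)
The paper states this lemma by citation to Lovász's book and does not reproduce a proof, so there is no internal argument to compare against. Your proof is correct and is essentially the standard argument behind that citation: the quotient identity $\hom(F,G)=\sum_\theta \mathrm{inj}(F/\theta,G)$ gives a unitriangular (by vertex count) change of basis between the $\hom$ and $\mathrm{inj}$ families, and linear independence of the $\mathrm{inj}(F,\cdot)$ then follows by evaluating any putative relation at a term that is minimal first in vertex count and then in edge count, where the only surviving summand is $c_j\,|\Aut(F_j)|\neq 0$.
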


Using Lemma~\ref{lem:linIndep}, we prove Theorem~\ref{th:poset}.

\begin{proof}[Proof of Theorem~\ref{th:poset}]
It is clear that $\preccurlyeq$ is reflexive. For transitivity, suppose that $F\succcurlyeq H$ and $H\succcurlyeq T$. Then, for every graph $G$,
\[t(F,G)\geq t(H,G)^{\frac{e(F)}{e(H)}} \geq t(T,G)^{\frac{e(H)}{e(T)}\cdot \frac{e(F)}{e(H)}} = t(T,G)^{\frac{e(F)}{e(T)}}\]
and so $F\succcurlyeq T$. Finally, suppose that $H$ and $T$ are non-empty connected graphs such that $H\succcurlyeq T$ and $H\preccurlyeq T$. Then we have
\[t(H,G)^{e(T)}=t(T,G)^{e(H)}\]
for every graph $G$. Using the definition of homomorphism density and the fact that $\hom(K_1,G)=v(G)$ for any graph $G$, this is equivalent to
\[\hom(H,G)^{e(T)}\hom(K_1,G)^{v(T)e(H)} = \hom(T,G)^{e(H)}\hom(K_1,G)^{v(H)e(T)}\]
for every graph $G$. In other words,
\[\hom((e(T)\cdot H)\sqcup(v(T)e(H)\cdot K_1),G)=\hom((e(H)\cdot T)\sqcup(v(H)e(T)\cdot K_1),G)\]
where, for any non-negative integer $k$ and graph $F$, we let $k\cdot F$ denote the disjoint union of $k$ copies of $F$. By Lemma~\ref{lem:linIndep}, this implies that $(e(T)\cdot H)\sqcup(v(T)e(H)\cdot K_1)$ and $(e(H)\cdot T)\sqcup(v(H)e(T)\cdot K_1)$ are isomorphic. So, since $H$ and $T$ are connected, $H$ and $T$ must be isomorphic. This completes the proof. 
\end{proof}

Next, we present a linear program which plays a key role in this paper. We should make it clear that this linear program can be obtained by specializing a more general linear program of Kopparty and Rossman~\cite{KoppartyRossman11} to the case of forests; therefore, in particular, this linear program is not new. However, rather than deriving the results about this linear program by specializing the (more complicated) linear program in~\cite{KoppartyRossman11}, we have decided to prove them from scratch in this simpler setting for the sake of completeness and because we believe that the ideas in the proof will be enlightening to any reader who is not already familiar with the argument in~\cite{KoppartyRossman11}. We remark that the linear program from~\cite{KoppartyRossman11} has been applied in several other recent papers on homomorphism density inequalities, such as~\cite{BlekhermanRaymond20,BlekhermanRaymond22,DascaluRaymond24}, and similar entropy-theoretic ideas are at the core of much of the recent progress on Sidorenko's Conjecture and related problems; see, e.g.,~\cite{ConlonKimLeeLee18,Szegedy15,ConlonLee17,Behague+22+,GrzesikLeeLidickyVolec22,Lee21}.

Let $H$ and $T$ be graphs. For each homomorphism $\varphi:H\to T$, let $\mu_\varphi:V(T)\cup E(T)\to \{0,1,2,\dots,\}$ be the function defined by $\mu_\varphi(v):=|\varphi^{-1}(v)|$ for each $v\in V(T)$ and $\mu_\varphi(e):=|\varphi^{-1}(e)|$ for each $e\in E(T)$. Let $w:\Hom(H,T)\to \mathbb{R}$ be a weighting on the homomorphisms from $H$ to $T$; we view the values $w(\varphi)$ for $\varphi\in\Hom(H,T)$ as variables.  We let $LP(H,T)$ be the following linear program:
\begin{align}
\label{eq:OBJ}\text{maximize } & \sum_{e\in E(T)}\sum_{\varphi:H\to T}\mu_\varphi(e)\cdot w(\varphi) & \\
\label{eq:Econstraint}\text{subject to } & \sum_{\varphi:H\to T}\mu_\varphi(e)\cdot w(\varphi)\leq 1 & \forall e\in E(T),\\
\label{eq:Vconstraint} & \sum_{\varphi:H\to T}\mu_\varphi(v)\cdot w(\varphi)\leq 1 & \forall v\in V(T),\\
\label{eq:+constraint} & w(\varphi)\geq0 & \forall \varphi\in \Hom(H,T).
\end{align}
Summing the constraint \eqref{eq:Econstraint} over all edges of $T$ tells us that the value of $LP(H,T)$ is at most $e(T)$. The next lemma says that, if $H$ and $T$ are forests such that $LP(H,T)$ attains the value $e(T)$, then $H\succcurlyeq T$. The proof of this lemma will require a fair bit of background from information theory (specifically, entropy), and so we postpone it until the next section. We note that this lemma can be derived from results in~\cite{KoppartyRossman11} but we will give a self-contained proof for expository purposes. 

\begin{lem}[See~{\cite[Theorem~3.3]{KoppartyRossman11}}]
\label{lem:LP}
If $H$ and $T$ are forests such that the value of $LP(H,T)$ is equal to $e(T)$, then $H\succcurlyeq T$.
\end{lem}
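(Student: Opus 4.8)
The plan is to use the linear programming duality together with the entropy method pioneered by Kopparty and Rossman. The key idea is that a feasible solution $w$ to $LP(H,T)$ with objective value $e(T)$ encodes a way to build a random homomorphism from $H$ to $G$, for any given $G$, whose ``entropy profile'' certifies the desired inequality. Concretely, fix a graph $G$ and let $\psi:T\to G$ be a uniformly random homomorphism, so that the (base-$v(G)$) entropy of $\psi$ is $\log_{v(G)}\hom(T,G)$. For each $\varphi:H\to T$ we obtain a random map $\psi\circ\varphi: H\to G$; since $w$ is a probability-like weighting (after rescaling), we want to combine these to produce a random homomorphism $\chi:H\to G$ and lower bound its entropy by $\frac{e(H)}{e(T)}\log_{v(G)}\hom(T,G)$, which (since entropy of any $V(G)$-valued random map on $V(H)$ is at most $v(H)\log v(G)$, and $\chi$ is always a genuine homomorphism) gives $\hom(H,G)\geq v(G)^{H(\chi)}\geq \hom(T,G)^{e(H)/e(T)}$ after the usual manipulation, hence $t(H,G)^{e(T)}\geq t(T,G)^{e(H)}$.

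The main steps I would carry out are as follows. First, I would recall the standard entropy facts: subadditivity, the chain rule, the bound $H(X)\le\log|\mathrm{range}(X)|$, and Shearer-type or edge-by-edge decomposition inequalities for tree/forest-structured random variables — in particular that for a forest $T$, the uniform random homomorphism $\psi:T\to G$ satisfies $H(\psi)=\sum_{uv\in E(T)} H(\psi(u),\psi(v)) - \sum_{v} (\deg_T(v)-1) H(\psi(v))$, and each $H(\psi(v))=\log v(G)$ while each $H(\psi(u),\psi(v)) = \log(2e(G))$ or similar — more usefully, I would track everything in terms of $\log_{v(G)}\hom(T,G)$ directly. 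Second, I would use the weighting $w$ to define a random homomorphism $\chi:H\to G$: roughly, sample $\varphi$ from $H$ to $T$ with probability proportional to $w(\varphi)$ times an appropriate normalization, then sample $\psi:T\to G$ uniformly and output $\psi\circ\varphi$; or, more likely following Kopparty--Rossman, define $\chi$ vertex-by-vertex along the forest structure of $H$, using the constraints \eqref{eq:Econstraint} and \eqref{eq:Vconstraint} to control how much ``new randomness'' is consumed at each edge and vertex. Third, I would compute $H(\chi)$: the edge constraints say that the total weighted multiplicity landing on any edge of $T$ is at most $1$, so the edges of $H$ can be ``charged'' to edges of $T$ in a fractional way that, combined with objective value $\sum_e (\text{weighted multiplicity}) = e(T)$ and the fact that each edge of $T$ carries $\log_{v(G)}\hom(T,G)/e(T)$ ``units'' of entropy (for forests, since homomorphism counts multiply nicely over the edge structure), yields $H(\chi)\ge \frac{e(H)}{e(T)}\log_{v(G)}\hom(T,G)$. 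Finally, I would conclude via $\hom(H,G)\ge v(G)^{H(\chi)}$.

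The hard part will be setting up the random homomorphism $\chi$ and the entropy bookkeeping so that the objective value $e(T)$ and the two families of constraints \eqref{eq:Econstraint}--\eqref{eq:Vconstraint} translate exactly into the entropy inequality, rather than merely an approximate one. In particular, one must be careful that the constraints $\sum_\varphi \mu_\varphi(e) w(\varphi)\le 1$ and $\sum_\varphi \mu_\varphi(v) w(\varphi)\le 1$ are precisely what is needed to ensure the entropy we assign to edges and vertices of $H$ does not overcount the entropy available in $\psi$ — this is where the forest hypothesis on both $H$ and $T$ is essential, since it guarantees that $H(\psi)$ decomposes cleanly along edges with no correction terms beyond vertices, and that a partial homomorphism on a subforest of $H$ extends to a full one. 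I would handle $\log v(G)$ normalizations by working throughout with entropies in base $v(G)$ (or dividing through at the end), and I would reduce to the case where $H$ and $T$ are each connected (i.e.\ trees) by noting that both sides of $H\succcurlyeq T$ and the linear program behave multiplicatively under disjoint unions, although one must check the normalization of $w$ survives this reduction. The remaining obstacle is purely a matter of assembling these standard entropy inequalities in the right order, which is why the authors defer it to a dedicated section.
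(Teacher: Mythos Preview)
Your high-level framework is right and matches the paper: construct a random homomorphism from $H$ to $G$ whose entropy is bounded below in terms of the entropy $\mathbb{H}(\psi)=\log\hom(T,G)$ of a uniform $\psi:T\to G$, then invoke $\log\hom(H,G)\geq \mathbb{H}(\chi)$. However, two of the concrete steps you propose are wrong, and a third key mechanism is missing.

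First, the map $\chi=\psi\circ\varphi$ does \emph{not} carry enough entropy. If $\varphi$ sends two vertices $u,u'\in V(H)$ to the same vertex of $T$, then $\chi(u)=\chi(u')$ deterministically, which kills entropy. The paper does something strictly better: for each fixed $\varphi$ it builds a random homomorphism $\psi_\varphi:H\to G$ vertex-by-vertex along the tree structure of $H$, resampling each new vertex so that each edge $(v_{p(i)},v_i)$ of $H$ has the \emph{marginal} distribution of $(X_{\varphi(v_{p(i)})},X_{\varphi(v_i)})$ but is conditionally independent of previously chosen vertices given its parent. One then gets the exact formula $\mathbb{H}(\psi_\varphi)=\sum_{e\in E(H)}\mathbb{H}(X_{\varphi(e)})-\sum_{v\in V(H)}(d_H(v)-1)\mathbb{H}(X_{\varphi(v)})$, which is in general strictly larger than $\mathbb{H}(\psi\circ\varphi)$.

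Second, your assertion that ``$H(\psi(v))=\log v(G)$'' and that ``each edge of $T$ carries $\log\hom(T,G)/e(T)$ units of entropy'' are both false: for a uniform homomorphism $\psi$, the marginals $X_v=\psi(v)$ are typically \emph{not} uniform on $V(G)$, and the edge entropies $\mathbb{H}(X_e)$ vary with $e$. The correct decomposition (which the paper states and proves) is $\mathbb{H}(\psi)=\sum_{e\in E(T)}\mathbb{H}(X_e)-\sum_{v\in V(T)}(d_T(v)-1)\mathbb{H}(X_v)$, and the whole calculation must track the individual $\mathbb{H}(X_e)$ and $\mathbb{H}(X_v)$, not just their averages.

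Third, you have not identified how constraint \eqref{eq:Vconstraint} enters, nor how to pass from a bound on $\hom$ to one on $t$. The paper handles both simultaneously by a device you do not mention: it clears denominators in $w$, takes $q$ disjoint copies of $H$, assigns each $\varphi$ to $q\cdot w(\varphi)\cdot e(H)/e(T)$ of the copies \emph{deterministically}, and then adds $m$ isolated vertices (mapped according to the marginals $X_v$) to form $H'=(q\cdot H)\sqcup(m\cdot K_1)$. The vertex constraint \eqref{eq:Vconstraint} is exactly what guarantees $m_v\geq 0$, and the isolated vertices supply the term $\sum_v(1-\sum_\varphi\mu_\varphi(v)w(\varphi))\mathbb{H}(X_v)$ needed so that the total entropy equals $\frac{q\,e(H)}{e(T)}\mathbb{H}(\psi)$ on the nose. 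Your plan to ``work in base $v(G)$'' does not by itself replace this step.
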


Before moving on, let us illustrate the applicability of Lemma~\ref{lem:LP} with a quick example that is inspired by the argument in~\cite{BlekhermanRaymond20}. 

\begin{ex}
Let $H=P_6$ and $T=P_4$. Denote the vertices of $P_6$ by $v_1,\dots,v_6$ and the vertices of $P_4$ by $u_1,\dots,u_4$, labelled in the order that they appear on the path. We let $\varphi_1,\varphi_2$ and $\varphi_3$ be homomorphisms defined by
\[(\varphi_1(v_1),\varphi_1(v_2),\varphi_1(v_3),\varphi_1(v_4),\varphi_1(v_5),\varphi_1(v_6)) = (u_1,u_2,u_1,u_2,u_3,u_4),\]
\[(\varphi_2(v_1),\varphi_2(v_2),\varphi_2(v_3),\varphi_2(v_4),\varphi_2(v_5),\varphi_2(v_6))  = (u_1,u_2,u_3,u_2,u_3,u_4)\]
and
\[(\varphi_3(v_1),\varphi_3(v_2),\varphi_3(v_3),\varphi_3(v_4),\varphi_3(v_5),\varphi_3(v_6))  = (u_1,u_2,u_3,u_4,u_3,u_4).\]
These homomorphisms are depicted in Figure~\ref{fig:P4P6}. 

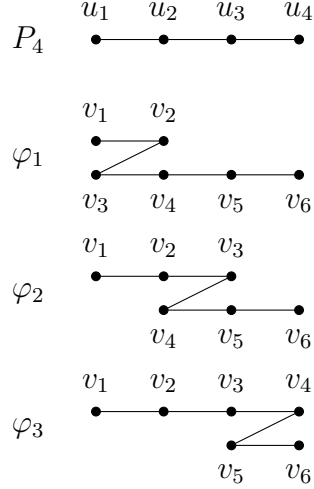
\begin{figure}[htbp]
\begin{center}
\begin{tikzpicture}[scale=0.9]
  \node (P4) at (-1,0) {$P_4$};
  \node[fill=black,circle,minimum size=0.05cm,scale=0.3,draw,label={[label distance=0.05cm]90:$u_1$}] (u1) at (0,0) {};
  \node[fill=black,circle,minimum size=0.05cm,scale=0.3,draw,label={[label distance=0.05cm]90:$u_2$}] (u2) at (1,0) {};
  \node[fill=black,circle,minimum size=0.05cm,scale=0.3,draw,label={[label distance=0.05cm]90:$u_3$}] (u3) at (2,0) {};
  \node[fill=black,circle,minimum size=0.05cm,scale=0.3,draw,label={[label distance=0.05cm]90:$u_4$}] (u4) at (3,0) {};
  \draw (u1) -- (u2) -- (u3) -- (u4);
  
  \node (phi1) at (-1,-1.75) {$\varphi_1$};
  \node[fill=black,circle,minimum size=0.05cm,scale=0.3,draw,label={[label distance=0.05cm]90:$v_1$}] (v11) at (0,-1.5) {};
  \node[fill=black,circle,minimum size=0.05cm,scale=0.3,draw,label={[label distance=0.05cm]90:$v_2$}] (v21) at (1,-1.5) {};
  \node[fill=black,circle,minimum size=0.05cm,scale=0.3,draw,label={[label distance=0.05cm]270:$v_3$}] (v31) at (0,-2) {};
  \node[fill=black,circle,minimum size=0.05cm,scale=0.3,draw,label={[label distance=0.05cm]270:$v_4$}] (v41) at (1,-2) {};
  \node[fill=black,circle,minimum size=0.05cm,scale=0.3,draw,label={[label distance=0.05cm]270:$v_5$}] (v51) at (2,-2) {};
  \node[fill=black,circle,minimum size=0.05cm,scale=0.3,draw,label={[label distance=0.05cm]270:$v_6$}] (v61) at (3,-2) {};
  \draw (v11) -- (v21) -- (v31) -- (v41) -- (v51) -- (v61);
  
  \node (phi2) at (-1,-3.75) {$\varphi_2$};
  \node[fill=black,circle,minimum size=0.05cm,scale=0.3,draw,label={[label distance=0.05cm]90:$v_1$}] (v12) at (0,-3.5) {};
  \node[fill=black,circle,minimum size=0.05cm,scale=0.3,draw,label={[label distance=0.05cm]90:$v_2$}] (v22) at (1,-3.5) {};
  \node[fill=black,circle,minimum size=0.05cm,scale=0.3,draw,label={[label distance=0.05cm]90:$v_3$}] (v32) at (2,-3.5) {};
  \node[fill=black,circle,minimum size=0.05cm,scale=0.3,draw,label={[label distance=0.05cm]270:$v_4$}] (v42) at (1,-4) {};
  \node[fill=black,circle,minimum size=0.05cm,scale=0.3,draw,label={[label distance=0.05cm]270:$v_5$}] (v52) at (2,-4) {};
  \node[fill=black,circle,minimum size=0.05cm,scale=0.3,draw,label={[label distance=0.05cm]270:$v_6$}] (v62) at (3,-4) {};
  \draw (v12) -- (v22) -- (v32) -- (v42) -- (v52) -- (v62);
  
  \node (phi3) at (-1,-5.75) {$\varphi_3$};
  \node[fill=black,circle,minimum size=0.05cm,scale=0.3,draw,label={[label distance=0.05cm]90:$v_1$}] (v13) at (0,-5.5) {};
  \node[fill=black,circle,minimum size=0.05cm,scale=0.3,draw,label={[label distance=0.05cm]90:$v_2$}] (v23) at (1,-5.5) {};
  \node[fill=black,circle,minimum size=0.05cm,scale=0.3,draw,label={[label distance=0.05cm]90:$v_3$}] (v33) at (2,-5.5) {};
  \node[fill=black,circle,minimum size=0.05cm,scale=0.3,draw,label={[label distance=0.05cm]90:$v_4$}] (v43) at (3,-5.5) {};
  \node[fill=black,circle,minimum size=0.05cm,scale=0.3,draw,label={[label distance=0.05cm]270:$v_5$}] (v53) at (2,-6) {};
  \node[fill=black,circle,minimum size=0.05cm,scale=0.3,draw,label={[label distance=0.05cm]270:$v_6$}] (v63) at (3,-6) {};
  \draw (v13) -- (v23) -- (v33) -- (v43) -- (v53) -- (v63);
\end{tikzpicture}
\end{center}
\vspace{-1em}
\caption{A depiction of the homomorphisms $\varphi_1,\varphi_2$ and $\varphi_3$ from $P_6$ to $P_4$. Each vertex of $P_6$ is drawn directly below the vertex of $P_4$ that it is mapped to.}
\label{fig:P4P6}
\end{figure}

Set $w(\varphi_1)=w(\varphi_2)=w(\varphi_3)=1/5$ and $w(\varphi)=0$ for every other homomorphism $\varphi$  from $P_6$ to $P_4$. Then, by construction, for each edge $e\in P_4$, we have
\[\sum_{\varphi:P_6\to P_4}\mu_\varphi(e)\cdot w(\varphi) = 1.\]
Also, we have
\[\sum_{\varphi:P_6\to P_4}\mu_\varphi(u_1)\cdot w(\varphi) = \sum_{\varphi:P_6\to P_4}\mu_\varphi(u_4)\cdot w(\varphi)=4/5\]
and
\[\sum_{\varphi:P_6\to P_4}\mu_\varphi(u_2)\cdot w(\varphi) = \sum_{\varphi:P_6\to P_4}\mu_\varphi(u_3)\cdot w(\varphi)=1.\]
Therefore, all of the constraints of $LP(P_6,P_4)$ are satisfied. Since \eqref{eq:Econstraint} is tight for every edge of $P_4$, the value of $LP(P_6,P_4)$ is precisely equal to $e(P_4)=3$ and so, by Lemma~\ref{lem:LP}, the inequality $P_6\succcurlyeq P_4$ holds. In~\cite{BlekhermanRaymond20}, this construction is generalized to give a beautiful alternative proof of Theorem~\ref{th:Saglam}.
\end{ex}

Let us now discuss the dual of $LP(H,T)$, which we denote by $DLP(H,T)$. We let $y:V(T)\cup E(T)\to \mathbb{R}$ be a weighting of the vertices and edges of $T$, and view $y(v)$ and $y(e)$ as variables for all $v\in V(T)$ and $e\in E(T)$. Then $DLP(H,T)$ is as follows:
\begin{align}
\label{eq:DOBJ}\text{minimize } & \sum_{v\in V(T)}y(v) + \sum_{e\in E(T)}y(e) & \\
\label{eq:DhomConstraint}\text{subject to } & \sum_{v\in V(T)}\mu_\varphi(v)\cdot y(v) + \sum_{e\in E(T)}\mu_\varphi(e)\cdot y(e)\geq e(H) & \forall \varphi\in\Hom(H,T),\\
\label{eq:Dv+} & y(v)\geq 0 & \forall v\in V(T),\\
\label{eq:De+} & y(e)\geq0 & \forall e\in E(T).
\end{align}

By setting $y(v)=0$ for all $v\in V(T)$ and $y(e)=1$ for all $e\in E(T)$, we see that all constraints of $DLP(H,T)$ are satisfied and, for this choice of variables, its objective function is equal to $e(T)$. The following lemma shows that, if the value of $DLP(H,T)$ is less than $e(T)$, then $H\not\succcurlyeq T$. Note that this lemma holds for all graphs $H$ and $T$; i.e. unlike Lemma~\ref{lem:LP}, it does not require $H$ and $T$ to be forests. The proof of this lemma does not require any significant background and so we can present it immediately. Again, this lemma can be derived from results in~\cite{KoppartyRossman11}. 

\begin{lem}[See~\cite{KoppartyRossman11}]
\label{lem:DLP}
If $H$ and $T$ are non-empty graphs such that the value of $DLP(H,T)$ is less than $e(T)$, then $H\not\succcurlyeq T$.
\end{lem}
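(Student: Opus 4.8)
The plan is to use a dual-feasible point with small objective value to build, for each $n$, a graph $G_n$ on roughly $n$ vertices witnessing $t(H,G_n)^{e(T)} < t(T,G_n)^{e(H)}$ in the limit. Let $y:V(T)\cup E(T)\to\mathbb{R}_{\geq 0}$ be a feasible solution to $DLP(H,T)$ with $\sum_v y(v) + \sum_e y(e) =: c < e(T)$. The idea is to ``blow up'' $T$ with part sizes proportional to the $y$-weights. More precisely, I would fix a large integer $N$ and let $G$ be the graph on vertex set $\bigsqcup_{v\in V(T)} A_v$, where $|A_v| = \lceil y(v) N \rceil$ (padding with a few isolated vertices or a tiny extra part so that $v(G)$ is controlled), and for each edge $e=uv\in E(T)$ we place a bipartite graph between $A_u$ and $A_v$ with edge density $y(e)N / (|A_u||A_v|)$ — concretely a union of $\lfloor y(e)N\rfloor$ perfect matchings or a pseudorandom bipartite graph of that density. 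The point of this construction is that $\hom(T,G) \geq \prod_{v} |A_v| \cdot \prod_{e} (\text{density of }e)$ times the appropriate normalization, so that $t(T,G)$ is bounded below by a quantity of order $N^{-(\text{something})}$, while every homomorphism $\varphi:H\to T$ contributes to $\hom(H,G)$ an amount controlled by $\prod_v |A_v|^{\mu_\varphi(v)} \prod_e (\text{density})^{\mu_\varphi(e)}$, and constraint \eqref{eq:DhomConstraint} forces each such term to be small because $\sum_v \mu_\varphi(v) y(v) + \sum_e \mu_\varphi(e) y(e) \geq e(H)$.

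Carrying this out, I would first argue that homomorphisms from $H$ into this blow-up $G$ essentially factor through homomorphisms into $T$: any homomorphism $\psi:H\to G$ determines a map $H\to T$ (send a vertex of $H$ in $A_v$ to $v$), which must itself be a homomorphism since edges of $G$ only lie over edges of $T$. Thus $\hom(H,G) = \sum_{\varphi:H\to T} \hom_\varphi(H,G)$, where $\hom_\varphi(H,G)$ counts homomorphisms lying over $\varphi$; and $\hom_\varphi(H,G) \leq \prod_{v}|A_v|^{\mu_\varphi(v)} \cdot \prod_{e}(\text{max codegree or density bound})$. Taking $N\to\infty$ and keeping track of exponents of $N$, one finds that $t(H,G)$ decays at least as fast as $N^{-(e(H)\cdot\text{const})}$ relative to the growth of $t(T,G)$, with the gap governed precisely by $e(T) - c > 0$. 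The crux is to get the exponents to line up: one wants $t(H,G)^{e(T)}$ to carry a strictly larger negative power of $N$ than $t(T,G)^{e(H)}$, which is exactly where $c < e(T)$ enters, after clearing denominators coming from the $v(G)^{v(H)}$ and $v(G)^{v(T)}$ normalizations.

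The main obstacle I anticipate is handling the normalization by $v(G)$ cleanly and making the bipartite densities in the construction both achievable (one cannot literally have density $y(e)N/(|A_u||A_v|)$ unless it is $\leq 1$, so one must scale so that every $y(e)$ part has enough room — rescaling $y$ by a constant does not change feasibility of the ratio test but does interact with the objective, so care is needed) and ``spread out'' enough that the codegree/density upper bounds on $\hom_\varphi(H,G)$ are tight up to $1+o(1)$ factors. A pseudorandom (quasirandom) choice of bipartite graph, or an explicit algebraic construction, should make the counting bounds $\hom_\varphi(H,G) = (1+o(1))\prod_v |A_v|^{\mu_\varphi(v)}\prod_e d_e^{\mu_\varphi(e)}$ hold; alternatively, since we only need an upper bound on $\hom(H,G)$ and a lower bound on $\hom(T,G)$, we can afford to be generous on the $H$ side (any bipartite graph of the right density gives the upper bound on $\hom_\varphi$ via a trivial count) and only need a matching-type lower bound for $\hom(T,G)$, which is easy since $T$ is connected and we can route a single homomorphism through the densest available edges. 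Finally, I would verify that the finitely many additive error terms (rounding in $\lceil y(v)N\rceil$, the number of homomorphisms $\varphi:H\to T$, which is bounded) are absorbed into the $(1+o(1))$ and do not affect the strict inequality for $N$ large. Then $t(H,G)^{e(T)} < t(T,G)^{e(H)}$ for some graph $G$ in this family, giving $H\not\succcurlyeq T$.
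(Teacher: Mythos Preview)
Your high-level strategy is right and matches the paper: build a blow-up of $T$ with parameters governed by $y$, lower-bound $\hom(T,G)$ via the identity map $T\to T$, and upper-bound $\hom(H,G)$ by splitting over $\varphi\in\Hom(H,T)$ and invoking the dual constraint \eqref{eq:DhomConstraint}. The gap is in the parameterization.

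With your linear choice $|A_v|\approx y(v)N$ and edge density $d_e\approx y(e)N/(|A_u||A_v|)=y(e)/(y(u)y(v)N)$, the exponent of $N$ in $\prod_v |A_v|^{\mu_\varphi(v)}\prod_e d_e^{\mu_\varphi(e)}$ is just $\sum_v\mu_\varphi(v)-\sum_e\mu_\varphi(e)=v(H)-e(H)$, independently of $\varphi$ and of $y$. Similarly your lower bound gives $\hom(T,G)\gtrsim N^{v(T)-e(T)}\cdot(\text{const})$. After dividing by $v(G)^{v(\cdot)}$ with $v(G)\approx(\sum y(v))N$, you obtain $t(H,G)\approx C_H N^{-e(H)}$ and $t(T,G)\approx C_T N^{-e(T)}$, so $t(H,G)^{e(T)}$ and $t(T,G)^{e(H)}$ carry the \emph{same} power $N^{-e(H)e(T)}$. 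There is no asymptotic separation; everything collapses to a comparison of constants $C_H,C_T$ that are messy products of the $y$-values and do not involve the sum $c=\sum y(v)+\sum y(e)$ in any usable way. In particular, the dual constraint \eqref{eq:DhomConstraint} never enters your bound on $\hom_\varphi(H,G)$. (A secondary problem: if some $y(v)=0$, the part $A_v$ is empty and your lower bound on $\hom(T,G)$ vanishes.)

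The fix---and this is what the paper does---is to make the dependence on $y$ \emph{exponential}. After clearing denominators so that $y$ is integer-valued, set $|S_v|=N^{m-y(v)}$ for a large constant $m$, and place between $S_u$ and $S_v$ a random bipartite graph of density $N^{-y(e)}$. Then
\[
\prod_v |S_v|^{\mu_\varphi(v)}\prod_e d_e^{\mu_\varphi(e)}=N^{\,m\,v(H)-\left(\sum_v\mu_\varphi(v)y(v)+\sum_e\mu_\varphi(e)y(e)\right)}\leq N^{\,m\,v(H)-e(H)}
\]
by \eqref{eq:DhomConstraint}, while the identity map gives $\hom(T,G)\geq N^{m\,v(T)-c}$. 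Now the exponents separate by exactly $e(H)(e(T)-c)>0$ after normalizing and raising to the right powers, and concentration handles the randomness. Your anticipated obstacles (density $\leq 1$, rounding) are real but minor; the essential missing idea is that a \emph{linear} constraint on $y$ must be converted into an \emph{exponent}, which forces the exponential scaling.
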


\begin{proof}
Let $y:V(T)\cup E(T)\to \mathbb{R}$ be a certificate that the value of $DLP(H,T)$ is less than $e(T)$. Since all coefficients in the objective function and constraints of $DLP(H,T)$ are integral, we can assume that $y(v)$ and $y(e)$ are rational for all $v\in V(T)$ and $e\in E(T)$. Let $q$ be a positive integer so that $y(v)\cdot q$ and $y(e)\cdot q$ are integers for all $v\in V(T)$ and $e\in E(T)$. Let $\xi:=\sum_{v\in V(T)}y(v)+\sum_{e\in E(T)}y(e)$, which is less than $e(T)$ by our choice of $y$. We let $m$ be an integer chosen large enough that $m>q\cdot\xi$.

For each integer $N$, let $G_N$ be a (random) graph constructed as follows. For each $v\in V(T)$, let $S_v$ be a set of $N^{m-y(v)\cdot q}$ vertices. Note that, by our choice of $m$ and $q$, we have that $N^{m-y(v)\cdot q}$ is a positive integer, and so this choice is well-defined. Also, let $S_0$ be a set of precisely
\[v(T)\cdot N^m - \sum_{v\in V(T)} |S_v|\] 
vertices. The above expression evaluates to a non-negative integer and so $S_0$ is well-defined. We assume that all of the sets $S_v$ for $v\in V(T)\cup\{0\}$ are pairwise disjoint. Note that the sole purpose of $S_0$ is to ensure that $G_N$ has precisely $v(T)\cdot N^m$ vertices, which will be useful for simplifying some of our calculations. For each $uv=e\in E(T)$, between $S_u$ and $S_v$, we place the edges of a random bipartite graph with bipartition $(S_u,S_v)$ in which each edge is present with probability $N^{-y(e)\cdot q}$ independent of all other edges. Our goal is to prove that, with probability one, there exists $N$ such that
\[t(H,G_N)^{e(T)} < t(T,G_N)^{e(H)}\]
which will imply that $H\not\succcurlyeq T$ and complete the proof.

Given a function $\psi:V(T)\to V(G_N)$ with the property that $\psi(v)\in S_v$ for all $v\in V(T)$, the probability that $\psi$ is a homomorphism from $T$ to $G$ is precisely equal to $\prod_{e\in E(T)}N^{-y(e)\cdot q}$. Also, the number of such functions is $\prod_{v\in V(T)}N^{m-y(v)\cdot q}$. Therefore, 
\[\mathbb{E}(\hom(T,G_N)) \geq \left(\prod_{v\in V(T)}N^{m-y(v)\cdot q}\right)\cdot \left(\prod_{e\in E(T)}N^{-y(e)\cdot q}\right) = N^{v(T)\cdot m - \xi\cdot q}.\]
Moreover, by standard concentration results, $\hom(T,G_N)$ is at least half of its expectation with probability $1-o(1)$, where the asymptotics are with respect to $N$ tending to infinity. Therefore, with probability close to $1$, we have
\begin{equation}\label{eq:Tlb}t(T,G_N) \geq\frac{1}{2}\cdot \frac{N^{v(T)\cdot m - \xi\cdot q}}{v(G_N)^{v(T)}} = \frac{1}{2}\cdot \frac{N^{v(T)\cdot m - \xi\cdot q}}{v(T)^{v(T)} N^{v(T)\cdot m}}  = \frac{ N^{-\xi \cdot q}}{2v(T)^{v(T)}}.\end{equation}

Now, let $\gamma:V(G_N)\setminus S_0\to V(T)$ be the function such that, for each $v\in V(T)$ and $x\in S_v$, we have $\gamma(x)=v$. For each homomorphism $\varphi:H\to T$, let $\Hom_\varphi(H,G_N)$ be the set of all homomorphisms $\psi:H\to G_N$ such that $\gamma\circ \psi = \varphi$ and let $\hom_\varphi(H,T)$ be its cardinality. Note that $\Hom(H,G_N) = \bigsqcup_{\varphi:H\to T}\Hom_\varphi(H,G_N)$ by the construction of $G_N$. Thus,
\[\hom(H,G_N) \leq \hom(H,T) \cdot \max\{\hom_\varphi(H,G_N): \varphi\in \Hom(H,T)\}.\]
Now, for fixed $\varphi\in \Hom(H,T)$ and a function $\psi:V(H)\to V(G_N)$ such that $\gamma\circ \psi=\varphi$, the probability that $\psi$ is a homomorphism is equal to $\prod_{e\in E(T)}N^{-\mu_\varphi(e)\cdot y(e)\cdot q}$. Therefore,
\[\mathbb{E}(\hom_\varphi(H,G_N))= \left(\prod_{v\in V(T)}\left(N^{ m-y(v)\cdot q}\right)^{\mu_\varphi(v)}\right)\cdot \left(\prod_{e\in E(T)}N^{-\mu_\varphi(e)\cdot y(e)\cdot q}\right).\]
By \eqref{eq:DhomConstraint}, this is at most
\[N^{v(H)\cdot m - e(H)\cdot q}.\]
Once again, by standard concentration inequalities, $\hom_\varphi(H,G_N)$ is at most twice its expectation with probability $1-o(1)$. Thus, putting all of this together, we have, with probability close to one,
\begin{equation}\label{eq:Hub}t(H,G_N)\leq \frac{2\cdot\hom(H,T)\cdot N^{v(H)\cdot m-e(H)\cdot q}}{v(G_N)^{v(H)}}\leq \frac{2\cdot v(T)^{v(H)}\cdot N^{v(H)\cdot m-e(H)\cdot q}}{v(T)^{v(H)}N^{v(H)\cdot m}}\leq 2\cdot N^{-e(H)\cdot q}.\end{equation}
Since $\xi<e(T)$, the combination of \eqref{eq:Tlb} and \eqref{eq:Hub} implies that $t(H,G_N)^{e(T)}<t(T,G_N)^{e(H)}$ holds with probability $1-o(1)$. This completes the proof.
\end{proof}

Putting Lemmas~\ref{lem:LP} and~\ref{lem:DLP} together, we get that the linear program $LP(H,T)$ exactly captures the property $H\succcurlyeq T$ in the case that $H$ and $T$ are forests.

\begin{thm}[See~{\cite[Theorem~3.3]{KoppartyRossman11}}]
If $H$ and $T$ are forests, then $H\succcurlyeq T$ if and only if the value of $LP(H,T)$ is equal to $e(T)$.
\end{thm}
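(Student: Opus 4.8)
The plan is to deduce this theorem directly by combining the two lemmas that have already been stated, treating it purely as a bookkeeping exercise. By weak duality for linear programs, the value of $LP(H,T)$ always equals the value of $DLP(H,T)$ (in fact strong duality holds, since both programs are clearly feasible: the primal admits the all-zero weighting, and the dual admits $y(v)=0$, $y(e)=1$, as noted in the text); call this common value $\mathrm{opt}$. We have already observed that $\mathrm{opt}\leq e(T)$, by summing the edge constraints \eqref{eq:Econstraint}. So there are exactly two cases: either $\mathrm{opt}=e(T)$ or $\mathrm{opt}<e(T)$.

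First I would handle the forward implication's contrapositive. Suppose the value of $LP(H,T)$ is not equal to $e(T)$; then it is strictly less than $e(T)$, and hence by strong duality the value of $DLP(H,T)$ is also strictly less than $e(T)$. Since $H$ and $T$ are forests, they are in particular non-empty graphs (here I would note that the hypothesis implicitly requires $H$ and $T$ to be non-empty, matching the standing convention of the paper, so that $\succcurlyeq$ is defined), so Lemma~\ref{lem:DLP} applies and gives $H\not\succcurlyeq T$. This establishes that $H\succcurlyeq T$ implies that the value of $LP(H,T)$ equals $e(T)$.

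For the reverse implication, suppose the value of $LP(H,T)$ equals $e(T)$. Then Lemma~\ref{lem:LP} applies verbatim — its hypotheses are precisely that $H$ and $T$ are forests and that the value of $LP(H,T)$ is $e(T)$ — and yields $H\succcurlyeq T$. Combining the two implications completes the proof.

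Since all the real content is packaged inside Lemmas~\ref{lem:LP} and~\ref{lem:DLP}, there is no substantive obstacle here; the only thing to be careful about is the appeal to LP duality. The cleanest route is to invoke strong duality, but one can avoid even that subtlety: for the forward direction it suffices to use \emph{weak} duality in the form ``value of $DLP(H,T)\leq$ value of $LP(H,T)$,'' so that $LP$-value $<e(T)$ forces $DLP$-value $<e(T)$; and for the reverse direction Lemma~\ref{lem:LP} is applied directly with no duality at all. Thus the proof needs only weak duality plus the two lemmas, and the main (trivial) point is simply to check that the non-emptiness hypotheses needed to invoke $\succcurlyeq$ and Lemma~\ref{lem:DLP} are met, which holds because a forest that is being compared under $\succcurlyeq$ is non-empty by convention.
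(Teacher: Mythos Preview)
Your argument is essentially the same as the paper's: use Lemma~\ref{lem:LP} for one direction and combine LP duality with Lemma~\ref{lem:DLP} for the other. The core reasoning is correct.

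However, your remarks about duality contain an error. In the first sentence you write ``by weak duality \ldots\ the value of $LP(H,T)$ always equals the value of $DLP(H,T)$''; that equality is strong duality, not weak duality. More importantly, in your final paragraph you claim that weak duality alone suffices, in the form ``value of $DLP(H,T)\leq$ value of $LP(H,T)$''. This inequality is backwards: since $LP(H,T)$ is a maximization problem and $DLP(H,T)$ is its (minimization) dual, weak duality gives $\mathrm{val}(LP)\leq \mathrm{val}(DLP)$, not the reverse. Hence knowing only that the primal value is below $e(T)$ does \emph{not} force the dual value below $e(T)$ via weak duality; you genuinely need strong duality here, exactly as the paper invokes it. So your main proof stands, but the proposed ``simplification'' to weak duality should be deleted.
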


\begin{proof}
If $LP(H,T)$ has value $e(T)$, then, by Lemma~\ref{lem:LP}, we get $H\succcurlyeq T$. So, suppose that the value of $LP(H,T)$ is not equal to $e(T)$. By summing \eqref{eq:Econstraint} over all edges of $T$, we see that the value of $LP(H,T)$ is at most $e(T)$. So, if its value is not equal to $e(T)$, then it must be less than $e(T)$. By the Strong Duality Theorem and the fact that $DLP(H,T)$ is the dual of $LP(H,T)$, the value of $DLP(H,T)$ is less than $e(T)$. By Lemma~\ref{lem:DLP}, we get that $H\not\succcurlyeq T$. This completes the proof.
\end{proof}

\section{Homomorphism Density Inequalities Via Entropy}
\label{sec:entropy}

The goal of this section is to prove Lemma~\ref{lem:LP}. As was mentioned earlier, this lemma is implied by results of~\cite{KoppartyRossman11} and so this proof is included for mainly expository purposes. For the proof, we need some basic properties of the entropy of a discrete random variable. We note that all of the properties of entropy that we use here are completely standard and can be found in many standard references on information theory or probabilistic combinatorics; see, e.g.~\cite[Chapter~15]{AlonSpencer16}.

\begin{defn}
Given a discrete random variable $X$, the \emph{range} of $X$ is defined to be
\[\rng(X):=\{x: \mathbb{P}(X=x)>0\}.\]
\end{defn}

\begin{defn}
The \emph{entropy} of a discrete random variable $X$ is defined to be
\[\mathbb{H}(X):=\sum_{x\in \rng(X)}\mathbb{P}(X=x)\log_2\left(\frac{1}{\mathbb{P}(X=x)}\right).\]
\end{defn}

A key fact is that the entropy of a discrete random variable $X$ with a given finite range is maximized by taking $X$ to be a uniformly random element of $\rng(X)$; this follows from a simple application of Jensen's Inequality to the function $x\log_2(x)$.

\begin{lem}[Maximality of the Uniform Distribution]
\label{lem:unif}
For any discrete random variable $X$ with finite range,
\[\mathbb{H}(X) \leq \log_2(|\rng(X)|).\]
Moreover, equality holds if and only if $\mathbb{P}(X=x)=\frac{1}{|\rng(X)|}$ for all $x\in \rng(X)$. 
\end{lem}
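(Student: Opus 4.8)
The statement to prove is Lemma~\ref{lem:unif}, the maximality of the uniform distribution for entropy. This is a completely standard fact, and the plan is to deduce it from Jensen's Inequality applied to a suitable convex function.

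The plan is as follows. Write $n := |\rng(X)|$ and, for brevity, set $p_x := \mathbb{P}(X=x)$ for each $x\in\rng(X)$, so that $\sum_{x\in\rng(X)} p_x = 1$ and each $p_x > 0$. The entropy is $\mathbb{H}(X) = \sum_{x\in\rng(X)} p_x \log_2(1/p_x)$. First I would rewrite this as an expectation: letting $Y$ be the random variable $1/p_X$ (that is, $Y$ takes the value $1/p_x$ with probability $p_x$), we have $\mathbb{H}(X) = \mathbb{E}\!\left[\log_2 Y\right]$. Since $t\mapsto \log_2 t$ is concave on $(0,\infty)$, Jensen's Inequality gives $\mathbb{E}[\log_2 Y] \leq \log_2 \mathbb{E}[Y]$. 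Now $\mathbb{E}[Y] = \sum_{x\in\rng(X)} p_x \cdot (1/p_x) = \sum_{x\in\rng(X)} 1 = n$, so $\mathbb{H}(X) \leq \log_2 n = \log_2(|\rng(X)|)$, as required. (One could equivalently phrase this via convexity of $t\mapsto t\log_2 t$ and Jensen applied to the uniform average $\frac1n\sum_x p_x\log_2 p_x \geq \big(\frac1n\sum_x p_x\big)\log_2\big(\frac1n\sum_x p_x\big)$, but the concave-logarithm route is cleanest.)

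For the equality characterization, I would invoke the equality condition in Jensen's Inequality: since $\log_2$ is \emph{strictly} concave, $\mathbb{E}[\log_2 Y] = \log_2\mathbb{E}[Y]$ holds if and only if $Y$ is almost surely constant, i.e.\ $1/p_x$ takes the same value for every $x\in\rng(X)$. This says precisely that all the $p_x$ are equal, and since they sum to $1$ over the $n$ elements of $\rng(X)$, we get $p_x = 1/n = 1/|\rng(X)|$ for all $x\in\rng(X)$. Conversely, if $\mathbb{P}(X=x) = 1/|\rng(X)|$ for all $x\in\rng(X)$, then $\mathbb{H}(X) = \sum_{x} \frac1n \log_2 n = \log_2 n$, so equality holds. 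This completes the proof.

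There is no real obstacle here — the only point requiring a little care is making sure the random variable $Y = 1/p_X$ and the application of Jensen are set up correctly (in particular that $Y$ has finite range, which holds since $\rng(X)$ is finite), and citing the strict concavity of $\log_2$ so that the equality case is genuinely an iff rather than just one direction. Everything follows from the stated properties of entropy and a standard form of Jensen's Inequality, which we may freely quote from any of the references mentioned (e.g.\ \cite[Chapter~15]{AlonSpencer16}).
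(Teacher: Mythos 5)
Your proof is correct and takes essentially the same approach as the paper: the paper's only comment on this lemma is that it ``follows from a simple application of Jensen's Inequality to the function $x\log_2(x)$,'' and your argument is exactly such an application (you use concavity of $\log_2$ applied to $Y=1/p_X$, and you note the equivalent $t\log_2 t$ phrasing parenthetically). The equality case via strict concavity is handled correctly.
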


We also require the notion of conditional entropy.

\begin{defn}
Let $X$ and $Y$ be discrete random variables and let $y\in \rng(Y)$. Define 
\[\rng(X\mid Y=y) := \{x: \mathbb{P}(X=x\mid Y=y)>0\}.\]
\end{defn}

\begin{defn}
Given two discrete random variables $X$ and $Y$ and $y\in \rng(Y)$, the \emph{entropy of $X$ given $Y=y$} is defined to be
\[\mathbb{H}(X\mid Y=y):=\sum_{x\in \rng(X\mid Y=y)}\mathbb{P}(X=x\mid Y=y)\log_2\left(\frac{1}{\mathbb{P}(X=x\mid Y=y)}\right).\]
\end{defn}

\begin{defn}
Given discrete random variables $X$ and $Y$, the \emph{entropy of $X$ given $Y$} is
\[\mathbb{H}(X\mid Y):=\sum_{y\in \rng(Y)}\mathbb{P}(Y=y)\cdot\mathbb{H}(X\mid Y=y).\]
\end{defn}

An important property of entropy is the so called ``chain rule'' for representing the entropy of a joint random variable in terms of conditional entropy. 

\begin{lem}[Chain Rule of Entropy]
\label{lem:CR}
Let $X_1,\dots,X_m$ be discrete random variables. Then
\[\mathbb{H}(X_1,\dots,X_m) = \mathbb{H}(X_1) + \sum_{i=2}^m\mathbb{H}(X_i\mid X_1,\dots,X_{i-1}).\]
\end{lem}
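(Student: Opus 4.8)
The plan is to first establish the two-variable identity $\mathbb{H}(X,Y)=\mathbb{H}(X)+\mathbb{H}(Y\mid X)$ directly from the definitions, and then deduce the general statement by induction on $m$.

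For the two-variable case, I would expand $\mathbb{H}(X,Y)$ according to the definition. The sum ranges over pairs $(x,y)$ with $\mathbb{P}(X=x,Y=y)>0$, and such pairs are exactly those with $x\in\rng(X)$ and $y\in\rng(Y\mid X=x)$, so the joint sum rewrites as the iterated sum $\sum_{x\in\rng(X)}\sum_{y\in\rng(Y\mid X=x)}$. Substituting $\mathbb{P}(X=x,Y=y)=\mathbb{P}(X=x)\,\mathbb{P}(Y=y\mid X=x)$ and using $\log_2\frac{1}{ab}=\log_2\frac1a+\log_2\frac1b$ splits $\mathbb{H}(X,Y)$ into two pieces. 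In the first piece, the inner sum $\sum_{y\in\rng(Y\mid X=x)}\mathbb{P}(Y=y\mid X=x)$ equals $1$, so what remains is $\sum_{x\in\rng(X)}\mathbb{P}(X=x)\log_2\frac{1}{\mathbb{P}(X=x)}=\mathbb{H}(X)$. The second piece is $\sum_{x\in\rng(X)}\mathbb{P}(X=x)\sum_{y\in\rng(Y\mid X=x)}\mathbb{P}(Y=y\mid X=x)\log_2\frac{1}{\mathbb{P}(Y=y\mid X=x)}$, which is exactly $\sum_{x\in\rng(X)}\mathbb{P}(X=x)\,\mathbb{H}(Y\mid X=x)=\mathbb{H}(Y\mid X)$ by the definition of conditional entropy.

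For the general statement I would induct on $m$, the case $m=1$ being trivial. For $m\geq 2$, apply the two-variable identity with $X:=(X_1,\dots,X_{m-1})$ (viewed as a single discrete random variable) and $Y:=X_m$, which yields $\mathbb{H}(X_1,\dots,X_m)=\mathbb{H}(X_1,\dots,X_{m-1})+\mathbb{H}(X_m\mid X_1,\dots,X_{m-1})$; here one just notes that $\mathbb{H}\big(X_m\mid (X_1,\dots,X_{m-1})\big)$ unwinds to the quantity written $\mathbb{H}(X_m\mid X_1,\dots,X_{m-1})$ in the statement. Applying the inductive hypothesis to $\mathbb{H}(X_1,\dots,X_{m-1})=\mathbb{H}(X_1)+\sum_{i=2}^{m-1}\mathbb{H}(X_i\mid X_1,\dots,X_{i-1})$ and combining gives the claim.

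The only point requiring genuine care is the bookkeeping of ranges: one must check that the iterated sum over $x\in\rng(X)$ and $y\in\rng(Y\mid X=x)$ coincides term-by-term with the single sum over $\rng(X,Y)$, so that no ill-defined expression of the form $0\log_2\frac10$ is ever introduced and no nonzero term is dropped. This is where I expect whatever mild friction there is to lie, but it is entirely routine.
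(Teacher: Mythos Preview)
Your proposal is correct and is the standard proof of the chain rule. The paper does not actually prove this lemma: it is stated as one of several ``completely standard'' facts about entropy, with a pointer to \cite{AlonSpencer16}, so there is no in-paper argument to compare against.
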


We also need the notion of conditional independence of random variables. 

\begin{defn}
Given three discrete random variables $X,Y$ and $Z$, we say that $X$ and $Y$ are \emph{conditionally independent given $Z$} if 
\[\mathbb{P}(X=x\mid Z=z, Y=y) = \mathbb{P}(X=x\mid Z=z)\]
and
\[\mathbb{P}(Y=y\mid Z=z, X=x) = \mathbb{P}(Y=y\mid Z=z)\]
for all $z\in \rng(Z)$, $x\in \rng(X\mid Z=z)$ and $y\in \rng(Y\mid Z=z)$. 
\end{defn}

The following lemma describes the main way in which conditional independence will be applied in this paper.

\begin{lem}[Deconditioning]
\label{lem:decon}
For any discrete random variables $X,Y$ and $Z$,
\[\mathbb{H}(X\mid Y,Z) \leq \mathbb{H}(X\mid Z).\]
Moreover, if $X$ and $Y$ are conditionally independent given $Z$, then
\[\mathbb{H}(X\mid Y,Z) = \mathbb{H}(X\mid Z). \]
In particular, if $X$ and $Y$ are independent, then
\[\mathbb{H}(X\mid Y) = \mathbb{H}(X). \]
\end{lem}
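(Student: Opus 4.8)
The plan is to derive all three assertions from a single application of Jensen's Inequality to the concave function $\phi(t):=t\log_2(1/t)$ on $[0,1]$, with the convention $\phi(0)=0$ (this is, up to sign, the function already used to prove Lemma~\ref{lem:unif}). The one probabilistic fact I will use repeatedly is the law of total probability in conditional form: for any value $x$ and any $z\in\rng(Z)$,
\[\mathbb{P}(X=x\mid Z=z)=\sum_{y}\mathbb{P}(Y=y\mid Z=z)\cdot\mathbb{P}(X=x\mid Y=y,Z=z),\]
so that $\mathbb{P}(X=x\mid Z=z)$ is a convex combination, with weights $\mathbb{P}(Y=y\mid Z=z)$, of the quantities $\mathbb{P}(X=x\mid Y=y,Z=z)$.

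First I would prove the inequality $\mathbb{H}(X\mid Y,Z)\leq\mathbb{H}(X\mid Z)$. Unwinding the definitions of conditional entropy and using $\mathbb{P}(Y=y,Z=z)=\mathbb{P}(Z=z)\mathbb{P}(Y=y\mid Z=z)$, one writes
\[\mathbb{H}(X\mid Y,Z)=\sum_{z\in\rng(Z)}\mathbb{P}(Z=z)\sum_{x}\sum_{y}\mathbb{P}(Y=y\mid Z=z)\,\phi\left(\mathbb{P}(X=x\mid Y=y,Z=z)\right),\]
where the convention $\phi(0)=0$ lets us extend each inner sum to run over all of $\rng(X)$ and $\rng(Y)$ rather than the exact conditional ranges. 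For each fixed $z$ and $x$ the weights $\mathbb{P}(Y=y\mid Z=z)$ sum to $1$, so Jensen's Inequality bounds the sum over $y$ by $\phi\left(\sum_{y}\mathbb{P}(Y=y\mid Z=z)\,\mathbb{P}(X=x\mid Y=y,Z=z)\right)=\phi\left(\mathbb{P}(X=x\mid Z=z)\right)$. Summing this estimate over $x$ and then over $z$ (weighted by $\mathbb{P}(Z=z)$) gives precisely $\mathbb{H}(X\mid Z)$. The ``in particular'' statement is then the case in which $Z$ is a constant random variable; alternatively, independence gives $\mathbb{P}(X=x\mid Y=y)=\mathbb{P}(X=x)$ for every $y\in\rng(Y)$, so $\mathbb{H}(X\mid Y=y)=\mathbb{H}(X)$ for each such $y$ and the claim follows by averaging over $y$.

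For the equality statement, suppose $X$ and $Y$ are conditionally independent given $Z$. Then for every $z\in\rng(Z)$ and every $y\in\rng(Y\mid Z=z)$ the distributions $\mathbb{P}(X=\cdot\mid Y=y,Z=z)$ and $\mathbb{P}(X=\cdot\mid Z=z)$ coincide: their values on $\rng(X\mid Z=z)$ agree by hypothesis, and for other $x$ both are $0$ since $\mathbb{P}(X=x\mid Z=z)=0$ forces $\mathbb{P}(X=x,Y=y,Z=z)=0$. Hence $\mathbb{H}(X\mid Y=y,Z=z)=\mathbb{H}(X\mid Z=z)$; averaging first over $y$ with weights $\mathbb{P}(Y=y\mid Z=z)$ and then over $z$ with weights $\mathbb{P}(Z=z)$ yields $\mathbb{H}(X\mid Y,Z)=\mathbb{H}(X\mid Z)$. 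Handling it this way avoids having to analyse the equality case of Jensen's Inequality directly.

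I do not expect a serious obstacle: the only thing needing care is the bookkeeping around zero-probability events, i.e. keeping the ranges $\rng(X\mid Y=y,Z=z)$, $\rng(Y\mid Z=z)$ and $\rng(Z)$ straight and invoking $\phi(0)=0$ consistently so that every summation can be taken over a fixed index set. Once the conditional total-probability identity above is set up, everything else is a routine computation.
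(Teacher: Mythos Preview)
Your proof is correct and is the standard Jensen-based argument. The paper does not actually prove this lemma: it is stated as one of several ``completely standard'' properties of entropy and deferred to a reference (Alon--Spencer, Chapter~15), so there is no paper proof to compare against.
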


We are now able to derive a formula for the entropy of a uniformly random homomorphism from a forest to any non-empty graph. Given a graph $T$ and a vertex $v\in V(T)$, let $d_T(v)$ be the \emph{degree} of $v$ in $T$, i.e., the number of edges of $T$ that are incident to $v$. 

\begin{lem}
\label{lem:unifHom}
Let $T$ be a forest, $G$ be a non-empty graph and $\psi$ be a uniformly random homomorphism from $T$ to $G$. For each $v\in V(T)$ let $X_v=\psi(v)$ and for each $e=uv\in E(T)$ let $X_e=(X_u,X_v)$. Then
\[\mathbb{H}(\psi) = \sum_{e\in E(T)}\mathbb{H}(X_e) - \sum_{v\in V(T)}(d_T(v)-1)\cdot \mathbb{H}(X_v).\]
\end{lem}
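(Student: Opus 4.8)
The plan is to build up the random homomorphism $\psi$ edge by edge using a spanning forest structure, and to exploit the fact that in a forest there are no cycles, so each new vertex is attached to the part already constructed by exactly one edge. Concretely, I would first reduce to the case where $T$ is a tree: since $T$ is a forest, it is a disjoint union of trees $T_1,\dots,T_k$, a uniformly random homomorphism $\psi:T\to G$ restricts to independent uniformly random homomorphisms $\psi_i:T_i\to G$, and both $\mathbb{H}(\psi)=\sum_i\mathbb{H}(\psi_i)$ (by the Chain Rule, Lemma~\ref{lem:CR}, together with independence via Lemma~\ref{lem:decon}) and the right-hand side split as sums over the components. Note $\hom(T,G)=\prod_i\hom(T_i,G)$ and $v(G)^{v(T)}=\prod_i v(G)^{v(T_i)}$, so uniformity is genuinely inherited; also $d_T(v)=d_{T_i}(v)$ for $v\in V(T_i)$.

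For the tree case, fix a root $r$ and order the vertices $v_1=r, v_2,\dots, v_n$ so that each $v_j$ with $j\geq 2$ has its (unique) neighbour $p(v_j)$ toward the root appearing earlier; write $e_j=v_jp(v_j)$, so $E(T)=\{e_2,\dots,e_n\}$. Applying the Chain Rule to $\psi=(X_{v_1},\dots,X_{v_n})$ gives
\[\mathbb{H}(\psi)=\mathbb{H}(X_{v_1})+\sum_{j=2}^n \mathbb{H}(X_{v_j}\mid X_{v_1},\dots,X_{v_{j-1}}).\]
The key claim is that, conditioned on the value of $X_{p(v_j)}$, the variable $X_{v_j}$ is conditionally independent of all the other earlier variables $X_{v_1},\dots,X_{v_{j-1}}$. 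This is where the forest hypothesis is essential: because $\psi$ is a uniformly random homomorphism and $T$ is a tree, the law of $\psi$ factors as a Markov random field on the tree, so the coordinate $X_{v_j}$ depends on the earlier coordinates only through its unique earlier neighbour $p(v_j)$. I would prove this directly by a counting argument — for any partial assignment to $v_1,\dots,v_{j-1}$ that extends to a homomorphism, the number of homomorphisms of all of $T$ extending it, summed appropriately, shows $\mathbb{P}(X_{v_j}=x\mid X_{v_1},\dots,X_{v_{j-1}})$ depends only on $x$ and $X_{p(v_j)}$ (one removes the subtree hanging below $v_j$ and counts completions, which only sees the value at $p(v_j)$). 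Granting this, Lemma~\ref{lem:decon} yields $\mathbb{H}(X_{v_j}\mid X_{v_1},\dots,X_{v_{j-1}})=\mathbb{H}(X_{v_j}\mid X_{p(v_j)})$, and then the Chain Rule applied to the pair $(X_{p(v_j)},X_{v_j})$ — that is, to $X_{e_j}$ — gives $\mathbb{H}(X_{v_j}\mid X_{p(v_j)})=\mathbb{H}(X_{e_j})-\mathbb{H}(X_{p(v_j)})$. Therefore
\[\mathbb{H}(\psi)=\mathbb{H}(X_{v_1})+\sum_{j=2}^n\bigl(\mathbb{H}(X_{e_j})-\mathbb{H}(X_{p(v_j)})\bigr)=\sum_{e\in E(T)}\mathbb{H}(X_e)-\sum_{v\in V(T)}c(v)\,\mathbb{H}(X_v),\]
where $c(v)$ counts how many times $v$ appears as $p(v_j)$, i.e. the number of children of $v$, which is $d_T(v)-1$ for $v\neq r$ and $d_T(r)$ for the root; combining with the lone $+\mathbb{H}(X_{v_1})$ term makes the root coefficient $d_T(r)-1$ as well, giving exactly $\sum_v(d_T(v)-1)\mathbb{H}(X_v)$.

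The main obstacle is making the conditional-independence/Markov claim fully rigorous: one must verify that a uniformly random homomorphism into an arbitrary (not necessarily nice) graph $G$ really does satisfy the tree Markov property, and handle the bookkeeping of ranges and conditioning (all the relevant conditional probabilities are well-defined precisely because we only condition on events of positive probability, which hold along any extendable partial assignment). Once that claim is in hand, everything else is the Chain Rule plus Lemma~\ref{lem:decon} and careful accounting of degree coefficients; the root's special role is absorbed cleanly because the isolated $\mathbb{H}(X_{v_1})$ term shifts its coefficient by one. I would present the argument for trees and then remark that the forest case follows by the independence reduction described above.
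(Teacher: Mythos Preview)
Your proposal is correct and follows essentially the same approach as the paper: reduce to the tree case via independence of components, order the vertices so each has a unique earlier neighbour, apply the Chain Rule, invoke the tree Markov property of the uniform homomorphism to replace $\mathbb{H}(X_{v_j}\mid X_{v_1},\dots,X_{v_{j-1}})$ by $\mathbb{H}(X_{v_j}\mid X_{p(v_j)})$ via Lemma~\ref{lem:decon}, and rearrange. The paper likewise asserts the conditional independence without a detailed counting argument, so your level of justification matches (and arguably slightly exceeds) theirs.
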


\begin{proof}
If $T$ is disconnected, then write $T=T_1\sqcup T_2$ for two forests $T_1$ and $T_2$. Let $\psi_i$ be the restriction of $\psi$ to $T_i$ for $i\in\{1,2\}$. Then $\mathbb{H}(\psi)=\mathbb{H}(\psi_1,\psi_2) = \mathbb{H}(\psi_1)+\mathbb{H}(\psi_2\mid \psi_1)$ by Lemma~\ref{lem:CR}. Clearly, $\psi_1$ and $\psi_2$ are independent and so $\mathbb{H}(\psi)=\mathbb{H}(\psi_1)+\mathbb{H}(\psi_2)$ by Lemma~\ref{lem:decon}. Also, $\psi_1$ and $\psi_2$ are uniformly random homomorphisms from $T_1$ and $T_2$, respectively, to $G$. So, by induction, we may assume that $T$ is connected; i.e. $T$ is a tree.

Let $t=v(T)$ and order the vertices of $T$ by $v_1,\dots,v_t$ so that each vertex $v_i$ for $2\leq i\leq t$ has a unique neighbour in $\{v_1,\dots,v_{i-1}\}$; we denote this neighbour by $v_{p(i)}$. Let $\psi$ be a uniformly random homomorphism from $T$ to $G$. For $1\leq i\leq t$, let $X_i=\psi(v_i)$ and, if $i\geq2$, let $X_{e(i)}=(X_{p(i)},X_i)$. By the Chain Rule (Lemma~\ref{lem:CR}), 
\[\mathbb{H}(\psi)=\mathbb{H}(X_1,\dots,X_t) = \mathbb{H}(X_1) + \sum_{i=2}^t\mathbb{H}(X_i\mid X_1,\dots,X_{i-1}).\]
By Lemma~\ref{lem:decon}, this is at most
\[\mathbb{H}(X_1) + \sum_{i=2}^t \mathbb{H}(X_i\mid X_{p(i)}).\]
Applying the Chain Rule once again, we have $\mathbb{H}(X_i\mid X_{p(i)}) = \mathbb{H}(X_{e(i)}) - \mathbb{H}(X_{p(i)})$. Plugging this in to the above expression and rearranging yields
\[\mathbb{H}(\psi)\leq \sum_{e\in E(T)}\mathbb{H}(X_e) - \sum_{v\in V(T)}(d_T(v)-1)\cdot \mathbb{H}(X_v).\]
Since $T$ is a tree and $\psi$ is uniform, the variable $X_i$ is conditionally independent of $(X_j: j<i, j\neq p(e))$ given $X_{p(i)}$ for all $2\leq i\leq t$. Thus, $\mathbb{H}(X_i\mid X_1,\dots,X_{i-1})$ is equal to $\mathbb{H}(X_i\mid X_{p(i)})$ by Lemma~\ref{lem:decon}, which means that all of the inequalities above are actually equalities, and so the proof is complete. 
\end{proof}

Finally, we present the proof of Lemma~\ref{lem:LP}.

\begin{proof}[Proof of Lemma~\ref{lem:LP}]
Let $H$ and $T$ be non-empty forests and suppose that the value of $LP(H,T)$ is equal to $e(T)$. Let $w:\Hom(H,T)\to\mathbb{R}$ be a function which certifies this; as usual, we may assume that $w(\varphi)$ is rational for all $\varphi\in \Hom(H,T)$. Let $q$ be a positive integer multiple of $e(T)$ such that $w(\varphi)\cdot q$ is also an integer multiple of $e(T)$ for all $\varphi\in \Hom(H,T)$. Note that, since $w$ certifies that $LP(H,T)$ has value $e(T)$, we must have
\[e(T) = \sum_{e\in E(T)}\sum_{\varphi:H\to T}\mu_\varphi(e)\cdot w(\varphi) = \sum_{\varphi:H\to T}w(\varphi) \left(\sum_{e\in E(T)}\mu_\varphi(e)\right) = e(H)\sum_{\varphi:H\to T}w(\varphi)\]
and so
\begin{equation}\label{eq:e(T)/e(H)}\sum_{\varphi:H\to T}w(\varphi) = e(T)/e(H).\end{equation}
Also, summing \eqref{eq:Vconstraint} over all vertices of $T$ gives us
\begin{equation}\label{eq:positive}v(T)\geq \sum_{v\in V(T)}\sum_{\varphi:H\to T}\mu_\varphi(v)\cdot w(\varphi) = \sum_{\varphi:H\to T}w(\varphi)\left(\sum_{v\in V(T)}\mu_\varphi(v)\right) = \frac{v(H)\cdot e(T)}{e(H)}\end{equation}
where the last step applies \eqref{eq:e(T)/e(H)}. We define 
\begin{equation}\label{eq:mDef}m=q\cdot\left(\frac{v(T)\cdot e(H)}{e(T)} - v(H)\right).\end{equation}
Note that, since $q$ is a multiple of $e(T)$, we have that $m$ is an integer. Also, $m$ is non-negative by \eqref{eq:positive}. Define
\[H':= (q\cdot H)\sqcup (m\cdot K_1).\]
Our aim is to show that
\begin{equation}\label{eq:LPH'} \hom(H',G)^{e(T)} \geq \hom(T,G)^{q\cdot e(H)}\end{equation}
for every non-empty graph $G$. To see that \eqref{eq:LPH'} is sufficient for proving $H\succcurlyeq T$, observe that
\[t(H,G)^{e(T)}= \left(\frac{\hom(H,G)}{v(G)^{v(H)}}\right)^{e(T)} =\left(\frac{\hom(H,G)^q\cdot v(G)^{m}}{v(G)^{v(H)\cdot q}\cdot v(G)^{m}}\right)^{e(T)/q}  =\left(\frac{\hom(H',G)}{v(G)^{v(H)\cdot q + m}}\right)^{e(T)/q}.\]
If \eqref{eq:LPH'} holds, then this expression can be bounded from below by
\[\left(\frac{\hom(T,G)^{q\cdot e(H)/e(T)}}{v(G)^{v(H)\cdot q + m}}\right)^{e(T)/q} = \frac{\hom(T,G)^{e(H)}}{\left(v(G)^{v(H)\cdot q + m}\right)^{e(T)/q}} = t(T,G)^{e(H)}.\]
By \eqref{eq:mDef}, this is equal to $t(T,G)^{e(H)}$, as desired.

So, we focus on proving \eqref{eq:LPH'}. To prove it, we construct a distribution on $\Hom(H',G)$ such that, if $\psi'$ is chosen randomly according to this distribution, then 
\begin{equation}\label{eq:compareToUnif}\mathbb{H}(\psi')\geq \frac{q\cdot e(H)}{e(T)}\cdot \mathbb{H}(\psi)\end{equation}
where $\psi$ is a uniformly random element of $\Hom(T,G)$. If this holds, then, by two applications of Lemma~\ref{lem:unif}, it will follow that
\[\log_2(\hom(H',G)) \geq \mathbb{H}(\psi')\geq \frac{q\cdot e(H)}{e(T)}\cdot \mathbb{H}(\psi) = \frac{q\cdot e(H)}{e(T)}\cdot\log_2(\hom(T,G))\]
which immediately implies \eqref{eq:LPH'}. Thus, it suffices to construct a random homomorphism $\psi'$ in such a way that \eqref{eq:compareToUnif} holds. 

As in Lemma~\ref{lem:unifHom}, for each $v\in V(T)$, let $X_v$ be the random variable $X_v=\psi(v)$ for a uniformly random homomorphism $\psi$ from $T$ to $G$. Also, for $e=uv\in E(T)$, let $X_e=(X_u,X_v)$. On the way to constructing the distribution on $\psi'$, let us argue that, for any homomorphism $\varphi:H\to T$, there exists a distribution on homomorphisms from $H$ to $G$ such that, if $\psi_\varphi$ is chosen according to this distribution, then
\[\mathbb{H}(\psi_\varphi) = \sum_{e\in E(H)}\mathbb{H}(X_{\varphi(e)} )- \sum_{v\in V(H)}(d_H(v)-1)\mathbb{H}(X_{\varphi(v)}).\]
This distribution is constructed as follows. In this description, we assume that $H$ is connected; if it is not connected, then one can simply repeat this procedure for every component of $H$, in turn. Let $v_1$ be an arbitrary vertex of $H$ and, for $2\leq i\leq v(H)$, let $v_i$ be a vertex of $H$ with a unique neighbour in $\{v_1,\dots,v_{i-1}\}$; this exists because $H$ is a tree. We denote this unique neighbour by $v_{p(i)}$. We start by choosing $Y_1$ according to the distribution of $X_{\varphi(v_1)}$. Now, for $2\leq i\leq v(H)$, we choose $Y_i$  in such a way that $(Y_{p(i)},Y_i)$ has the same distribution as $(X_{\varphi(v_{p(i)})},X_{\varphi(v_i)})$ and $Y_i$ is conditionally independent of $(Y_j: j<i, j\neq p(i))$ given $Y_{p(i)}$.  In practice, this can be achieved as follows. Sample a pair $(Z_{p(i)},Z_i)$ according to the distribution of $(X_{\varphi(v_{p(i)})}, X_{\varphi(v_i)})$, independently of $Y_1,\dots, Y_{i-1}$. As long as $Z_{p(i)} \neq Y_{p(i)}$, resample $(Z_{p(i)},Z_i)$ in the same way. At the first time that $Z_{p(i)} =Y_{p(i)}$, we stop resampling and set $Y_i:=Z_i$. Since $Y_{p(i)}$ is distributed in the same way as $X_{\varphi(v_{p(i)})}$ by induction, and so is $Z_{p(i)}$ (by construction), the probability that $Z_{p(i)} =Y_{p(i)}$ is non-zero, and so this process will terminate after a finite number of steps with probability one. It is clear that the variable $Y_i$ obtained from this procedure will satisfy the desired properties. Now, if we let $\psi_\varphi:V(H)\to V(G)$ be defined by $\psi_\varphi(v_i) = Y_i$ for all $1\leq i\leq v(H)$, then, by Lemmas~\ref{lem:CR} and~\ref{lem:decon},
\[\mathbb{H}(\psi_\varphi) = \mathbb{H}(Y_1) + \sum_{i=2}^{v(H)}\mathbb{H}(Y_i\mid Y_1,\dots,Y_{i-1})=\mathbb{H}(Y_1) + \sum_{i=2}^{v(H)}\mathbb{H}(Y_i\mid Y_{p(i)})\]
\[=\mathbb{H}(Y_1) + \sum_{i=2}^{v(H)}\left(\mathbb{H}(Y_i,Y_{p(i)}) - \mathbb{H}(Y_{p(i)})\right) = \sum_{e\in E(H)}\mathbb{H}(X_{\varphi(e)}) - \sum_{v\in V(H)}(d_H(v)-1)\mathbb{H}(X_{\varphi(v)})\]
as desired. 

Now, we construct a random homomorphism $\psi'$ from $H'$ to $G$ as follows. For each $\varphi\in \Hom(H,T)$, take $q\cdot w(\varphi)\cdot \frac{e(H)}{e(T)}$ (which is an integer by definition of $q$) of the copies of $H$ from the construction of $H'$ and map them into $G$ according to $q\cdot w(\varphi)\cdot \frac{e(H)}{e(T)}$ independent copies of the random homomorphism $\psi_\varphi$ constructed in the previous paragraph. Note that, by \eqref{eq:e(T)/e(H)}, this determines the mapping all of the $q$ copies of $H$ added during the construction of $H'$. Next, for each $v\in V(T)$, we map exactly
\[m_v:=q\cdot \frac{e(H)}{e(T)}\left(1- \sum_{\varphi:H\to T}\mu_{\varphi}(v)\cdot w(\varphi)\right)\]
of the independent vertices added in the construction of $H'$ into $V(G)$ according to the distribution on $X_v$, independently of everything else that has been done so far. Note that $m_v$ is an integer by definition of $q$ and that $m_v\geq0$ by constraint \eqref{eq:Vconstraint} of the linear program. Also, $\sum_{v\in V(T)}m_v=m$ because of \eqref{eq:e(T)/e(H)}. This concludes the definition of $\psi'$. 

To complete the proof, we need to establish \eqref{eq:compareToUnif}. Since the mappings of different copies of $H$ or independent vertices in $H'$ are chosen independently of one another, we have
\[\mathbb{H}(\psi') = \sum_{\varphi:H\to T}q\cdot w(\varphi)\cdot \frac{e(H)}{e(T)}\cdot \mathbb{H}(\psi_\varphi) + \sum_{v\in V(T)}q\cdot \frac{e(H)}{e(T)}\left(1- \sum_{\varphi:H\to T}\mu_{\varphi}(v)\cdot w(\varphi)\right)\mathbb{H}(X_v)\]
\[=q\cdot \frac{e(H)}{e(T)} \left(\sum_{\varphi:H\to T} w(\varphi)\cdot \mathbb{H}(\psi_\varphi) + \sum_{v\in V(T)}\left(1- \sum_{\varphi:H\to T}\mu_{\varphi}(v)\cdot w(\varphi)\right)\mathbb{H}(X_v)\right).\]
So, it suffices to show that the expression within the parentheses is precisely equal to $\mathbb{H}(\psi)$. For future reference, we label the two summations within the parentheses as follows:
\begin{equation}
\label{eq:firstSum}
\sum_{\varphi:H\to T} w(\varphi)\cdot \mathbb{H}(\psi_\varphi),
\end{equation}
and 
\begin{equation}
\label{eq:secondSum}
\sum_{v\in V(T)}\left(1- \sum_{\varphi:H\to T}\mu_{\varphi}(v)\cdot w(\varphi)\right)\mathbb{H}(X_v).
\end{equation}
Using the expression for $\mathbb{H}(\psi_\varphi)$ that we proved earlier, we can expand the expression in \eqref{eq:firstSum} as follows: 
\[\sum_{\varphi:H\to T} w(\varphi)\cdot \mathbb{H}(\psi_\varphi) = \sum_{\varphi:H\to T} w(\varphi)\cdot \left(\sum_{e\in E(H)}\mathbb{H}(X_{\varphi(e)}) - \sum_{v\in V(H)}(d_H(v)-1)\cdot\mathbb{H}(X_{\varphi(v)})\right)\]
\[=\sum_{e\in E(T)}\left(\sum_{\varphi:H\to T}w(\varphi)\cdot \mu_\varphi(e)\right)\cdot \mathbb{H}(X_e)-\sum_{v\in V(H)}\sum_{\varphi:H\to T}w(\varphi)\cdot (d_H(v)-1)\cdot\mathbb{H}(X_{\varphi(v)}).\]
Since the value of $LP(H,T)$ is $e(T)$, we must have that $\sum_{\varphi:H\to T}w(\varphi)\cdot \mu_\varphi(e)=1$ for all $e\in E(T)$. So, the previous expression reduces to
\[\sum_{e\in E(T)} \mathbb{H}(X_e)-\sum_{v\in V(H)}\sum_{\varphi:H\to T}w(\varphi)\cdot (d_H(v)-1)\cdot\mathbb{H}(X_{\varphi(v)}).\]
The second summation in the above expression can be rearranged as follows:
\[\sum_{v\in V(H)}\sum_{\varphi:H\to T}w(\varphi)\cdot (d_H(v)-1)\cdot\mathbb{H}(X_{\varphi(v)})\]
\[= \sum_{v\in V(H)}\sum_{\varphi:H\to T}w(\varphi)\cdot d_H(v)\cdot\mathbb{H}(X_{\varphi(v)}) - \sum_{v\in V(H)}\sum_{\varphi:H\to T}w(\varphi)\cdot \mathbb{H}(X_{\varphi(v)})\]
\[= \sum_{v\in V(T)}\sum_{\substack{e\in E(T)\\ v\in e}}\sum_{\varphi:H\to T}w(\varphi)\cdot \mu_\varphi(e)\cdot \mathbb{H}(X_v) -\sum_{v\in V(T)}\sum_{\varphi:H\to T}w(\varphi)\cdot \mu_\varphi(v)\cdot \mathbb{H}(X_v).\]
Using the fact that the value of $LP(H,T)$ is $e(T)$ once again, we can simplify the first summation in the above expression as follows:
\[\sum_{v\in V(T)}\sum_{\substack{e\in E(T)\\ v\in e}}\sum_{\varphi:H\to T}w(\varphi)\cdot \mu_\varphi(e)\cdot \mathbb{H}(X_v) =  \sum_{v\in V(T)}\sum_{\substack{e\in E(T)\\ v\in e}} \mathbb{H}(X_v)=\sum_{v\in V(T)}d_T(v)\cdot \mathbb{H}(X_v).\]
Therefore, \eqref{eq:firstSum} can be rewritten as 
\[\sum_{e\in E(T)}\mathbb{H}(X_e) -\sum_{v\in V(T)}d_T(v)\cdot \mathbb{H}(X_v)+ \sum_{v\in V(T)}\sum_{\varphi:H\to T}w(\varphi)\cdot \mu_\varphi(v)\cdot \mathbb{H}(X_v).\]
Adding this to \eqref{eq:secondSum} and doing a bit of cancellation precisely yields the expression for $\mathbb{H}(\psi)$ derived in Lemma~\ref{lem:unifHom}. This completes the proof. 
\end{proof}

\section{Stars}
\label{sec:star}

Our goal in this section is to prove a characterization, for each $k\geq3$, of trees $H$ such that $H\succcurlyeq S_k$. We will derive this from a few general necessary and sufficient conditions for $H\succcurlyeq T$ to hold for a general tree $T$. The key necessary condition roughly says that, if $H$ and $T$ are bipartite graphs such that $H\succcurlyeq T$, then $H$ must be at least as ``unbalanced'' as $T$ is. This extends a result of Sidorenko~\cite[Theorem~3.3]{Sidorenko94} which only applied to pairs of trees on the same number of vertices. 

\begin{lem}
\label{lem:sigma}
Let $H$ and $T$ be bipartite graphs. If $H\succcurlyeq T$, then $e(H)/\sigma(H)\geq e(T)/\sigma(T)$.
\end{lem}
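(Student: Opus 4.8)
The plan is to derive this from the dual linear program $DLP(H,T)$ together with Lemma~\ref{lem:DLP}. Suppose, toward a contradiction, that $H\succcurlyeq T$ but $e(H)/\sigma(H)<e(T)/\sigma(T)$. Since $H$ and $T$ are non-empty, both $\sigma(H)$ and $\sigma(T)$ are at least $1$, so these ratios are well defined. I would then exhibit a feasible point of $DLP(H,T)$ whose objective value is strictly less than $e(T)$; by Lemma~\ref{lem:DLP} this forces $H\not\succcurlyeq T$, a contradiction.

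The feasible point is the natural ``put all the weight on the small side of $T$'' solution. Fix a bipartition $(A,B)$ of $T$ with $|A|=\sigma(T)$ and define $y\colon V(T)\cup E(T)\to\mathbb{R}$ by $y(v)=e(H)/\sigma(H)$ for $v\in A$, $y(v)=0$ for $v\in B$, and $y(e)=0$ for every $e\in E(T)$. Nonnegativity, i.e. \eqref{eq:Dv+} and \eqref{eq:De+}, is immediate. For the homomorphism constraints \eqref{eq:DhomConstraint}, the key observation is that for any $\varphi\in\Hom(H,T)$ the pair $(\varphi^{-1}(A),\varphi^{-1}(B))$ is a bipartition of $H$: it partitions $V(H)$, and for every edge $uv\in E(H)$ the image $\varphi(u)\varphi(v)$ is an edge of $T$, hence has exactly one endpoint in $A$, so exactly one of $u,v$ lies in $\varphi^{-1}(A)$. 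Therefore $|\varphi^{-1}(A)|\geq\sigma(H)$, and since the edge terms vanish, $\sum_{v\in V(T)}\mu_\varphi(v)y(v)=\frac{e(H)}{\sigma(H)}\,|\varphi^{-1}(A)|\geq\frac{e(H)}{\sigma(H)}\cdot\sigma(H)=e(H)$, as required. Finally the objective value of $y$ is $\sum_{v\in V(T)}y(v)+\sum_{e\in E(T)}y(e)=\frac{e(H)}{\sigma(H)}\,|A|=\frac{e(H)}{\sigma(H)}\,\sigma(T)<e(T)$, using our assumption $e(H)/\sigma(H)<e(T)/\sigma(T)$.

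The argument is essentially this short, and there is no real obstacle once one has Lemma~\ref{lem:DLP} in hand — the bipartiteness hypothesis is exactly what makes this one-parameter ``weight the small side'' dual solution feasible. The only points that need a word of care are: (i) verifying that $(\varphi^{-1}(A),\varphi^{-1}(B))$ is a genuine bipartition of $H$, where one should note that isolated vertices of $H$ may be assigned to either side with no harm; and (ii) the trivial remark that $\sigma(H)\geq 1$ because $H$ is non-empty, so $y$ is defined. I would also mention that one could instead prove the lemma directly by feeding an unbalanced complete bipartite test graph (parts of sizes roughly $\sigma(T)\cdot N$ and $N^{e(T)/\sigma(T)}$) into the definition of $\succcurlyeq$, but routing through $DLP(H,T)$ reuses machinery already established and keeps the proof clean.
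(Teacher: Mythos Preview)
Your proof is correct and is essentially identical to the paper's: both use the same dual certificate $y$ that places weight $e(H)/\sigma(H)$ on the small side $A$ of a bipartition of $T$ and zero elsewhere, verify \eqref{eq:DhomConstraint} via the observation that $|\varphi^{-1}(A)|\geq\sigma(H)$, and then invoke Lemma~\ref{lem:DLP}. The only cosmetic difference is that you phrase it as a contradiction while the paper argues directly.
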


\begin{proof}
Let $(A,B)$ be a bipartition of $T$ such that $|A|=\sigma(T)$, define $y(v)=e(H)/\sigma(H)$ for all $v\in A$, $y(v)=0$ for all $v\in B$ and $y(e)=0$ for all $e\in E(T)$. Any homomorphism $\varphi$ from $H$ to $T$ must map at least $\sigma(H)$ vertices to $A$. Therefore, for any $\varphi\in \Hom(H,T)$, we have
\[\sum_{v\in V(T)}\mu_\varphi(v)\cdot y(v) + \sum_{e\in E(H)}\mu_\varphi(e)\cdot y(e)\geq  \sigma(H)(e(H)/\sigma(H))=e(H).\]
Therefore, \eqref{eq:DhomConstraint} is satisfied. So, by Lemma~\ref{lem:DLP}, in order for $H\succcurlyeq T$ to hold, we must have
\[\sum_{v\in V(T)}y(v) + \sum_{e\in E(T)}y(e)\geq e(T)\]
which is equivalent to
\[\sigma(T)\cdot (e(H)/\sigma(H)) \geq e(T).\]
This completes the proof. 
\end{proof}

Next, we provide a sufficient condition for $H\succcurlyeq T$ to hold for two trees $H$ and $T$. The idea is to restrict our attention to primal-feasible weight functions $w:\Hom(H,T)\to \mathbb{R}$ supported only on homomorphisms that map to single edges of $T$. A \emph{fractional orientation} of a graph $T$ is a function $f:V(T)\times V(T)\to [0,\infty)$ such that $f(u,v)+f(v,u)=1$ for any edge $uv\in E(T)$ and $f(u,v)=0$ if $uv\notin E(T)$. The \emph{out-degree} and \emph{in-degree} of a vertex $v\in V(T)$ in a fractional orientation $f$ of $T$ are $d_{f}^+(v):=\sum_{u\in V(T)}f(v,u)$ and $d_{f}^-(v):=\sum_{u\in V(T)}f(u,v)$, respectively. 

\begin{lem}
\label{lem:orient}
If $H$ and $T$ are trees and $f$ is a fractional orientation of $T$ such that, for all $v\in V(T)$,
\begin{equation}\label{eq:orient}\frac{d_{f}^-(v)\cdot (v(H)-\sigma(H)) + d_{f}^+(v)\cdot\sigma(H) }{e(H)} \leq 1,\end{equation}
then $H\succcurlyeq T$. 
\end{lem}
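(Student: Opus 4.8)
The plan is to deduce this from Lemma~\ref{lem:LP}: since summing \eqref{eq:Econstraint} over all edges of $T$ shows that the value of $LP(H,T)$ never exceeds $e(T)$, it is enough to produce a primal-feasible weighting $w\colon\Hom(H,T)\to\mathbb{R}$ under which every constraint \eqref{eq:Econstraint} holds with equality, so that the objective \eqref{eq:OBJ} equals $e(T)$; then $H\succcurlyeq T$ follows at once from Lemma~\ref{lem:LP}.

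First I would fix notation. Since $H$ is a connected tree, it has a unique bipartition $(A,B)$ into independent sets; label it so that $|A|=\sigma(H)$, and hence $|B|=v(H)-\sigma(H)$. For each ordered pair $(u,v)$ with $uv\in E(T)$, let $\varphi_{u,v}\colon H\to T$ be the map sending every vertex of $A$ to $u$ and every vertex of $B$ to $v$. This is a homomorphism, since every edge of $H$ runs between $A$ and $B$ and is therefore sent to the edge $uv$; moreover, the image of $\varphi_{u,v}$ is $\{u,v\}$ and $\varphi_{u,v}^{-1}(u)=A$, so distinct ordered pairs give distinct homomorphisms. I would then set $w(\varphi_{u,v}):=f(u,v)/e(H)$ for every ordered pair $(u,v)$ with $uv\in E(T)$, and $w(\varphi):=0$ for all other $\varphi\in\Hom(H,T)$; constraint \eqref{eq:+constraint} holds because $f\ge 0$.

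The two remaining feasibility checks are short computations. For an edge $e=uv\in E(T)$, the only homomorphisms in the support of $w$ with $\mu_\varphi(e)\neq 0$ are $\varphi_{u,v}$ and $\varphi_{v,u}$, each with $\mu_\varphi(e)=e(H)$, so the left-hand side of \eqref{eq:Econstraint} equals $e(H)\bigl(w(\varphi_{u,v})+w(\varphi_{v,u})\bigr)=f(u,v)+f(v,u)=1$; thus every edge constraint is tight and the objective value is exactly $e(T)$. For a vertex $v\in V(T)$, the homomorphisms in the support that use $v$ are $\varphi_{v,u}$ (with $\mu_{\varphi_{v,u}}(v)=|A|=\sigma(H)$) and $\varphi_{u,v}$ (with $\mu_{\varphi_{u,v}}(v)=|B|=v(H)-\sigma(H)$), where $u$ ranges over neighbours of $v$ in $T$; summing and using $\sum_u f(v,u)=d^+_f(v)$ and $\sum_u f(u,v)=d^-_f(v)$ yields
\[\sum_{\varphi\colon H\to T}\mu_\varphi(v)\,w(\varphi)=\frac{\sigma(H)\cdot d^+_f(v)+(v(H)-\sigma(H))\cdot d^-_f(v)}{e(H)},\]
which is precisely the left-hand side of \eqref{eq:orient} and hence at most $1$ by hypothesis. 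So $w$ is feasible for $LP(H,T)$ with objective value $e(T)$, and Lemma~\ref{lem:LP} gives $H\succcurlyeq T$.

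I do not expect a genuine obstacle here. The one point requiring care is to orient the weights so that the vertex-constraint computation produces exactly the expression in \eqref{eq:orient} rather than the one obtained by interchanging $\sigma(H)$ and $v(H)-\sigma(H)$; this is why it matters that $w(\varphi_{u,v})$ is proportional to $f(u,v)$ and not to $f(v,u)$. It is also worth recording explicitly that the uniqueness of the bipartition of the connected tree $H$ is what makes the collapsing homomorphisms $\varphi_{u,v}$ well-defined.
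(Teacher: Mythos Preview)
Your proposal is correct and follows essentially the same approach as the paper: define the collapsing homomorphisms $\varphi_{u,v}$ sending $A$ to $u$ and $B$ to $v$, weight them by $f(u,v)/e(H)$, and verify that the edge constraints hold with equality while the vertex constraints reduce exactly to \eqref{eq:orient}. Your write-up is slightly more careful than the paper's (e.g., noting that distinct ordered pairs give distinct homomorphisms), but the argument is the same.
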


\begin{proof}
Let $H$ be a tree with bipartition $(A,B)$ where $|A|=\sigma(H)$. For every such ordered pair, we let $\varphi_{u,v}$ be the unique homomorphism from $H$ to $T$ such that $\varphi_{u,v}(A)=\{u\}$ and $\varphi_{u,v}(B)=\{v\}$. For each ordered pair $(u,v)$ with $uv\in E(T)$, we set
\[w(\varphi_{u,v}) = \frac{f(u,v)}{e(H)}.\]
Also, set $w(\varphi)=0$ for all other homomorphisms $\varphi\in \Hom(H,T)$. Then \eqref{eq:Econstraint} holds with equality for every $e\in E(T)$ by the definition of fractional orientation. Regarding \eqref{eq:Vconstraint}, we have, for each $v\in V(T)$,
\[\sum_{\varphi:H\to T}\mu_\varphi(v)\cdot w(\varphi) = \sum_{u\in V(T)}\frac{f(u,v)\cdot (v(H)-\sigma(H))}{e(H)} +\sum_{u\in V(T)}\frac{f(v,u)\cdot \sigma(H)}{e(H)}\]
\[= \frac{d_{f}^-(v)\cdot (v(H)-\sigma(H)) + d_{f}^+(v)\cdot\sigma(H) }{e(H)}\]
which is at most 1 by \eqref{eq:orient}. Thus, the value of $LP(H,T)$ is $e(T)$ and so $H\succcurlyeq T$ by Lemma~\ref{lem:LP}. 
\end{proof}

Using the previous two lemmas, we characterize trees $H$ satisfying $H\succcurlyeq S_k$ for $k\geq3$. 

\begin{proof}[Proof of Theorem~\ref{th:star}]
If $H\succcurlyeq S_k$, then, by Lemma~\ref{lem:sigma}, we have $\frac{e(H)}{\sigma(H)}\geq \frac{e(S_k)}{\sigma(S_k)} =k-1$. This proves the ``only if'' direction.

For the other direction, suppose that $e(H)\geq (k-1)\sigma(H)$. Let $x$ be the unique vertex of degree $k-1$ in $S_k$ and let $f$ be the fractional orientation of $S_k$ such that $f(x,v)=1$ for all $v\in V(S_k)\setminus\{x\}$. We verify that \eqref{eq:orient} holds for all vertices of $S_k$. First, for $x$,
\[\frac{d_f^-(x)\cdot (v(H)-\sigma(H)) + d_f^+(x)\cdot \sigma(H)}{e(H)} = \frac{(k-1)\cdot \sigma(H)}{e(H)}\leq \frac{e(H)}{e(H)}=1\]
by the assumption that $e(H)\geq (k-1)\sigma(H)$. Now, if $v$ is a vertex of degree one, then
\[\frac{d_f^-(v)\cdot (v(H)-\sigma(H)) + d_f^+(v)\cdot \sigma(H)}{e(H)} = \frac{v(H)-\sigma(H)}{e(H)}\leq \frac{v(H)-1}{e(H)}= 1\]
since $H$ is a non-empty tree. Thus, $H\succcurlyeq S_k$ by Lemma~\ref{lem:orient} and we are done. 
\end{proof}

\section{The Four-Vertex Path}
\label{sec:P4}

The goal of this section is to prove Theorem~\ref{th:P4}. For the ``only if'' part of the theorem, we use the following general result which recently appeared as~\cite[Proposition~2.6(b)]{Stoner22+}; we give an alternative proof using Lemma~\ref{lem:DLP}.

\begin{lem}[Stoner~{\cite[Proposition~2.6(b)]{Stoner22+}}]
\label{lem:e/v}
If $H$ and $T$ are non-empty graphs without isolated vertices such that $H\succcurlyeq T$, then $e(H)/v(H)\geq e(T)/v(T)$.
\end{lem}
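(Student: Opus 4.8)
The plan is to construct an explicit dual-feasible point for $DLP(H,T)$ whose objective value is strictly less than $e(T)$ whenever $e(H)/v(H) < e(T)/v(T)$, and then invoke Lemma~\ref{lem:DLP}. The natural guess is to make the dual weights uniform: set $y(v) = \alpha$ for every $v \in V(T)$ and $y(e) = 0$ for every $e \in E(T)$, for a suitable constant $\alpha \geq 0$. With this choice, the homomorphism constraint \eqref{eq:DhomConstraint} for $\varphi \in \Hom(H,T)$ reads $\alpha \sum_{v \in V(T)} \mu_\varphi(v) \geq e(H)$, and since $\sum_{v \in V(T)} \mu_\varphi(v) = v(H)$ for every $\varphi$, this is exactly $\alpha \cdot v(H) \geq e(H)$. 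So the smallest feasible choice is $\alpha = e(H)/v(H)$, giving objective value $\sum_{v \in V(T)} y(v) = v(T) \cdot e(H)/v(H)$.

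Then the conclusion is immediate: if $H \succcurlyeq T$, Lemma~\ref{lem:DLP} forces the value of $DLP(H,T)$ to be at least $e(T)$ (equivalently, the contrapositive: value $< e(T)$ implies $H \not\succcurlyeq T$), so in particular the objective value of our feasible point satisfies $v(T) \cdot e(H)/v(H) \geq e(T)$, which rearranges to $e(H)/v(H) \geq e(T)/v(T)$. One subtlety: Lemma~\ref{lem:DLP} as stated requires the value of $DLP(H,T)$ to be \emph{strictly} less than $e(T)$ to conclude $H \not\succcurlyeq T$, so strictly speaking one should argue: if $e(H)/v(H) < e(T)/v(T)$ then our point has objective $v(T)e(H)/v(H) < e(T)$, hence the LP value is $< e(T)$, hence $H \not\succcurlyeq T$; this is the contrapositive of the desired implication. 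The hypothesis that $H$ and $T$ have no isolated vertices is what guarantees $v(H), v(T) > 0$ and that the ratios are well-defined and the relevant constraint-counting makes sense; it also ensures $\Hom(H,T)$ behaves well, though the argument really only uses $\sum_v \mu_\varphi(v) = v(H)$, which holds regardless.

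I do not anticipate a genuine obstacle here — the argument is a one-line dual certificate — but the point requiring the most care is simply checking that $\alpha = e(H)/v(H)$ makes \emph{every} constraint \eqref{eq:DhomConstraint} tight-or-slack in the right direction, which follows because $\sum_{v \in V(T)} \mu_\varphi(v)$ is the constant $v(H)$ independent of $\varphi$, so there is no worst-case homomorphism to worry about. I would write the proof as: assume for contradiction (or by contrapositive) that $e(H)/v(H) < e(T)/v(T)$; exhibit the point $y(v) = e(H)/v(H)$, $y(e) = 0$; verify \eqref{eq:Dv+}, \eqref{eq:De+}, and \eqref{eq:DhomConstraint}; compute the objective value $v(T) e(H)/v(H) < e(T)$; apply Lemma~\ref{lem:DLP} to conclude $H \not\succcurlyeq T$.
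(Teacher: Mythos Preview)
Your proposal is correct and is essentially identical to the paper's proof: both set $y(v)=e(H)/v(H)$ for all $v\in V(T)$ and $y(e)=0$ for all $e\in E(T)$, observe that \eqref{eq:DhomConstraint} holds because $\sum_{v}\mu_\varphi(v)=v(H)$, and then use Lemma~\ref{lem:DLP} to force $v(T)\cdot e(H)/v(H)\geq e(T)$ whenever $H\succcurlyeq T$.
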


\begin{proof}
Let $y:V(T)\cup E(T)\to \mathbb{R}$ be the function defined by $y(v)=\frac{e(H)}{v(H)}$ for all $v\in V(T)$ and $y(e)=0$ for all $e\in E(T)$. Then the constraints of $DLP(H,T)$ hold trivially. If $H\succcurlyeq T$, then, by Lemma~\ref{lem:DLP}, the value of $DLP(H,T)$ is at least $e(T)$. Therefore, we must have
\[\sum_{v\in V(T)}y(v) + \sum_{e\in E(T)}y(e) = v(T)(e(H)/v(H))\geq e(T).\]
The result follows by rearranging the above inequality.
\end{proof}

For trees, Lemma~\ref{lem:e/v} simply translates to the following. 

\begin{cor}
\label{cor:biggerTreeIsBigger}
If $H$ and $T$ are trees such that $H\succcurlyeq T$, then $v(H)\geq v(T)$. 
\end{cor}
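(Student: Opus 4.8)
The plan is to derive Corollary~\ref{cor:biggerTreeIsBigger} directly from Lemma~\ref{lem:e/v}. The only content here is the translation of the inequality $e(H)/v(H) \geq e(T)/v(T)$ into a statement about vertex counts, using the fact that an $n$-vertex tree has exactly $n-1$ edges. First I would observe that both $H$ and $T$ are trees, hence connected and in particular have no isolated vertices (as soon as they have at least two vertices; a one-vertex tree is empty and thus excluded since $\succcurlyeq$ is only defined for non-empty graphs, so $v(H), v(T) \geq 2$). Therefore Lemma~\ref{lem:e/v} applies and gives $e(H)/v(H) \geq e(T)/v(T)$.

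Next I would substitute $e(H) = v(H) - 1$ and $e(T) = v(T) - 1$ to rewrite this as
\[\frac{v(H)-1}{v(H)} \geq \frac{v(T)-1}{v(T)},\]
i.e. $1 - \frac{1}{v(H)} \geq 1 - \frac{1}{v(T)}$, which simplifies to $\frac{1}{v(T)} \geq \frac{1}{v(H)}$. Since $v(H)$ and $v(T)$ are positive integers, this is equivalent to $v(H) \geq v(T)$, as claimed.

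There is no real obstacle here: the statement is an immediate corollary, and the only thing to be slightly careful about is confirming that the hypothesis "without isolated vertices" of Lemma~\ref{lem:e/v} is met — which it is, because a tree on at least two vertices is connected with minimum degree at least one, and the degenerate one-vertex case is excluded by the standing convention that all graphs under consideration are non-empty. I would state the proof in two or three sentences along exactly these lines.
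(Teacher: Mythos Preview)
Your proposal is correct and matches the paper's approach exactly: the paper simply remarks that for trees Lemma~\ref{lem:e/v} ``translates to'' the corollary, and your substitution $e(H)=v(H)-1$, $e(T)=v(T)-1$ is precisely that translation. Your extra care in verifying the no-isolated-vertices hypothesis is appropriate and does not deviate from the intended argument.
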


The following lemma comes from applying Lemma~\ref{lem:orient} to the graph $T=P_k$ for $k\geq2$. 

\begin{lem}
\label{lem:pathBalance}
If $k\geq2$ and $H$ is a tree such that $\sigma(H)\leq \frac{v(H)-k+2}{2}$, then $H\succcurlyeq P_k$. 
\end{lem}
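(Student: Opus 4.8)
The plan is to invoke Lemma~\ref{lem:orient} with $T = P_k$, which reduces the whole statement to producing one fractional orientation of the path $P_k$ that satisfies inequality~\eqref{eq:orient} at each of its vertices. Before doing that I would clear away the case $k = 2$: there $P_k = K_2$, the hypothesis $\sigma(H) \le \tfrac{v(H)}{2}$ is automatic, and $H \succcurlyeq K_2$ is exactly Theorem~\ref{th:Sid}. So from now on $k \ge 3$; I would also note that since $H$ is non-empty we have $\sigma(H) \ge 1$, so the hypothesis forces $v(H) \ge k$. Write $V(P_k) = \{u_1, \dots, u_k\}$ with $u_i u_{i+1} \in E(P_k)$ for $1 \le i \le k-1$, and set $d := v(H) - 2\sigma(H)$; the hypothesis is exactly $d \ge k - 2$, and in particular $d \ge 1$.

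The orientation I would use ``pushes weight inward from the two ends of the path at rate $1/d$'': define $f(u_i, u_{i+1}) := \tfrac{d - k + 2i}{2d}$ and $f(u_{i+1}, u_i) := 1 - f(u_i, u_{i+1})$ for $1 \le i \le k - 1$, with $f \equiv 0$ on non-edges. Checking that $f$ is an honest fractional orientation amounts to $f(u_i, u_{i+1}) \in [0,1]$ for every $i$; since $d \ge 1$ the value $\tfrac{d-k+2i}{2d}$ increases in $i$, so it suffices that it be $\ge 0$ at $i = 1$ and $\le 1$ at $i = k-1$, and both of these reduce to $d \ge k - 2$. A one-line computation then gives the crucial feature of $f$: every internal vertex $u_i$ ($2 \le i \le k-1$) has in-degree $d_f^-(u_i) = \tfrac{d-k+2(i-1)}{2d} + \tfrac{d+k-2i}{2d} = 1 - \tfrac1d$, the same for all $i$, while each leaf has in-degree $\tfrac{d+k-2}{2d} \in [0,1]$.

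Verifying~\eqref{eq:orient} is then straightforward. Put $\sigma := \sigma(H)$, so $v(H) - \sigma(H) = \sigma + d$ and $e(H) = v(H) - 1 = 2\sigma + d - 1$. For a leaf $u$, with in-degree $t \le 1$, the numerator of~\eqref{eq:orient} equals $t(\sigma + d) + (1 - t)\sigma = \sigma + td \le \sigma + d \le 2\sigma + d - 1 = e(H)$, where the last step uses $\sigma \ge 1$. For an internal vertex $u_i$ it equals $\bigl(1 - \tfrac1d\bigr)(\sigma + d) + \bigl(1 + \tfrac1d\bigr)\sigma = 2\sigma + d - 1 = e(H)$, so~\eqref{eq:orient} in fact holds with equality there. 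Hence~\eqref{eq:orient} holds at every vertex, and Lemma~\ref{lem:orient} gives $H \succcurlyeq P_k$.

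The only real point of the argument is the in-degree restriction at internal vertices: rearranging~\eqref{eq:orient} shows that an internal vertex is allowed in-degree at most $1 - \tfrac1d < 1$, so for $k \ge 4$ (when $P_k$ has two adjacent internal vertices) no \emph{integral} orientation can work, and the available in-weight must be distributed uniformly, forcing each of the $k - 2$ internal vertices to have in-degree exactly $1 - \tfrac1d$. Fitting this requirement into a legitimate fractional orientation of the whole path --- keeping every edge weight in $[0,1]$ --- is possible precisely when $d = v(H) - 2\sigma(H) \ge k - 2$, which is the hypothesis. This also explains why $k = 2$ must be peeled off separately: there are no internal vertices, $d$ may be $0$, and the formula for $f$ would divide by zero.
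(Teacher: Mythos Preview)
Your proof is correct and follows the same overall strategy as the paper: both invoke Lemma~\ref{lem:orient} with a fractional orientation of $P_k$ and then verify \eqref{eq:orient} at the two leaves and at the $k-2$ internal vertices.

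The difference is purely in the choice of orientation. The paper uses $f(u_i,u_{i+1})=\frac{i-1}{k-2}$, which depends only on $k$; this gives each internal vertex in-degree $\frac{k-3}{k-2}$ and out-degree $\frac{k-1}{k-2}$, and the internal-vertex check of \eqref{eq:orient} reduces to $(k-3)v(H)+2\sigma(H)\le (k-2)e(H)$, which is exactly the hypothesis. Your orientation $f(u_i,u_{i+1})=\frac{d-k+2i}{2d}$ with $d=v(H)-2\sigma(H)$ instead depends on $H$, is symmetric about the centre of the path, and achieves internal in-degree $1-\frac{1}{d}$; this makes \eqref{eq:orient} hold with equality at every internal vertex, so the hypothesis is spent entirely on ensuring $f$ takes values in $[0,1]$. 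Both verifications are short; yours is arguably a touch cleaner at the internal vertices, at the cost of one extra line checking that $f$ is a genuine fractional orientation. Your explicit handling of $k=2$ via Theorem~\ref{th:Sid} is also slightly more careful than the paper, whose formula divides by $k-2$.
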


\begin{proof}
Suppose that $\sigma(H)\leq \frac{v(H)-k+2}{2}$. We apply Lemma~\ref{lem:orient}. Let $u_1,\dots,u_k$ be the vertices of $P_k$ in the order that they come on the path. We define a fractional orientation $f$ of $P_k$ such that
\[f(u_i,u_{i+1}) = \frac{i-1}{k-2}\]
for $1\leq i\leq k-1$. We verify that \eqref{eq:orient} holds for every vertex of $P_k$. First,
\[\frac{d_f^-(u_1)\cdot (v(H)-\sigma(H)) + d_f^+(u_1)\cdot \sigma(H)}{e(H)} =\frac{v(H)-\sigma(H)}{e(H)}\leq \frac{v(H)-1}{e(H)}=1\]
and the same holds for $u_k$ by symmetry. Now, for $2\leq i\leq k-1$,
\[d_f^-(u_i) = \frac{i-2}{k-2} + \frac{k-2-i+1}{k-2} = \frac{k-3}{k-2}\]
and
\[d_f^+(u_i) = \frac{k-2-i+2}{k-2} + \frac{i-1}{k-2}=\frac{k-1}{k-2}.\]
So,
\[\frac{d_f^-(u_i)\cdot (v(H)-\sigma(H)) + d_f^+(u_i)\cdot \sigma(H)}{e(H)} = \frac{\left(\frac{k-3}{k-2}\right)\cdot(v(H)-\sigma(H)) + \left(\frac{k-1}{k-2}\right)\cdot \sigma(H)}{e(H)}\]
\[ = \frac{(k-3)\cdot v(H) + 2\sigma(H)}{(k-2)\cdot e(H)} \leq \frac{(k-2)\cdot(v(H)-1)}{(k-2)\cdot e(H)} = 1\]
where the final inequality uses the assumption that $\sigma(H)\leq \frac{v(H)-k+2}{2}$. Thus, $H\succcurlyeq P_k$ by Lemma~\ref{lem:orient}. 
\end{proof}

Without further delay, we prove Theorem~\ref{th:P4}. 

\begin{proof}[Proof of Theorem~\ref{th:P4}]
If $H$ is a tree such that $H\succcurlyeq P_4$, then $v(H)\geq4$ by Corollary~\ref{cor:biggerTreeIsBigger}. This proves one direction of the theorem. So, from here forward, we let $H$ be any tree on at least four vertices and show that $H\succcurlyeq P_4$. Suppose, for the sake of contradiction, that this is not the case. Label the vertices of $P_4$ by $u_1,u_2,u_3,u_4$ in the order that they come on the path. Throughout the rest of the proof, let $(A_1,A_2)$ be the bipartition of $H$ and assume, without loss of generality, that $|A_1|\geq |A_2|$. By Lemma~\ref{lem:pathBalance}, we have $\sigma(H)\geq \frac{v(H)-1}{2}$. The next claim sharpens this bound. 

\begin{claim}
\label{claim:veryBalanced}
$\sigma(H)=\frac{v(H)}{2}$.
\end{claim}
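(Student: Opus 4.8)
We are in the situation where $H$ is a tree on at least four vertices, $(A_1,A_2)$ is a bipartition with $|A_1|\ge|A_2|$, so $\sigma(H)=|A_2|$, and we have assumed for contradiction that $H\not\succcurlyeq P_4$. Lemma~\ref{lem:pathBalance} with $k=4$ gives $\sigma(H)\ge\frac{v(H)-1}{2}$, i.e.\ $|A_2|\ge\frac{v(H)-1}{2}$, which forces $|A_1|-|A_2|\le 1$. The claim is that in fact $|A_1|=|A_2|$, equivalently $v(H)$ is even and the two sides are equal. So what must be ruled out is the case $|A_1|=|A_2|+1$, i.e.\ $v(H)$ odd and $\sigma(H)=\frac{v(H)-1}{2}$.

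The natural way to do this is to apply Lemma~\ref{lem:orient} to $T=P_4$ with a cleverly chosen fractional orientation $f$, tailored to the hypothetical near-balanced tree. With $e(H)=v(H)-1$, and writing $n=v(H)$, $s=\sigma(H)$, the constraint \eqref{eq:orient} at a vertex $u$ of $P_4$ reads $d_f^-(u)(n-s)+d_f^+(u)s\le n-1$. For a leaf $u_1$ or $u_4$ the cheapest choice is to orient its edge outward, giving $(n-s)\le n-1$, which always holds. The interesting vertices are the two internal ones $u_2,u_3$, each of degree $2$. If $u_2$ has in-degree $a\in[0,1]$ from the edge $u_1u_2$ and we set $f(u_2,u_3)=1-b$, $f(u_3,u_2)=b$, then one checks $d_f^-(u_2)=a+b$, $d_f^+(u_2)=2-a-b$ and symmetrically at $u_3$. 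The point is that in the odd near-balanced case $s=\frac{n-1}{2}$, so $n-s=\frac{n+1}{2}=s+1$; the LHS at an internal vertex of total in-degree $d^-$ equals $d^-(s+1)+(2-d^-)s=2s+d^-$, and we need $2s+d^-\le n-1=2s$, i.e.\ $d^-\le 0$. That is impossible to satisfy at both $u_2$ and $u_3$ simultaneously (their in-degrees sum to at least the contribution of the middle edge plus the two outer ones, which is strictly positive), so Lemma~\ref{lem:orient} as stated does not directly give $H\succcurlyeq P_4$ here. Hence the argument cannot be purely "apply \ref{lem:orient}'' — we must instead build a primal-feasible $w$ for $LP(H,P_4)$ of value $e(P_4)=3$ that uses homomorphisms not supported on single edges, exploiting genuine structure of $H$ (e.g.\ the existence of a vertex adjacent to a leaf, or a path of length $\ge 3$ inside $H$, guaranteed since $n\ge 4$), to "spend'' the extra weight on the middle edge of $P_4$ without overloading $u_2,u_3$.

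**Key steps, in order.** (1) Reduce to the single bad case: by Lemma~\ref{lem:pathBalance} we may assume $v(H)$ is odd and $\sigma(H)=\frac{v(H)-1}{2}$, so $|A_1|=|A_2|+1$. (2) Locate useful structure in $H$: since $v(H)\ge 4$ and $H$ is a tree with $|A_1|>|A_2|$, there is an edge $e^\star=ab$ with, say, $a\in A_1$ having a leaf-neighbour, or more robustly there is a sub-path $P$ of $H$ on $4$ vertices; fix such a path and its endpoints in $A_1$. (3) Construct a feasible $w$ for $LP(H,P_4)$ attaining value $3$: combine (a) the "edge-type'' homomorphisms $\varphi_{u,v}$ (mapping one side of $H$'s bipartition to $u$, the other to $v$) for edges of $P_4$, with suitable weights, and (b) a small number of "spread'' homomorphisms that fold the $4$-vertex sub-path of $H$ onto all of $P_4$ and collapse the rest of $H$ onto two adjacent vertices of $P_4$; choose the weights so that \eqref{eq:Econstraint} is tight on all three edges of $P_4$ while \eqref{eq:Vconstraint} holds at $u_1,u_2,u_3,u_4$. (4) Conclude via Lemma~\ref{lem:LP} that $H\succcurlyeq P_4$, contradicting the assumption, which proves $\sigma(H)=\frac{v(H)}{2}$.

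**Main obstacle.** The crux is step (3): designing the "spread'' homomorphisms and balancing all the linear constraints simultaneously. The vertex constraints at the two internal vertices $u_2,u_3$ are the binding ones — any homomorphism that maps many vertices of $H$ to an internal vertex of $P_4$ eats into a tight budget — so one must be careful that the collapsed part of $H$ lands on $\{u_1,u_2\}$ and $\{u_3,u_4\}$ in a way that, after summing over a symmetric pair of such maps, contributes equally and within budget at $u_2$ and $u_3$. I expect the verification to hinge on the identities $|A_1|+|A_2|=v(H)$, $|A_1|-|A_2|=1$, $e(H)=v(H)-1$, and on the fact that the $4$-vertex sub-path, having $2$ vertices in each of $H$'s colour classes, distributes evenly; the rest of $H$ has one more vertex in $A_1$ than in $A_2$, and that single surplus vertex is exactly what the asymmetric orientation of the outer edges of $P_4$ is designed to absorb. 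Getting this bookkeeping exactly right, rather than the conceptual idea, is the real work.
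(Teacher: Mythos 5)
Your overall strategy is correct and matches the paper's: construct a primal-feasible weighting of $LP(H,P_4)$ attaining value $e(P_4)=3$ by combining two ``edge-type'' homomorphisms (collapsing $H$ onto $u_1u_2$, resp.\ $u_3u_4$, by bipartition class) with a pair of ``spread'' homomorphisms hitting all four vertices of $P_4$, then invoke Lemma~\ref{lem:LP} for the contradiction. Your diagnosis of why Lemma~\ref{lem:orient} alone fails in the nearly-balanced case (the internal-vertex constraints at $u_2,u_3$ force total in-degree zero at both, which is impossible) is also correct and is precisely the obstacle that motivates the richer weighting.

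The genuine gap is that your step~(3) is the entire content of the claim and you do not carry it out; you acknowledge as much. Moreover, the structural tool you propose is weaker than what you would need. Folding a $4$-vertex sub-path $p_1p_2p_3p_4$ of $H$ onto $P_4$ and ``collapsing the rest onto two adjacent vertices'' does not define a homomorphism in general: once $p_1\mapsto u_1$, every neighbour of $p_1$ other than $p_2$ is forced onto $u_2$, and the images of vertices further out are only constrained modulo $2$, so you cannot simply dictate that the rest of $H$ lands on two prescribed vertices without checking consistency along every tree edge. The paper avoids this by picking two \emph{leaves} $z_1,z_2$ of $H$ (degree one, so no downstream constraints) and defining $\varphi_3,\varphi_4$ to send $z_1,z_2$ to the endpoints $u_1,u_4$ (or both to one endpoint, according to their colour classes) while mapping every other vertex to $\{u_2,u_3\}$ by bipartition class; with weights $w(\varphi_1)=w(\varphi_2)=\tfrac{e(H)-3}{e(H)(e(H)-2)}$ and $w(\varphi_3)=w(\varphi_4)=\tfrac{1}{2(e(H)-2)}$, equality in \eqref{eq:Econstraint} and the bound \eqref{eq:Vconstraint} follow from $|A_1|\le e(H)$ and $|A_2|\le e(H)/2$. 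Note also that the paper does not need your preliminary reduction via Lemma~\ref{lem:pathBalance} to the single case $|A_1|=|A_2|+1$: the construction and its verification work directly from the assumption $|A_1|>|A_2|$.
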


\begin{proof}
Observe that $\sigma(H)=\frac{v(H)}{2}$ if and only if $|A_1|=|A_2|$. So, for the sake of contradiction, suppose that $|A_1|>|A_2|$. Let $z_1$ and $z_2$ be two leaves of $H$. Consider the following two homomorphisms. 
\begin{itemize}
    \item $\varphi_1$ satisfies $\varphi_1(A_1)=\{u_1\}$ and $\varphi_1(A_2)=\{u_2\}$,
    \item $\varphi_2$ satisfies $\varphi_2(A_1)=\{u_4\}$ and $\varphi_2(A_2)=\{u_3\}$.
\end{itemize}
Next, we define two additional homomorphisms $\varphi_3$ and $\varphi_4$. Their precise definitions depend on whether or not $z_1$ and $z_2$ lie on the same side of the bipartition. Let $\{i,j\}=\{1,2\}$ such that $z_1\in A_i$. First, define $\varphi_3$ as follows:
\begin{itemize}
    \item if $z_2\in A_j$, then define $\varphi_3$ to be the homomorphism such that $\varphi_3(z_1)=u_1$, $\varphi_3(A_j\setminus\{z_2\})=\{u_2\}$, $\varphi_3(A_i\setminus\{z_1\})=\{u_3\}$ and $\varphi_3(z_2)=u_4$, 
    \item if $z_2\in A_i$, then define $\varphi_3$ to be the homomorphism such that $\varphi_3(z_1)=\varphi_3(z_2)=u_1$, $\varphi_3(A_j)=\{u_2\}$ and $\varphi_3(A_i\setminus\{z_1,z_2\})=\{u_3\}$.
\end{itemize}
Next, define $\varphi_4$ as follows:
\begin{itemize}
    \item if $z_2\in A_j$, then define $\varphi_4$ to be the homomorphism such that $\varphi_4(z_1)=u_4$, $\varphi_4(A_j\setminus\{z_2\})=\{u_3\}$, $\varphi_4(A_i\setminus\{z_1\})=\{u_2\}$ and $\varphi_4(z_2)=u_1$, 
    \item if $z_2\in A_i$, then define $\varphi_4$ to be the homomorphism such that $\varphi_4(z_1)=\varphi_4(z_2)=u_4$, $\varphi_4(A_j)=\{u_3\}$ and $\varphi_4(A_i\setminus\{z_1,z_2\})=\{u_2\}$.
\end{itemize}
Let $w:\Hom(H,P_4)\to\mathbb{R}$ be the weight function such that
\[w(\varphi_1)=w(\varphi_2)=\frac{e(H)-3}{e(H)\cdot(e(H)-2)},\]
\[w(\varphi_3)=w(\varphi_4)=\frac{1}{2(e(H)-2)}\]
and $w(\varphi)=0$ for every other homomorphism from $H$ to $P_4$. Note that, since $e(H)\geq3$, this weighting is well-defined and $w(\varphi)\geq0$ for every homomorphism $\varphi$. Next, we show that \eqref{eq:Econstraint} holds with equality for every edge of $P_4$. First, for the edge $e=u_2u_3$, 
\[\sum_{\varphi:H\to P_4}\mu_\varphi(e)\cdot w(\varphi) = 2(e(H)-2)\cdot \left(\frac{1}{2(e(H)-2)}\right)=1.\]
Now, for $e=u_1u_2$ or $e=u_3u_4$,
\[\sum_{\varphi:H\to P_4}\mu_\varphi(e)\cdot w(\varphi)= e(H)\cdot\left(\frac{e(H)-3}{e(H)\cdot(e(H)-2)}\right) + 2\cdot\left(\frac{1}{2(e(H)-2)}\right)=1.\]
Finally, we verify the constraint \eqref{eq:Vconstraint}. For $v=u_1$ or $v=u_4$, we have
\[\sum_{\varphi:H\to P_4}\mu_\varphi(v)\cdot w(\varphi) =|A_1|\cdot\left(\frac{e(H)-3}{e(H)\cdot(e(H)-2)}\right) + 2\cdot\left(\frac{1}{2(e(H)-2)}\right)\]
\[=\frac{|A_1|\cdot (e(H)-3) + e(H)}{e(H)(e(H)-2)}.\]
Note that $|A_1|\leq v(H)-1=e(H)$ and so the above expression is at most one. Now, for $v=u_2$ or $v=u_3$, 
\[\sum_{\varphi:H\to P_4}\mu_\varphi(v)\cdot w(\varphi)=|A_2|\cdot\left(\frac{e(H)-3}{e(H)\cdot(e(H)-2)}\right) + (|A_1|+|A_2|-2)\cdot\left(\frac{1}{2(e(H)-2)}\right)\]
\[=\frac{2|A_2|\cdot(e(H)-3) + e(H)\cdot(|A_1|+|A_2|-2)}{2e(H)\cdot(e(H)-2)}=\frac{2|A_2|\cdot(e(H)-3) + e(H)\cdot(e(H)-1)}{2e(H)\cdot(e(H)-2)}.\]
Since $|A_1|>|A_2|$, we have that $|A_2|\leq \frac{v(H)-1}{2}=\frac{e(H)}{2}$. Plugging this into the expression above, we get that it is at most $1$. Thus, the value of $LP(H,P_4)$ is at least $e(P_4)$ and we have $H\succcurlyeq P_4$ by Lemma~\ref{lem:LP}. This contradiction completes the proof of the claim.
\end{proof}

\begin{claim}
\label{claim:big}
$v(H)\geq 9$.
\end{claim}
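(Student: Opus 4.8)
The plan is to use Claim~\ref{claim:veryBalanced} to reduce the statement to a finite check. Since $\sigma(H)=v(H)/2$, the number $v(H)$ is even, so (given that $v(H)\geq4$) it suffices to rule out $v(H)\in\{4,6,8\}$. For each such $H$, I will exhibit a primal-feasible weighting $w:\Hom(H,P_4)\to\mathbb{R}$ whose objective value equals $e(P_4)=3$; by Lemma~\ref{lem:LP} this gives $H\succcurlyeq P_4$, contradicting the standing assumption that $H\not\succcurlyeq P_4$.

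For $v(H)=4$, the only tree with a balanced bipartition is $P_4$ itself, and $P_4\succcurlyeq P_4$ is immediate. For $v(H)=6$, there are exactly three balanced trees: the path $P_6$, the double star whose two adjacent centres each carry two leaves, and the spider with legs of lengths $2,2,1$. The path is handled by the example in Section~\ref{sec:basics}, which shows $P_6\succcurlyeq P_4$; for each of the other two I would write down an explicit weighting supported on a handful of homomorphisms, by analogy with the proof of Claim~\ref{claim:veryBalanced} — the two homomorphisms folding $H$ onto the end edges $u_1u_2$ and $u_3u_4$ of $P_4$, together with a few homomorphisms onto all of $P_4$ that exploit the leaves of $H$ — and then verify \eqref{eq:Econstraint}, \eqref{eq:Vconstraint} and \eqref{eq:+constraint} directly. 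For $v(H)=8$, I would enumerate the balanced trees on eight vertices (for instance $P_8$, the double star with two triples of leaves, the spiders with legs $(1,2,4)$, $(2,2,3)$, $(1,2,2,2)$, and the remaining balanced caterpillars) and repeat the same scheme for each; in particular $P_8$ can instead be disposed of via $P_8\succcurlyeq P_6\succcurlyeq P_4$ using Theorem~\ref{th:Saglam} and the example, and more generally transitivity through a suitable smaller balanced tree can shorten the list of certificates one has to produce by hand. The weightings and the routine constraint checks can be collected in an appendix.

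The main obstacle is the case $v(H)=8$. Unlike in the proof of Claim~\ref{claim:veryBalanced}, the balanced bipartition leaves no slack: mapping $A_1\to\{u_1\}$ on a copy of $H$ already tends to saturate the interior vertices $u_2,u_3$, so any feasible weighting must be assembled from homomorphisms whose mass is concentrated near the endpoints $u_1$ and $u_4$ of $P_4$. Constructing such homomorphisms forces one to use the leaves of $H$ and their distances to the rest of the tree, and the details are genuinely tree-dependent — which is exactly why the small balanced trees must be treated individually while the uniform argument is left for $v(H)\geq10$. Accordingly, I expect the delicate point in each of the finitely many cases to be the verification of the vertex constraints \eqref{eq:Vconstraint} at $u_2$ and $u_3$; once those are checked, Lemma~\ref{lem:LP} finishes the case and hence establishes $v(H)\geq9$.
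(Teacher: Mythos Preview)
Your proposal is correct and takes essentially the same approach as the paper: a finite case check. The paper's own proof is even terser---it simply defers to the tables in Section~\ref{sec:smallTrees}, which certify $H\succcurlyeq P_4$ for \emph{every} tree $H$ with $4\le v(H)\le 8$, balanced or not. Your use of Claim~\ref{claim:veryBalanced} to force $v(H)$ even and to restrict attention to balanced trees is a valid and pleasant shortcut (it cuts the number of cases roughly in half), but since the paper already has the full data for all small trees it does not bother with this reduction.
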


\begin{proof}
We refer the reader to the tables in Section~\ref{sec:smallTrees} which provide justification that $H\succcurlyeq P_4$ for every tree $H$ with $4\leq v(H)\leq 8$.
\end{proof}

\begin{claim}
\label{claim:3Leaves}
$H$ has at most three leaves. 
\end{claim}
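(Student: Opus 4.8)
The plan is to prove the contrapositive: if a tree $H$ on at least $9$ vertices has at least four leaves, then we can construct a primal-feasible weighting $w:\Hom(H,P_4)\to\mathbb R$ whose objective value is $e(P_4)=3$, and then conclude $H\succcurlyeq P_4$ by Lemma~\ref{lem:LP} — contradicting the standing assumption that $H\not\succcurlyeq P_4$. By Claim~\ref{claim:veryBalanced} we already know $\sigma(H)=v(H)/2$, so $H$ has an equibalanced bipartition $(A_1,A_2)$ with $|A_1|=|A_2|=v(H)/2=e(H+1)/2$; in particular $v(H)$ is even and $v(H)\ge 10$. The leaves of $H$ are distributed among $A_1$ and $A_2$, and since there are at least four of them, at least two of them lie on the same side, say $A_i$; call them $z_1,z_2$. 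The extra flexibility gained from having two leaves on a common side is exactly what should let us push the objective up to $3$.

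The concrete construction mirrors the proof of Claim~\ref{claim:veryBalanced}. I would use the two ``flat'' homomorphisms $\varphi_1$ (with $\varphi_1(A_1)=\{u_1\}$, $\varphi_1(A_2)=\{u_2\}$) and $\varphi_2$ (with $\varphi_2(A_1)=\{u_4\}$, $\varphi_2(A_2)=\{u_3\}$), which cover the end edges $u_1u_2$ and $u_3u_4$ but not the middle edge, together with ``stretched'' homomorphisms built from the two same-side leaves $z_1,z_2\in A_i$: for instance, $\varphi_3$ sending $z_1,z_2\mapsto u_1$, $A_j\mapsto\{u_2\}$, $A_i\setminus\{z_1,z_2\}\mapsto\{u_3\}$, and its mirror $\varphi_4$ sending $z_1,z_2\mapsto u_4$, $A_j\mapsto\{u_3\}$, $A_i\setminus\{z_1,z_2\}\mapsto\{u_2\}$. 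If this exact set of four homomorphisms is not enough, one should introduce a fifth and sixth homomorphism that sits ``even more in the middle'' — e.g. using a further leaf or a leaf on the other side to send one vertex to $u_2$ and the rest of one side to $u_3$ while the opposite side maps to $u_2,u_3$ respectively — to supply more weight on the middle edge $u_2u_3$ without overloading $u_2$ or $u_3$. I would then solve the resulting small linear system (balance the end edges to $1$, balance the middle edge to $1$) to pin down the weights as explicit rational functions of $e(H)$, and finally check the four vertex constraints \eqref{eq:Vconstraint}. The key inequalities to verify will be at $u_2$ and $u_3$ (the ``heavy'' internal vertices), and the bounds $|A_1|=|A_2|=v(H)/2$ from Claim~\ref{claim:veryBalanced} together with $v(H)\ge 10$ should make each of these at most $1$, often with room to spare precisely because $v(H)$ is large.

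The main obstacle I anticipate is getting the weight on the middle edge $u_2u_3$ up to $1$ while keeping the vertex loads at $u_2,u_3$ below $1$: the middle-edge constraint is the one that the two flat homomorphisms completely miss, so all of that weight has to come from stretched homomorphisms, each of which also deposits a lot of mass on $u_2$ and $u_3$ (roughly $|A_i|-O(1)$ and $|A_j|$ vertices respectively). The reason this should work for $v(H)\ge 9$ but genuinely fails for small trees (consistent with Claim~\ref{claim:big} and the existence of near-stars with $H\not\succcurlyeq P_k$ from Theorem~\ref{th:pathsNotUnique}) is that, with $|A_i|=|A_j|=v(H)/2$ large, we can afford to spread the required middle-edge mass of total size $1$ across homomorphisms whose per-vertex contribution scales like $1/v(H)$, so the accumulated load at $u_2,u_3$ stays bounded. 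I would therefore calibrate the construction so that the dominant terms in the $u_2$-constraint are of the form $\bigl(v(H)/2\bigr)\cdot\Theta(1/e(H))+\Theta(1)$, which is $\tfrac12+o(1)<1$; the sub-case analysis (two leaves in $A_1$ versus two leaves in $A_2$, and whether a third leaf is available on the light side) will require separate but entirely parallel bookkeeping, and I expect the worst case to be when exactly two leaves lie in $A_i$ and the remaining leaves, if any, are all in $A_i$ as well — there one may need the extra homomorphisms $\varphi_5,\varphi_6$ mentioned above to balance things out.
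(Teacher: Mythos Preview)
Your four-homomorphism construction does \emph{not} work once Claim~\ref{claim:veryBalanced} is in force. With only two leaves pulled to the ends, your $\varphi_3,\varphi_4$ each put $e(H)-2$ edges on $u_2u_3$, so setting that constraint tight forces $b=\tfrac{1}{2(e(H)-2)}$ and then $a=\tfrac{e(H)-3}{e(H)(e(H)-2)}$ --- exactly the weighting from the proof of Claim~\ref{claim:veryBalanced}. But now $|A_2|=\tfrac{e(H)+1}{2}$, and the load at $u_2$ becomes
\[
\frac{2|A_2|(e(H)-3)+e(H)(e(H)-1)}{2e(H)(e(H)-2)}
=\frac{2e(H)^2-3e(H)-3}{2e(H)^2-4e(H)}
=1+\frac{e(H)-3}{2e(H)(e(H)-2)},
\]
which is \emph{strictly greater than $1$} for every $e(H)\ge 4$. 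So the constraint fails, and it does not get better as $v(H)\to\infty$: the excess is $\Theta(1/e(H))$, not the ``$\tfrac12+o(1)$'' you predicted. Your asymptotic heuristic is simply wrong here; the $u_2$-load is $1+o(1)$ from the wrong side, not $\tfrac12+o(1)$.

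The missing idea is to use \emph{all four} leaves in a single stretched homomorphism $\varphi_3$ (mapping each $z_i$ to $u_1$ or $u_4$ according to its side of the bipartition, and everything else to $\{u_2,u_3\}$), together with its reversal $\varphi_4$. Now $\varphi_3$ puts $e(H)-4$ edges on the middle and $4$ on the ends, so $b=\tfrac{1}{2(e(H)-4)}$, $a=\tfrac{e(H)-6}{e(H)(e(H)-4)}$ (non-negative since $e(H)\ge 8$ by Claim~\ref{claim:big}), and the $u_2$-load becomes
\[
\frac{(e(H)+1)(e(H)-6)+e(H)(e(H)-3)}{2e(H)(e(H)-4)}=\frac{2e(H)^2-8e(H)-6}{2e(H)^2-8e(H)}<1.
\]
Doubling the number of leaf-edges sent to the ends is exactly what flips the sign of the slack. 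Your proposal to add ad hoc $\varphi_5,\varphi_6$ is vague and unnecessary; the clean fix is to stretch with four leaves at once rather than two.
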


\begin{proof}
Let $\varphi_1$ and $\varphi_2$ be homomorphisms as in the proof of Claim~\ref{claim:veryBalanced}. Suppose that $z_1,z_2,z_3$ and $z_4$ are distinct leaves of $H$. Analogous to the proof of Claim~\ref{claim:veryBalanced}, we let $\varphi_3$ be a homomorphism such that each of $z_1,z_2,z_3$ and $z_4$ is mapped to either $u_1$ or $u_4$ and all other vertices are mapped to $u_2$ or $u_3$. Also, let $\varphi_4$ be obtained by ``reversing'' $\varphi_3$; that is, we set $\varphi_4(v)=u_i$ whenever $\varphi_3(v)=u_{5-i}$ for $1\leq i\leq 4$.

Let $w:\Hom(H,P_4)\to\mathbb{R}$ be the weight function such that
\[w(\varphi_1)=w(\varphi_2)=\frac{e(H)-6}{e(H)\cdot(e(H)-4)},\]
\[w(\varphi_3)=w(\varphi_4)=\frac{1}{2(e(H)-4)}\]
and $w(\varphi)=0$ for every other homomorphism from $H$ to $P_4$. Note that, since $e(H)\geq8$ by Claim~\ref{claim:big}, we have that this weighting is well-defined and satisfies $w(\varphi)\geq0$ for every homomorphism $\varphi$. Next, we show that \eqref{eq:Econstraint} holds with equality for every edge of $P_4$. First, for the edge $e=u_2u_3$, we have
\[\sum_{\varphi:H\to P_4}\mu_\varphi(e)\cdot w(\varphi) = 2(e(H)-4)\cdot \left(\frac{1}{2(e(H)-4)}\right)=1.\]
Now, for $e=u_1u_2$ or $e=u_3u_4$, we have
\[\sum_{\varphi:H\to P_4}\mu_\varphi(e)\cdot w(\varphi)= e(H)\cdot\left(\frac{e(H)-6}{e(H)\cdot(e(H)-4)}\right) + 4\cdot\left(\frac{1}{2(e(H)-4)}\right)=1.\]
Finally, we verify the constraint \eqref{eq:Vconstraint}. For $v=u_1$ or $v=u_4$, we have
\[\sum_{\varphi:H\to P_4}\mu_\varphi(v)\cdot w(\varphi) =|A_1|\cdot\left(\frac{e(H)-6}{e(H)\cdot(e(H)-4)}\right) + 4\cdot\left(\frac{1}{2(e(H)-4)}\right)\]
\[=\frac{|A_1|\cdot (e(H)-6) + 2e(H)}{e(H)(e(H)-4)}.\]
We have $|A_1|\leq v(H)-1=e(H)$ and so the above expression is at most one. Now, for $v=u_2$ or $v=u_3$, we have
\[\sum_{\varphi:H\to P_4}\mu_\varphi(v)\cdot w(\varphi)=|A_2|\cdot\left(\frac{e(H)-6}{e(H)\cdot(e(H)-4)}\right) + (|A_1|+|A_2|-4)\cdot\left(\frac{1}{2(e(H)-4)}\right)\]
\[=\frac{2|A_2|\cdot(e(H)-6) + e(H)\cdot(|A_1|+|A_2|-4)}{2e(H)\cdot(e(H)-4)}=\frac{2|A_2|\cdot(e(H)-6) + e(H)\cdot(e(H)-3)}{2e(H)\cdot(e(H)-4)}.\]
By Claim~\ref{claim:veryBalanced}, we have $|A_2|=\frac{v(H)}{2}=\frac{e(H)+1}{2}$. Plugging this into the expression above and simplifying, we get
\[\frac{(e(H)+1)\cdot(e(H)-6) + e(H)\cdot(e(H)-3)}{2e(H)\cdot(e(H)-4)} = \frac{2e(H)^2-8e(H)-6}{2e(H)^2-8e(H)}<1.\]
Thus, the value of $LP(H,P_4)$ is at least $e(P_4)$ and we have $H\succcurlyeq P_4$ by Lemma~\ref{lem:LP}, which is a contradiction.
\end{proof}

If $H$ has only two leaves, then $H$ is a path and the result follows from Theorems~\ref{th:Godsil} and~\ref{th:Saglam} and transitivity of $\succcurlyeq$. So, by Claim~\ref{claim:3Leaves}, we may assume that $H$ has exactly three leaves. This implies that $H$ has a unique vertex of degree exactly three, say $z$, and all other vertices of $H$ have degree one or two. The graph $H\setminus\{z\}$ consists of three paths, say $w_1\cdots w_\ell$, $x_1\cdots x_k$ and $y_1\cdots y_m$. Since $z$ has degree three, we have $m,k,\ell\geq1$. Since $\sigma(H)=\frac{v(H)}{2}$, exactly one of $\ell$, $k$ or $m$ is odd; without loss of generality, $\ell$ is odd. Note that $v(H)=m+k+\ell+1$ and $e(H)=m+k+\ell$. By Claim~\ref{claim:big}, we have $m+k+\ell\geq8$.

Let $\varphi_1:H\to P_4$ be the unique homomorphism such that $\varphi_1(z)=u_2$, $\varphi_1(w_i)\in\{u_1,u_2\}$ for all $1\leq i\leq\ell$, $\varphi_1(x_i)\in\{u_3,u_4\}$ for all $1\leq i\leq k$ and $\varphi_1(y_i)\in\{u_3,u_4\}$ for all $1\leq i\leq m$. Let $\varphi_2:H\to P_4$ be defined so that, if $v\in V(H)$ and $\varphi_1(v)=u_i$, then $\varphi_2(v)=u_{5-i}$. Let $\varphi_3:H\to P_4$ be a homomorphism such that every vertex is mapped to $u_2$ or $u_3$. Define $w:\Hom(H,P_4)\to\mathbb{R}$ by 
\[w(\varphi_1)=w(\varphi_2)=\frac{1}{m+k+\ell-2},\]
\[w(\varphi_3)=\frac{m+k+\ell-6}{(m+k+\ell-2)(m+k+\ell)}\]
and $w(\varphi)=0$ for all other homomorphisms $\varphi$. Note that, since $m+k+\ell\geq 8>6$, we have $w(\varphi)\geq0$ for all homomorphisms $\varphi$. We verify that \eqref{eq:Econstraint} holds with equality for all edges $e$ of $P_4$. First, if $e=u_1u_2$, then
\[\sum_{\varphi:H\to P_4}\mu_\varphi(e)\cdot w(\varphi) = (m+k+\ell-2)\left(\frac{1}{m+k+\ell-2}\right)=1.\]
The calculation for $e=u_3u_4$ is the same. Now, for $e=u_2u_3$, 
\[\sum_{\varphi:H\to P_4}\mu_\varphi(e)\cdot w(\varphi) = 4\left(\frac{1}{m+k+\ell-2}\right) + (m+k+\ell)\left(\frac{m+k+\ell-6}{(m+k+\ell-2)(m+k+\ell)}\right)=1.\]
Finally, we show that \eqref{eq:Vconstraint} holds for all $v\in V(P_4)$. For $v=u_1$, we have
\[\sum_{\varphi:H\to P_4}\mu_\varphi(v)\cdot w(\varphi)=\left(\frac{\ell+1}{2}\right)\cdot\left(\frac{1}{m+k+\ell-2}\right)+\left(\frac{m+k}{2}\right)\cdot \left(\frac{1}{m+k+\ell-2}\right)\]
\[=\frac{m+k+\ell+1}{2(m+k+\ell-2)}\]
which is less than 1 because $m+k+\ell>5$. Now, for $v=u_2$, we have
\[\sum_{\varphi:H\to P_4}\mu_\varphi(v)\cdot w(\varphi)=\frac{m+k+\ell+1}{2(m+k+\ell-2)} + \left(\frac{m+k+\ell+1}{2}\right)\cdot\frac{m+k+\ell-6}{(m+k+\ell-2)(m+k+\ell)}\]
\[=\frac{(m+k+\ell)(m+k+\ell+1)+(m+k+\ell+1)(m+k+\ell-6)}{2(m+k+\ell-2)(m+k+\ell)}\]
\[=1-\frac{3}{(m+k+\ell-2)(m+k+\ell)}<1.\]
Thus, by Lemma~\ref{lem:LP}, we have $H\succcurlyeq P_4$ and the proof is complete.
\end{proof}

\section{Additional Necessary Conditions}
\label{sec:nec}

In this section, we recall several necessary conditions for two graphs $H$ and $T$ to satisfy $H\succcurlyeq T$ from the literature and add a few new conditions to the list. These results will be used to certify that $H\not\succcurlyeq T$ for many pairs of small trees in the next section.

\subsection{Radius}

The following lemma of~\cite{Leontovich89} is useful for proving that trees on the same number of vertices are incomparable. Recall that the \emph{eccentricity} of a vertex $v$ of a graph $T$ is the maximum over all $u\in V(T)$ of the distance in $T$ from $v$ to $u$. The \emph{radius} of $T$, denoted $\rad(T)$, is the minimum eccentricity of any vertex of $T$; a vertex of minimum eccentricity is called a \emph{centre} of $T$.

\begin{lem}[Leontovich~{\cite[p.~100]{Leontovich89}}]\label{lem:radius1}
Let $H$ and $T$ be trees such that $v(H)=v(T)$. If $H\succcurlyeq T$, then $\rad(T)\geq \rad(H)$. 
\end{lem}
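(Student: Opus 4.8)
The natural approach is to use the dual linear program $DLP(H,T)$ together with Lemma~\ref{lem:DLP}: it suffices to exhibit a dual-feasible weighting $y:V(T)\cup E(T)\to\mathbb{R}$ whose objective value is strictly less than $e(T)$ whenever $\rad(T)<\rad(H)$. The idea is to put all the weight on a single well-chosen vertex of $T$ and nothing on the edges. So fix a centre $c$ of $T$, so that every vertex of $T$ is within distance $r:=\rad(T)$ of $c$. I would set $y(e)=0$ for all $e\in E(T)$ and $y(c)=\lambda$ for a constant $\lambda$ to be chosen, with $y(v)=0$ for all $v\neq c$. The dual objective is then just $\lambda$, so the whole game is to make the constraint \eqref{eq:DhomConstraint} hold with $\lambda$ as small as possible, and then observe that $\lambda<e(T)$.

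For this to work I need a lower bound, valid for \emph{every} homomorphism $\varphi:H\to T$, on $\mu_\varphi(c)=|\varphi^{-1}(c)|$; the constraint \eqref{eq:DhomConstraint} then reads $\mu_\varphi(c)\cdot\lambda\ge e(H)$, so I want $\lambda=e(H)/\min_\varphi\mu_\varphi(c)$. The key combinatorial claim is therefore: for any homomorphism $\varphi$ from a tree $H$ into a tree $T$ with $v(H)=v(T)$, $\varphi$ maps at least some explicit number of vertices of $H$ onto the centre $c$ of $T$. The mechanism is that homomorphisms of trees into trees cannot ``spread things out'': since $c$ has eccentricity $r$ in $T$, the image $\varphi(H)$ is contained in the ball of radius $r$ about $c$, and if $H$ has radius $>r$ then $H$ itself does not fit in any ball of radius $r$, forcing $\varphi$ to fold $H$ substantially — in particular, since $v(H)=v(T)$, $\varphi$ cannot be injective, and more quantitatively the vertices of $H$ that are ``too far'' (at distance $>r$ from every preimage-candidate of $c$) must be collapsed toward $c$. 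I expect the clean statement to be something like: $\mu_\varphi(c)\ge v(H)-v(T)+1=1$ is too weak, and what is actually needed is a bound forcing $\lambda<e(T)=v(T)-1$, i.e. $\mu_\varphi(c)>e(H)/(v(T)-1)=(v(H)-1)/(v(T)-1)=1$, so one just needs $\mu_\varphi(c)\ge 2$ for every $\varphi$. That in turn says: no homomorphism from a tree of radius $>r$ into a tree whose centre has eccentricity $r$ can be injective on $\varphi^{-1}(\text{ball})$ — indeed cannot be injective at all near $c$.

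So the heart of the argument is the graph-theoretic lemma that if $H,T$ are trees, $v(H)=v(T)$, and $\rad(H)>\rad(T)$, then every $\varphi:H\to T$ satisfies $|\varphi^{-1}(c)|\ge2$ where $c$ is a centre of $T$. I would prove this by contradiction: if $|\varphi^{-1}(c)|\le1$, then since $\varphi(H)$ lies in $B_T(c,r)$, one analyzes distances — for any $u\in V(H)$, $d_T(\varphi(u),c)\le r$, and homomorphisms are distance-non-increasing along the tree $H$ in the sense that $d_T(\varphi(u),\varphi(w))\le d_H(u,w)$ only when... (this needs care: homomorphisms do not contract distances in general, but into a \emph{tree} the image path between $\varphi(u)$ and $\varphi(w)$ is a sub-walk of the image of the $u$–$w$ path, so $d_T(\varphi(u),\varphi(w))\le d_H(u,w)$ and the parities match modulo the bipartition). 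Pick $u$ realizing $\varphi(u)=c$ or closest to it; then every vertex of $H$ is within distance $r+(\text{small})$ of $u$ in $H$, contradicting $\rad(H)>r$ unless there is folding at $c$. The bookkeeping with parities and the exact constant is the part I expect to be fiddly, and it is the main obstacle: making ``$H$ of radius $>r$ cannot inject into the $r$-ball of $T$'' precise enough to guarantee $|\varphi^{-1}(c)|\ge 2$ rather than merely $|\varphi^{-1}(c)|\ge1$. Once that lemma is in hand, set $y(c)=e(H)/2$, $y=0$ elsewhere; \eqref{eq:DhomConstraint} holds, the dual objective is $e(H)/2=(v(H)-1)/2=(v(T)-1)/2<v(T)-1=e(T)$ since $v(T)\ge2$, and Lemma~\ref{lem:DLP} gives $H\not\succcurlyeq T$. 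Wait — this shows $H\not\succcurlyeq T$ only assuming $\rad(H)>\rad(T)$, which is exactly the contrapositive of the statement, so we are done.
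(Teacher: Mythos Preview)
Your approach has a genuine gap: the key combinatorial claim that every homomorphism $\varphi:H\to T$ satisfies $|\varphi^{-1}(c)|\ge 2$ is false, even when $\rad(H)>\rad(T)$ and $v(H)=v(T)$. Take $T$ to be the tree on $\{1,\dots,6\}$ with edges $12,23,34,35,36$; its centres are $2$ and $3$, each of eccentricity $2$. Take $H=P_6$, which has radius $3$. The homomorphism sending $P_6$ to the walk $(1,2,1,2,1,2)$ never hits the centre $3$, and the homomorphism $(4,3,4,3,4,3)$ never hits the centre $2$. So no single vertex of $T$ is visited by every homomorphism, and putting all dual weight on one centre cannot satisfy \eqref{eq:DhomConstraint}. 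The flaw in your heuristic is that the non-expansion inequality $d_T(\varphi(u),\varphi(w))\le d_H(u,w)$ goes the wrong way: it gives no upper bound on distances in $H$, so a homomorphism can fold a long path into a single edge of $T$ far from the centre.

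The paper does not use the dual LP here at all. Instead it proves the stronger Lemma~\ref{lem:radius} by a direct construction: for each $d$ it takes $G_d$ to be the full $d$-ary tree of height $r=\rad(T)$, shows $\hom(T,G_d)\ge d^{e(T)}$ by sending a centre of $T$ to the root, and then bounds $\hom(H,G_d)$ from above by exploiting two long internally disjoint paths out of a centre of $H$ (each of which must repeatedly ``climb'' toward the root of $G_d$, costing a factor of $d$ at each upward step). Comparing exponents yields $t(H,G_d)^{e(T)}<t(T,G_d)^{e(H)}$ for large $d$. If you want to salvage a dual-LP proof, you would need to spread weight over several vertices of $T$ in a way that mirrors the level structure of this $d$-ary tree; a single centre will not do.
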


We are able to generalize this result to the case of trees where $v(H) \ne v(T)$. 

\begin{lem}\label{lem:radius}
    Let $H$ and $T$ be trees. If $H\succcurlyeq T$, then $\frac{e(H)}{\rad(H)} \ge \frac{e(T)}{\rad(T)}$. 
\end{lem}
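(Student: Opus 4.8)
The plan is to apply Lemma~\ref{lem:DLP} with a carefully chosen dual-feasible point, mimicking the structure of the proof of Lemma~\ref{lem:radius1} but working with edge-weights rather than vertex-weights so that the bound scales correctly when $v(H)\neq v(T)$. Specifically, I would fix a centre $c$ of $T$, so that $\rad(T)$ equals the eccentricity of $c$. The natural candidate is to put weight $y(e) = \frac{e(H)}{\rad(H)}\cdot\frac{1}{?}$ on edges according to their distance from $c$ — but more precisely I expect the right choice is to orient or stratify $T$ by distance from $c$: let $L_i$ be the set of vertices at distance $i$ from $c$, and put $y(e) = \frac{e(H)}{\rad(H)}$ on each edge lying between $L_{i-1}$ and $L_i$ for every $i$, dividing by an appropriate normalizing factor, and $y(v)=0$ for all vertices. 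Then $\sum_v y(v)+\sum_e y(e) = \frac{e(H)}{\rad(H)}\cdot(\text{something involving }\rad(T))$, and the key inequality to check is the homomorphism constraint~\eqref{eq:DhomConstraint}.

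The heart of the argument will be verifying~\eqref{eq:DhomConstraint}: for every homomorphism $\varphi:H\to T$, we need $\sum_{e\in E(T)}\mu_\varphi(e)\,y(e)\geq e(H)$. With $y(e)=\frac{e(H)}{\rad(H)}$ on a suitable set of ``layer-crossing'' edges, this reduces to showing that $\varphi$ sends at least $\rad(H)$ edges of $H$ (counted with multiplicity $\mu_\varphi$) onto those distinguished edges of $T$. The idea is this: pick a vertex $x$ of $H$ achieving the radius, i.e.\ an eccentricity-$\rad(H)$ vertex won't directly help; instead pick an edge or a diametral-type path. Actually the cleaner route: there is a vertex $x_0\in V(H)$ with a vertex $x_1\in V(H)$ at distance $\geq \rad(H)$ from it — but we want something that works for \emph{every} homomorphism. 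I would instead argue: since $H$ is a tree, for any two vertices $a,b\in V(H)$ the image path $\varphi(a\text{–}b\text{ path})$ is a walk in $T$ from $\varphi(a)$ to $\varphi(b)$, and the number of edges of this walk crossing any fixed ``cut'' separating two vertices of $T$ has the right parity/lower bound. Choosing the cut structure given by the distance layers from $c$: any walk from a vertex in layer $L_i$ to a vertex in layer $L_j$ must cross the cut between consecutive layers enough times. The cleanest formulation: there exists a vertex $r\in V(H)$ such that $H$ contains a path of length $\rad(H)$ starting at $r$; and actually for trees, every vertex is the endpoint of a path of length $\geq \rad(H)$ (since eccentricity $\geq$ radius). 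So for any $\varphi$, taking $r$ and a vertex $r'$ at distance $\rad(H)$ from $r$ in $H$, the $H$-path from $r$ to $r'$ has $\rad(H)$ edges, and I claim $\varphi$ maps at least $\rad(H)$ of them — with multiplicity, across all of $H$ — to the distinguished edge set. Hmm, this still needs care, and this is the step I expect to be the main obstacle.

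Let me reconsider the distinguished-edge set to make the counting clean. Pick the centre $c$ of $T$; every vertex of $T$ is at distance $\leq\rad(T)$ from $c$. Define $y(e)=\frac{e(H)}{\rad(H)}$ for \emph{every} edge $e\in E(T)$ lying on some geodesic from $c$ — but in a tree every edge lies on a geodesic from $c$, so that's all edges, giving the trivial bound. So the layered approach is essential: set $y$ to be $\frac{e(H)}{\rad(H)}$ on edges of one particular ``monotone'' structure. I think the correct move, paralleling Leontovich, is: root $T$ at its centre $c$, and for a homomorphism $\varphi:H\to T$, consider the composition with the ``depth'' function $\mathrm{depth}:V(T)\to\{0,1,\dots,\rad(T)\}$. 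For any edge $uv\in E(H)$, $|\mathrm{depth}(\varphi(u))-\mathrm{depth}(\varphi(v))|\leq 1$. Fix a longest path $P$ in $H$ with one endpoint $r$; actually fix $r\in V(H)$ with $\mathrm{ecc}_H(r)=\mathrm{diam}(H)$ or just pick $r$ to be a leaf of a longest path. The key combinatorial lemma I'd isolate and prove: \emph{for any tree $H$, any tree $T$ with centre $c$, and any homomorphism $\varphi:H\to T$, we have $\sum_{uv\in E(H)}|\mathrm{depth}(\varphi(u))-\mathrm{depth}(\varphi(v))| \geq \rad(H)$}, OR the analogous statement with depth replaced by distance-to-$\varphi(r)$. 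Granting such a lemma, I set $y(e)=\frac{e(H)}{\rad(H)}$ for all $e$ and weight by the depth-difference... no — rather, I should define $y$ on edges of $T$ as $\frac{e(H)}{\rad(H)}$ and the objective is $\frac{e(H)}{\rad(H)}\,e(T)\geq e(T)$, wrong direction. So the dual point must be supported on a \emph{small} set of edges. The resolution: by Lemma~\ref{lem:orient}'s dual perspective or directly, orient $T$ away from $c$; then $y(e)=\frac{e(H)}{\rad(H)}$ only on a single geodesic-spanning subtree won't work either. Given these tensions, I'd ultimately follow the Leontovich template most faithfully: set $y(v)=\frac{e(H)}{\rad(H)}\cdot[\text{dist}_T(v,c) \text{ is, say, } \leq 0]$... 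The honest summary is that I would reverse-engineer the weights from the requirement that the objective equals $\frac{e(H)\,\rad(T)}{\rad(H)}$ and the constraint forces $\varphi$ to "spend" $e(H)$ units, and the main obstacle is proving the combinatorial lower bound that every homomorphism from $H$ uses at least $\rad(H)/\rad(T)$ fraction of its edge-mass on the distinguished weighted edges — a statement that should follow from tracking, along a radius-achieving path of $H$, how its image progresses through $T$'s distance layers from a centre, using that no image vertex can be farther than $\rad(T)$ from the centre.
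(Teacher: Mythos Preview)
Your proposal is not a proof but an exploration that never settles on a working dual certificate, and the specific weightings you try all fail for the reason you half-notice: any homomorphism collapsing $H$ onto a single edge $uv\in E(T)$ forces $y(uv)+\tfrac{1}{2}(y(u)+y(v))\cdot\tfrac{v(H)}{e(H)}\geq 1$ (roughly), so a depth-stratified weighting supported only on edges, or only on vertices, cannot have total mass below $e(T)$. A certificate exists in principle (the LP characterizes $\succcurlyeq$ for forests), but there is no evident closed form of the kind you are seeking, and your final paragraph is a statement of hope rather than an argument.

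The paper does \emph{not} go through Lemma~\ref{lem:DLP} at all. Instead it constructs an explicit test graph: let $r=\rad(T)$, $R=\rad(H)$, and take $G_d$ to be the full $d$-ary tree of height $r$. A lower bound $\hom(T,G_d)\geq d^{e(T)}$ is immediate by mapping the centre of $T$ to the root and each vertex at depth $i$ into level $L_i$. For the upper bound on $\hom(H,G_d)$, the paper uses exactly the idea in your last sentence---tracking how a path from the centre of $H$ moves through the levels of $G_d$---but applied to the concrete graph $G_d$, not encoded as DLP weights on $T$. Concretely, from a centre $v_0$ of $H$ one finds two internally disjoint paths of lengths $R$ and $R-1$; for any homomorphism $\varphi:H\to G_d$ with $\varphi(v_0)\in L_{i^*}$, each of these paths must take at least a prescribed number of ``up'' steps (toward the root) because levels are bounded by $r$, and up-steps contribute no factor of $d$. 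This yields $\hom(H,G_d)=O(d^{e(H)+r-R})$, and comparing exponents gives $t(H,G_d)^{e(T)}<t(T,G_d)^{e(H)}$ for large $d$ precisely when $e(H)/R<e(T)/r$.

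The conceptual gap in your approach is that the natural witness graph is a blowup of $P_{r+1}$ (via the depth map), not of $T$; pulling weights back along the depth homomorphism $T\to P_{r+1}$ preserves dual feasibility but inflates the objective by the level multiplicities in $T$, which is why your layered weights on $T$ keep overshooting $e(T)$. The paper's direct construction sidesteps this entirely.
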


\begin{proof}
We assume that $\frac{e(H)}{\rad(H)} < \frac{e(T)}{\rad(T)}$ and prove that $H\not\succcurlyeq T$. Let $R:=\rad(H)$ and let $v_0$ be a centre of $H$. Let $v_0v_1\cdots v_R$ be a path in $H$ which certifies that the eccentricity of $v_0$ is $R$. Since $v_0$ is a centre of $H$, there must exist a vertex at distance at least $R$ from $v_1$. Moreover, the distance from $v_0$ to such a vertex must be equal to $R-1$, since the eccentricity of $v_0$ is $R$ and $H$ is bipartite. Thus, the unique path from $v_1$ to this vertex passes through $v_0$. So, we can let $v_0u_1\cdots u_{R-1}$ be a path in $H$, where $u_{R-1}$ is a vertex at distance $R$ from $v_1$. Since $H$ is a tree, we have that the set $\{u_1,\dots,u_{R-1}\}$ is disjoint from $\{v_1,\dots,v_R\}$.

Let $r:=\rad(T)$. For an integer $d\geq2$, we define $G_{d}$ to be a full $d$-ary tree of height $r$. Specifically, $G_d$ is defined as follows. First, let $L_0$ be a set consisting of one vertex. Then, for $1\leq i\leq r$, let $L_i$ be a set of $d^i$ vertices such that each vertex in $L_i$ has exactly one neighbour in $L_{i-1}$, called its \emph{parent}, and every vertex in $L_{i-1}$ has exactly $d$ neighbours in $L_i$, called its \emph{children}. Note that every vertex, except for the vertex of $L_0$, has a unique parent and all vertices have exactly $d$ children, except for the vertices of $L_{r}$ which have no children.

Now, we bound the number of homomorphisms from $T$ into the graph $G_{d}$ from below. Let $v$ be a centre of $T$ and note that all vertices of $T$ are at distance at most $r$ from $v$. So, $\hom(T,G_{d})$ is at least the number of homomorphisms from $T$ to $G_{d}$ that map each vertex at distance $i$ from $v$ to the set $L_i$ for $0\leq i\leq r$. Thus,
\[\hom(T,G_{d}) \geq d^{e(T)},\]
and so 
\[t(T,G_{d})^{e(H)} \geq \frac{d^{e(T)e(H)}}{v(G)^{v(T)e(H)}} = \frac{d^{e(T)e(H)}}{v(G_d)^{e(T)e(H)+e(H)}}\]

Finally, we bound $t(H,G_d)$ from above. Recall that $v_0$ is the centre of $H$ and that $v_0v_1\cdots v_R$ and $v_0u_1\cdots u_{R-1}$ are paths in $H$ such that $\{u_1,\dots,u_{R-1}\}$ is disjoint from $\{v_1,\dots,v_R\}$. Given a homomorphism $\varphi:H\to G_d$, let $L_{i^*}$ be the set containing $\varphi(v_0)$, let $S_1$ be the set of indices $1\leq j\leq R$ such that $\varphi(v_j)$ is the parent of $\varphi(v_{j-1})$ and let $S_2$ be the set of indices $1\leq j\leq R-1$ such that $\varphi(u_j)$ is the parent of $\varphi(u_{j-1})$, where we regard $u_0$ as being $v_0$. The number of homomorphisms from $H$ to $G_d$ corresponding to a given choice of $(i^*,S_1,S_2)$ is at most
\[|L_{i^*}|\cdot d^{R-|S_1|} \cdot d^{R-1-|S_2|}\cdot (d+1)^{e(H)-R-(R-1)}= O\left(d^{i^*+e(H) - |S_1|-|S_2|}\right).\]
where the asymptotics are as $d$ tends to infinity. By the construction of $G_d$, for any given choice of $i^*$, we have that
\[(R-|S_1|)-|S_1| \leq r-i^*  \qquad \text{and} \qquad (R-1-|S_2|)-|S_2| \leq r-i^*  \]
which implies
\[|S_1| \geq \left\lceil\frac{R-r+i^*}{2}\right\rceil  \qquad \text{and} \qquad |S_2| \geq \left\lceil\frac{R-r+i^*-1}{2}\right\rceil .\]
Together, this gives
\[|S_1| + |S_2| \ge R - r + i^*.\]
Also, the number of choices of $(i^*,S_1,S_2)$ is a constant depending on $H$ and $T$ and, in particular, is independent of $d$. So, we have
\[\hom(H,G_d) = O\left(d^{e(H)+r-R}\right)\]
and so
\[t(H,G_d)^{e(T)} =O\left(
\frac{d^{e(H)e(T)+re(T)-Re(T)}}{v(G)^{(e(H)+1)e(T)}}
\right).\]
Comparing this to the lower bound on $t(T,G_d)^{e(H)}$ proven earlier we see that we will be done if we can show that for sufficiently large $d$,
\[\frac{d^{e(H)e(T)+re(T)-Re(T)}}{v(G)^{e(T)e(H)+e(T)}} \ll \frac{d^{e(T)e(H)}}{v(G)^{e(T)e(H)+e(H)}}\]
or equivalently that
\[v(G)^{e(H)-e(T)} \ll d^{Re(T)-re(T)}.\] 
Note that  $v(G) = 1 + d + \ldots d^r < 2d^r$ for $d>1$ and so $v(G)^{e(H)-e(T)} = \Theta\left(d^{r(e(H) - e(T))}\right).$ By assumption, $r(e(H) - e(T)) < Re(T)-re(T)$ and so for sufficiently large $d$ we have $t(H,G_d)^{e(T)} < t(T,G_{d})^{e(H)}$. In particular, $H\not\succcurlyeq T$. 
\end{proof}

\subsection{Degree Sequence}

Given a graph $T$, the \emph{degree} sequence of $T$ is $D_T :=(d_T(v_1),\dots,d_T(v_k))$ where $v_1,\dots,v_k$ are the vertices of $T$ labelled in such a way that $d_T(v_1)\geq\cdots\geq d_T(v_k)$. Given two sequences $(a_1,\dots,a_k)$ and $(b_1,\dots,b_k)$ of non-negative real numbers, say that $(a_1,\dots,a_k)$ \emph{majorizes} $(b_1,\dots,b_k)$ if
\[a_1+\cdots+a_t\geq b_1+\cdots+b_t\]
for all $1\leq t\leq k$. The following necessary condition was proven by Leontovich~\cite{Leontovich89} by applying a well-known inequality of Muirhead~\cite{Muirhead02}. While the original proof was written in Russian, one can find an English summary of the proof in~\cite[p.~274]{Sidorenko94}.

\begin{lem}[Leontovich~\cite{Leontovich89}]
Let $H$ and $T$ be graphs such that $v(H)=v(T)$. If $H\succcurlyeq T$, then $D_H$ majorizes $D_T$.
\end{lem}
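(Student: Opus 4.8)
The statement to prove is Leontovich's majorization condition: if $H$ and $T$ are graphs with $v(H)=v(T)$ and $H\succcurlyeq T$, then $D_H$ majorizes $D_T$. The strategy is to use the dual characterization provided by Lemma~\ref{lem:DLP}: if we can exhibit, for each threshold $t$, a dual-feasible point $y$ for $DLP(H,T)$ whose objective value would be forced to be less than $e(T)$ whenever $d_H(v_1)+\cdots+d_H(v_t) < d_T(v_1)+\cdots+d_T(v_t)$, then $H\not\succcurlyeq T$, contradicting the hypothesis. So I would fix $t$ with $1\le t\le k$, let $U$ be the set of the $t$ vertices of $T$ of largest degree (so $\sum_{u\in U}d_T(u) = d_T(v_1)+\cdots+d_T(v_t)$), and try to build $y$ supported in a useful way on $U$ and on the edges incident to $U$.

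\textbf{Key steps.} First I would set up the right dual point. A natural choice, echoing the proof of Lemma~\ref{lem:sigma}, is to put weight only on vertices: for a parameter $\lambda>0$ to be chosen, set $y(v)=\lambda$ for $v\in U$, $y(v)=0$ otherwise, and $y(e)=0$ for all $e$. The constraint \eqref{eq:DhomConstraint} then requires $\lambda\cdot\bigl(\sum_{v\in U}\mu_\varphi(v)\bigr)\ge e(H)$ for every $\varphi:H\to T$, i.e. we need every homomorphism from $H$ to $T$ to place at least $e(H)/\lambda$ vertices (counted with multiplicity) into $U$. This is false in general, so the honest argument must instead count \emph{edges}: a homomorphism $\varphi$ maps each of the $e(H)$ edges of $H$ to an edge of $T$, and an edge of $T$ with at least one endpoint in $U$ is "charged" to $U$. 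The cleaner route is therefore to weight edges of $T$: take $y(e) = 1$ if $e$ has at least one endpoint in $V(T)\setminus U'$ for a suitable complementary set, or — following Leontovich/Muirhead more directly — to encode the majorization via a symmetric-function inequality. Concretely, Muirhead's inequality says that for a fixed graph $G$ on $n$ vertices, the quantity $\sum_{\pi} \prod_{v} d_G(v)^{a_{\pi(v)}}$, summed over permutations $\pi$ of an exponent vector $(a_1,\dots,a_n)$, is monotone in the majorization order of the exponent vector. The plan is: (i) show that $\hom(H,G)$ for a carefully chosen "blow-up-type" graph $G$ built from a sequence of prescribed degrees can be expressed (to leading order) as such a symmetric polynomial in the degree multiset, with exponent vector governed by $D_H$; (ii) do the same for $T$ with exponent vector governed by $D_T$; (iii) if $D_H$ does not majorize $D_T$, pick the test sequence of degrees so that Muirhead's inequality runs the wrong way, producing a graph $G$ with $t(H,G)^{e(T)} < t(T,G)^{e(H)}$ and hence $H\not\succcurlyeq T$.

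\textbf{Construction of the test graph and the asymptotics.} For the graph $G$ I would use the same device as in the proof of Lemma~\ref{lem:radius}: given a degree sequence target, take $G$ to be a disjoint-union-free "weighted" construction — e.g. attach to a dominating structure a family of parts of sizes $N^{d_1},\dots,N^{d_k}$ and put random or complete bipartite connections so that, up to lower-order terms in $N$, $\hom(F,G)$ is dominated by the homomorphisms that respect the part structure, giving $\hom(F,G) = \Theta\bigl(\sum_{\text{injections }\rho} \prod_{v\in V(F)} N^{d_{\rho(v)}\,\text{(incidence count)}}\bigr)$. The exponent of $N$ in the dominant term is then $\max_\rho \sum_{uv\in E(F)} (\text{something involving } d_{\rho(u)}, d_{\rho(v)})$, and the point is that this max, as a function of the target degree vector, is exactly a majorization-monotone functional whose "slope" is read off from $D_F$. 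Matching the exponents for $H$ and $T$ and using $v(H)=v(T)$, the inequality $t(H,G)^{e(T)}\ge t(T,G)^{e(H)}$ forces the exponent inequality in both directions, and choosing the target degrees as a near-threshold vector (almost all coordinates equal, with a jump at position $t$) isolates precisely the partial sum $\sum_{i\le t} d_F(v_i)$.

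\textbf{Main obstacle.} The delicate point is step (i)–(ii): making precise the claim that $\hom(F,G)$ for the test graph is asymptotically governed by a clean symmetric function of the target degree sequence with exponent data equal to $D_F$, and controlling the error terms uniformly so that the comparison survives exponentiation by $e(T)$ and $e(H)$. Leontovich's original argument routes this through Muirhead's inequality rather than through an explicit $G$, so the honest and shortest proof is probably: reduce, via Lemma~\ref{lem:DLP} or directly via evaluating both sides on an explicit family $G_N$, to an inequality of the form "for all positive reals $x_1,\dots,x_k$, $\sum_{\sigma}\prod x_i^{(D_H)_{\sigma(i)}} \ge \sum_\sigma \prod x_i^{(D_T)_{\sigma(i)}}$," which is exactly Muirhead's inequality and holds iff $D_H$ majorizes $D_T$; running it with a cleverly chosen $x$-vector when majorization fails yields a witnessing $G$. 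I would therefore present the proof as: (1) evaluate $t(H,G)$ and $t(T,G)$ on a complete-multipartite-style test graph with part sizes $x_1,\dots,x_k$, obtaining (exactly, not just asymptotically, if $G$ is complete multipartite) symmetric-function expressions; (2) invoke $H\succcurlyeq T$ to get an inequality between these; (3) observe this inequality, holding for all valid part sizes, is a Muirhead-type inequality forcing $D_H\succeq D_T$ in majorization order; citing \cite{Muirhead02} for the final implication.
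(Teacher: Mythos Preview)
The paper does not supply its own proof of this lemma; it attributes the result to Leontovich, notes that the argument uses Muirhead's inequality, and points to Sidorenko~\cite{Sidorenko94} for an English summary. Your overall plan---reduce $H\succcurlyeq T$ to a Muirhead-type symmetric-function inequality by evaluating both sides on a well-chosen family of test graphs---is therefore in line with the cited approach.

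However, there is a genuine gap at precisely the step you flag as delicate. The test family you ultimately settle on, complete multipartite graphs with part sizes $x_1,\dots,x_k$, does not produce a Muirhead expression in the degree sequence. For such $G$ one has
\[
\hom(H,G)=\sum_{\substack{f:V(H)\to[k]\\ f\text{ proper}}}\ \prod_{v\in V(H)} x_{f(v)},
\]
and this weighted chromatic polynomial depends on the independent-set/colouring structure of $H$, not on $D_H$ alone (for instance $C_6$ and $2K_3$ have the same degree sequence but different such polynomials). So the inequality you obtain is not a Muirhead inequality between $D_H$ and $D_T$, and step~(3) of your plan does not go through as stated.

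The device that \emph{does} isolate the degree sequence is a rank-one weighted graph: on vertex set $[n]$ give the pair $ij$ weight $a_i a_j$ (loops included), realised by simple graphs via a Chung--Lu-type random model or a blow-up as in the proof of Lemma~\ref{lem:DLP}. Then every map $f:V(H)\to[n]$ contributes $\prod_{uv\in E(H)} a_{f(u)} a_{f(v)}=\prod_{v} a_{f(v)}^{d_H(v)}$, whence
\[
\hom(H,G)=\prod_{v\in V(H)} p_{d_H(v)}(a),\qquad p_k(a):=\sum_i a_i^{k},
\]
which depends only on the multiset $D_H$. Plugging this into $H\succcurlyeq T$ with $v(H)=v(T)$ and varying $a$ now genuinely reduces to the Muirhead comparison (equivalently, the Hardy--Littlewood--P\'olya/Karamata characterisation of majorisation via convex test functions, using log-convexity of $k\mapsto p_k(a)$) that forces $D_H$ to majorise $D_T$. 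Your earlier ``blow-up with prescribed degrees'' sketch was heading this way; the missing ingredient is the multiplicative (rank-one) edge-weight structure that collapses the edge product to $\prod_v a_{f(v)}^{d_H(v)}$.
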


The following refinement of the above lemma for graphs with the same degree sequence was proved by Sidorenko~\cite{Sidorenko94}. Given a graph $T$ and an edge $e=uv$ of $T$, define the \emph{degree} of $e$, denoted $d_T(e)$, to be the set $\{d_T(u),d_T(v)\}$.

\begin{lem}[Sidorenko~{\cite[Theorem~3.2]{Sidorenko94}}]
Let $H$ and $T$ be graphs such that $D_H=D_T$. If $H\succcurlyeq T$, then there exists a bijection $\beta:E(H)\to E(T)$ such that $d_H(e)=d_T(\beta(e))$ for all $e\in E(H)$.
\end{lem}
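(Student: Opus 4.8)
The statement is due to Sidorenko; the plan is to recover it by a weighted–graph perturbation argument, in the spirit of the constructions underlying Lemma~\ref{lem:DLP}. First note that $D_H=D_T$ forces $v(H)=v(T)=:n$ and $e(H)=e(T)=:m$, so the hypothesis $H\succcurlyeq T$ collapses to the statement that $\hom(H,G)\ge\hom(T,G)$ for every graph $G$. By a standard approximation (replace each vertex of a nonnegative weighted graph by a large independent set and insert quasirandom bipartite graphs of the prescribed densities; see~\cite{Lovasz12}), this inequality extends to weighted graphs: for every symmetric matrix $W\colon[N]^2\to[0,\infty)$,
\[
\sum_{\varphi\colon[N]\to[N]}\ \prod_{uv\in E(H)}W(\varphi(u),\varphi(v))\ \ge\ \sum_{\varphi\colon[N]\to[N]}\ \prod_{uv\in E(T)}W(\varphi(u),\varphi(v)).
\]

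Next I would feed in a rank‑one matrix perturbed in a rank‑one direction: fix $N$, fix $x_1,\dots,x_N>0$ and $y_1,\dots,y_N\in\mathbb{R}$, and put $W_\epsilon(i,j)=x_ix_j+\epsilon\,y_iy_j$, which is nonnegative whenever $|\epsilon|$ is small. Expanding $\prod_{uv\in E(H)}W_\epsilon(\varphi(u),\varphi(v))$ in powers of $\epsilon$ and summing over $\varphi$, the constant term equals $\prod_{w\in V(H)}p_{d_H(w)}$ with $p_k:=\sum_i x_i^k$, which is unchanged when $H$ is replaced by $T$ because $D_H=D_T$. A short computation shows the coefficient of $\epsilon^1$ equals $A\cdot\sum_{uv\in E(H)}\zeta_{d_H(u)}\zeta_{d_H(v)}$, where $A=\prod_w p_{d_H(w)}>0$ and $\zeta_k:=\big(\sum_i x_i^{k-1}y_i\big)/p_k$. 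Since the displayed inequality holds for both small positive and small negative $\epsilon$ and the constant terms agree, these first–order coefficients must coincide for $H$ and $T$, i.e.
\[
\sum_{uv\in E(H)}\zeta_{d_H(u)}\zeta_{d_H(v)}\ =\ \sum_{uv\in E(T)}\zeta_{d_T(u)}\zeta_{d_T(v)}.
\]

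Finally, let $K$ be the finite set of degrees occurring in $H$ or $T$. As $(x_i),(y_i)$ vary, the vector $(\zeta_k)_{k\in K}$ ranges over all of $\mathbb{R}^K$: writing $\zeta_k=\sum_i (x_i^k/p_k)(y_i/x_i)$, the matrix $(x_i^k/p_k)_{k\in K,\,i\in[N]}$ has full row rank when $N\ge|K|$ and the $x_i$ are distinct, by the generalized Vandermonde determinant. Hence the two sides above agree as polynomials in the indeterminates $(\zeta_k)_{k\in K}$; comparing the coefficients of $\zeta_i\zeta_j$ shows that $H$ and $T$ have the same number of edges whose endpoint-degree pair is $\{i,j\}$, for every $i,j$. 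Grouping the edges of $H$ and of $T$ by this pair and bijecting within each group produces the desired bijection $\beta\colon E(H)\to E(T)$ with $d_H(e)=d_T(\beta(e))$.

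The step needing the most care is the reduction to weighted graphs combined with the two‑sided nature of the perturbation: one must ensure $W_\epsilon$ stays a genuine nonnegative weighted graph for $\epsilon$ of both signs (which is exactly why one perturbs a strictly positive rank‑one matrix), and push the blow‑up/density approximation through so that the weighted inequality is legitimate. The $\epsilon$-expansion bookkeeping and the Vandermonde surjectivity are routine.
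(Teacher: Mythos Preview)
The paper does not give its own proof of this lemma; it simply cites Sidorenko's original~\cite{Sidorenko94}. So there is no in-paper argument to compare against directly.

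Your argument is correct. A couple of minor points: the sums should be over $\varphi\colon V(H)\to[N]$ rather than $\varphi\colon[N]\to[N]$; and the passage from simple graphs to arbitrary nonnegative weighted graphs deserves one sentence of justification, namely that $e(H)=e(T)$ makes both sides homogeneous of the same degree (so one may freely rescale a $[0,1]$-valued step graphon), and the standard graphon-limit argument handles the approximation including positive diagonal entries. With those in place, the $\epsilon$-expansion is exactly as you describe, the vanishing of the first-order difference follows from $h(\epsilon)\ge0$ with $h(0)=0$, and the generalized Vandermonde step gives surjectivity onto $\mathbb{R}^K$.

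Your method---perturb a strictly positive rank-one matrix in a second rank-one direction and equate first-order terms---is the natural refinement of the Muirhead-type test-graph arguments behind Leontovich's degree-sequence majorization lemma stated just above this one, and it is very much in the spirit of Sidorenko's original treatment. So although the paper itself supplies nothing to compare to, your proof is essentially the classical one rather than a genuinely new route.
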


\subsection{Independence Number}

Next, we give an alternative proof of~\cite[Proposition~2.6(d)]{Stoner22+} using Lemma~\ref{lem:DLP}; recall that $\alpha(H)$ is the size of the largest independent set in a graph $H$.

\begin{lem}[Stoner~{\cite[Proposition~2.6(d)]{Stoner22+}}]
\label{lem:alpha}
If $H$ and $T$ are graphs such that $H\succcurlyeq T$, then
\[\frac{e(H)}{v(H)-\alpha(H)}\geq \frac{e(T)}{v(T)-\alpha(T)}.\]
\end{lem}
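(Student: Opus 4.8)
The plan is to mimic the proofs of Lemma~\ref{lem:e/v} and Lemma~\ref{lem:sigma} by exhibiting an explicit feasible solution to $DLP(H,T)$ whose objective value forces the desired inequality. Let $(A,B)$ be a partition of $V(T)$ where $A$ is a maximum independent set of $T$, so $|A|=\alpha(T)$ and every edge of $T$ has at least one endpoint in $B:=V(T)\setminus A$, with $|B|=v(T)-\alpha(T)$. I would set $y(v):=\frac{e(H)}{v(H)-\alpha(H)}$ for every $v\in B$, $y(v):=0$ for every $v\in A$, and $y(e):=0$ for every $e\in E(T)$.

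The key point is to check constraint \eqref{eq:DhomConstraint}. Given any homomorphism $\varphi\colon H\to T$, consider $W:=\varphi^{-1}(B)\subseteq V(H)$. Since $A$ is independent in $T$, no edge of $H$ can have both endpoints mapped into $A$; equivalently, $V(H)\setminus W=\varphi^{-1}(A)$ is an independent set in $H$, so $|V(H)\setminus W|\le \alpha(H)$, giving $|W|\ge v(H)-\alpha(H)$. Hence
\[\sum_{v\in V(T)}\mu_\varphi(v)\,y(v)+\sum_{e\in E(T)}\mu_\varphi(e)\,y(e)=\sum_{v\in B}\mu_\varphi(v)\cdot\frac{e(H)}{v(H)-\alpha(H)}=|W|\cdot\frac{e(H)}{v(H)-\alpha(H)}\ge e(H),\]
so \eqref{eq:DhomConstraint} holds, and \eqref{eq:Dv+}, \eqref{eq:De+} hold trivially. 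Therefore $y$ is dual-feasible, and by Lemma~\ref{lem:DLP} (in the contrapositive), since $H\succcurlyeq T$, the objective value of this feasible point must be at least $e(T)$:
\[\sum_{v\in V(T)}y(v)+\sum_{e\in E(T)}y(e)=(v(T)-\alpha(T))\cdot\frac{e(H)}{v(H)-\alpha(H)}\ge e(T).\]
Rearranging yields $\frac{e(H)}{v(H)-\alpha(H)}\ge\frac{e(T)}{v(T)-\alpha(T)}$, as claimed.

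There is essentially no serious obstacle here: this is a direct analogue of the $\sigma$-based argument in Lemma~\ref{lem:sigma}, with $\alpha$ playing the role of the ``large side'' and $v-\alpha$ the role of a vertex cover. The only thing to be slightly careful about is the implicit assumption that $v(H)-\alpha(H)>0$ and $v(T)-\alpha(T)>0$, which holds because $H$ and $T$ are non-empty (a graph with an edge has $\alpha<v$); one should also note that $H\succcurlyeq T$ presupposes $H,T$ non-empty, so all the quantities appearing are well-defined and positive. I would also remark that this recovers Lemma~\ref{lem:e/v} when one only uses the trivial bound $\alpha\ge 0$ is not what is wanted — rather, the sharper content is exactly the use of a maximum independent set / minimum vertex cover of $T$.
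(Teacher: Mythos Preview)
Your proof is correct and essentially identical to the paper's: both place weight $\frac{e(H)}{v(H)-\alpha(H)}$ on the vertices of $V(T)$ outside a maximum independent set, zero elsewhere, and use that the preimage of an independent set under a homomorphism is independent to verify \eqref{eq:DhomConstraint}. Your write-up is slightly more explicit in justifying the bound $|W|\ge v(H)-\alpha(H)$ and in noting well-definedness, but the argument is the same.
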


\begin{proof}
Let $S$ be the largest independent set in $T$. Set $y(v)=0$ for all $v\in S$, $y(v)=\frac{e(H)}{v(H)-\alpha(H)}$ for all $v\in V(T)\setminus S$ and $y(e)=0$ for all $e\in E(T)$. Now, for any homomorphism $\varphi$ from $H$ to $T$, at least $v(H)-\alpha(H)$ vertices of $H$ are mapped to $V(T)\setminus S$. So, 
\[\sum_{v\in V(T)}\mu_\varphi(v)\cdot y(v) + \sum_{e\in E(T)}\mu_\varphi(e)\cdot y(e)\geq  (v(H)-\alpha(H))\left(\frac{e(H)}{v(H)-\alpha(H)}\right)=e(H).\]
Thus, the constraints of $DLP(H,T)$ are satisfied. Since $H\succcurlyeq T$, Lemma~\ref{lem:DLP} says that the value of $DLP(H,T)$ must be at least $e(T)$. So,
\[(v(T)-\alpha(T))\left(\frac{e(H)}{v(H)-\alpha(H)}\right)\geq e(T)\]
which completes the proof.
\end{proof}

\subsection{Exploiting Leaves}

The next lemma roughly says that, if $H$ and $T$ are trees with the same number of vertices satisfying $H\succcurlyeq T$ and $T$ has a vertex that is adjacent to a large number of leaves and a small number of non-leaves, then so does $H$. After proving this, we will build upon the ideas of the proof to establish Theorem~\ref{th:pathsNotUnique}. 

\begin{defn}
Given a non-empty tree $H$ and a vertex $v\in V(H)$, let $\ell_H(v)$ be the number of leaves of $H$ that are adjacent to $v$.
\end{defn}

\begin{defn}
Given a non-empty tree $H$ and a vertex $v\in V(H)$, define $\lambda_H(v):=\frac{\ell_H(v)}{d_H(v)-\ell_H(v)}$ if $d_H(v)>\ell_H(v)$ and $\lambda_H(v)=\infty$ otherwise. 
\end{defn}

\begin{defn}
Given a non-empty tree $H$, let $\lambda(H):=\max_{v\in V(H)}\lambda_H(v)$. 
\end{defn}

\begin{obs}
Every non-empty tree $H$ satisfies $\lambda(H)\geq1$.
\end{obs}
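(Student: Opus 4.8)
The claim to prove is the Observation: every non-empty tree $H$ satisfies $\lambda(H)\geq 1$, where $\lambda(H):=\max_{v\in V(H)}\lambda_H(v)$ and $\lambda_H(v)=\ell_H(v)/(d_H(v)-\ell_H(v))$ when $d_H(v)>\ell_H(v)$, and $\lambda_H(v)=\infty$ otherwise.

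The plan is to exhibit a single vertex $v$ witnessing $\lambda_H(v)\geq 1$, i.e.\ a vertex satisfying $\ell_H(v)\geq d_H(v)-\ell_H(v)$, or equivalently $2\ell_H(v)\geq d_H(v)$. Equivalently, at least half of the neighbours of $v$ are leaves. A natural candidate is a ``deepest'' non-leaf vertex. Concretely, root $H$ at an arbitrary vertex, or better, consider the following: if $H$ has only one edge then $H=K_2$ and any vertex $v$ has $d_H(v)=\ell_H(v)=1$, so $\lambda_H(v)=\infty\geq1$ and we are done. Otherwise $H$ has a non-leaf vertex. Among all non-leaf vertices, take one, say $v$, whose distance to the ``centre'' (or to a fixed root) is maximum — more simply, take a longest path $P$ in $H$ and let $v$ be the second vertex of $P$ (the unique neighbour of an endpoint leaf of $P$).

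First I would argue that for this vertex $v$, every neighbour of $v$ except possibly one is a leaf. Indeed, let $P=x_0x_1\cdots x_m$ be a longest path in $H$, so $x_0$ and $x_m$ are leaves, and set $v=x_1$. Any neighbour $u$ of $v$ other than $x_2$ must be a leaf: if $u$ had a neighbour $w\neq v$, then $w\cdots u\, v\, x_2\cdots x_m$ would be a path longer than $P$ (here we use that $H$ is a tree, so $w\notin\{v,x_2,\dots,x_m\}$), contradicting maximality. Hence at most one neighbour of $v$ (namely $x_2$) is a non-leaf, so $\ell_H(v)\geq d_H(v)-1$. If $d_H(v)=\ell_H(v)$ then $\lambda_H(v)=\infty$; otherwise $d_H(v)-\ell_H(v)=1$ and $\lambda_H(v)=\ell_H(v)\geq 1$ since $v$ has at least one leaf neighbour (e.g.\ $x_0$). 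In either case $\lambda_H(v)\geq 1$, hence $\lambda(H)\geq 1$.

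I do not anticipate a real obstacle here; the only thing to be careful about is the degenerate case $H=K_2$ (where there is no non-leaf vertex, handled separately) and making sure the longest-path argument genuinely uses that $H$ is a tree (acyclicity guarantees the extended walk $w\cdots u\,v\,x_2\cdots x_m$ is actually a path). One could alternatively phrase the proof via a leaf-closest-to-the-centre argument or a simple averaging/handshake count, but the longest-path argument is cleanest and self-contained. I would write it in two or three sentences as an immediate remark following the definition of $\lambda(H)$.

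\begin{proof}
If $e(H)=1$ then $H$ is a single edge and its two endpoints $v$ satisfy $d_H(v)=\ell_H(v)=1$, so $\lambda_H(v)=\infty$ and $\lambda(H)\geq 1$. Otherwise, let $P=x_0x_1\cdots x_m$ with $m\geq 2$ be a path in $H$ of maximum length, and set $v:=x_1$. We claim every neighbour of $v$ other than $x_2$ is a leaf. Suppose some neighbour $u\notin\{x_0,x_2\}$ of $v$ has a neighbour $w\neq v$. Since $H$ is a tree and $P$ is a path through $v$, the vertices $w,u$ lie outside $\{v,x_2,\dots,x_m\}$, so $w\,u\,v\,x_2\cdots x_m$ is a path of length $m+1$, contradicting the maximality of $P$; and $x_0$ is a leaf since $P$ is maximal. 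Hence at most one neighbour of $v$ is a non-leaf, so $\ell_H(v)\geq d_H(v)-1$. If $d_H(v)=\ell_H(v)$, then $\lambda_H(v)=\infty\geq 1$. Otherwise $d_H(v)-\ell_H(v)=1$, so $\lambda_H(v)=\ell_H(v)\geq 1$ because $x_0$ is a leaf adjacent to $v$. In all cases $\lambda(H)\geq\lambda_H(v)\geq 1$.
\end{proof}
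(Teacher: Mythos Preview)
Your proof is correct and follows the same approach as the paper: pick a longest path and look at its second vertex, which has at least one leaf neighbour and at most one non-leaf neighbour. The paper's proof is simply a terser version of yours, omitting the separate $K_2$ case and the explicit verification that other neighbours of $v$ must be leaves.
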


\begin{proof}
Take a longest path in $H$. The second to last vertex on this path, say $v$, has at least one leaf neighbour and at most one non-leaf neighbour. Thus, $\lambda(H)\geq \lambda_H(v)\geq 1$. 
\end{proof}

\begin{lem}
\label{lem:lambda}
Let $H$ and $T$ be trees with $v(H)=v(T)\geq2$. If $H\succcurlyeq T$, then $\lambda(H)\geq \lambda(T)$. 
\end{lem}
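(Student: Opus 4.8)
The natural strategy is to mirror the proof of Lemma~\ref{lem:sigma}: assume $H \succcurlyeq T$ and use Lemma~\ref{lem:DLP} by constructing a dual-feasible weighting $y$ on $V(T) \cup E(T)$ that extracts the quantity $\lambda(T)$, then show the dual objective being at least $e(T)$ forces $\lambda(H) \geq \lambda(T)$. Concretely, let $v^\ast$ be a vertex of $T$ achieving $\lambda(T) = \lambda_T(v^\ast)$, and let $L$ be the set of $\ell_T(v^\ast)$ leaves adjacent to $v^\ast$. The idea is to place all the dual weight on $v^\ast$ and on the edges joining $v^\ast$ to $L$: set $y(v^\ast) = a$, $y(e) = b$ for each of the $\ell_T(v^\ast)$ edges from $v^\ast$ to a leaf in $L$, and $y = 0$ everywhere else, where $a, b \geq 0$ are constants (depending only on invariants of $H$) to be chosen so that constraint \eqref{eq:DhomConstraint} holds for every $\varphi \in \Hom(H,T)$.

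The key combinatorial estimate is a lower bound, valid for every homomorphism $\varphi : H \to T$, on the quantity $\mu_\varphi(v^\ast) \cdot a + (\text{number of edges of }H\text{ mapped to }v^\ast L\text{-edges}) \cdot b$. Since $v(H) = v(T)$, a homomorphism $H \to T$ is quite constrained: writing $d = d_T(v^\ast) - \ell_T(v^\ast)$ for the number of non-leaf neighbours of $v^\ast$, one argues that if $\varphi$ puts many vertices of $H$ onto $v^\ast$, then (because $H$ also has only $v(T)$ vertices and the neighbourhood structure around $v^\ast$ in $T$ is small) a controlled number of $H$-edges must land on the pendant edges at $v^\ast$, and moreover the vertices of $H$ landing on $v^\ast$ together with their images force a relation between $\ell_H$ and $d_H$ at the relevant vertices. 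The cleanest route is probably: choose $a$ and $b$ so that \eqref{eq:DhomConstraint} reads, for each $\varphi$, as a statement that the "cost" of $\varphi$ is at least $e(H)$, with the extremal $\varphi$ being exactly the homomorphism that collapses $H$ onto the star $S_{d_H(u)+1}$ at the best vertex $u$ of $H$; then the dual objective is $a + \ell_T(v^\ast) b$, and the inequality "dual objective $\geq e(T)$" rearranges into $\ell_T(v^\ast) \cdot (\text{something about } H) \geq (d_T(v^\ast) - \ell_T(v^\ast)) \cdot (\text{same thing})$, i.e.\ $\lambda(T) \leq \lambda(H)$.

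I would carry this out in the order: (1) fix $v^\ast$, $L$, set up the three-zone dual weighting with symbolic $a, b$; (2) prove the homomorphism estimate — for any $\varphi : H \to T$, bound $\mu_\varphi(v^\ast)$ and the number of $H$-edges mapped into $\varphi^{-1}$ of the pendant edges at $v^\ast$ in terms of $\ell_H$ and $d_H$ at the vertex or vertices of $H$ that $\varphi$ sends near $v^\ast$, crucially using $v(H) = v(T)$ so that no vertices are "wasted"; (3) solve for $a, b$ making \eqref{eq:DhomConstraint} hold with equality on the extremal homomorphisms; (4) invoke Lemma~\ref{lem:DLP} to conclude $a + \ell_T(v^\ast) b \geq e(T)$; (5) algebraically rearrange to $\lambda(H) \geq \lambda(T)$, using $e(H) = e(T) = v(T) - 1$.

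The main obstacle is step (2): the homomorphism estimate. Unlike the $\sigma$ bound, where "at least $\sigma(H)$ vertices map to $A$" is immediate from bipartiteness, here one needs a genuinely local-to-global argument about how a homomorphism can concentrate mass at a fixed vertex of $T$, and the bound has to be tight precisely for the star-collapsing homomorphism while remaining valid for pathological $\varphi$ that spread $H$ out or fold it in unexpected ways. Getting the constants $a, b$ to simultaneously satisfy all of \eqref{eq:DhomConstraint} and produce a clean final inequality will require care; I expect one may need to track, for each $\varphi$, not just $\mu_\varphi(v^\ast)$ but the full local picture of $\varphi$ on the ball of radius one around $v^\ast$, and to use that $H$ is a tree (so its edge count equals its vertex count minus one) to close the accounting. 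If a direct dual certificate proves too delicate, a fallback is to argue via a cleverly chosen target graph $G$ (as in the proof of Lemma~\ref{lem:radius}) built to blow up the "leaf-cluster at $v^\ast$" structure of $T$, but the dual-LP route is more in the spirit of this section and should be pushed first.
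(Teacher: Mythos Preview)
Your instinct to use Lemma~\ref{lem:DLP} is correct, and this is also what the paper does. However, the specific weighting you propose cannot satisfy constraint~\eqref{eq:DhomConstraint}. With $y$ supported only on $v^\ast$ and the pendant edges at $v^\ast$, any homomorphism $\varphi:H\to T$ that avoids $v^\ast$ entirely gives left-hand side zero. Such $\varphi$ exist whenever $T$ is not a star: take any edge $ab\in E(T)$ with $v^\ast\notin\{a,b\}$ and collapse the bipartite tree $H$ onto that single edge. Since the interesting case is precisely $\lambda(T)<\infty$ (the star case is handled by Theorem~\ref{th:star}), your step~(2) fails before any choice of $a,b$ can rescue it. The hypothesis $v(H)=v(T)$ does not prevent this, because homomorphisms need not be injective.

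The paper repairs this by taking a \emph{global} base weighting and perturbing it locally near $v^\ast$. Start from $y(v)=e(T)/v(T)$ on every vertex and $y(e)=0$ on every edge; this has dual objective exactly $e(T)$ and satisfies \eqref{eq:DhomConstraint} with equality for every $\varphi$ (since $\sum_v\mu_\varphi(v)=v(H)=v(T)$). Now perturb: set $y=0$ on the $\ell$ leaf neighbours of $v^\ast$, put $y=e(T)/v(T)-\delta$ on each pendant edge at $v^\ast$, and put $y=\lambda(T)\cdot\delta-\varepsilon$ on each edge from $v^\ast$ to a non-leaf neighbour, where $\delta>0$ is small and $0<\varepsilon<(\lambda(T)-\lambda(H))\delta$. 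The objective drops by exactly $d\varepsilon>0$, where $d=d_T(v^\ast)-\ell_T(v^\ast)$. The verification of \eqref{eq:DhomConstraint} is then a perturbation argument, cleanly organized by rooting $H$ and assigning each oriented edge $(u,v)$ a ``cost'' $y(uv)+y(v)$; the only arcs of cost below $e(T)/v(T)$ point from $v^\ast$ to a leaf, and one checks by cases (does $\varphi$ send a non-leaf to a leaf neighbour of $v^\ast$? which vertices of $H$ land on $v^\ast$?) that the deficit on those arcs is always covered by the surplus $\lambda(T)\delta-\varepsilon$ on the arcs to non-leaf neighbours, precisely because $\lambda_H(r)\le\lambda(H)<\lambda(T)$ at each $r\in\varphi^{-1}(v^\ast)$. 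Your proposed weighting is essentially this construction with the base layer removed, which is why it breaks.
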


\begin{proof}
Suppose that $\lambda(T)>\lambda(H)$ and let $x\in V(T)$ such that $\lambda_T(x)=\lambda(T)$. If $\lambda(T)=\infty$, then $T$ is a star and the fact that $H\not\succcurlyeq T$ for every tree $H$ on $v(T)$ vertices, apart from $T$ itself, follows from Theorem~\ref{th:star}. So, we assume that $\lambda(T)\in \mathbb{R}$. 

Let $\ell:=\ell_T(x)$ and $d:=d_T(x)-\ell_T(x)$ and note that $\lambda(T)=\ell/d$ by definition of $x$. Let $u_1,\dots,u_\ell$ be the leaf neighbours of $x$ and $w_1,\dots,w_d$ be the non-leaf neighbours of $x$. Choose $\delta>0$ sufficiently small so that
\begin{equation}\label{eq:delta}(v(T)+1)\cdot \left(\frac{e(T)}{v(T)}-\delta\right)>e(T).\end{equation}
Next, choose $\varepsilon>0$ small enough so that
\begin{equation}\label{eq:epsilon}\varepsilon < (\lambda(T)-\lambda(H))\cdot \delta\end{equation}
We define a function $y:V(T)\cup E(T)\to\mathbb{R}$ as follows:
\[y(u_1)=\cdots=y(u_\ell)=0,\]
\[y(v)=\frac{e(T)}{v(T)}\text{ for all }v\in V(T)\setminus\{u_1,\dots,u_\ell\},\]
\[y(xu_1)=\cdots=y(xu_\ell)=\frac{e(T)}{v(T)}-\delta,\]
\[y(xw_1)=\cdots=y(xw_d)=\lambda(T)\cdot\delta-\varepsilon,\]
\[y(e)=0\text{ for all }e\in E(T)\text{ such that }x\notin e.\]
We have
\[\sum_{v\in V(T)}y(v) + \sum_{e\in E(T)}y(e) = \frac{e(T)}{v(T)}\cdot(v(T)-\ell) + \ell\cdot\left(\frac{e(T)}{v(T)}-\delta\right) + d\cdot\left(\lambda(T)\cdot \delta-\varepsilon\right)\]
\[=e(T)-d\cdot \varepsilon < e(T).\]
Thus, by Lemma~\ref{lem:DLP}, in order to show that $H\not\succcurlyeq T$, it suffices to show that \eqref{eq:DhomConstraint} holds for every homomorphism $\varphi:H\to T$. 

To analyze this, we define a notion of ``cost.'' Given an ordered pair $(u,v)\in V(T)\times V(T)$ with $uv\in E(T)$, define the \emph{cost} of $(u,v)$ to be $c(u,v) := y(uv) + y(v)$. Given an arbitrary root vertex $r$ in $V(H)$, let $D_r$ be the directed graph obtained from $H$ by orienting all edges away from $r$. Then, given a homomorphism $\varphi:H\to T$, we have
\[\sum_{v\in V(T)}\mu_\varphi(v)\cdot y(v) + \sum_{e\in E(T)}\mu_\varphi(e)\cdot y(e) = y(\varphi(r)) + \sum_{(a,b)\in A(D_r)}c(\varphi(a),\varphi(b)).\]
So, to verify \eqref{eq:DhomConstraint}, it is useful to analyze the cost function. We observe the following:
\[c(x,u_i) = \frac{e(T)}{v(T)}-\delta\text{ for }1\leq i\leq \ell,\]
\[c(u_i,x) = \frac{2e(T)}{v(T)}-\delta\text{ for }1\leq i\leq \ell,\]
\[c(x,w_i)=c(w_i,x)=\frac{e(T)}{v(T)} + \lambda(T)\cdot \delta - \varepsilon\text{ for }1\leq i\leq d,\]
\[c(u,v)=\frac{e(T)}{v(T)}\text{ for any other }uv\in E(T).\]
In particular, $c(u,v)\geq \frac{e(T)}{v(T)}-\delta$ for every adjacent pair $(u,v)$, and the only such pairs satisfying $c(u,v)<\frac{e(T)}{v(T)}$ are those of the form $(x,u_i)$ for $1\leq i\leq \ell$. Moreover, $y(v)\geq \frac{e(T)}{v(T)}$ for every vertex $v$ apart from $u_1,\dots,u_\ell$. Thus, any homomorphism $\varphi:H\to T$ that does not map any vertex to $u_1,\dots,u_\ell$ satisfies \eqref{eq:DhomConstraint} automatically. 

Now, suppose that $\varphi$ maps a non-leaf vertex $r\in V(H)$ to $u_i$ for some $i$. Recall that $u_i$ is a leaf, and so every neighbour of $r$ must be mapped to $x$. So, 
\[y(\varphi(r)) + \sum_{(a,b)\in A(D_r)}c(\varphi(a),\varphi(b))\geq d_H(r)\cdot \left(\frac{2e(T)}{v(T)}-\delta\right) + \left(v(H)-d_H(r)-1\right)\left(\frac{e(T)}{v(T)}-\delta\right)\]
\[ = d_H(r)\cdot\left(\frac{e(T)}{v(T)}\right) + \left(v(H)-1\right)\cdot \left(\frac{e(T)}{v(T)}-\delta\right)> (v(H)+1)\cdot \left(\frac{e(T)}{v(T)}-\delta\right)\]
which is greater than $e(T)$ by \eqref{eq:delta} and the fact that $v(H)=v(T)$. 

Thus, we can restrict our attention to only those homomorphisms $\varphi$ that map a non-empty set of leaves, and no other vertices, to $\{u_1,\dots,u_\ell\}$. Let $r_1,\dots,r_q$ be the vertices of $H$ that are mapped to $x$ by $\varphi$. Note that all of the vertices $r_1,\dots,r_q$ are on the same side of the bipartition of $H$ and, in particular, no two of them are adjacent. The cost of each arc of $D_{r_1}$ from $r_i$ to a leaf, for any $1\leq i\leq q$, is at least
\[\frac{e(T)}{v(T)}-\delta.\]
Since no non-leaf is mapped to $\{u_1,\dots,u_\ell\}$, for $1\leq i\leq q$, the cost of any arc of $D_{r_1}$ entering $r_i$ or leaving $r_i$ and entering a non-leaf vertex is precisely
\[\frac{e(T)}{v(T)} + \lambda(T)\cdot \delta - \varepsilon\]
and, since none of the vertices $r_1,\dots,r_q$ are adjacent, these arcs are all distinct. The cost of any other arc of $D_{r_1}$ is at least $\frac{e(T)}{v(T)}$. So, we get
\[ y(\varphi(r_1)) + \sum_{(a,b)\in A(D_{r_1})}c(\varphi(a),\varphi(b))\]
\[=e(T) + \sum_{i=1}^q \left((d_H(r_i)-\ell_H(r_i))\cdot\left(\lambda(T)\cdot \delta-\varepsilon\right) - \ell_H(r_i)\cdot \delta \right).\]
For each $1\leq i\leq q$, we have
\[(d_H(r_i)-\ell_H(r_i))\cdot\left(\lambda(T)\cdot \delta-\varepsilon\right) - \ell_H(r_i)\cdot \delta\]
\[\geq(d_H(r_i)-\ell_H(r_i))\cdot\left(\lambda(T)\cdot \delta-\varepsilon - \lambda(H)\cdot \delta\right)\]
which is non-negative by \eqref{eq:epsilon}. So \eqref{eq:DhomConstraint} holds which, by Lemma~\ref{lem:DLP}, implies that $H\not\succcurlyeq T$ and the proof is complete.
\end{proof}

Next, we build upon the ideas used in the proof of the previous lemma to prove Theorem~\ref{th:pathsNotUnique}. However, since $\lambda(P_k)=1$, we will need a slightly different argument which relies on the specific structure of near-stars. 

\begin{proof}[Proof of Theorem~\ref{th:pathsNotUnique}]
Let $H$ be a $k$-vertex near-star with $\ell$ leaves where $\frac{k+1}{2}\leq \ell\leq k-3$. Note that these inequalities together imply that $k\geq7$. Thus, $\ell\geq 4$ and there is a unique vertex of $H$, say $x$, of degree $\ell$. 

The tree $H$ was obtained from the star $S_{\ell+1}$ by subdividing $p:=k-\ell-1$ distinct edges. Note that the number of neighbours of $x$ of degree two is precisely $p$. By the assumed bounds on $\ell$, we have 
\[2\leq p\leq k- \frac{k+1}{2} - 1=\frac{k-3}{2}\leq \ell-2.\]
Thus, if we let $q$ be the number of leaves of $H$ adjacent to $x$, we have that $p\geq2$ and $q\geq 2$. 

Pick $0<\gamma<1$ small enough so that
\begin{equation}
\label{eq:gamma}
(k+1)\cdot\left(\frac{k-1}{k}-\gamma\right)>k-1.
\end{equation}
Next, we define
\begin{equation}
\label{eq:deltaAgain}
\delta:=\frac{\gamma\cdot(p-1)}{q+p-1}.
\end{equation}
Finally, pick $\varepsilon>0$ so that
\begin{equation}
\label{eq:epsilonAgain}
\frac{p\cdot\delta}{p+q-1}< \varepsilon<\delta.
\end{equation}
Note that such an $\varepsilon$ exists because $\delta<1$ and $q\geq2$. Let $a,b,c,d$ be the first four vertices of the path $P_k$, which all exist because $k\geq7$. We define a function $y:V(P_k)\cup E(P_k)\to\mathbb{R}$ as follows. Set
\[y(a)=0,\]
\[y(b)=\frac{k-1}{k} - \gamma+\varepsilon,\]
\[y(c)=\frac{k-1}{k} - \varepsilon,\]
\[y(v)=\frac{k-1}{k} \text{ for all other vertices }v,\]
\[y(ab)=\frac{k-1}{k}-\delta,\]
\[y(bc)=\gamma,\]
\[y(cd)=\varepsilon,\]
\[y(e)=0\text{ for all other edges }e.\]
We have
\[\sum_{v\in V(P_k)}y(v)+\sum_{e\in E(P_k)}y(e)\]
\[=0\left(\frac{k-1}{k}-\gamma+\varepsilon\right) + \left(\frac{k-1}{k}-\varepsilon\right) + (k-3)\left(\frac{k-1}{k}\right) + \left(\frac{k-1}{k}-\delta\right)+\gamma+\varepsilon\]
\[=k-1+\varepsilon-\delta\]
which is less than $k-1=e(P_k)$ by \eqref{eq:epsilonAgain}. 

So, by Lemma~\ref{lem:DLP}, to show that $H\not\succcurlyeq P_k$, it suffices to show that \eqref{eq:DhomConstraint} is satisfied. It is useful to define a cost function associated to $y$ as in the proof of Lemma~\ref{lem:lambda}. We have
\[c(a,b) = \frac{2(k-1)}{k}-\delta-\gamma+\varepsilon\]
\[c(b,a)= \frac{k-1}{k}-\delta,\]
\[c(b,c)=\frac{k-1}{k}+\gamma-\varepsilon,\]
\[c(c,b)=\frac{k-1}{k}+\varepsilon,\]
\[c(c,d)=\frac{k-1}{k}+\varepsilon,\]
and
\[c(u,v)=\frac{k-1}{k}\text{ for all other }uv\in E(P_k).\]
In particular, the only pair with a cost less than $\frac{k-1}{k}$ is $(b,a)$, which has a cost of $\frac{k-1}{k}-\delta$.

If $\varphi:H\to P_k$ maps a non-leaf vertex $r$ of $H$ to $a$, then we get
\[y(\varphi(r))+\sum_{(u,v)\in A(D_r)}c(\varphi(u),\varphi(v))\geq d_H(r)\cdot \left(\frac{2(k-1)}{k}-\delta-\gamma+\varepsilon\right) + (k-d_H(r)-1)\cdot\left(\frac{k-1}{k}-\delta\right)\]
\[=d_H(r)\cdot\left(\frac{k-1}{k}-\gamma+\varepsilon\right) + (k-1)\cdot\left(\frac{k-1}{k}-\delta\right)\geq (k+1)\left(\frac{k-1}{k}-\gamma\right)\]
which is greater than $k-1$ by  \eqref{eq:gamma}. So, we may focus our attention on homomorphisms which map a (possibly empty) set of leaves to $a$. 

Suppose that $x$ is not mapped to $b$ nor $c$. By the result of the previous paragraph, it is not mapped to $a$. So, $y(\varphi(x))=\frac{k-1}{k}$. Also, since every vertex of $H$ is at distance at most two from $x$, there is no vertex of $H$ mapped to $a$. Thus, every arc of $D_x$ contributes a cost of at least $\frac{k-1}{k}$ and so \eqref{eq:DhomConstraint} holds.

Suppose next that $x$ is mapped to $b$. Each leaf that is adjacent to $x$ contributes a cost of at least $\frac{k-1}{k}-\delta$. Since no non-leaf vertices are mapped to $a$, every non-leaf neighbour of $x$ is mapped to $c$, and so they contribute a cost of at least $\frac{k-1}{k}+\gamma-\varepsilon$ each. The vertices at distance two from $x$ are mapped to either $b$ or $d$; either way, they contribute a cost of $\frac{k-1}{k}+\varepsilon$ each. Thus, 
\[y(\varphi(x))+\sum_{(u,v)\in A(D_x)}c(\varphi(u),\varphi(v))\]
\[\geq\frac{k-1}{k}-\gamma+\varepsilon+ q\cdot \left(\frac{k-1}{k}-\delta\right) + p\cdot\left(\frac{k-1}{k}+\gamma-\varepsilon\right) + p\cdot\left(\frac{k-1}{k}+\varepsilon\right)\]
\[\geq k-1+(p-1)(\gamma-\varepsilon) - q\delta.\]
By the upper bound on $\varepsilon$ in \eqref{eq:epsilon}, this is at least
\[\geq k-1+(p-1)(\gamma-\delta) - q\delta = k-1+(p-1)\gamma -(q+p-1)\delta = k-1.\]

Finally, suppose that $x$ is mapped to $c$. The neighbours of $x$ contribute a cost of exactly $\frac{k-1}{k}+\varepsilon$ each. The $p$ leaves of $H$ that are not adjacent to $x$ contribute a cost of at least $\frac{k-1}{k}-\delta$ each. Thus, 
\[y(\varphi(x))+\sum_{(u,v)\in A(D_x)}c(\varphi(u),\varphi(v))\]
\[\geq \frac{k-1}{k}-\varepsilon + (p+q)\cdot \left(\frac{k-1}{k}+\varepsilon\right) + p\cdot \left(\frac{k-1}{k}-\delta\right)\]
\[=k-1 + (p+q-1)\varepsilon - \delta\cdot p>k-1\]
by \eqref{eq:epsilon}. Thus, \eqref{eq:DhomConstraint} holds and we have $H\not\succcurlyeq P_k$ by Lemma~\ref{lem:DLP}.
\end{proof}

Note that the previous proof did not require the full structure of the path $P_k$. The same argument yields the following more general result.

\begin{thm}
\label{th:nearStar}
Suppose that $H$ is a $k$-vertex near-star with $\ell$ leaves for $\frac{k+1}{2}\leq \ell\leq k-3$. If $T$ is a tree with $k$ vertices containing a path $abc$ such that $a$ is a leaf and $b$ and $c$ have degree two, then $H\not\succcurlyeq T$.
\end{thm}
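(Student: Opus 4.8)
The plan is to re-run the argument in the proof of Theorem~\ref{th:pathsNotUnique} almost verbatim, observing that it only used very local information about $P_k$ near the leaf end. Specifically, fix a $k$-vertex tree $T$ containing a path $abc$ where $a$ is a leaf and $d_T(b)=d_T(c)=2$. Let $d$ be the unique neighbour of $c$ other than $b$. As before, let $H$ be the $k$-vertex near-star with $\ell$ leaves, let $x$ be its unique vertex of degree $\ell$, let $p:=k-\ell-1$ be the number of subdivided edges (equivalently, the number of degree-two neighbours of $x$), and let $q$ be the number of leaf neighbours of $x$; the hypothesis $\frac{k+1}{2}\le \ell\le k-3$ again forces $p\ge 2$ and $q\ge 2$.

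The key point is that the dual certificate $y:V(T)\cup E(T)\to\mathbb{R}$ can be defined exactly as in the proof of Theorem~\ref{th:pathsNotUnique}, but now living on $T$: set $y(a)=0$, $y(b)=\frac{k-1}{k}-\gamma+\varepsilon$, $y(c)=\frac{k-1}{k}-\varepsilon$, and $y(v)=\frac{k-1}{k}$ for all other vertices $v$ of $T$; set $y(ab)=\frac{k-1}{k}-\delta$, $y(bc)=\gamma$, $y(cd)=\varepsilon$, and $y(e)=0$ for all other edges $e$. Here $\gamma,\delta,\varepsilon$ are chosen by \eqref{eq:gamma}, \eqref{eq:deltaAgain}, \eqref{eq:epsilonAgain} exactly as before; note \eqref{eq:gamma} uses only $e(P_k)=e(T)=k-1$, which still holds since $T$ has $k$ vertices. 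The objective value computation is identical: since $T$ has $k$ vertices and $k-1$ edges, $\sum_v y(v)+\sum_e y(e) = k-1+\varepsilon-\delta < k-1 = e(T)$, so by Lemma~\ref{lem:DLP} it suffices to verify \eqref{eq:DhomConstraint} for every $\varphi:H\to T$.

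For the feasibility check, I would introduce the cost function $c(u,v):=y(uv)+y(v)$ exactly as before. The crucial structural fact is that the only ordered pair $(u,v)$ with $uv\in E(T)$ of cost strictly less than $\frac{k-1}{k}$ is $(b,a)$, with cost $\frac{k-1}{k}-\delta$ — this holds because $a$ is a leaf (so $a$ has no incident edges other than $ab$) and $b,c$ have degree two (so the only nonzero edge-weights are on $ab$, $bc$, $cd$ and there are no other edges at $b$ or $c$ to spoil the computation). Given this, the three cases in the original proof — $x\mapsto a$ (impossible for a non-leaf, as it costs more than $k-1$), $x$ not mapped to $\{a,b,c\}$, $x\mapsto b$, and $x\mapsto c$ — go through word for word, because every inequality used only the costs listed above, the facts $p\ge2$, $q\ge2$, the relations among $\gamma,\delta,\varepsilon$, and that every vertex of $H$ is within distance two of $x$. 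The one subtlety to state carefully is that when $x\mapsto b$ or $x\mapsto c$, the degree-two neighbours of $x$ in $H$ are forced to $c$ or to $\{b,d\}$ respectively, and their own further neighbours land on vertices of cost $\ge \frac{k-1}{k}$ — this still works because $T$ is a tree (so distances behave), not because $T$ equals $P_k$.

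I do not expect a genuine obstacle here; the whole content of Theorem~\ref{th:pathsNotUnique}'s proof was already local, and the statement of Theorem~\ref{th:nearStar} is essentially the abstraction of that observation. The only thing requiring a little care is checking that the case analysis on the image of $x$ is genuinely exhaustive in $T$ (rather than $P_k$): since $x$ has a neighbour that is not mapped to $a$ only in the excluded non-leaf case, and since all of $V(H)$ lies within distance two of $x$, once $\varphi(x)\notin\{a,b,c\}$ no vertex of $H$ can reach $a$, which is what makes the generic case trivial. So the proof is simply: "the argument of Theorem~\ref{th:pathsNotUnique} applies verbatim with $P_k$ replaced by $T$ and $u_1u_2u_3u_4$ replaced by $abcd$," with the above sentence of justification for why only the $abcd$-neighbourhood of $T$ mattered.
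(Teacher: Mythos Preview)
Your proposal is correct and matches the paper's own treatment: the paper proves Theorem~\ref{th:nearStar} by the single remark that ``the previous proof did not require the full structure of the path $P_k$; the same argument yields the following more general result.'' Your write-up spells out precisely the locality observations (the only sub-$\frac{k-1}{k}$ cost pair is $(b,a)$, the case split on $\varphi(x)\in\{a,b,c\}$ versus not, and the distance-two reachability argument from $x$) that make this transfer valid, and these checks are all sound.
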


\section{Trees on Few Vertices}
\label{sec:smallTrees}

In this section, we describe the relation $\succcurlyeq$ on all trees on at most $8$ vertices in a series of tables. In each table, the cell on the row corresponding to $T_i$ and column corresponding to $T_j$ is grey if and only if $T_j\succcurlyeq T_i$. The labelling of the trees is given in Appendix~\ref{app:smallTrees}. By Corollary~\ref{cor:biggerTreeIsBigger}, we need only consider pairs $i$ and $j$ such that $v(T_j)\geq v(T_i)$; for this reason, all other cells are left blank. Trees are listed in order of increasing number of vertices and thick lines in the tables separate trees on different numbers of vertices. 

Each cell contains a reference to the paper in which it first appeared (to the best of our knowledge), to a theorem or lemma from this paper, or to an ad hoc construction provided in Appendices~\ref{app:Primalcertificates} or~\ref{app:Dualcertificates}. All references to Erd\H{o}s and Simonovits~\cite{ErdosSimonovits82} refer to the result of Godsil which appeared there. All references to~\cite{Stoner22+} refer to Lemma~\ref{lem:alpha}. It is trivial that $H\succcurlyeq H$ for any graph $H$, and we do not reference anything in the diagonal entries of the tables. Likewise, proving $S_k\succcurlyeq S_\ell$ when $k\geq\ell$ is an easy application of H\"older's Inequality, and we are not sure who first observed it, so we will simply write ``$\st$'' for ``star'' inside of all such cells and not include a reference for it. When $T_j\succcurlyeq T_i$ can be deduced from transitivity involving another tree  $T_k$, then we write ``$\tr_k$'' inside the cell on the $T_i$ row and $T_j$ column, unless the result $T_j\succcurlyeq T_i$ was proven earlier chronologically than at least one of $T_j\succcurlyeq T_k$ or $T_k\succcurlyeq T_i$. Cases in which $T_j\not\succcurlyeq T_i$ can be deduced from transitivity are treated similarly. Likewise, if $T_i\succcurlyeq T_j$ for $T_i\neq T_j$, then $T_j\not\succcurlyeq T_i$ automatically follows from antisymmetry, in which case we write ``$\as$''' inside the cell on the $T_i$ row and $T_j$ column. 

Figure \ref{fig:poset} shows a Hasse diagram for the poset of trees on at most $7$ vertices, where a vertex labelled $i$ corresponds to tree $T_i$. 

\begin{figure}
    \centering
    \includegraphics[scale=.5]{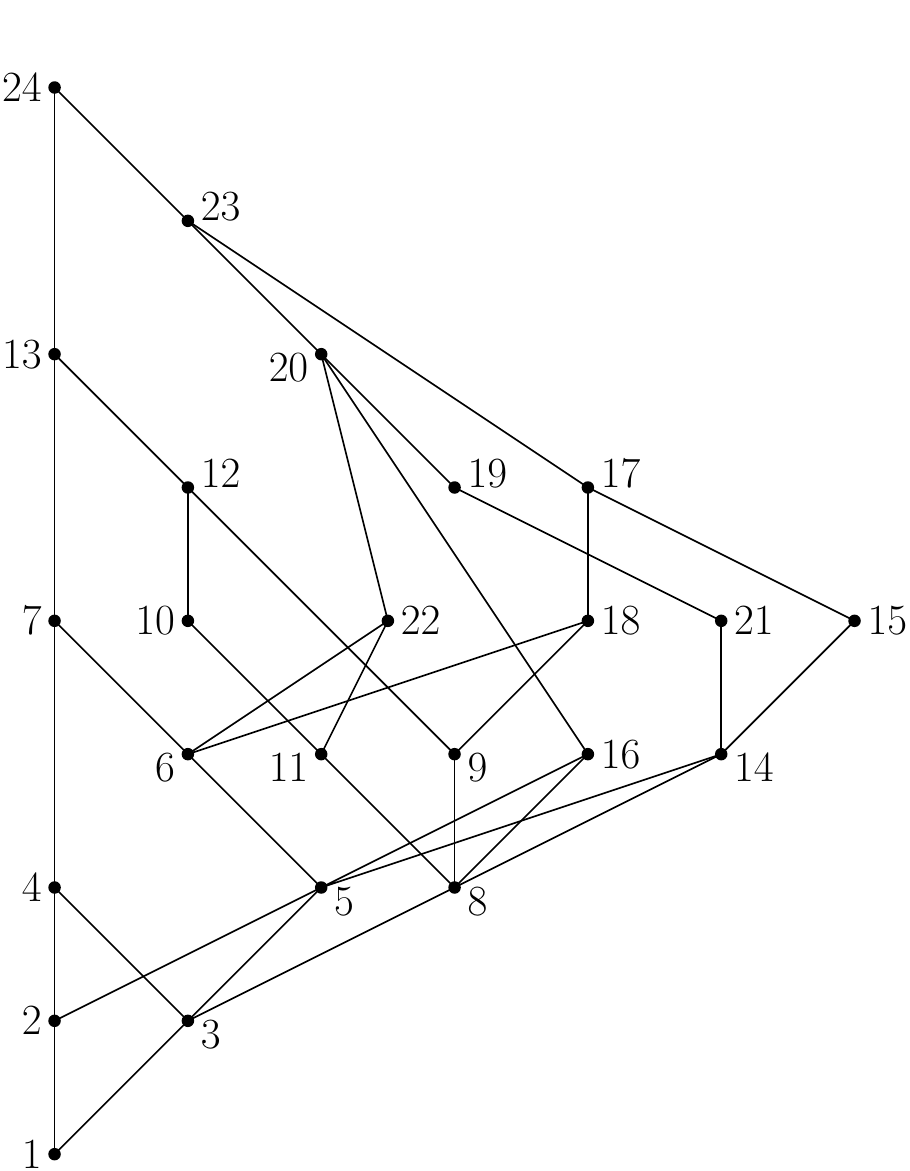}
    \caption{A Hasse diagram for the poset of trees on at most $7$ vertices.}
    \label{fig:poset}
\end{figure}

\begin{table}[htbp]
\centering
\begin{tblr}{
colspec = {|c|c|c|c|c|c|c|c|c|c|c|c|c|c|},
rowspec = {|c|c|c|c|c|c|c|c|c|c|c|c|c|c|},
cell{2}{2} = {lightgray},
cell{2}{3} = {lightgray},
cell{2}{4} = {lightgray},
cell{2}{5} = {lightgray},
cell{2}{6} = {lightgray},
cell{2}{7} = {lightgray},
cell{2}{8} = {lightgray},
cell{2}{9} = {lightgray},
cell{2}{10} = {lightgray},
cell{2}{11} = {lightgray},
cell{2}{12} = {lightgray},
cell{2}{13} = {lightgray},
cell{2}{14} = {lightgray},
cell{3}{3} = {lightgray},
cell{3}{5} = {lightgray},
cell{3}{6} = {lightgray},
cell{3}{7} = {lightgray},
cell{3}{8} = {lightgray},
cell{3}{10} = {lightgray},
cell{3}{13} = {lightgray},
cell{3}{14} = {lightgray},
cell{4}{4} = {lightgray},
cell{4}{5} = {lightgray},
cell{4}{6} = {lightgray},
cell{4}{7} = {lightgray},
cell{4}{8} = {lightgray},
cell{4}{9} = {lightgray},
cell{4}{10} = {lightgray},
cell{4}{11} = {lightgray},
cell{4}{12} = {lightgray},
cell{4}{13} = {lightgray},
cell{4}{14} = {lightgray},
cell{5}{5} = {lightgray},
cell{5}{8} = {lightgray},
cell{5}{14} = {lightgray},
cell{6}{6} = {lightgray},
cell{6}{7} = {lightgray},
cell{6}{8} = {lightgray},
cell{6}{10} = {lightgray},
cell{6}{13} = {lightgray},
cell{6}{14} = {lightgray},
cell{7}{7} = {lightgray},
cell{7}{8} = {lightgray},
cell{7}{13} = {lightgray},
cell{7}{14} = {lightgray},
cell{8}{8} = {lightgray},
cell{8}{14} = {lightgray},
cell{9}{9} = {lightgray},
cell{9}{10} = {lightgray},
cell{9}{11} = {lightgray},
cell{9}{12} = {lightgray},
cell{9}{13} = {lightgray},
cell{9}{14} = {lightgray},
cell{10}{10} = {lightgray},
cell{10}{13} = {lightgray},
cell{10}{14} = {lightgray},
cell{11}{11} = {lightgray},
cell{11}{13} = {lightgray},
cell{11}{14} = {lightgray},
cell{12}{11} = {lightgray},
cell{12}{12} = {lightgray},
cell{12}{13} = {lightgray},
cell{12}{14} = {lightgray},
cell{13}{13} = {lightgray},
cell{13}{14} = {lightgray},
cell{14}{14} = {lightgray},
vline{2,3,4,6,9}={1.5pt},
hline{2,3,4,6,9}={1.5pt},
}
 & $T_{1}$ & $T_{2}$ & $T_{3}$ & $T_{4}$ & $T_{5}$ & $T_{6}$ & $T_{7}$ & $T_{8}$ & $T_{9}$ & $T_{10}$ & $T_{11}$ & $T_{12}$ & $T_{13}$\\
$T_{1}$ &  & $\st$ & \cite{MulhollandSmith59} & $\st$ & \cite{MulhollandSmith59} & \cite{Sidorenko91} & $\st$ & \cite{MulhollandSmith59} & \cite{Sidorenko91} & \cite{Sidorenko91} & \cite{Sidorenko91} & \cite{Sidorenko91} & $\st$\\
$T_{2}$ &  &  & \cite{Stoner22+} & $\st$ & \cite{ErdosSimonovits82} & \ref{th:star} & $\st$ & \cite{Stoner22+} & \ref{th:star} & \ref{th:star} & \cite{Stoner22+} & \ref{th:star} & $\st$\\
$T_{3}$ &  &  &  & \cite{Hoffman67} & \cite{ErdosSimonovits82} & $\tr_{5}$ & \ref{lem:pathBalance} & \cite{Saglam18} & \ref{lem:pathBalance} & $\tr_{8}$ & $\tr_{8}$ & \ref{lem:pathBalance} & \ref{lem:pathBalance}\\
$T_{4}$ &  &  & $\as$ &  & \cite{Stoner22+} & \cite{Stoner22+} & $\st$ & \cite{Stoner22+} & \cite{Stoner22+} & \cite{Stoner22+} & \cite{Stoner22+} & \cite{Stoner22+} & $\st$\\
$T_{5}$ &  &  &  &  &  & \cite{Sidorenko85} & \cite{Hoffman67} & \cite{Stoner22+} & \ref{primal:5-9} & \ref{lem:sigma} & \cite{Stoner22+} & $\tr_{9}$ & \ref{lem:pathBalance}\\
$T_{6}$ &  &  &  &  & $\as$ &  & \cite{Sidorenko85} & \cite{Stoner22+} & \ref{dual:6-9} & \ref{lem:sigma} & \cite{Stoner22+} & \ref{primal:6-12} & $\tr_{7}$\\
$T_{7}$ &  &  &  &  & $\as$ & $\as$ &  & \cite{Stoner22+} & \cite{Stoner22+} & \cite{Stoner22+} & \cite{Stoner22+} & \cite{Stoner22+} & $\st$\\
$T_{8}$ &  &  &  &  &  &  &  &  & \cite{Sidorenko85} & \cite{Leontovich89} & \cite{Leontovich89} & \cite{Sidorenko85} & \cite{Hoffman67}\\
$T_{9}$ &  &  &  &  &  &  &  & $\as$ &  & \cite{Leontovich89} & \cite{Leontovich89} & \cite{Sidorenko85} & \cite{Sidorenko85}\\
$T_{10}$ &  &  &  &  &  &  &  & $\as$ & \cite{Leontovich89} &  & $\as$ & \cite{Leontovich89} & \cite{Leontovich89}\\
$T_{11}$ &  &  &  &  &  &  &  & $\as$ & \cite{Leontovich89} & \cite{Leontovich89} &  & \cite{Leontovich89} & \cite{Leontovich89}\\
$T_{12}$ &  &  &  &  &  &  &  & $\as$ & $\as$ & $\as$ & $\as$ &  & \cite{Sidorenko85}\\
$T_{13}$ &  &  &  &  &  &  &  & $\as$ & $\as$ & $\as$ & $\as$ & $\as$ & \\
\end{tblr}
\caption{The relation $\succcurlyeq$ restricted to trees on at most 6 vertices.}
\end{table}

\begin{table}[htbp]
\centering
\begin{tblr}{
colspec = {|c|c|c|c|c|c|c|c|c|c|c|c|},
rowspec = {|c|c|c|c|c|c|c|c|c|c|c|c|c|c|},
cell{2}{2} = {lightgray},
cell{2}{3} = {lightgray},
cell{2}{4} = {lightgray},
cell{2}{5} = {lightgray},
cell{2}{6} = {lightgray},
cell{2}{7} = {lightgray},
cell{2}{8} = {lightgray},
cell{2}{9} = {lightgray},
cell{2}{10} = {lightgray},
cell{2}{11} = {lightgray},
cell{2}{12} = {lightgray},
cell{3}{2} = {lightgray},
cell{3}{3} = {lightgray},
cell{3}{4} = {lightgray},
cell{3}{5} = {lightgray},
cell{3}{6} = {lightgray},
cell{3}{7} = {lightgray},
cell{3}{8} = {lightgray},
cell{3}{9} = {lightgray},
cell{3}{10} = {lightgray},
cell{3}{11} = {lightgray},
cell{3}{12} = {lightgray},
cell{4}{2} = {lightgray},
cell{4}{3} = {lightgray},
cell{4}{4} = {lightgray},
cell{4}{5} = {lightgray},
cell{4}{6} = {lightgray},
cell{4}{7} = {lightgray},
cell{4}{8} = {lightgray},
cell{4}{9} = {lightgray},
cell{4}{10} = {lightgray},
cell{4}{11} = {lightgray},
cell{4}{12} = {lightgray},
cell{5}{5} = {lightgray},
cell{5}{6} = {lightgray},
cell{5}{11} = {lightgray},
cell{5}{12} = {lightgray},
cell{6}{2} = {lightgray},
cell{6}{3} = {lightgray},
cell{6}{4} = {lightgray},
cell{6}{5} = {lightgray},
cell{6}{6} = {lightgray},
cell{6}{7} = {lightgray},
cell{6}{8} = {lightgray},
cell{6}{9} = {lightgray},
cell{6}{10} = {lightgray},
cell{6}{11} = {lightgray},
cell{6}{12} = {lightgray},
cell{7}{5} = {lightgray},
cell{7}{6} = {lightgray},
cell{7}{8} = {lightgray},
cell{7}{10} = {lightgray},
cell{7}{11} = {lightgray},
cell{7}{12} = {lightgray},
cell{8}{12} = {lightgray},
cell{9}{2} = {lightgray},
cell{9}{3} = {lightgray},
cell{9}{4} = {lightgray},
cell{9}{5} = {lightgray},
cell{9}{6} = {lightgray},
cell{9}{7} = {lightgray},
cell{9}{8} = {lightgray},
cell{9}{9} = {lightgray},
cell{9}{10} = {lightgray},
cell{9}{11} = {lightgray},
cell{9}{12} = {lightgray},
cell{10}{5} = {lightgray},
cell{10}{6} = {lightgray},
cell{10}{11} = {lightgray},
cell{10}{12} = {lightgray},
cell{11}{8} = {lightgray},
cell{11}{11} = {lightgray},
cell{11}{12} = {lightgray},
cell{12}{8} = {lightgray},
cell{12}{10} = {lightgray},
cell{12}{11} = {lightgray},
cell{12}{12} = {lightgray},
cell{13}{11} = {lightgray},
cell{13}{12} = {lightgray},
cell{14}{12} = {lightgray},
vline{2}={1.5pt},
hline{2,3,4,6,9}={1.5pt},
}
 & $T_{14}$ & $T_{15}$ & $T_{16}$ & $T_{17}$ & $T_{18}$ & $T_{19}$ & $T_{20}$ & $T_{21}$ & $T_{22}$ & $T_{23}$ & $T_{24}$\\
$T_{1}$ & \cite{MulhollandSmith59} & \cite{Sidorenko91} & \cite{Sidorenko91} & \cite{Sidorenko91} & \cite{Sidorenko91} & \cite{Sidorenko91} & \cite{Sidorenko91} & \cite{Sidorenko91} & \cite{Sidorenko91} & \cite{Sidorenko91} & $\st$\\
$T_{2}$ & \cite{ErdosSimonovits82} & \ref{th:star} & \ref{th:star} & \ref{th:star} & \ref{th:star} & \ref{th:star} & \ref{th:star} & \ref{th:star} & \ref{th:star} & \ref{th:star} & $\st$\\
$T_{3}$ & \cite{ErdosSimonovits82} & $\tr_{14}$ & $\tr_{5}$ & \ref{lem:pathBalance} & \ref{lem:pathBalance} & $\tr_{14}$ & $\tr_{14}$ & $\tr_{14}$ & $\tr_{5}$ & \ref{lem:pathBalance} & \ref{lem:pathBalance}\\
$T_{4}$ & \cite{Stoner22+} & \cite{Stoner22+} & \cite{Stoner22+} & \ref{th:star} & \ref{th:star} & \cite{Stoner22+} & \ref{th:star} & \cite{Stoner22+} & \cite{Stoner22+} & \ref{th:star} & $\st$\\
$T_{5}$ & \cite{ErdosSimonovits82} & $\tr_{14}$ & \ref{primal:5-16} & \ref{lem:pathBalance} & \ref{lem:pathBalance} & $\tr_{14}$ & $\tr_{14}$ & $\tr_{14}$ & $\tr_{6}$ & \ref{lem:pathBalance} & \ref{lem:pathBalance}\\
$T_{6}$ & $\tr_{15}$ & \ref{dual:6-15} & \ref{dual:6-16} & $\tr_{18}$ & \ref{primal:6-18} & \ref{dual:6-19} & $\tr_{22}$ & $\tr_{19}$ & \ref{primal:6-22} & $\tr_{17}$ & $\tr_{7}$\\
$T_{7}$ & \cite{Stoner22+} & \cite{Stoner22+} & \cite{Stoner22+} & \cite{Stoner22+} & \cite{Stoner22+} & \cite{Stoner22+} & \cite{Stoner22+} & \cite{Stoner22+} & \cite{Stoner22+} & \cite{Stoner22+} & $\st$\\
$T_{8}$ & \cite{ErdosSimonovits82} & $\tr_{14}$ & \ref{primal:8-16} & $\tr_{14}$ & $\tr_{14}$ & $\tr_{14}$ & $\tr_{14}$ & $\tr_{14}$ & $\tr_{11}$ & $\tr_{14}$ & \ref{lem:pathBalance}\\
$T_{9}$ & \cite{Stoner22+} & \cite{Stoner22+} & \cite{Stoner22+} & $\tr_{18}$ & \ref{primal:9-18} & \cite{Stoner22+} & \ref{lem:sigma} & \cite{Stoner22+} & \cite{Stoner22+} & $\tr_{12}$ & $\tr_{13}$\\
$T_{10}$ & \cite{Stoner22+} & \cite{Stoner22+} & \cite{Stoner22+} & $\tr_{11}$ & $\tr_{17}$ & \cite{Stoner22+} & \ref{primal:10-20} & \cite{Stoner22+} & \cite{Stoner22+} & $\tr_{12}$ & $\tr_{13}$\\
$T_{11}$ & \ref{lem:radius} & \ref{lem:radius} & \ref{lem:radius} & \ref{dual:11-17} & $\tr_{17}$ & \ref{dual:11-19} & $\tr_{22}$ & $\tr_{19}$ & \ref{primal:11-22} & $\tr_{10}$ & $\tr_{10}$\\
$T_{12}$ & \cite{Stoner22+} & \cite{Stoner22+} & \cite{Stoner22+} & $\tr_{10}$ & $\tr_{10}$ & \cite{Stoner22+} & \ref{lem:sigma} & \cite{Stoner22+} & \cite{Stoner22+} & \ref{primal:12-23} & $\tr_{13}$\\
$T_{13}$ & \cite{Stoner22+} & \cite{Stoner22+} & \cite{Stoner22+} & \cite{Stoner22+} & \cite{Stoner22+} & \cite{Stoner22+} & \cite{Stoner22+} & \cite{Stoner22+} & \cite{Stoner22+} & \cite{Stoner22+} & $\st$\\
\end{tblr}
\caption{The relation $H\succcurlyeq T$ restricted to trees $H$ and $T$ with $v(H)=7$ and $v(T)\leq 6$.}
\end{table}

\begin{table}[htbp]
\centering
\begin{tblr}{
colspec = {|c|c|c|c|c|c|c|c|c|c|c|c|c|},
rowspec = {|c|c|c|c|c|c|c|c|c|c|c|c|c|c|},
cell{2}{2} = {lightgray},
cell{2}{3} = {lightgray},
cell{2}{4} = {lightgray},
cell{2}{5} = {lightgray},
cell{2}{6} = {lightgray},
cell{2}{7} = {lightgray},
cell{2}{8} = {lightgray},
cell{2}{9} = {lightgray},
cell{2}{10} = {lightgray},
cell{2}{11} = {lightgray},
cell{2}{12} = {lightgray},
cell{2}{13} = {lightgray},
cell{3}{3} = {lightgray},
cell{3}{6} = {lightgray},
cell{3}{8} = {lightgray},
cell{3}{9} = {lightgray},
cell{3}{12} = {lightgray},
cell{3}{13} = {lightgray},
cell{4}{2} = {lightgray},
cell{4}{3} = {lightgray},
cell{4}{4} = {lightgray},
cell{4}{5} = {lightgray},
cell{4}{6} = {lightgray},
cell{4}{7} = {lightgray},
cell{4}{8} = {lightgray},
cell{4}{9} = {lightgray},
cell{4}{10} = {lightgray},
cell{4}{11} = {lightgray},
cell{4}{12} = {lightgray},
cell{4}{13} = {lightgray},
cell{5}{13} = {lightgray},
cell{6}{3} = {lightgray},
cell{6}{6} = {lightgray},
cell{6}{8} = {lightgray},
cell{6}{9} = {lightgray},
cell{6}{12} = {lightgray},
cell{6}{13} = {lightgray},
cell{7}{6} = {lightgray},
cell{7}{9} = {lightgray},
cell{7}{12} = {lightgray},
cell{7}{13} = {lightgray},
cell{9}{2} = {lightgray},
cell{9}{3} = {lightgray},
cell{9}{4} = {lightgray},
cell{9}{5} = {lightgray},
cell{9}{6} = {lightgray},
cell{9}{7} = {lightgray},
cell{9}{8} = {lightgray},
cell{9}{9} = {lightgray},
cell{9}{10} = {lightgray},
cell{9}{11} = {lightgray},
cell{9}{12} = {lightgray},
cell{9}{13} = {lightgray},
cell{10}{13} = {lightgray},
cell{11}{13} = {lightgray},
cell{12}{13} = {lightgray},
vline{2}={1.5pt},
hline{2,3,4,6,9}={1.5pt},
}
 & $T_{25}$ & $T_{26}$ & $T_{27}$ & $T_{28}$ & $T_{29}$ & $T_{30}$ & $T_{31}$ & $T_{32}$ & $T_{33}$ & $T_{34}$ & $T_{35}$ & $T_{36}$\\
$T_{1}$ & \cite{MulhollandSmith59} & \cite{Sidorenko91} & \cite{Sidorenko91} & \cite{Sidorenko91} & \cite{Sidorenko91} & \cite{Sidorenko91} & \cite{Sidorenko91} & \cite{Sidorenko91} & \cite{Sidorenko91} & \cite{Sidorenko91} & \cite{Sidorenko91} & \cite{Sidorenko91}\\
$T_{2}$ & \cite{Stoner22+} & \ref{th:star} & \ref{th:star} & \cite{Stoner22+} & \ref{th:star} & \cite{Stoner22+} & \ref{th:star} & \ref{th:star} & \ref{th:star} & \cite{Stoner22+} & \ref{th:star} & \ref{th:star}\\
$T_{3}$ & $\tr_{8}$ & \ref{lem:pathBalance} & $\tr_{8}$ & $\tr_{8}$ & \ref{lem:pathBalance} & $\tr_{8}$ & \ref{lem:pathBalance} & \ref{lem:pathBalance} & $\tr_{8}$ & $\tr_{8}$ & \ref{lem:pathBalance} & \ref{lem:pathBalance}\\
$T_{4}$ & \cite{Stoner22+} & \cite{Stoner22+} & \cite{Stoner22+} & \cite{Stoner22+} & \cite{Stoner22+} & \cite{Stoner22+} & \cite{Stoner22+} & \cite{Stoner22+} & \cite{Stoner22+} & \cite{Stoner22+} & \cite{Stoner22+} & \ref{th:star}\\
$T_{5}$ & \cite{Stoner22+} & $\tr_{14}$ & \ref{lem:sigma} & \cite{Stoner22+} & $\tr_{6}$ & \cite{Stoner22+} & \ref{primal:5-31} & $\tr_{26}$ & \ref{lem:sigma} & \cite{Stoner22+} & $\tr_{6}$ & \ref{lem:pathBalance}\\
$T_{6}$ & \cite{Stoner22+} & \ref{dual:6-26} & \ref{lem:sigma} & \cite{Stoner22+} & \ref{primal:6-29} & \cite{Stoner22+} & $\tr_{41}$ & \ref{primal:6-32} & \ref{lem:sigma} & \cite{Stoner22+} & \ref{primal:6-35} & $\tr_{32}$\\
$T_{7}$ & \cite{Stoner22+} & \cite{Stoner22+} & \cite{Stoner22+} & \cite{Stoner22+} & \cite{Stoner22+} & \cite{Stoner22+} & \cite{Stoner22+} & \cite{Stoner22+} & \cite{Stoner22+} & \cite{Stoner22+} & \cite{Stoner22+} & \cite{Stoner22+}\\
$T_{8}$ & \cite{Saglam18} & $\tr_{25}$ & $\tr_{25}$ & \ref{primal:8-28} & $\tr_{14}$ & \ref{primal:8-30} & $\tr_{25}$ & $\tr_{25}$ & $\tr_{25}$ & $\tr_{25}$ & $\tr_{16}$ & \ref{lem:pathBalance}\\
$T_{9}$ & \cite{Stoner22+} & \cite{Stoner22+} & \cite{Stoner22+} & \cite{Stoner22+} & \cite{Stoner22+} & \cite{Stoner22+} & \cite{Stoner22+} & \cite{Stoner22+} & \cite{Stoner22+} & \cite{Stoner22+} & \cite{Stoner22+} & $\tr_{17}$\\
$T_{10}$ & \cite{Stoner22+} & \cite{Stoner22+} & \cite{Stoner22+} & \cite{Stoner22+} & \cite{Stoner22+} & \cite{Stoner22+} & \cite{Stoner22+} & \cite{Stoner22+} & \cite{Stoner22+} & \cite{Stoner22+} & \cite{Stoner22+} & $\tr_{37}$\\
$T_{11}$ & \ref{lem:radius} & \ref{lem:radius} & \ref{lem:radius} & \ref{lem:radius} & \ref{lem:radius} & \ref{lem:radius} & \ref{lem:radius} & \ref{lem:radius} & \ref{lem:radius} & \ref{lem:radius} & \ref{lem:radius} & $\tr_{10}$\\
$T_{12}$ & \cite{Stoner22+} & \cite{Stoner22+} & \cite{Stoner22+} & \cite{Stoner22+} & \cite{Stoner22+} & \cite{Stoner22+} & \cite{Stoner22+} & \cite{Stoner22+} & \cite{Stoner22+} & \cite{Stoner22+} & \cite{Stoner22+} & \ref{dual:12-36}\\
$T_{13}$ & \cite{Stoner22+} & \cite{Stoner22+} & \cite{Stoner22+} & \cite{Stoner22+} & \cite{Stoner22+} & \cite{Stoner22+} & \cite{Stoner22+} & \cite{Stoner22+} & \cite{Stoner22+} & \cite{Stoner22+} & \cite{Stoner22+} & \cite{Stoner22+}\\
\end{tblr}
\caption{The relation $H\succcurlyeq T$ restricted to trees $H$ and $T$ with $H\in\{T_{25},\dots,T_{36}\}$ and $v(T)\leq 6$.}
\end{table}

\begin{table}[htbp]
\centering
\begin{tblr}{
colspec = {|c|c|c|c|c|c|c|c|c|c|c|c|},
rowspec = {|c|c|c|c|c|c|c|c|c|c|c|c|c|c|},
cell{2}{2} = {lightgray},
cell{2}{3} = {lightgray},
cell{2}{4} = {lightgray},
cell{2}{5} = {lightgray},
cell{2}{6} = {lightgray},
cell{2}{7} = {lightgray},
cell{2}{8} = {lightgray},
cell{2}{9} = {lightgray},
cell{2}{10} = {lightgray},
cell{2}{11} = {lightgray},
cell{2}{12} = {lightgray},
cell{3}{2} = {lightgray},
cell{3}{3} = {lightgray},
cell{3}{5} = {lightgray},
cell{3}{6} = {lightgray},
cell{3}{8} = {lightgray},
cell{3}{10} = {lightgray},
cell{3}{11} = {lightgray},
cell{3}{12} = {lightgray},
cell{4}{2} = {lightgray},
cell{4}{3} = {lightgray},
cell{4}{4} = {lightgray},
cell{4}{5} = {lightgray},
cell{4}{6} = {lightgray},
cell{4}{7} = {lightgray},
cell{4}{8} = {lightgray},
cell{4}{9} = {lightgray},
cell{4}{10} = {lightgray},
cell{4}{11} = {lightgray},
cell{4}{12} = {lightgray},
cell{5}{2} = {lightgray},
cell{5}{11} = {lightgray},
cell{5}{12} = {lightgray},
cell{6}{2} = {lightgray},
cell{6}{3} = {lightgray},
cell{6}{5} = {lightgray},
cell{6}{6} = {lightgray},
cell{6}{8} = {lightgray},
cell{6}{10} = {lightgray},
cell{6}{11} = {lightgray},
cell{6}{12} = {lightgray},
cell{7}{2} = {lightgray},
cell{7}{3} = {lightgray},
cell{7}{5} = {lightgray},
cell{7}{8} = {lightgray},
cell{7}{10} = {lightgray},
cell{7}{11} = {lightgray},
cell{7}{12} = {lightgray},
cell{8}{12} = {lightgray},
cell{9}{2} = {lightgray},
cell{9}{3} = {lightgray},
cell{9}{4} = {lightgray},
cell{9}{5} = {lightgray},
cell{9}{6} = {lightgray},
cell{9}{7} = {lightgray},
cell{9}{8} = {lightgray},
cell{9}{9} = {lightgray},
cell{9}{10} = {lightgray},
cell{9}{11} = {lightgray},
cell{9}{12} = {lightgray},
cell{10}{2} = {lightgray},
cell{10}{11} = {lightgray},
cell{10}{12} = {lightgray},
cell{11}{2} = {lightgray},
cell{11}{4} = {lightgray},
cell{11}{8} = {lightgray},
cell{11}{11} = {lightgray},
cell{11}{12} = {lightgray},
cell{12}{2} = {lightgray},
cell{12}{3} = {lightgray},
cell{12}{4} = {lightgray},
cell{12}{5} = {lightgray},
cell{12}{7} = {lightgray},
cell{12}{8} = {lightgray},
cell{12}{9} = {lightgray},
cell{12}{10} = {lightgray},
cell{12}{11} = {lightgray},
cell{12}{12} = {lightgray},
cell{13}{11} = {lightgray},
cell{13}{12} = {lightgray},
cell{14}{12} = {lightgray},
vline{2}={1.5pt},
hline{2,3,4,6,9}={1.5pt},
}
 & $T_{37}$ & $T_{38}$ & $T_{39}$ & $T_{40}$ & $T_{41}$ & $T_{42}$ & $T_{43}$ & $T_{44}$ & $T_{45}$ & $T_{46}$ & $T_{47}$\\
$T_{1}$ & \cite{Sidorenko91} & \cite{Sidorenko91} & \cite{Sidorenko91} & \cite{Sidorenko91} & \cite{Sidorenko91} & \cite{Sidorenko91} & \cite{Sidorenko91} & \cite{Sidorenko91} & \cite{Sidorenko91} & \cite{Sidorenko91} & $\st$\\
$T_{2}$ & \ref{th:star} & \ref{th:star} & \ref{th:star} & \ref{th:star} & \ref{th:star} & \ref{th:star} & \ref{th:star} & \cite{Stoner22+} & \ref{th:star} & \ref{th:star} & $\st$\\
$T_{3}$ & \ref{lem:pathBalance} & \ref{lem:pathBalance} & $\tr_{25}$ & \ref{lem:pathBalance} & \ref{lem:pathBalance} & $\tr_{25}$ & \ref{lem:pathBalance} & $\tr_{25}$ & \ref{lem:pathBalance} & \ref{lem:pathBalance} & \ref{lem:pathBalance}\\
$T_{4}$ & \ref{th:star} & \cite{Stoner22+} & \ref{th:star} & \cite{Stoner22+} & \cite{Stoner22+} & \cite{Stoner22+} & \ref{th:star} & \cite{Stoner22+} & \cite{Stoner22+} & \ref{th:star} & $\st$\\
$T_{5}$ & \ref{lem:pathBalance} & $\tr_{6}$ & \ref{lem:sigma} & $\tr_{6}$ & $\tr_{14}$ & \ref{lem:sigma} & $\tr_{6}$ & \cite{Stoner22+} & $\tr_{6}$ & \ref{lem:pathBalance} & \ref{lem:pathBalance}\\
$T_{6}$ & $\tr_{18}$ & $\tr_{40}$ & \ref{lem:sigma} & \ref{primal:6-40} & \ref{dual:6-41} & \ref{lem:sigma} & $\tr_{20}$ & \cite{Stoner22+} & $\tr_{22}$ & $\tr_{29}$ & $\tr_{7}$\\
$T_{7}$ & \cite{Stoner22+} & \cite{Stoner22+} & \cite{Stoner22+} & \cite{Stoner22+} & \cite{Stoner22+} & \cite{Stoner22+} & \cite{Stoner22+} & \cite{Stoner22+} & \cite{Stoner22+} & \cite{Stoner22+} & $\st$\\
$T_{8}$ & \ref{lem:pathBalance} & $\tr_{25}$ & $\tr_{25}$ & $\tr_{25}$ & $\tr_{25}$ & $\tr_{25}$ & $\tr_{39}$ & $\tr_{25}$ & $\tr_{11}$ & \ref{lem:pathBalance} & \ref{lem:pathBalance}\\
$T_{9}$ & $\tr_{18}$ & \cite{Stoner22+} & \ref{lem:sigma} & \cite{Stoner22+} & \cite{Stoner22+} & \cite{Stoner22+} & \ref{lem:sigma} & \cite{Stoner22+} & \cite{Stoner22+} & $\tr_{36}$ & $\tr_{13}$\\
$T_{10}$ & \ref{primal:10-37} & \cite{Stoner22+} & \ref{primal:10-39} & \cite{Stoner22+} & \cite{Stoner22+} & \cite{Stoner22+} & $\tr_{20}$ & \cite{Stoner22+} & \cite{Stoner22+} & $\tr_{43}$ & $\tr_{13}$\\
$T_{11}$ & $\tr_{10}$ & $\tr_{40}$ & $\tr_{10}$ & \ref{primal:11-40} & \ref{dual:11-41} & $\tr_{44}$ & $\tr_{38}$ & \ref{primal:11-44} & $\tr_{22}$ & $\tr_{36}$ & $\tr_{10}$\\
$T_{12}$ & $\tr_{36}$ & \cite{Stoner22+} & \ref{lem:sigma} & \cite{Stoner22+} & \cite{Stoner22+} & \cite{Stoner22+} & \ref{lem:sigma} & \cite{Stoner22+} & \cite{Stoner22+} & $\tr_{23}$ & $\tr_{13}$\\
$T_{13}$ & \cite{Stoner22+} & \cite{Stoner22+} & \cite{Stoner22+} & \cite{Stoner22+} & \cite{Stoner22+} & \cite{Stoner22+} & \cite{Stoner22+} & \cite{Stoner22+} & \cite{Stoner22+} & \cite{Stoner22+} & $\st$\\
\end{tblr}
\caption{The relation $H\succcurlyeq T$ restricted to trees $H$ and $T$ with $H\in\{T_{37},\dots,T_{47}\}$ and $v(T)\leq 6$.}
\end{table}

\begin{table}[htbp]
\centering
\begin{tblr}{
colspec = {|c|c|c|c|c|c|c|c|c|c|c|c|},
rowspec = {|c|c|c|c|c|c|c|c|c|c|c|c|},
cell{2}{2} = {lightgray},
cell{2}{3} = {lightgray},
cell{2}{5} = {lightgray},
cell{2}{6} = {lightgray},
cell{2}{7} = {lightgray},
cell{2}{8} = {lightgray},
cell{2}{9} = {lightgray},
cell{2}{11} = {lightgray},
cell{2}{12} = {lightgray},
cell{3}{3} = {lightgray},
cell{3}{5} = {lightgray},
cell{3}{6} = {lightgray},
cell{3}{11} = {lightgray},
cell{3}{12} = {lightgray},
cell{4}{4} = {lightgray},
cell{4}{5} = {lightgray},
cell{4}{8} = {lightgray},
cell{4}{11} = {lightgray},
cell{4}{12} = {lightgray},
cell{5}{5} = {lightgray},
cell{5}{11} = {lightgray},
cell{5}{12} = {lightgray},
cell{6}{5} = {lightgray},
cell{6}{6} = {lightgray},
cell{6}{11} = {lightgray},
cell{6}{12} = {lightgray},
cell{7}{7} = {lightgray},
cell{7}{8} = {lightgray},
cell{7}{11} = {lightgray},
cell{7}{12} = {lightgray},
cell{8}{8} = {lightgray},
cell{8}{11} = {lightgray},
cell{8}{12} = {lightgray},
cell{9}{7} = {lightgray},
cell{9}{8} = {lightgray},
cell{9}{9} = {lightgray},
cell{9}{11} = {lightgray},
cell{9}{12} = {lightgray},
cell{10}{8} = {lightgray},
cell{10}{10} = {lightgray},
cell{10}{11} = {lightgray},
cell{10}{12} = {lightgray},
cell{11}{11} = {lightgray},
cell{11}{12} = {lightgray},
cell{12}{12} = {lightgray},
vline{2}={1.5pt},
hline{2}={1.5pt},
}
 & $T_{14}$ & $T_{15}$ & $T_{16}$ & $T_{17}$ & $T_{18}$ & $T_{19}$ & $T_{20}$ & $T_{21}$ & $T_{22}$ & $T_{23}$ & $T_{24}$\\
$T_{14}$ &  & \cite{Sidorenko85} & \cite{Sidorenko94} & \cite{Sidorenko85} & \cite{Sidorenko94} & \cite{Leontovich89} & \cite{Sidorenko94} & \cite{Leontovich89} & \ref{th:pathsNotUnique} & \cite{Sidorenko85} & \cite{Hoffman67}\\
$T_{15}$ & $\as$ &  & \cite{Sidorenko94} & \cite{Sidorenko85} & \cite{Sidorenko94} & \cite{Sidorenko94} & \ref{dual:15-20} & \cite{Sidorenko94} & \ref{th:nearStar} & \cite{Sidorenko85} & \cite{Sidorenko85}\\
$T_{16}$ & \cite{Leontovich89} & \cite{Sidorenko94} &  & \cite{Sidorenko94} & \ref{dual:16-18} & \ref{dual:16-19} & \cite{Sidorenko94} & \cite{Sidorenko94} & \ref{th:nearStar} & \cite{Sidorenko94} & \cite{Sidorenko94}\\
$T_{17}$ & $\as$ & $\as$ & \cite{Leontovich89} &  & $\as$ & \cite{Leontovich89} & \cite{Sidorenko94} & \cite{Leontovich89} & \cite{Sidorenko94} & \cite{Sidorenko85} & \cite{Sidorenko85}\\
$T_{18}$ & \cite{Leontovich89} & \cite{Leontovich89} & \cite{Leontovich89} & \cite{Leontovich89} &  & \cite{Sidorenko94} & \cite{Sidorenko94} & \cite{Leontovich89} & \cite{Sidorenko94} & \cite{Sidorenko94} & \cite{Sidorenko94}\\
$T_{19}$ & $\as$ & \cite{Leontovich89} & \cite{Leontovich89} & \cite{Leontovich89} & \cite{Leontovich89} &  & \cite{Sidorenko94} & $\as$ & \cite{Leontovich89} & \cite{Sidorenko94} & \cite{Sidorenko94}\\
$T_{20}$ & \cite{Leontovich89} & \cite{Leontovich89} & \cite{Leontovich89} & \cite{Leontovich89} & \cite{Leontovich89} & \cite{Leontovich89} &  & \cite{Leontovich89} & $\as$ & \cite{Leontovich89} & \cite{Sidorenko94}\\
$T_{21}$ & $\as$ & \cite{Leontovich89} & \cite{Leontovich89} & \cite{Leontovich89} & \cite{Leontovich89} & \cite{Leontovich89} & \cite{Sidorenko94} &  & \cite{Leontovich89} & \cite{Sidorenko94} & \cite{Sidorenko94}\\
$T_{22}$ & \cite{Leontovich89} & \cite{Leontovich89} & \cite{Leontovich89} & \cite{Sidorenko94} & \cite{Leontovich89} & \cite{Leontovich89} & \cite{Leontovich89} & \cite{Leontovich89} &  & \cite{Leontovich89} & \cite{Sidorenko94}\\
$T_{23}$ & $\as$ & $\as$ & \cite{Leontovich89} & $\as$ & \cite{Leontovich89} & \cite{Leontovich89} & $\as$ & \cite{Leontovich89} & $\as$ &  & \cite{Sidorenko85}\\
$T_{24}$ & $\as$ & $\as$ & \cite{Leontovich89} & $\as$ & \cite{Leontovich89} & \cite{Leontovich89} & \cite{Leontovich89} & \cite{Leontovich89} & \cite{Leontovich89} & $\as$ & \\
\end{tblr}
\caption{The relation $\succcurlyeq$ restricted to $7$-vertex trees.}
\end{table}

\begin{table}[htbp]
\centering
\begin{tblr}{
colspec = {|c|c|c|c|c|c|c|c|c|c|c|c|c|},
rowspec = {|c|c|c|c|c|c|c|c|c|c|c|c|},
cell{2}{3} = {lightgray},
cell{2}{6} = {lightgray},
cell{2}{9} = {lightgray},
cell{2}{13} = {lightgray},
cell{3}{9} = {lightgray},
cell{3}{13} = {lightgray},
cell{4}{12} = {lightgray},
cell{4}{13} = {lightgray},
cell{5}{13} = {lightgray},
cell{6}{13} = {lightgray},
vline{2}={1.5pt},
hline{2}={1.5pt},
}
 & $T_{25}$ & $T_{26}$ & $T_{27}$ & $T_{28}$ & $T_{29}$ & $T_{30}$ & $T_{31}$ & $T_{32}$ & $T_{33}$ & $T_{34}$ & $T_{35}$ & $T_{36}$\\
$T_{14}$ & \cite{Stoner22+} & \ref{primal:14-26} & \ref{lem:sigma} & \cite{Stoner22+} & \ref{primal:14-29} & \cite{Stoner22+} & \ref{dual:14-31} & $\tr_{26}$ & \ref{lem:sigma} & \cite{Stoner22+} & \ref{dual:14-35} & $\tr_{26}$\\
$T_{15}$ & \cite{Stoner22+} & \ref{dual:15-26} & \ref{lem:sigma} & \cite{Stoner22+} & \ref{dual:15-29} & \cite{Stoner22+} & $\tr_{14}$ & \ref{primal:15-32} & \ref{lem:sigma} & \cite{Stoner22+} & $\tr_{14}$ & $\tr_{32}$\\
$T_{16}$ & \cite{Stoner22+} & $\tr_{32}$ & \ref{lem:sigma} & \cite{Stoner22+} & \ref{dual:16-29} & \cite{Stoner22+} & $\tr_{29}$ & \ref{dual:16-32} & \ref{lem:sigma} & \cite{Stoner22+} & \ref{primal:16-35} & $\tr_{35}$\\
$T_{17}$ & \cite{Stoner22+} & \cite{Stoner22+} & \cite{Stoner22+} & \cite{Stoner22+} & \cite{Stoner22+} & \cite{Stoner22+} & \cite{Stoner22+} & \cite{Stoner22+} & \cite{Stoner22+} & \cite{Stoner22+} & \cite{Stoner22+} & \ref{primal:17-36}\\
$T_{18}$ & \cite{Stoner22+} & \cite{Stoner22+} & \cite{Stoner22+} & \cite{Stoner22+} & \cite{Stoner22+} & \cite{Stoner22+} & \cite{Stoner22+} & \cite{Stoner22+} & \cite{Stoner22+} & \cite{Stoner22+} & \cite{Stoner22+} & $\tr_{17}$\\
$T_{19}$ & \cite{Stoner22+} & \ref{lem:radius} & \ref{lem:radius} & \cite{Stoner22+} & \ref{lem:radius} & \cite{Stoner22+} & \ref{lem:radius} & \ref{lem:radius} & \ref{lem:radius} & \cite{Stoner22+} & \ref{lem:radius} & $\tr_{21}$\\
$T_{20}$ & \cite{Stoner22+} & \cite{Stoner22+} & \cite{Stoner22+} & \cite{Stoner22+} & \cite{Stoner22+} & \cite{Stoner22+} & \cite{Stoner22+} & \cite{Stoner22+} & \cite{Stoner22+} & \cite{Stoner22+} & \cite{Stoner22+} & $\tr_{19}$\\
$T_{21}$ & \cite{Stoner22+} & \ref{lem:radius} & \ref{lem:radius} & \cite{Stoner22+} & \ref{lem:radius} & \cite{Stoner22+} & \ref{lem:radius} & \ref{lem:radius} & \ref{lem:radius} & \cite{Stoner22+} & \ref{lem:radius} & \ref{dual:21-36}\\
$T_{22}$ & \cite{Stoner22+} & \ref{lem:radius} & \ref{lem:radius} & \cite{Stoner22+} & \ref{lem:radius} & \cite{Stoner22+} & \ref{lem:radius} & \ref{lem:radius} & \ref{lem:radius} & \cite{Stoner22+} & \ref{lem:radius} & \ref{dual:22-36}\\
$T_{23}$ & \cite{Stoner22+} & \cite{Stoner22+} & \cite{Stoner22+} & \cite{Stoner22+} & \cite{Stoner22+} & \cite{Stoner22+} & \cite{Stoner22+} & \cite{Stoner22+} & \cite{Stoner22+} & \cite{Stoner22+} & \cite{Stoner22+} & $\tr_{12}$\\
$T_{24}$ & \cite{Stoner22+} & \cite{Stoner22+} & \cite{Stoner22+} & \cite{Stoner22+} & \cite{Stoner22+} & \cite{Stoner22+} & \cite{Stoner22+} & \cite{Stoner22+} & \cite{Stoner22+} & \cite{Stoner22+} & \cite{Stoner22+} & \cite{Stoner22+}\\
\end{tblr}
\caption{The relation $H\succcurlyeq T$ restricted to trees $H$ and $T$ with $H\in\{T_{25},\dots,T_{36}\}$ and $v(T)=7$.}
\end{table}

\begin{table}[htbp]
\centering
\begin{tblr}{
colspec = {|c|c|c|c|c|c|c|c|c|c|c|c|},
rowspec = {|c|c|c|c|c|c|c|c|c|c|c|c|},
cell{2}{2} = {lightgray},
cell{2}{3} = {lightgray},
cell{2}{5} = {lightgray},
cell{2}{6} = {lightgray},
cell{2}{8} = {lightgray},
cell{2}{11} = {lightgray},
cell{2}{12} = {lightgray},
cell{3}{2} = {lightgray},
cell{3}{3} = {lightgray},
cell{3}{5} = {lightgray},
cell{3}{8} = {lightgray},
cell{3}{11} = {lightgray},
cell{3}{12} = {lightgray},
cell{4}{2} = {lightgray},
cell{4}{3} = {lightgray},
cell{4}{5} = {lightgray},
cell{4}{8} = {lightgray},
cell{4}{10} = {lightgray},
cell{4}{11} = {lightgray},
cell{4}{12} = {lightgray},
cell{5}{11} = {lightgray},
cell{5}{12} = {lightgray},
cell{6}{2} = {lightgray},
cell{6}{11} = {lightgray},
cell{6}{12} = {lightgray},
cell{7}{3} = {lightgray},
cell{7}{5} = {lightgray},
cell{7}{8} = {lightgray},
cell{7}{11} = {lightgray},
cell{7}{12} = {lightgray},
cell{8}{8} = {lightgray},
cell{8}{11} = {lightgray},
cell{8}{12} = {lightgray},
cell{9}{3} = {lightgray},
cell{9}{5} = {lightgray},
cell{9}{8} = {lightgray},
cell{9}{11} = {lightgray},
cell{9}{12} = {lightgray},
cell{10}{8} = {lightgray},
cell{10}{10} = {lightgray},
cell{10}{11} = {lightgray},
cell{10}{12} = {lightgray},
cell{11}{11} = {lightgray},
cell{11}{12} = {lightgray},
cell{12}{12} = {lightgray},
vline{2}={1.5pt},
hline{2}={1.5pt},
}
 & $T_{37}$ & $T_{38}$ & $T_{39}$ & $T_{40}$ & $T_{41}$ & $T_{42}$ & $T_{43}$ & $T_{44}$ & $T_{45}$ & $T_{46}$ & $T_{47}$\\
$T_{14}$ & $\tr_{26}$ & $\tr_{26}$ & \ref{lem:sigma} & $\tr_{26}$ & \ref{primal:14-41} & \ref{lem:sigma} & $\tr_{26}$ & \cite{Stoner22+} & \ref{dual:14-45} & $\tr_{26}$ & \ref{lem:pathBalance}\\
$T_{15}$ & $\tr_{18}$ & $\tr_{40}$ & \ref{lem:sigma} & \ref{primal:15-40} & \ref{dual:15-41} & \ref{lem:sigma} & $\tr_{38}$ & \cite{Stoner22+} & $\tr_{14}$ & $\tr_{32}$ & $\tr_{24}$\\
$T_{16}$ & \ref{primal:16-37} & $\tr_{40}$ & \ref{lem:sigma} & \ref{primal:16-40} & \ref{dual:16-41} & \ref{lem:sigma} & $\tr_{38}$ & \cite{Stoner22+} & \ref{primal:16-45} & $\tr_{35}$ & $\tr_{24}$\\
$T_{17}$ & \ref{dual:17-37} & \cite{Stoner22+} & \ref{lem:sigma} & \cite{Stoner22+} & \cite{Stoner22+} & \cite{Stoner22+} & \ref{lem:sigma} & \cite{Stoner22+} & \cite{Stoner22+} & $\tr_{36}$ & $\tr_{24}$\\
$T_{18}$ & \ref{primal:18-37} & \cite{Stoner22+} & \ref{lem:sigma} & \cite{Stoner22+} & \cite{Stoner22+} & \cite{Stoner22+} & \ref{lem:sigma} & \cite{Stoner22+} & \cite{Stoner22+} & $\tr_{17}$ & $\tr_{17}$\\
$T_{19}$ & $\tr_{36}$ & $\tr_{40}$ & \ref{lem:sigma} & \ref{primal:19-40} & $\tr_{21}$ & \ref{lem:sigma} & $\tr_{38}$ & \cite{Stoner22+} & $\tr_{14}$ & $\tr_{38}$ & $\tr_{24}$\\
$T_{20}$ & $\tr_{19}$ & \cite{Stoner22+} & \ref{lem:sigma} & \cite{Stoner22+} & \cite{Stoner22+} & \cite{Stoner22+} & \ref{primal:20-43} & \cite{Stoner22+} & \cite{Stoner22+} & $\tr_{43}$ & $\tr_{24}$\\
$T_{21}$ & $\tr_{36}$ & $\tr_{19}$ & \ref{lem:sigma} & $\tr_{19}$ & \ref{dual:21-41} & \ref{lem:sigma} & $\tr_{19}$ & \cite{Stoner22+} & $\tr_{14}$ & $\tr_{19}$ & $\tr_{19}$\\
$T_{22}$ & $\tr_{36}$ & \ref{dual:22-38} & \ref{lem:sigma} & $\tr_{38}$ & $\tr_{6}$ & \ref{lem:sigma} & $\tr_{20}$ & \cite{Stoner22+} & \ref{primal:22-45} & $\tr_{20}$ & $\tr_{20}$\\
$T_{23}$ & $\tr_{12}$ & \cite{Stoner22+} & \ref{lem:sigma} & \cite{Stoner22+} & \cite{Stoner22+} & \cite{Stoner22+} & \ref{lem:sigma} & \cite{Stoner22+} & \cite{Stoner22+} & \ref{primal:23-46} & $\tr_{24}$\\
$T_{24}$ & \cite{Stoner22+} & \cite{Stoner22+} & \cite{Stoner22+} & \cite{Stoner22+} & \cite{Stoner22+} & \cite{Stoner22+} & \cite{Stoner22+} & \cite{Stoner22+} & \cite{Stoner22+} & \cite{Stoner22+} & $\st$\\
\end{tblr}
\caption{The relation $H\succcurlyeq T$ restricted to trees $H$ and $T$ with $H\in\{T_{37},\dots,T_{47}\}$ and $v(T)=7$.}
\end{table}

\begin{table}[htbp]
\centering
\begin{tblr}{
colspec = {|c|c|c|c|c|c|c|c|c|c|c|c|c|},
rowspec = {|c|c|c|c|c|c|c|c|c|c|c|c|c|},
cell{2}{2} = {lightgray},
cell{2}{3} = {lightgray},
cell{2}{4} = {lightgray},
cell{2}{6} = {lightgray},
cell{2}{8} = {lightgray},
cell{2}{9} = {lightgray},
cell{2}{10} = {lightgray},
cell{2}{11} = {lightgray},
cell{2}{13} = {lightgray},
cell{3}{3} = {lightgray},
cell{3}{9} = {lightgray},
cell{3}{13} = {lightgray},
cell{4}{4} = {lightgray},
cell{4}{9} = {lightgray},
cell{4}{13} = {lightgray},
cell{5}{5} = {lightgray},
cell{5}{9} = {lightgray},
cell{5}{13} = {lightgray},
cell{6}{6} = {lightgray},
cell{6}{13} = {lightgray},
cell{7}{7} = {lightgray},
cell{7}{12} = {lightgray},
cell{7}{13} = {lightgray},
cell{8}{6} = {lightgray},
cell{8}{8} = {lightgray},
cell{8}{13} = {lightgray},
cell{9}{9} = {lightgray},
cell{9}{13} = {lightgray},
cell{10}{10} = {lightgray},
cell{10}{13} = {lightgray},
cell{11}{10} = {lightgray},
cell{11}{11} = {lightgray},
cell{11}{13} = {lightgray},
cell{12}{12} = {lightgray},
cell{12}{13} = {lightgray},
cell{13}{13} = {lightgray},
vline{2}={1.5pt},
hline{2}={1.5pt},
}
 & $T_{25}$ & $T_{26}$ & $T_{27}$ & $T_{28}$ & $T_{29}$ & $T_{30}$ & $T_{31}$ & $T_{32}$ & $T_{33}$ & $T_{34}$ & $T_{35}$ & $T_{36}$\\
$T_{25}$ &  & \cite{Sidorenko85} & \ref{primal:25-27} & \ref{dual:25-28} & $\tr_{31}$ & $\tr_{35}$ & \ref{primal:25-31} & \cite{Sidorenko85} & $\tr_{34}$ & \ref{primal:25-34} & \ref{dual:25-35} & \cite{Sidorenko85}\\
$T_{26}$ & $\as$ &  & \cite{Sidorenko94} & \cite{Stoner22+} & \ref{dual:26-29} & \cite{Stoner22+} & \cite{Sidorenko94} & \cite{Sidorenko85} & \cite{Sidorenko94} & \cite{Sidorenko94} & \ref{lem:lambda} & \cite{Sidorenko85}\\
$T_{27}$ & \cite{Leontovich89} & \cite{Leontovich89} &  & \cite{Leontovich89} & \cite{Sidorenko94} & \cite{Sidorenko94} & \cite{Leontovich89} & \cite{Leontovich89} & \cite{Sidorenko94} & \cite{Leontovich89} & \ref{lem:lambda} & $\tr_{32}$\\
$T_{28}$ & \cite{Leontovich89} & \ref{dual:28-26} & \ref{dual:28-27} &  & \ref{dual:28-29} & $\tr_{35}$ & \cite{Sidorenko94} & \ref{primal:28-32} & \ref{dual:28-33} & \cite{Sidorenko94} & \ref{dual:28-35} & $\tr_{32}$\\
$T_{29}$ & \cite{Leontovich89} & \cite{Leontovich89} & \cite{Sidorenko94} & \cite{Leontovich89} &  & \cite{Sidorenko94} & \cite{Leontovich89} & $\tr_{31}$ & \cite{Sidorenko94} & \cite{Leontovich89} & \ref{lem:lambda} & $\tr_{37}$\\
$T_{30}$ & \cite{Leontovich89} & \cite{Leontovich89} & \cite{Sidorenko94} & \cite{Leontovich89} & \cite{Sidorenko94} &  & \cite{Leontovich89} & \ref{dual:30-32} & \cite{Sidorenko94} & \cite{Leontovich89} & \ref{primal:30-35} & $\tr_{35}$\\
$T_{31}$ & \cite{Leontovich89} & \cite{Sidorenko94} & \cite{Sidorenko94} & \cite{Sidorenko94} & \ref{primal:31-29} & \cite{Stoner22+} &  & \ref{dual:31-32} & \cite{Sidorenko94} & \cite{Stoner22+} & $\tr_{25}$ & $\tr_{29}$\\
$T_{32}$ & $\as$ & $\as$ & $\as$ & \cite{Leontovich89} & \cite{Leontovich89} & \cite{Leontovich89} & \cite{Leontovich89} &  & \cite{Leontovich89} & \cite{Leontovich89} & \cite{Sidorenko94} & \cite{Sidorenko85}\\
$T_{33}$ & \cite{Leontovich89} & \cite{Leontovich89} & \cite{Sidorenko94} & \cite{Leontovich89} & $\tr_{34}$ & \cite{Sidorenko94} & \cite{Leontovich89} & $\tr_{34}$ &  & \cite{Leontovich89} & \ref{lem:lambda} & $\tr_{37}$\\
$T_{34}$ & \cite{Leontovich89} & \cite{Sidorenko94} & $\tr_{32}$ & \cite{Sidorenko94} & \ref{dual:34-29} & $\tr_{35}$ & $\tr_{29}$ & \ref{dual:34-32} & \ref{primal:34-33} &  & $\tr_{25}$ & $\tr_{33}$\\
$T_{35}$ & \cite{Leontovich89} & \cite{Leontovich89} & \cite{Leontovich89} & \cite{Leontovich89} & \cite{Leontovich89} & \cite{Leontovich89} & \cite{Leontovich89} & \cite{Sidorenko94} & \cite{Leontovich89} & \cite{Leontovich89} &  & \ref{primal:35-36}\\
$T_{36}$ & $\as$ & $\as$ & $\as$ & \cite{Leontovich89} & \cite{Leontovich89} & \cite{Leontovich89} & \cite{Leontovich89} & $\as$ & \cite{Leontovich89} & \cite{Leontovich89} & \cite{Leontovich89} & \\
\end{tblr}
\caption{The relation $H\succcurlyeq T$ restricted to trees $H$ and $T$ with $H,T\in\{T_{25},\dots,T_{36}\}$.}
\end{table}

\begin{table}[htbp]
\centering
\begin{tblr}{
colspec = {|c|c|c|c|c|c|c|c|c|c|c|c|},
rowspec = {|c|c|c|c|c|c|c|c|c|c|c|c|},
cell{2}{2} = {lightgray},
cell{2}{11} = {lightgray},
cell{2}{12} = {lightgray},
cell{3}{3} = {lightgray},
cell{3}{8} = {lightgray},
cell{3}{11} = {lightgray},
cell{3}{12} = {lightgray},
cell{4}{4} = {lightgray},
cell{4}{8} = {lightgray},
cell{4}{11} = {lightgray},
cell{4}{12} = {lightgray},
cell{5}{3} = {lightgray},
cell{5}{5} = {lightgray},
cell{5}{8} = {lightgray},
cell{5}{11} = {lightgray},
cell{5}{12} = {lightgray},
cell{6}{3} = {lightgray},
cell{6}{5} = {lightgray},
cell{6}{6} = {lightgray},
cell{6}{8} = {lightgray},
cell{6}{11} = {lightgray},
cell{6}{12} = {lightgray},
cell{7}{4} = {lightgray},
cell{7}{7} = {lightgray},
cell{7}{8} = {lightgray},
cell{7}{11} = {lightgray},
cell{7}{12} = {lightgray},
cell{8}{8} = {lightgray},
cell{8}{11} = {lightgray},
cell{8}{12} = {lightgray},
cell{9}{4} = {lightgray},
cell{9}{7} = {lightgray},
cell{9}{8} = {lightgray},
cell{9}{9} = {lightgray},
cell{9}{11} = {lightgray},
cell{9}{12} = {lightgray},
cell{10}{8} = {lightgray},
cell{10}{10} = {lightgray},
cell{10}{11} = {lightgray},
cell{10}{12} = {lightgray},
cell{11}{11} = {lightgray},
cell{11}{12} = {lightgray},
cell{12}{12} = {lightgray},
vline{2}={1.5pt},
hline{2}={1.5pt},
}
 & $T_{37}$ & $T_{38}$ & $T_{39}$ & $T_{40}$ & $T_{41}$ & $T_{42}$ & $T_{43}$ & $T_{44}$ & $T_{45}$ & $T_{46}$ & $T_{47}$\\
$T_{37}$ &  & \cite{Sidorenko94} & \cite{Sidorenko94} & \cite{Leontovich89} & \cite{Leontovich89} & \cite{Sidorenko94} & \cite{Sidorenko94} & \cite{Leontovich89} & \cite{Stoner22+} & \cite{Sidorenko94} & \cite{Sidorenko94}\\
$T_{38}$ & \cite{Sidorenko94} &  & \cite{Sidorenko94} & \cite{Leontovich89} & \cite{Leontovich89} & \cite{Sidorenko94} & \ref{primal:38-43} & \cite{Leontovich89} & \ref{lem:lambda} & \cite{Sidorenko94} & \cite{Sidorenko94}\\
$T_{39}$ & \cite{Leontovich89} & \cite{Leontovich89} &  & \cite{Leontovich89} & \cite{Leontovich89} & \cite{Leontovich89} & \cite{Leontovich89} & \cite{Leontovich89} & \cite{Leontovich89} & \cite{Leontovich89} & \cite{Sidorenko94}\\
$T_{40}$ & $\tr_{19}$ & \ref{primal:40-38} & \cite{Sidorenko94} &  & \cite{Leontovich89} & \cite{Sidorenko94} & $\tr_{38}$ & \cite{Leontovich89} & \ref{lem:lambda} & \cite{Sidorenko94} & \cite{Sidorenko94}\\
$T_{41}$ & $\tr_{36}$ & $\tr_{40}$ & \cite{Sidorenko94} & \ref{primal:41-40} &  & \cite{Sidorenko94} & $\tr_{38}$ & \cite{Stoner22+} & \ref{lem:lambda} & \cite{Sidorenko94} & \cite{Sidorenko94}\\
$T_{42}$ & \cite{Sidorenko94} & \cite{Sidorenko94} & \ref{primal:42-39} & \cite{Leontovich89} & \cite{Leontovich89} &  & $\tr_{39}$ & \cite{Leontovich89} & \ref{lem:lambda} & \cite{Sidorenko94} & \cite{Sidorenko94}\\
$T_{43}$ & \cite{Leontovich89} & \cite{Leontovich89} & $\as$ & \cite{Leontovich89} & \cite{Leontovich89} & \cite{Leontovich89} &  & \cite{Leontovich89} & \cite{Leontovich89} & \cite{Leontovich89} & \cite{Sidorenko94}\\
$T_{44}$ & $\tr_{36}$ & \ref{dual:44-38} & $\tr_{42}$ & \cite{Leontovich89} & \cite{Leontovich89} & \ref{primal:44-42} & $\tr_{39}$ &  & $\tr_{25}$ & \cite{Sidorenko94} & \cite{Sidorenko94}\\
$T_{45}$ & \cite{Leontovich89} & \cite{Leontovich89} & \cite{Leontovich89} & \cite{Leontovich89} & \cite{Leontovich89} & \cite{Leontovich89} & \ref{primal:45-43} & \cite{Leontovich89} &  & \cite{Sidorenko94} & \cite{Sidorenko94}\\
$T_{46}$ & \cite{Leontovich89} & \cite{Leontovich89} & $\as$ & \cite{Leontovich89} & \cite{Leontovich89} & \cite{Leontovich89} & $\as$ & \cite{Leontovich89} & \cite{Leontovich89} &  & \cite{Sidorenko85}\\
$T_{47}$ & \cite{Leontovich89} & \cite{Leontovich89} & \cite{Leontovich89} & \cite{Leontovich89} & \cite{Leontovich89} & \cite{Leontovich89} & \cite{Leontovich89} & \cite{Leontovich89} & \cite{Leontovich89} & $\as$ & \\
\end{tblr}
\caption{The relation $H\succcurlyeq T$ restricted to trees $H$ and $T$ with  $H,T\in\{T_{37},\dots,T_{47}\}$.}
\end{table}

\begin{table}[htbp]
\centering
\begin{tblr}{
colspec = {|c|c|c|c|c|c|c|c|c|c|c|c|},
rowspec = {|c|c|c|c|c|c|c|c|c|c|c|c|c|},
cell{2}{2} = {lightgray},
cell{2}{3} = {lightgray},
cell{2}{4} = {lightgray},
cell{2}{5} = {lightgray},
cell{2}{6} = {lightgray},
cell{2}{7} = {lightgray},
cell{2}{8} = {lightgray},
cell{2}{9} = {lightgray},
cell{2}{11} = {lightgray},
cell{2}{12} = {lightgray},
cell{3}{2} = {lightgray},
cell{3}{3} = {lightgray},
cell{3}{5} = {lightgray},
cell{3}{8} = {lightgray},
cell{3}{11} = {lightgray},
cell{3}{12} = {lightgray},
cell{4}{2} = {lightgray},
cell{4}{3} = {lightgray},
cell{4}{5} = {lightgray},
cell{4}{8} = {lightgray},
cell{4}{11} = {lightgray},
cell{4}{12} = {lightgray},
cell{5}{2} = {lightgray},
cell{5}{3} = {lightgray},
cell{5}{4} = {lightgray},
cell{5}{5} = {lightgray},
cell{5}{7} = {lightgray},
cell{5}{8} = {lightgray},
cell{5}{9} = {lightgray},
cell{5}{11} = {lightgray},
cell{5}{12} = {lightgray},
cell{6}{2} = {lightgray},
cell{6}{8} = {lightgray},
cell{6}{11} = {lightgray},
cell{6}{12} = {lightgray},
cell{7}{2} = {lightgray},
cell{7}{3} = {lightgray},
cell{7}{4} = {lightgray},
cell{7}{5} = {lightgray},
cell{7}{6} = {lightgray},
cell{7}{7} = {lightgray},
cell{7}{8} = {lightgray},
cell{7}{9} = {lightgray},
cell{7}{10} = {lightgray},
cell{7}{11} = {lightgray},
cell{7}{12} = {lightgray},
cell{8}{2} = {lightgray},
cell{8}{3} = {lightgray},
cell{8}{5} = {lightgray},
cell{8}{6} = {lightgray},
cell{8}{8} = {lightgray},
cell{8}{11} = {lightgray},
cell{8}{12} = {lightgray},
cell{9}{11} = {lightgray},
cell{9}{12} = {lightgray},
cell{10}{2} = {lightgray},
cell{10}{3} = {lightgray},
cell{10}{4} = {lightgray},
cell{10}{5} = {lightgray},
cell{10}{8} = {lightgray},
cell{10}{11} = {lightgray},
cell{10}{12} = {lightgray},
cell{11}{2} = {lightgray},
cell{11}{3} = {lightgray},
cell{11}{4} = {lightgray},
cell{11}{5} = {lightgray},
cell{11}{6} = {lightgray},
cell{11}{7} = {lightgray},
cell{11}{8} = {lightgray},
cell{11}{9} = {lightgray},
cell{11}{11} = {lightgray},
cell{11}{12} = {lightgray},
cell{12}{8} = {lightgray},
cell{12}{11} = {lightgray},
cell{12}{12} = {lightgray},
cell{13}{11} = {lightgray},
cell{13}{12} = {lightgray},
vline{2}={1.5pt},
hline{2}={1.5pt},
}
 & $T_{37}$ & $T_{38}$ & $T_{39}$ & $T_{40}$ & $T_{41}$ & $T_{42}$ & $T_{43}$ & $T_{44}$ & $T_{45}$ & $T_{46}$ & $T_{47}$\\
$T_{25}$ & $\tr_{26}$ & $\tr_{26}$ & $\tr_{33}$ & $\tr_{26}$ & $\tr_{31}$ & $\tr_{34}$ & $\tr_{39}$ & $\tr_{34}$ & \ref{th:pathsNotUnique} & \cite{Sidorenko85} & \cite{Hoffman67}\\
$T_{26}$ & \ref{primal:26-37} & $\tr_{40}$ & \cite{Sidorenko94} & \ref{primal:26-40} & \ref{dual:26-41} & \cite{Sidorenko94} & $\tr_{38}$ & \cite{Stoner22+} & \ref{th:nearStar} & \cite{Sidorenko85} & \cite{Sidorenko85}\\
$T_{27}$ & \ref{primal:27-37} & $\tr_{40}$ & \ref{dual:27-39} & \ref{primal:27-40} & \cite{Sidorenko94} & $\tr_{39}$ & $\tr_{38}$ & \cite{Stoner22+} & \ref{lem:lambda} & \cite{Sidorenko94} & \cite{Sidorenko94}\\
$T_{28}$ & \ref{primal:28-37} & $\tr_{40}$ & $\tr_{42}$ & \ref{primal:28-40} & \ref{dual:28-41} & $\tr_{44}$ & $\tr_{38}$ & \ref{primal:28-44} & \ref{th:nearStar} & \cite{Sidorenko94} & \cite{Sidorenko94}\\
$T_{29}$ & \ref{primal:29-37} & \ref{dual:29-38} & \cite{Sidorenko94} & $\tr_{38}$ & \cite{Sidorenko94} & \cite{Sidorenko94} & \ref{primal:29-43} & \cite{Stoner22+} & \ref{lem:lambda} & \cite{Sidorenko94} & \cite{Sidorenko94}\\
$T_{30}$ & \ref{primal:30-37} & $\tr_{40}$ & $\tr_{42}$ & $\tr_{41}$ & \cite{Sidorenko94} & $\tr_{44}$ & $\tr_{38}$ & \ref{primal:30-44} & \ref{primal:30-45} & \cite{Sidorenko94} & \cite{Sidorenko94}\\
$T_{31}$ & $\tr_{29}$ & $\tr_{40}$ & \cite{Sidorenko94} & $\tr_{41}$ & \ref{primal:31-41} & \cite{Sidorenko94} & $\tr_{29}$ & \cite{Stoner22+} & \ref{th:nearStar} & \cite{Sidorenko94} & \cite{Sidorenko94}\\
$T_{32}$ & \ref{dual:32-37} & $\tr_{43}$ & \cite{Sidorenko94} & \cite{Leontovich89} & \cite{Leontovich89} & \cite{Sidorenko94} & \ref{dual:32-43} & \cite{Sidorenko94} & \ref{th:nearStar} & \cite{Sidorenko85} & \cite{Sidorenko85}\\
$T_{33}$ & \ref{primal:33-37} & $\tr_{40}$ & \ref{primal:33-39} & \ref{primal:33-40} & \cite{Sidorenko94} & \ref{dual:33-42} & $\tr_{38}$ & \cite{Stoner22+} & \ref{th:nearStar} & \cite{Sidorenko94} & \cite{Sidorenko94}\\
$T_{34}$ & $\tr_{33}$ & $\tr_{33}$ & $\tr_{33}$ & $\tr_{33}$ & \ref{primal:34-41} & $\tr_{44}$ & $\tr_{33}$ & \ref{primal:34-44} & \ref{th:nearStar} & \cite{Sidorenko94} & \cite{Sidorenko94}\\
$T_{35}$ & \ref{dual:35-37} & \ref{dual:35-38} & \cite{Sidorenko94} & \cite{Leontovich89} & \cite{Leontovich89} & \cite{Sidorenko94} & \ref{primal:35-43} & \cite{Sidorenko94} & \ref{th:nearStar} & \cite{Sidorenko94} & \cite{Sidorenko94}\\
$T_{36}$ & $\as$ & \cite{Leontovich89} & \cite{Leontovich89} & \cite{Leontovich89} & \cite{Leontovich89} & \cite{Leontovich89} & \cite{Sidorenko94} & \cite{Leontovich89} & \cite{Sidorenko94} & \cite{Sidorenko85} & \cite{Sidorenko85}\\
\end{tblr}
\caption{The relation $H\succcurlyeq T$ restricted to trees $H$ and $T$ with $H\in\{T_{37},\dots,T_{47}\}$ and $T\in\{T_{25},\dots,T_{36}\}$.}
\end{table}

\begin{table}[htbp]
\centering
\begin{tblr}{
colspec = {|c|c|c|c|c|c|c|c|c|c|c|c|c|},
rowspec = {|c|c|c|c|c|c|c|c|c|c|c|c|},
cell{2}{13} = {lightgray},
vline{2}={1.5pt},
hline{2}={1.5pt},
}
 & $T_{25}$ & $T_{26}$ & $T_{27}$ & $T_{28}$ & $T_{29}$ & $T_{30}$ & $T_{31}$ & $T_{32}$ & $T_{33}$ & $T_{34}$ & $T_{35}$ & $T_{36}$\\
$T_{37}$ & \cite{Leontovich89} & \cite{Leontovich89} & \cite{Leontovich89} & \cite{Leontovich89} & \cite{Leontovich89} & \cite{Leontovich89} & \cite{Leontovich89} & \cite{Leontovich89} & \cite{Leontovich89} & \cite{Leontovich89} & \cite{Leontovich89} & \cite{Leontovich89}\\
$T_{38}$ & \cite{Leontovich89} & \cite{Leontovich89} & \cite{Leontovich89} & \cite{Leontovich89} & \cite{Leontovich89} & \cite{Leontovich89} & \cite{Leontovich89} & \cite{Leontovich89} & \cite{Leontovich89} & \cite{Leontovich89} & \cite{Leontovich89} & $\tr_{19}$\\
$T_{39}$ & \cite{Leontovich89} & \cite{Leontovich89} & \cite{Leontovich89} & \cite{Leontovich89} & \cite{Leontovich89} & \cite{Leontovich89} & \cite{Leontovich89} & \cite{Leontovich89} & \cite{Leontovich89} & \cite{Leontovich89} & \cite{Leontovich89} & \cite{Leontovich89}\\
$T_{40}$ & \cite{Leontovich89} & \cite{Leontovich89} & \cite{Leontovich89} & \cite{Leontovich89} & \cite{Leontovich89} & \cite{Leontovich89} & \cite{Leontovich89} & \cite{Leontovich89} & \cite{Leontovich89} & \cite{Leontovich89} & \cite{Leontovich89} & $\tr_{19}$\\
$T_{41}$ & \cite{Leontovich89} & \cite{Leontovich89} & \cite{Leontovich89} & \cite{Leontovich89} & \cite{Leontovich89} & \cite{Leontovich89} & \cite{Leontovich89} & \cite{Leontovich89} & \cite{Leontovich89} & \cite{Leontovich89} & \cite{Leontovich89} & \ref{dual:41-36}\\
$T_{42}$ & \cite{Leontovich89} & \cite{Leontovich89} & \cite{Leontovich89} & \cite{Leontovich89} & \cite{Leontovich89} & \cite{Leontovich89} & \cite{Leontovich89} & \cite{Leontovich89} & \cite{Leontovich89} & \cite{Leontovich89} & \cite{Leontovich89} & $\tr_{44}$\\
$T_{43}$ & \cite{Leontovich89} & \cite{Leontovich89} & \cite{Leontovich89} & \cite{Leontovich89} & \cite{Leontovich89} & \cite{Leontovich89} & \cite{Leontovich89} & \cite{Leontovich89} & \cite{Leontovich89} & \cite{Leontovich89} & \cite{Leontovich89} & \cite{Leontovich89}\\
$T_{44}$ & \cite{Leontovich89} & \cite{Leontovich89} & \cite{Leontovich89} & \cite{Leontovich89} & \cite{Leontovich89} & \cite{Leontovich89} & \cite{Leontovich89} & \cite{Leontovich89} & \cite{Leontovich89} & \cite{Leontovich89} & \cite{Leontovich89} & \ref{dual:44-36}\\
$T_{45}$ & \cite{Leontovich89} & \cite{Leontovich89} & \cite{Leontovich89} & \cite{Leontovich89} & \cite{Leontovich89} & \cite{Leontovich89} & \cite{Leontovich89} & \cite{Leontovich89} & \cite{Leontovich89} & \cite{Leontovich89} & \cite{Leontovich89} & \cite{Sidorenko94}\\
$T_{46}$ & $\as$ & $\as$ & \cite{Leontovich89} & \cite{Leontovich89} & \cite{Leontovich89} & \cite{Leontovich89} & \cite{Leontovich89} & $\as$ & \cite{Leontovich89} & \cite{Leontovich89} & \cite{Leontovich89} & $\as$\\
$T_{47}$ & $\as$ & $\as$ & \cite{Leontovich89} & \cite{Leontovich89} & \cite{Leontovich89} & \cite{Leontovich89} & \cite{Leontovich89} & $\as$ & \cite{Leontovich89} & \cite{Leontovich89} & \cite{Leontovich89} & $\as$\\
\end{tblr}
\caption{The relation $H\succcurlyeq T$ restricted to trees $H$ and $T$ with $H\in\{T_{25},\dots,T_{36}\}$ and $T\in\{T_{37},\dots,T_{47}\}$.}
\end{table}

\section{Concluding Remarks}
\label{sec:conclusion}

While Theorem~\ref{th:pathsNotUnique} shows that Theorem~\ref{th:P4} does not directly generalize to longer paths, we believe that the following weaker generalization may hold. This statement, if true, would support the rough intuition that path-like graphs are near the bottom of the partial order restricted to trees.

\begin{conj}
For any $k\geq1$, there exists $n_0(k)$ such that if $H$ is a tree with at least $n_0(k)$ vertices, then $H\succcurlyeq P_{2k}$.
\end{conj}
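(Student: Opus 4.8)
The plan is to separate two regimes according to how balanced $H$ is. Write $n=v(H)$ and let $(A,B)$ be the bipartition of $H$ with $\sigma(H)=|A|\le |B|=n-\sigma(H)$. If $\sigma(H)\le \frac{n-2k+2}{2}$, then Lemma~\ref{lem:pathBalance} applied to $P_{2k}$ already gives $H\succcurlyeq P_{2k}$, requiring nothing of $n$ beyond $n\ge 2k$. Thus the only difficulty lies in the \emph{nearly balanced} regime $\frac{n-2k+2}{2}<\sigma(H)\le n/2$, equivalently where the imbalance $n-2\sigma(H)$ is one of $0,1,\dots,2k-3$; this is precisely where $n_0(k)$ will be needed. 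We remark that the restriction to even paths is essential for exactly this reason: an exactly balanced tree $H$ (for instance $H=P_{2m}$, for $m$ arbitrarily large) satisfies $e(H)/\sigma(H)=2-2/n<2=e(P_{2j+1})/\sigma(P_{2j+1})$, so $H\not\succcurlyeq P_{2j+1}$ by Lemma~\ref{lem:sigma}; no analogue of the conjecture can hold for odd paths.

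The nearly balanced regime genuinely requires a different tool than the ones used for Theorem~\ref{th:star} and for the easy direction of Theorem~\ref{th:P4}: the fractional-orientation method of Lemma~\ref{lem:orient} is useless here, because when $H$ is exactly balanced the vertex constraint~\eqref{eq:Vconstraint} would force every vertex of $P_{2k}$ to have degree at most $2-2/n<2$, which is absurd. So we would certify $H\succcurlyeq P_{2k}$ through Lemma~\ref{lem:LP} by exhibiting a feasible weighting $w\colon\Hom(H,P_{2k})\to\mathbb{R}$ of objective value $e(P_{2k})=2k-1$ that is supported on homomorphisms which \emph{fold} $H$ onto $P_{2k}$, in the spirit of the $P_6\to P_4$ example recalled after Lemma~\ref{lem:LP} and of the explicit weighting used in the proof of Theorem~\ref{th:P4}. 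Concretely, the idea is to fix either a longest path or a centroidal edge of $H$, route it through $P_{2k}$ in a zig-zag that reflects at the ends $u_1$ and $u_{2k}$, map the remaining branches of $H$ into $P_{2k}$ locally, and then combine these ``spread'' homomorphisms with the two single-edge homomorphisms $A\mapsto u_1,B\mapsto u_2$ and its reflection, and with homomorphisms that collapse $H$ onto the internal sub-path $u_2\cdots u_{2k-1}$, choosing the coefficients — exactly as in the proof of Theorem~\ref{th:P4}, where $e(H)$ plays the role of the large parameter — so that the edge constraints~\eqref{eq:Econstraint} are tight and the vertex constraints~\eqref{eq:Vconstraint} hold with slack coming from $n$ being large. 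It is natural to run this through a structural dichotomy for balanced $H$: either $H$ contains a path on a positive fraction of its vertices, in which case the folding construction organizes naturally around that path, with the arguments of Godsil and Sa\u{g}lam (Theorems~\ref{th:Godsil} and~\ref{th:Saglam}) as prototypes; or $H$ has bounded diameter, in which case $H$ has a vertex of very large degree and star-like constructions in the spirit of Theorem~\ref{th:star} become available, once again with the necessary room supplied by the size of $H$.

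The hard part will be carrying out the folding construction \emph{uniformly} over all nearly balanced trees. A folding homomorphism must visit each of the $2k-2$ internal vertices of $P_{2k}$ several times, and keeping all $2k$ vertex constraints~\eqref{eq:Vconstraint} satisfied simultaneously, while the imbalance of $H$ ranges over $\{0,1,\dots,2k-3\}$ and the branch structure of $H$ is arbitrary, appears to need a single coherent family of homomorphisms rather than an ad hoc patchwork. Already the case $H=P_n$ with $n$ even amounts to a streamlined proof of the even instances of Theorem~\ref{th:Saglam}, and the case of balanced spiders or double brooms requires the three-homomorphism scheme from the proof of Theorem~\ref{th:P4} adapted from $3$ to $2k-1$ edges; a construction subsuming all of these, together with the verification that the associated linear program attains value $2k-1$, is what we expect to be the crux. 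A more robust but non-constructive alternative is to argue by contradiction with Lemma~\ref{lem:DLP}: if there were trees $H_m\not\succcurlyeq P_{2k}$ with $v(H_m)\to\infty$, one could try to extract from the normalized dual-optimal weightings on $P_{2k}$ a limiting obstruction and show it cannot exist, exploiting that all of the necessary conditions recorded in Section~\ref{sec:nec} — on radius, on $\sigma$, on independence number, and on $e/v$ — become slack for $P_{2k}$ as $v(H)\to\infty$.
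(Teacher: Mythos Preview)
This statement is a \emph{conjecture} in the paper, stated in the concluding remarks as an open problem; the paper offers no proof, so there is nothing to compare your argument against. What you have written is not a proof either, and you are candid about this: phrases like ``the hard part will be,'' ``appears to need,'' and ``what we expect to be the crux'' signal that you are outlining a strategy rather than executing one.

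Your outline is sensible and well-informed. The reduction to the nearly balanced regime via Lemma~\ref{lem:pathBalance} is correct, and your remark that the conjecture fails for odd paths (via Lemma~\ref{lem:sigma}) is both accurate and a useful sanity check. You are also right that Lemma~\ref{lem:orient} cannot handle exactly balanced trees. But the central difficulty you identify --- producing, uniformly over all nearly balanced trees $H$, a primal-feasible weighting on $\Hom(H,P_{2k})$ of value $2k-1$ --- is genuinely open, and nothing in your proposal closes it. The structural dichotomy (long path versus high-degree vertex) is suggestive but not exhaustive: a nearly balanced tree need have neither a path on a positive fraction of its vertices nor a vertex of very large degree (consider complete binary trees). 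The dual/compactness sketch at the end is also not a proof: the space of dual weightings on $P_{2k}$ is finite-dimensional, so extracting a limit is easy, but showing that any limiting dual point violates feasibility for \emph{some} homomorphism from every sufficiently large tree is exactly the content of the conjecture, not a consequence of the listed necessary conditions becoming slack.

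In short: this is a reasonable research plan for an open problem, but it is not a proof, and the paper does not claim one.
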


In this paper, we have focused mainly on trees. A modest, but still interesting, extension of the results in this paper could be to study $\succcurlyeq$ restricted to forests. We remark that the full structure of the relation $\succcurlyeq$ on forests consisting of disjoint unions of paths is determined by~\cite[Theorem~1.3]{BlekhermanRaymond22}.

The ideas in this paper can be equivalently expressed in terms of graph limits. A \emph{kernel} is a bounded measurable function $U:[0,1]^2\to\mathbb{R}$ such that $U(x,y)=U(y,x)$ for all $x,y\in[0,1]$. The \emph{homomorphism density} of a graph $H$ in a kernel $U$ is defined to be
\[t(H,U):=\int_{[0,1]^{V(H)}}\prod_{uv\in E(H)}U(x_u,x_v)\prod_{u\in V(H)}dx_{u}.\]
A kernel $W$ such that $0\leq W(x,y)\leq 1$ for all $x,y\in[0,1]$ is called a \emph{graphon}. Standard results in graph limit theory (see~\cite{Lovasz12}) tell us that $H\succcurlyeq T$ if and only if $t(H,W)^{e(T)}\geq t(T,W)^{e(H)}$ for every graphon $W$. It would be interesting to study the relation in which $H$ is related to $T$ if  $t(H,U)^{e(T)}\geq t(T,U)^{e(H)}$ for every kernel $U$. Note that this relation is much more restrictive than $\succcurlyeq$. For example, by letting $U$ be the constant kernel $U:=-1$, we have $t(H,U)=(-1)^{e(H)}$ for every graph $H$ and so no graph with an odd number of edges can be related to a graph with an even number of edges under this relation. Inequalities involving homomorphism densities of kernels are frequently studied and have many applications; see, e.g.,~\cite{KimLee22+,Lovasz11}. It is unclear to us whether the approach in this paper, or a modification of it, can be applied in this setting.

While our results provide some insight into the structure of the partial order $\succcurlyeq$ on the set of all trees, our understanding of this poset in general is very limited. Two basic parameters of a poset are its \emph{height} and \emph{width}, which are the sizes of the largest chain and antichain, respectively. It would be an interesting (and possibly very challenging) problem to obtain good asymptotic estimates of the height and width of the poset $\succcurlyeq$ restricted to $n$-vertex trees as $n$ tends to infinity. 

Of course, it would be extremely interesting to gain a better understanding of the relation $\succcurlyeq$ on general pairs of graphs, although this would be quite challenging; indeed, a classification of graphs $H$ satisfying $H\succcurlyeq K_2$ would resolve Sidorenko's Conjecture. Note that the linear programming approach of~\cite{KoppartyRossman11} applied in this paper can be generalized beyond the setting of forests, but obtaining a tight bound on $t(H,G)$ in terms of $t(T,G)$ for all $G$ using this method requires some conditions on $H$ and $T$. 

As a very open-ended direction for possible future work, it would be interesting to study similar partial orders for other combinatorial structures, such as digraphs, hypergraphs, tournaments, etc.

\appendix

\section{Primal Certificates}
\label{app:Primalcertificates}

We provide certificates for the value of $LP(H,T)$ being equal to $e(T)$ for various trees $H$ and $T$ on at most 8 vertices. All such trees $T$ are listed in Appendix~\ref{app:smallTrees} together with a labelling of the vertices of $T$ using the numbers $1,\dots,v(T)$. Throughout the following constructions, we describe a homomorphism $\varphi$ from $H$ to $T$ as the vector $(\varphi(1),\dots,\varphi(v(T))$. Whenever we specify the weighting of $\Hom(H,T)$, any homomorphisms that are not listed are assumed to have weight zero.

{\footnotesize
\begin{const}
\label{primal:5-9}
The following weighting certifies that the value of $LP(T_{9},T_{5})$ is $4$.
\[w (1,4,5,5,2,1)  =  1/5,  \qquad w (1,4,5,5,4,5)  =  1/5,  \qquad w (2,1,4,2,1,4)  =  1/5, \]
\[w (3,2,3,3,2,3)  =  1/5.\]
\end{const}

\begin{const}
\label{primal:5-16}
The following weighting certifies that the value of $LP(T_{16},T_5)$ is $4$.
\[w (2,1,2,3,1,4,3)  =  1/4,  \qquad w (2,3,2,3,3,2,3)  =  1/12,  \qquad w (4,1,4,5,1,2,5)  =  1/4, \]
\[w (4,5,4,5,5,4,5)  =  1/12.\]
\end{const}




\begin{const}
\label{primal:5-31}
The following weighting certifies that the value of $LP(T_{31},T_{5})$ is $4$.
\[w (4,1,2,3,1,2,3,5)  =  1/2,  \qquad w (4,5,4,5,5,4,5,5)  =  1/14.\]
\end{const}

\begin{const}
\label{primal:6-12}
The following weighting certifies that the value of $LP(T_{12},T_{6})$ is $4$.
\[w (1,2,3,4,2,4)  =  3/10,  \qquad w (1,2,3,5,5,2)  =  1/10,  \qquad w (1,5,1,4,5,4)  =  1/5, \]
\[w (2,1,5,3,3,3)  =  1/5.\]
\end{const}


\begin{const}
\label{primal:6-18}
The following weighting certifies that the value of $LP(T_{18},T_{6})$ is $4$.
\[w (2,1,4,5,1,4,5)  =  1/2,  \qquad w (2,3,2,2,3,2,2)  =  1/18,  \qquad w (3,2,3,3,2,3,3)  =  1/9.\]
\end{const}


\begin{const}
\label{primal:6-22}
The following weighting certifies that the value of $LP(T_{22},T_{6})$ is $4$.
\[w (1,2,3,2,3,4,4)  =  1/2,  \qquad w (1,5,1,5,1,5,5)  =  1/6.\]
\end{const}

\begin{const}
\label{primal:6-29}
The following weighting certifies that the value of $LP(T_{29},T_{6})$ is $4$.
\[w (2,1,2,3,4,1,4,2)  =  1/14,  \qquad w (2,1,2,3,5,1,5,5)  =  1/7,  \qquad w (3,2,3,2,3,2,3,3)  =  1/14, \]
\[w (5,1,2,3,4,1,4,4)  =  2/7.\]
\end{const}

\begin{const}
\label{primal:6-32}
The following weighting certifies that the value of $LP(T_{32},T_{6})$ is $4$.
\[w (1,2,1,5,2,3,3,3)  =  1/14,  \qquad w (2,1,2,3,1,5,4,2)  =  1/14,  \qquad w (2,3,2,3,1,4,4,2)  =  1/14, \]
\[w (2,3,2,3,1,4,4,4)  =  1/14,  \qquad w (5,1,2,3,1,5,4,4)  =  2/7.\]
\end{const}

\begin{const}
\label{primal:6-35}
The following weighting certifies that the value of $LP(T_{35},T_{6})$ is $4$.
\[w (1,2,1,4,2,3,4,2)  =  1/14,  \qquad w (1,2,1,5,2,3,5,5)  =  1/7,  \qquad w (1,4,1,4,2,3,5,5)  =  2/7, \]
\[w (2,3,2,3,3,2,3,3)  =  1/14.\]
\end{const}



\begin{const}
\label{primal:6-40}
The following weighting certifies that the value of $LP(T_{40},T_{6})$ is $4$.
\[w (2,1,5,4,1,4,5,3)  =  1/2,  \qquad w (2,3,2,2,3,2,2,3)  =  1/14.\]
\end{const}



\begin{const}
\label{primal:8-16}
The following weighting certifies that the value of $LP(T_{16},T_{8})$ is $5$.
\[w (1,2,3,4,5,6,2)  =  1/4,  \qquad w (2,1,5,6,3,4,1)  =  1/12,  \qquad w (2,1,5,6,3,4,3)  =  1/6, \]
\[w (3,2,3,4,2,1,4)  =  1/12,  \qquad w (3,4,3,4,2,1,4)  =  1/12,  \qquad w (5,1,5,6,1,5,6)  =  1/8, \]
\[w (5,6,5,6,6,5,6)  =  1/24.\]
\end{const}




\begin{const}
\label{primal:8-28}
The following weighting certifies that the value of $LP(T_{28},T_{8})$ is $5$.
\[w (1,2,3,4,1,2,3,4)  =  1/14,  \qquad w (1,5,1,5,6,2,3,4)  =  1/7,  \qquad w (2,1,5,6,2,1,5,6)  =  1/14, \]
\[w (2,3,2,3,4,1,5,6)  =  1/7,  \qquad w (3,2,3,4,1,4,3,4)  =  1/7,  \qquad w (5,1,5,6,2,6,5,6)  =  1/7.\]
\end{const}


\begin{const}
\label{primal:8-30}
The following weighting certifies that the value of $LP(T_{30},T_{8})$ is $5$.
\[w (1,5,6,5,6,5,6,5)  =  1/7,  \qquad w (2,1,5,6,2,3,4,1)  =  1/7,  \qquad w (3,2,3,4,1,2,1,4)  =  1/7, \]
\[w (3,2,3,4,3,4,3,4)  =  1/7,  \qquad w (5,1,5,6,2,1,2,6)  =  1/7.\]
\end{const}

\begin{const}
\label{primal:9-18}
The following weighting certifies that the value of $LP(T_{18},T_{9})$ is $5$.
\[w (1,2,3,4,2,4,3)  =  1/2,  \qquad w (1,5,1,1,5,1,1)  =  1/18,  \qquad w (5,1,5,5,1,5,5)  =  1/9, \]
\[w (6,5,6,6,5,6,6)  =  1/6.\]
\end{const}

\begin{const}
\label{primal:10-20}
The following weighting certifies that the value of $LP(T_{20},T_{10})$ is $5$.
\[w (1,2,3,3,5,6,6)  =  1/2,  \qquad w (2,1,5,5,4,1,4)  =  1/4,  \qquad w (2,4,2,2,4,4,4)  =  1/12.\]
\end{const}


\begin{const}
\label{primal:10-37}
The following weighting certifies that the value of $LP(T_{37},T_{10})$ is $5$.
\[w (1,2,3,3,1,2,3,3)  =  1/49,  \qquad w (1,2,4,3,3,2,4,3)  =  3/49,  \qquad w (1,2,4,4,4,2,4,1)  =  4/49, \]
\[w (2,1,5,5,2,1,5,5)  =  2/49,  \qquad w (2,1,6,6,6,1,6,5)  =  11/49,  \qquad w (3,2,4,4,4,2,3,3)  =  9/49, \]
\[w (5,1,5,5,5,1,5,6)  =  5/49.\]
\end{const}

\begin{const}
\label{primal:10-39}
The following weighting certifies that the value of $LP(T_{39},T_{10})$ is $5$.
\[w (1,2,4,3,3,6,5,5)  =  1/3,  \qquad w (1,2,4,3,4,2,6,6)  =  1/3,  \qquad w (1,5,1,1,1,5,5,5)  =  1/21.\]
\end{const}

\begin{const}
\label{primal:11-22}
The following weighting certifies that the value of $LP(T_{22},T_{11})$ is $5$.
\[w (1,2,3,2,3,6,6)  =  1/4,  \qquad w (1,4,5,4,5,6,6)  =  1/4,  \qquad w (2,1,4,1,4,3,3)  =  1/8, \]
\[w (2,3,2,3,2,3,3)  =  1/24,  \qquad w (4,1,2,1,2,5,5)  =  1/8,  \qquad w (4,5,4,5,4,5,5)  =  1/24.\]
\end{const}

\begin{const}
\label{primal:11-40}
The following weighting certifies that the value of $LP(T_{40},T_{11})$ is $5$.
\[w (1,2,3,3,4,5,5,6)  =  2/7,  \qquad w (2,1,6,4,1,4,6,3)  =  5/28,  \qquad w (2,3,2,2,3,2,2,3)  =  1/28, \]
\[w (4,1,6,2,1,6,2,5)  =  5/28,  \qquad w (4,5,4,4,5,4,4,5)  =  1/28.\]
\end{const}


\begin{const}
\label{primal:11-44}
The following weighting certifies that the value of $LP(T_{44},T_{11})$ is $5$.
\[w (1,2,3,2,3,2,3,2)  =  1/28,  \qquad w (1,2,3,2,3,4,5,6)  =  2/7,  \qquad w (1,2,3,4,5,2,3,4)  =  3/28, \]
\[w (1,4,5,6,1,4,5,6)  =  1/7,  \qquad w (2,3,2,1,6,1,6,3)  =  1/28,  \qquad w (4,5,4,1,6,1,6,5)  =  3/28.\]
\end{const}


\begin{const}
\label{primal:12-23}
The following weighting certifies that the value of $LP(T_{23},T_{12})$ is $5$.
\[w (1,2,1,2,2,2,2)  =  1/24,  \qquad w (1,2,1,4,5,5,4)  =  1/8,  \qquad w (1,2,3,6,4,4,6)  =  1/8, \]
\[w (1,2,3,6,6,5,5)  =  3/8,  \qquad w (1,4,1,4,4,4,4)  =  1/12,  \qquad w (2,3,2,3,3,3,3)  =  1/12.\]
\end{const}

\begin{const}
\label{primal:14-26}
The following weighting certifies that the value of $LP(T_{26},T_{14})$ is $6$.
\[w (1,2,3,4,4,5,6,7)  =  3/14,  \qquad w (1,5,1,2,5,2,1,5)  =  1/14,  \qquad w (1,5,6,7,7,2,3,4)  =  3/14, \]
\[w (2,1,5,6,6,3,2,1)  =  1/14,  \qquad w (3,4,3,4,4,2,3,4)  =  1/14,  \qquad w (5,1,2,3,3,1,2,3)  =  1/14, \]
\[w (5,6,5,6,6,1,2,3)  =  1/14,  \qquad w (6,5,6,7,7,7,6,7)  =  1/14.\]
\end{const}

\begin{const}
\label{primal:14-29}
The following weighting certifies that the value of $LP(T_{29},T_{14})$ is $6$.
\[w (1,2,3,4,3,5,6,6)  =  1/7,  \qquad w (2,1,2,3,5,3,4,4)  =  2/7,  \qquad w (2,1,5,1,2,3,4,4)  =  1/7, \]
\[w (5,6,5,1,7,6,7,7)  =  1/14,  \qquad w (5,6,7,6,7,6,7,7)  =  1/7,  \qquad w (6,5,6,7,1,5,1,1)  =  1/14.\]
\end{const}

\begin{const}
\label{primal:14-41}
The following weighting certifies that the value of $LP(T_{41},T_{14})$ is $6$.
\[w (1,2,1,3,5,6,2,1)  =  1/7,  \qquad w (1,2,3,3,5,6,5,6)  =  1/14,  \qquad w (2,3,2,4,3,4,3,4)  =  3/28, \]
\[w (2,3,4,2,1,2,1,2)  =  1/28,  \qquad w (2,3,4,4,3,4,1,2)  =  1/14,  \qquad w (4,3,4,4,3,4,3,2)  =  1/14, \]
\[w (5,1,2,5,6,7,6,7)  =  1/14,  \qquad w (5,6,7,7,6,7,1,5)  =  2/7.\]
\end{const}

\begin{const}
\label{primal:15-32}
The following weighting certifies that the value of $LP(T_{32},T_{15})$ is $6$.
\[w (1,2,1,5,5,7,7,7)  =  5/28,  \qquad w (1,2,3,2,5,7,7,7)  =  1/28,  \qquad w (1,2,3,4,5,6,6,6)  =  1/4, \]
\[w (1,5,1,2,5,6,7,6)  =  3/28,  \qquad w (2,1,2,3,3,4,4,4)  =  3/28,  \qquad w (2,1,5,6,3,4,4,4)  =  1/28, \]
\[w (3,2,3,4,2,3,3,3)  =  1/14,  \qquad w (4,3,4,3,3,4,4,4)  =  1/28,  \qquad w (7,5,7,5,5,7,7,7)  =  1/28.\]
\end{const}



\begin{const}
\label{primal:15-40}
The following weighting certifies that the value of $LP(T_{40},T_{15})$ is $6$.
\[w (1,5,7,6,5,7,6,2)  =  1/2,  \qquad w (2,3,4,4,3,4,4,1)  =  3/14,  \qquad w (3,2,1,1,2,1,1,4)  =  1/14, \]
\[w (3,2,3,3,2,3,3,4)  =  1/14.\]
\end{const}

\begin{const}
\label{primal:16-35}
The following weighting certifies that the value of $LP(T_{35},T_{16})$ is $6$.
\[w (1,2,1,7,5,6,7,7)  =  1/14,  \qquad w (1,2,3,4,5,6,2,7)  =  3/14,  \qquad w (1,2,3,4,5,6,7,7)  =  5/28, \]
\[w (1,5,1,5,5,6,7,7)  =  1/14,  \qquad w (2,1,5,6,3,4,3,3)  =  3/28,  \qquad w (3,2,1,7,4,3,4,4)  =  1/14, \]
\[w (3,2,3,4,2,1,4,4)  =  1/14,  \qquad w (5,6,5,6,1,5,6,6)  =  1/14.\]
\end{const}

\begin{const}
\label{primal:16-37}
The following weighting certifies that the value of $LP(T_{37},T_{16})$ is $6$.
\[w (1,2,1,3,3,2,3,3)  =  1/7,  \qquad w (2,1,5,5,7,3,4,4)  =  1/7,  \qquad w (2,3,4,4,4,3,4,4)  =  1/7, \]
\[w (5,1,2,5,2,1,2,2)  =  3/49,  \qquad w (5,1,2,7,7,1,7,7)  =  9/49,  \qquad w (5,1,7,5,5,1,7,7)  =  2/49, \]
\[w (6,5,6,6,6,5,6,6)  =  1/7.\]
\end{const}


\begin{const}
\label{primal:16-40}
The following weighting certifies that the value of $LP(T_{40},T_{16})$ is $6$.
\[w (1,5,6,6,5,6,6,5)  =  1/14,  \qquad w (2,1,5,5,1,7,7,3)  =  5/28,  \qquad w (2,1,7,2,1,2,7,3)  =  1/14, \]
\[w (2,3,4,4,3,4,4,3)  =  1/4,  \qquad w (5,1,2,7,1,7,2,6)  =  5/28,  \qquad w (5,1,7,7,1,7,7,6)  =  1/28, \]
\[w (6,5,6,6,5,6,6,5)  =  1/14.\]
\end{const}

\begin{const}
\label{primal:16-45}
The following weighting certifies that the value of $LP(T_{45},T_{16})$ is $6$.
\[w (1,5,6,5,6,7,7,2)  =  1/2,  \qquad w (2,3,2,3,2,3,3,3)  =  1/21,  \qquad w (2,3,4,3,4,1,1,1)  =  1/6, \]
\[w (3,2,3,2,3,4,4,4)  =  1/18,  \qquad w (3,4,3,4,3,4,4,4)  =  1/14.\]
\end{const}

\begin{const}
\label{primal:17-36}
The following weighting certifies that the value of $LP(T_{36},T_{17})$ is $6$.
\[w (1,2,1,3,3,3,6,7)  =  3/70,  \qquad w (1,2,1,5,5,5,2,5)  =  1/210,  \qquad w (1,2,3,5,5,3,6,1)  =  11/84, \]
\[w (1,2,4,3,4,3,2,5)  =  32/105,  \qquad w (1,2,4,5,4,5,6,7)  =  13/105,  \qquad w (1,2,4,5,5,4,6,1)  =  1/28, \]
\[w (4,2,4,5,5,5,2,5)  =  1/42,  \qquad w (6,1,6,6,6,6,1,6)  =  1/14,  \qquad w (7,6,7,7,7,7,6,7)  =  5/42.\]
\end{const}

\begin{const}
\label{primal:18-37}
The following weighting certifies that the value of $LP(T_{37},T_{18})$ is $6$.
\[w (1,2,1,4,4,5,6,6)  =  3/14,  \qquad w (1,5,7,6,6,5,6,6)  =  1/14,  \qquad w (1,5,7,6,7,2,3,3)  =  3/14, \]
\[w (1,5,7,7,7,2,4,4)  =  1/14,  \qquad w (1,5,7,7,7,5,7,6)  =  1/14,  \qquad w (2,1,2,2,2,1,2,5)  =  1/42, \]
\[w (2,1,5,5,5,1,5,2)  =  1/21,  \qquad w (3,2,4,4,3,2,4,3)  =  1/7.\]
\end{const}


\begin{const}
\label{primal:19-40}
The following weighting certifies that the value of $LP(T_{40},T_{19})$ is $6$.
\[w (1,2,3,4,2,4,3,7)  =  1/2,  \qquad w (1,5,6,6,5,6,6,5)  =  3/14,  \qquad w (5,1,7,7,1,7,7,6)  =  1/8.\]
\end{const}

\begin{const}
\label{primal:20-43}
The following weighting certifies that the value of $LP(T_{43},T_{20})$ is $6$.
\[w (1,2,3,3,2,5,6,5)  =  3/28,  \qquad w (1,2,3,3,6,6,7,7)  =  5/28,  \qquad w (1,2,4,4,7,5,5,7)  =  5/28, \]
\[w (1,2,4,4,7,7,6,2)  =  3/28,  \qquad w (1,6,1,1,6,6,6,6)  =  3/49,  \qquad w (2,1,5,5,3,4,3,4)  =  3/14.\]
\end{const}


\begin{const}
\label{primal:22-45}
The following weighting certifies that the value of $LP(T_{45},T_{22})$ is $6$.
\[w (1,2,3,2,3,7,7,7)  =  1/3,  \qquad w (1,2,3,4,5,4,6,6)  =  1/3,  \qquad w (4,1,6,1,6,5,5,5)  =  1/6, \]
\[w (5,4,5,4,5,4,4,4)  =  1/42.\]
\end{const}

\begin{const}
\label{primal:23-46}
The following weighting certifies that the value of $LP(T_{46},T_{23})$ is $6$.
\[w (1,2,3,4,4,5,5,7)  =  5/28,  \qquad w (1,2,3,4,4,7,5,6)  =  2/7,  \qquad w (1,2,3,6,5,6,5,6)  =  5/28, \]
\[w (1,2,3,7,2,6,6,7)  =  1/14,  \qquad w (1,7,1,7,6,4,7,6)  =  1/56,  \qquad w (1,7,1,7,7,7,7,4)  =  3/56, \]
\[w (2,1,2,3,3,3,3,1)  =  1/14.\]
\end{const}

\begin{const}
\label{primal:25-27}
The following weighting certifies that the value of $LP(T_{27},T_{25})$ is $7$.
\[w (2,3,4,5,5,1,2,2)  =  1/8,  \qquad w (2,3,4,5,5,1,6,6)  =  1/8,  \qquad w (3,2,3,4,4,4,5,5)  =  1/4, \]
\[w (6,1,2,1,3,7,8,8)  =  1/4,  \qquad w (6,1,6,7,7,7,8,8)  =  1/4.\]
\end{const}


\begin{const}
\label{primal:25-31}
The following weighting certifies that the value of $LP(T_{31},T_{25})$ is $7$.
\[w (1,6,7,8,6,7,8,2)  =  1/2,  \qquad w (2,3,4,5,3,4,5,1)  =  1/2.\]
\end{const}


\begin{const}
\label{primal:25-34}
The following weighting certifies that the value of $LP(T_{34},T_{25})$ is $7$.
\[w (3,2,1,6,4,5,4,5)  =  1/2,  \qquad w (6,1,2,3,7,8,7,8)  =  1/2.\]
\end{const}

\begin{const}
\label{primal:26-37}
The following weighting certifies that the value of $LP(T_{37},T_{26})$ is $7$.
\[w (1,2,3,1,3,6,7,7)  =  4/35,  \qquad w (1,6,7,1,7,2,1,1)  =  3/35,  \qquad w (1,6,7,7,1,6,7,7)  =  1/20, \]
\[w (2,1,6,6,6,1,6,2)  =  1/35,  \qquad w (2,3,5,4,4,3,4,5)  =  2/7,  \qquad w (2,3,5,5,4,1,2,2)  =  1/7, \]
\[w (2,3,5,5,5,3,5,5)  =  1/35,  \qquad w (6,1,6,6,6,1,6,6)  =  9/140,  \qquad w (6,7,8,8,8,7,8,8)  =  1/5.\]
\end{const}


\begin{const}
\label{primal:26-40}
The following weighting certifies that the value of $LP(T_{40},T_{26})$ is $7$.
\[w (2,3,4,4,3,5,5,1)  =  1/2,  \qquad w (6,1,2,2,1,2,6,7)  =  1/6,  \qquad w (6,7,8,8,1,6,6,7)  =  1/6, \]
\[w (6,7,8,8,7,8,8,7)  =  1/6.\]
\end{const}



\begin{const}
\label{primal:27-37}
The following weighting certifies that the value of $LP(T_{37},T_{27})$ is $7$.
\[w (1,2,1,1,1,2,1,1)  =  3/50,  \qquad w (1,2,3,3,3,2,3,3)  =  1/25,  \qquad w (1,6,7,7,7,6,7,7)  =  3/20, \]
\[w (1,6,8,8,8,6,8,7)  =  1/4,  \qquad w (2,1,2,2,2,3,5,5)  =  1/10,  \qquad w (2,3,4,4,4,3,4,4)  =  9/50, \]
\[w (2,3,5,5,4,1,6,6)  =  1/10,  \qquad w (2,3,5,5,5,3,5,5)  =  3/25.\]
\end{const}


\begin{const}
\label{primal:27-40}
The following weighting certifies that the value of $LP(T_{40},T_{27})$ is $7$.
\[w (1,6,8,7,6,7,8,2)  =  1/2,  \qquad w (2,3,5,4,3,4,5,1)  =  1/2.\]
\end{const}

\begin{const}
\label{primal:28-32}
The following weighting certifies that the value of $LP(T_{32},T_{28})$ is $7$.
\[w (1,6,7,8,2,5,5,3)  =  1/2,  \qquad w (2,1,6,7,3,4,4,4)  =  1/3,  \qquad w (6,1,2,3,7,8,8,8)  =  1/6.\]
\end{const}


\begin{const}
\label{primal:28-37}
The following weighting certifies that the value of $LP(T_{37},T_{28})$ is $7$.
\[w (1,2,5,5,1,2,5,5)  =  1/16,  \qquad w (1,2,5,5,5,6,7,7)  =  1/4,  \qquad w (1,6,1,1,1,6,1,1)  =  1/40, \]
\[w (1,6,7,7,7,2,3,3)  =  1/40,  \qquad w (2,3,4,4,4,3,4,4)  =  1/5,  \qquad w (6,1,6,6,2,7,8,8)  =  1/8, \]
\[w (6,1,6,6,6,1,6,6)  =  1/40,  \qquad w (6,7,8,8,8,7,8,8)  =  3/20.\]
\end{const}



\begin{const}
\label{primal:28-40}
The following weighting certifies that the value of $LP(T_{40},T_{28})$ is $7$.
\[w (1,2,5,3,2,5,3,6)  =  7/32,  \qquad w (1,2,5,5,2,5,5,6)  =  1/16,  \qquad w (1,6,1,1,6,1,1,6)  =  1/32, \]
\[w (1,6,7,1,2,1,1,6)  =  1/8,  \qquad w (1,6,7,7,2,3,5,6)  =  1/16,  \qquad w (2,3,4,4,3,4,4,5)  =  1/4, \]
\[w (6,7,8,8,7,8,8,7)  =  1/4.\]
\end{const}


\begin{const}
\label{primal:28-44}
The following weighting certifies that the value of $LP(T_{44},T_{28})$ is $7$.
\[w (1,2,5,2,5,2,5,6)  =  1/6,  \qquad w (2,3,4,3,4,1,6,5)  =  1/2,  \qquad w (6,7,8,7,8,7,8,1)  =  1/3.\]
\end{const}


\begin{const}
\label{primal:29-37}
The following weighting certifies that the value of $LP(T_{37},T_{29})$ is $7$.
\[w (1,2,1,5,3,6,8,8)  =  1/12,  \qquad w (1,2,3,5,5,6,7,7)  =  5/12,  \qquad w (1,6,8,7,8,2,1,3)  =  1/24, \]
\[w (1,6,8,8,7,6,8,8)  =  1/8,  \qquad w (1,6,8,8,8,2,3,5)  =  1/12,  \qquad w (2,3,4,4,4,1,6,2)  =  1/8, \]
\[w (2,3,4,4,4,3,4,4)  =  1/8.\]
\end{const}

\begin{const}
\label{primal:29-43}
The following weighting certifies that the value of $LP(T_{43},T_{29})$ is $7$.
\[w (2,3,4,4,3,3,3,5)  =  1/6,  \qquad w (2,3,4,4,5,1,5,1)  =  1/4,  \qquad w (2,3,4,4,5,5,5,5)  =  1/12, \]
\[w (6,1,2,2,7,8,7,8)  =  1/4,  \qquad w (6,1,6,6,7,7,7,7)  =  1/8,  \qquad w (6,1,6,6,8,8,8,8)  =  1/8.\]
\end{const}

\begin{const}
\label{primal:30-35}
The following weighting certifies that the value of $LP(T_{35},T_{30})$ is $7$.
\[w (1,2,3,4,6,7,8,8)  =  1/2,  \qquad w (2,1,6,7,3,4,5,5)  =  1/2.\]
\end{const}


\begin{const}
\label{primal:30-37}
The following weighting certifies that the value of $LP(T_{37},T_{30})$ is $7$.
\[w (1,2,5,5,3,6,7,7)  =  1/2,  \qquad w (2,1,6,8,8,3,4,4)  =  1/2.\]
\end{const}






\begin{const}
\label{primal:30-44}
The following weighting certifies that the value of $LP(T_{44},T_{30})$ is $7$.
\[w (1,2,5,6,7,6,7,8)  =  1/2,  \qquad w (2,1,8,1,8,3,4,5)  =  1/4,  \qquad w (2,3,4,3,4,3,4,5)  =  1/4.\]
\end{const}

\begin{const}
\label{primal:30-45}
The following weighting certifies that the value of $LP(T_{45},T_{30})$ is $7$.
\[w (1,6,7,6,7,8,2,8)  =  1/2,  \qquad w (2,3,4,3,4,5,5,1)  =  1/2.\]
\end{const}

\begin{const}
\label{primal:31-29}
The following weighting certifies that the value of $LP(T_{29},T_{31})$ is $7$.
\[w (2,1,2,3,8,3,4,4)  =  1/2,  \qquad w (5,1,5,6,8,6,7,7)  =  1/2.\]
\end{const}




\begin{const}
\label{primal:31-41}
The following weighting certifies that the value of $LP(T_{41},T_{31})$ is $7$.
\[w (2,1,8,5,3,4,3,4)  =  1/2,  \qquad w (5,1,2,2,6,7,6,7)  =  1/4,  \qquad w (5,1,8,8,6,7,6,7)  =  1/4.\]
\end{const}


\begin{const}
\label{primal:33-37}
The following weighting certifies that the value of $LP(T_{37},T_{33})$ is $7$.
\[w (1,2,3,3,3,2,3,3)  =  1/18,  \qquad w (1,5,6,7,7,5,6,6)  =  1/3,  \qquad w (1,5,7,7,7,2,3,3)  =  1/9, \]
\[w (2,1,8,2,8,3,4,4)  =  5/18,  \qquad w (2,1,8,8,5,3,4,4)  =  2/9.\]
\end{const}


\begin{const}
\label{primal:33-39}
The following weighting certifies that the value of $LP(T_{39},T_{33})$ is $7$.
\[w (1,5,6,6,7,5,8,8)  =  1/6,  \qquad w (1,5,7,7,6,8,8,2)  =  1/3,  \qquad w (2,3,4,4,4,1,1,1)  =  1/9, \]
\[w (3,2,3,3,3,4,4,4)  =  2/9,  \qquad w (5,1,2,2,5,6,6,7)  =  1/6.\]
\end{const}

\begin{const}
\label{primal:33-40}
The following weighting certifies that the value of $LP(T_{40},T_{33})$ is $7$.
\[w (1,5,6,6,5,6,6,2)  =  1/4,  \qquad w (1,5,7,7,5,7,7,2)  =  1/4,  \qquad w (2,1,8,8,1,8,8,3)  =  1/4, \]
\[w (2,3,4,4,3,4,4,3)  =  1/4.\]
\end{const}

\begin{const}
\label{primal:34-33}
The following weighting certifies that the value of $LP(T_{33},T_{34})$ is $7$.
\[w (1,2,3,4,5,6,6,5)  =  1/2,  \qquad w (1,2,3,4,7,8,8,7)  =  1/2.\]
\end{const}


\begin{const}
\label{primal:34-41}
The following weighting certifies that the value of $LP(T_{41},T_{34})$ is $7$.
\[w (1,5,6,6,5,6,5,6)  =  1/4,  \qquad w (1,7,8,8,7,8,7,8)  =  1/4,  \qquad w (2,1,2,5,3,4,3,4)  =  1/4, \]
\[w (2,1,7,2,3,4,3,4)  =  1/4.\]
\end{const}


\begin{const}
\label{primal:34-44}
The following weighting certifies that the value of $LP(T_{44},T_{34})$ is $7$.
\[w (1,5,6,7,8,5,6,2)  =  1/3,  \qquad w (1,7,8,5,6,7,8,2)  =  1/3,  \qquad w (2,3,4,3,4,3,4,1)  =  1/3.\]
\end{const}

\begin{const}
\label{primal:35-36}
The following weighting certifies that the value of $LP(T_{36},T_{35})$ is $7$.
\[w (1,5,6,6,6,6,2,3)  =  1/4,  \qquad w (2,1,5,5,5,7,3,4)  =  1/12,  \qquad w (2,1,7,7,7,7,3,4)  =  1/12, \]
\[w (2,1,8,8,7,5,3,4)  =  1/2,  \qquad w (2,3,4,4,4,4,1,7)  =  1/12.\]
\end{const}

\begin{const}
\label{primal:35-43}
The following weighting certifies that the value of $LP(T_{43},T_{35})$ is $7$.
\[w (1,2,1,1,2,2,2,2)  =  5/64,  \qquad w (1,2,1,1,5,5,5,5)  =  3/64,  \qquad w (1,2,3,3,5,7,5,2)  =  1/8, \]
\[w (1,5,6,6,2,8,8,5)  =  1/16,  \qquad w (1,5,6,6,8,7,7,8)  =  7/16,  \qquad w (3,2,3,3,4,4,4,4)  =  1/4.\]
\end{const}


\begin{const}
\label{primal:38-43}
The following weighting certifies that the value of $LP(T_{43},T_{38})$ is $7$.
\[w (2,1,2,2,4,4,4,4)  =  1/8,  \qquad w (2,1,6,6,4,5,5,4)  =  1/8,  \qquad w (2,1,8,8,3,5,3,5)  =  3/8, \]
\[w (2,1,8,8,4,4,3,3)  =  1/8,  \qquad w (6,1,6,6,7,7,7,7)  =  1/4.\]
\end{const}


\begin{const}
\label{primal:40-38}
The following weighting certifies that the value of $LP(T_{38},T_{40})$ is $7$.
\[w (1,2,3,4,4,2,3,8)  =  1/2,  \qquad w (1,5,7,6,6,5,7,8)  =  1/2.\]
\end{const}



\begin{const}
\label{primal:41-40}
The following weighting certifies that the value of $LP(T_{40},T_{41})$ is $7$.
\[w (1,2,3,4,2,4,3,7)  =  1/4,  \qquad w (1,2,4,4,5,6,6,5)  =  1/4,  \qquad w (1,5,6,6,2,3,3,5)  =  1/4, \]
\[w (1,7,8,8,7,8,8,7)  =  1/4.\]
\end{const}


\begin{const}
\label{primal:42-39}
The following weighting certifies that the value of $LP(T_{39},T_{42})$ is $7$.
\[w (1,2,3,4,3,8,7,7)  =  1/3,  \qquad w (2,1,2,8,8,3,4,4)  =  1/3,  \qquad w (5,1,5,5,5,6,6,6)  =  2/9, \]
\[w (5,1,7,7,7,6,6,6)  =  1/9.\]
\end{const}



\begin{const}
\label{primal:44-42}
The following weighting certifies that the value of $LP(T_{42},T_{44})$ is $7$.
\[w (1,2,3,3,6,7,6,4)  =  2/9,  \qquad w (1,4,5,5,2,3,4,2)  =  1/9,  \qquad w (1,4,5,5,2,3,8,8)  =  1/9, \]
\[w (1,4,5,5,4,5,2,6)  =  1/9,  \qquad w (1,4,5,5,6,7,4,8)  =  1/9,  \qquad w (1,6,7,7,2,3,8,8)  =  1/3.\]
\end{const}


\begin{const}
\label{primal:45-43}
The following weighting certifies that the value of $LP(T_{43},T_{45})$ is $7$.
\[w (1,2,3,3,2,4,4,8)  =  5/24,  \qquad w (1,2,3,3,2,6,8,6)  =  1/8,  \qquad w (1,2,3,3,7,8,8,7)  =  1/6, \]
\[w (1,4,5,5,2,8,2,8)  =  1/12,  \qquad w (1,4,5,5,4,8,6,8)  =  1/12,  \qquad w (1,4,5,5,7,7,6,6)  =  1/3.\]
\end{const}
}

\section{Dual Certificates}
\label{app:Dualcertificates}

We provide certificates for the value of $DLP(H,T)$ being less than $e(T)$ for various trees $H$ and $T$ on at most 8 vertices. Throughout, the vertices of all such trees are labelled as in Appendix~\ref{app:smallTrees}. Whenever we specify the weighting of $V(T)\cup E(T)$, any vertex or edge that is not listed is assumed to have weight zero.

\footnotesize{
\begin{const}
\label{dual:6-9}
The following weighting certifies that the value of $DLP(T_{9},T_{6})$ is less than $4$.
\[y (23)  =  5/6,  \qquad y (1)  =  5/3.\]
\end{const}


\begin{const}
\label{dual:6-15}
The following weighting certifies that the value of $DLP(T_{15},T_{6})$ is less than $4$.
\[y (23)  =  6/5,  \qquad y (1)  =  12/5.\]
\end{const}

\begin{const}
\label{dual:6-16}
The following weighting certifies that the value of $DLP(T_{16},T_{6})$ is less than $4$.
\[y (23)  =  6/5,  \qquad y (1)  =  12/5.\]
\end{const}

\begin{const}
\label{dual:6-19}
The following weighting certifies that the value of $DLP(T_{19},T_{6})$ is less than $4$.
\[y (12)  =  2/5,  \qquad y (23)  =  6/5,  \qquad y (1)  =  2.\]
\end{const}

\begin{const}
\label{dual:6-26}
The following weighting certifies that the value of $DLP(T_{26},T_{6})$ is less than $4$.
\[y (23)  =  7/6,  \qquad y (1)  =  7/3.\]
\end{const}


\begin{const}
\label{dual:6-41}
The following weighting certifies that the value of $DLP(T_{41},T_{6})$ is less than $4$.
\[y (23)  =  7/6,  \qquad y (1)  =  7/3.\]
\end{const}





\begin{const}
\label{dual:11-17}
The following weighting certifies that the value of $DLP(T_{17},T_{11})$ is less than $5$.
\[y (16)  =  6/11,  \qquad y (23)  =  9/11,  \qquad y (45)  =  9/11,  \qquad y (1)  =  18/11,  \qquad y (2)  =  6/11, \]
\[y (4)  =  6/11.\]
\end{const}

\begin{const}
\label{dual:11-19}
The following weighting certifies that the value of $DLP(T_{19},T_{11})$ is less than $5$.
\[y (23)  =  6/5,  \qquad y (45)  =  6/5,  \qquad y (1)  =  12/5.\]
\end{const}

\begin{const}
\label{dual:11-41}
The following weighting certifies that the value of $DLP(T_{41},T_{11})$ is less than $5$.
\[y (23)  =  7/6,  \qquad y (45)  =  7/6,  \qquad y (1)  =  7/3.\]
\end{const}

\begin{const}
\label{dual:12-36}
The following weighting certifies that the value of $DLP(T_{36},T_{12})$ is less than $5$.
\[y (23)  =  35/47,  \qquad y (1)  =  91/47,  \qquad y (2)  =  42/47.\]
\end{const}

\begin{const}
\label{dual:14-31}
The following weighting certifies that the value of $DLP(T_{31},T_{14})$ is less than $6$.
\[y (23)  =  7/27,  \qquad y (34)  =  7/9,  \qquad y (56)  =  7/27,  \qquad y (67)  =  7/9,  \qquad y (1)  =  7/9, \]
\[y (2)  =  28/27,  \qquad y (3)  =  14/27,  \qquad y (5)  =  28/27,  \qquad y (6)  =  14/27.\]
\end{const}

\begin{const}
\label{dual:14-35}
The following weighting certifies that the value of $DLP(T_{35},T_{14})$ is less than $6$.
\[y (34)  =  35/47,  \qquad y (67)  =  35/47,  \qquad y (1)  =  42/47,  \qquad y (2)  =  42/47,  \qquad y (3)  =  28/47, \]
\[y (5)  =  42/47,  \qquad y (6)  =  28/47.\]
\end{const}

\begin{const}
\label{dual:14-45}
The following weighting certifies that the value of $DLP(T_{45},T_{14})$ is less than $6$.
\[y (34)  =  49/67,  \qquad y (67)  =  49/67,  \qquad y (1)  =  63/67,  \qquad y (2)  =  56/67,  \qquad y (3)  =  42/67, \]
\[y (5)  =  56/67,  \qquad y (6)  =  42/67.\]
\end{const}

\begin{const}
\label{dual:15-20}
The following weighting certifies that the value of $DLP(T_{20},T_{15})$ is less than $6$.
\[y (12)  =  4/5,  \qquad y (23)  =  2/5,  \qquad y (34)  =  4/5,  \qquad y (2)  =  6/5,  \qquad y (3)  =  2/5, \]
\[y (5)  =  2.\]
\end{const}

\begin{const}
\label{dual:15-26}
The following weighting certifies that the value of $DLP(T_{26},T_{15})$ is less than $6$.
\[y (34)  =  14/17,  \qquad y (2)  =  28/17,  \qquad y (3)  =  7/17,  \qquad y 62/34  =  63/34.\]
\end{const}

\begin{const}
\label{dual:15-29}
The following weighting certifies that the value of $DLP(T_{29},T_{15})$ is less than $6$.
\[y (15)  =  7/27,  \qquad y (34)  =  7/9,  \qquad y (56)  =  14/27,  \qquad y (1)  =  14/27,  \qquad y (5)  =  35/27.\]
\end{const}

\begin{const}
\label{dual:15-41}
The following weighting certifies that the value of $DLP(T_{41},T_{15})$ is less than $6$.
\[y (12)  =  7/6,  \qquad y (23)  =  7/8,  \qquad y (34)  =  7/8,  \qquad y (3)  =  7/24,  \qquad y (5)  =  7/3.\]
\end{const}


\begin{const}
\label{dual:16-18}
The following weighting certifies that the value of $DLP(T_{18},T_{16})$ is less than $6$.
\[y (17)  =  6/11,  \qquad y (23)  =  2/3,  \qquad y (34)  =  31/33,  \qquad y (56)  =  9/11,  \qquad y (1)  =  18/11, \]
\[y (2)  =  6/11,  \qquad y (3)  =  2/11,  \qquad y (5)  =  6/11.\]
\end{const}

\begin{const}
\label{dual:16-19}
The following weighting certifies that the value of $DLP(T_{19},T_{16})$ is less than $6$.
\[y (15)  =  2/5,  \qquad y (56)  =  6/5,  \qquad y (1)  =  2,  \qquad y (3)  =  2.\]
\end{const}


\begin{const}
\label{dual:16-29}
The following weighting certifies that the value of $DLP(T_{29},T_{16})$ is less than $6$.
\[y (23)  =  56/69,  \qquad y (34)  =  70/69,  \qquad y (56)  =  35/46,  \qquad y (1)  =  119/69.\]
\end{const}

\begin{const}
\label{dual:16-32}
The following weighting certifies that the value of $DLP(T_{32},T_{16})$ is less than $6$.
\[y (15)  =  14/41,  \qquad y (23)  =  21/41,  \qquad y (34)  =  35/41,  \qquad y (56)  =  35/41,  \qquad y (1)  =  56/41, \]
\[y (2)  =  28/41,  \qquad y (5)  =  14/41.\]
\end{const}

\begin{const}
\label{dual:16-41}
The following weighting certifies that the value of $DLP(T_{41},T_{16})$ is less than $6$.
\[y (23)  =  7/6,  \qquad y (34)  =  7/6,  \qquad y (56)  =  7/6,  \qquad y (1)  =  7/3.\]
\end{const}

\begin{const}
\label{dual:17-37}
The following weighting certifies that the value of $DLP(T_{37},T_{17})$ is less than $6$.
\[y (16)  =  14/15,  \qquad y (67)  =  7/6,  \qquad y (2)  =  14/5.\]
\end{const}

\begin{const}
\label{dual:21-36}
The following weighting certifies that the value of $DLP(T_{36},T_{21})$ is less than $6$.
\[y (23)  =  14/17,  \qquad y (45)  =  14/17,  \qquad y (67)  =  14/17,  \qquad y (1)  =  28/17.\]
\end{const}

\begin{const}
\label{dual:21-41}
The following weighting certifies that the value of $DLP(T_{41},T_{21})$ is less than $6$.
\[y (23)  =  7/6,  \qquad y (45)  =  7/6,  \qquad y (67)  =  7/6,  \qquad y (1)  =  7/3.\]
\end{const}

\begin{const}
\label{dual:22-36}
The following weighting certifies that the value of $DLP(T_{36},T_{22})$ is less than $6$.
\[y (23)  =  14/17,  \qquad y (45)  =  14/17,  \qquad y (1)  =  28/17.\]
\end{const}

\begin{const}
\label{dual:22-38}
The following weighting certifies that the value of $DLP(T_{38},T_{22})$ is less than $6$.
\[y (23)  =  7/5,  \qquad y (45)  =  7/5,  \qquad y (1)  =  14/5.\]
\end{const}

\begin{const}
\label{dual:25-28}
The following weighting certifies that the value of $DLP(T_{28},T_{25})$ is less than $7$.
\[y (34)  =  2/19,  \qquad y (78)  =  10/19,  \qquad y (1)  =  12/19,  \qquad y (2)  =  24/19,  \qquad y (4)  =  27/19, \]
\[y (6)  =  20/19,  \qquad y (7)  =  1.\]
\end{const}

\begin{const}
\label{dual:25-35}
The following weighting certifies that the value of $DLP(T_{35},T_{25})$ is less than $7$.
\[y (34)  =  14/47,  \qquad y (45)  =  35/47,  \qquad y (67)  =  14/47,  \qquad y (78)  =  35/47,  \qquad y (1)  =  42/47, \]
\[y (2)  =  42/47,  \qquad y (3)  =  42/47,  \qquad y (4)  =  28/47,  \qquad y (6)  =  42/47,  \qquad y (7)  =  28/47.\]
\end{const}

\begin{const}
\label{dual:26-29}
The following weighting certifies that the value of $DLP(T_{29},T_{26})$ is less than $7$.
\[y (12)  =  2/5,  \qquad y (16)  =  2/5,  \qquad y (34)  =  3/10,  \qquad y (35)  =  3/10,  \qquad y (78)  =  4/5, \]
\[y (2)  =  3/5,  \qquad y (3)  =  2,  \qquad y (6)  =  3/5.\]
\end{const}

\begin{const}
\label{dual:26-41}
The following weighting certifies that the value of $DLP(T_{41},T_{26})$ is less than $7$.
\[y (12)  =  14/15,  \qquad y (3)  =  7/3.\]
\end{const}




\begin{const}
\label{dual:27-39}
The following weighting certifies that the value of $DLP(T_{39},T_{27})$ is less than $7$.
\[y (12)  =  7/4,  \qquad y (3)  =  7/4,  \qquad y (6)  =  7/4.\]
\end{const}



\begin{const}
\label{dual:28-26}
The following weighting certifies that the value of $DLP(T_{26},T_{28})$ is less than $7$.
\[y (25)  =  7/17,  \qquad y (34)  =  14/17,  \qquad y (78)  =  14/17,  \qquad y (1)  =  7/17,  \qquad y (2)  =  28/17, \]
\[y (3)  =  7/17,  \qquad y (6)  =  28/17,  \qquad y (7)  =  7/17.\]
\end{const}

\begin{const}
\label{dual:28-27}
The following weighting certifies that the value of $DLP(T_{27},T_{28})$ is less than $7$.
\[y (16)  =  7/15,  \qquad y (34)  =  7/6,  \qquad y (67)  =  7/15,  \qquad y (78)  =  7/5,  \qquad y (2)  =  7/3, \]
\[y (6)  =  14/15.\]
\end{const}

\begin{const}
\label{dual:28-29}
The following weighting certifies that the value of $DLP(T_{29},T_{28})$ is less than $7$.
\[y (16)  =  7/30,  \qquad y (25)  =  7/10,  \qquad y (34)  =  7/10,  \qquad y (67)  =  7/20,  \qquad y (1)  =  7/10, \]
\[y (2)  =  7/5,  \qquad y (3)  =  7/10,  \qquad y (6)  =  7/10,  \qquad y (8)  =  7/30.\]
\end{const}

\begin{const}
\label{dual:28-33}
The following weighting certifies that the value of $DLP(T_{33},T_{28})$ is less than $7$.
\[y (16)  =  7/8,  \qquad y (34)  =  7/6,  \qquad y (67)  =  7/12,  \qquad y (78)  =  7/8,  \qquad y (2)  =  7/3, \]
\[y (6)  =  7/12,  \qquad y (7)  =  7/24.\]
\end{const}

\begin{const}
\label{dual:28-35}
The following weighting certifies that the value of $DLP(T_{35},T_{28})$ is less than $7$.
\[y (23)  =  14/47,  \qquad y (25)  =  42/47,  \qquad y (34)  =  35/47,  \qquad y (67)  =  14/47,  \qquad y (78)  =  35/47, \]
\[y (1)  =  42/47,  \qquad y (2)  =  42/47,  \qquad y (3)  =  28/47,  \qquad y (6)  =  42/47,  \qquad y (7)  =  28/47.\]
\end{const}

\begin{const}
\label{dual:28-41}
The following weighting certifies that the value of $DLP(T_{41},T_{28})$ is less than $7$.
\[y (16)  =  7/6,  \qquad y (34)  =  7/6,  \qquad y (67)  =  7/8,  \qquad y (78)  =  7/8,  \qquad y (2)  =  7/3, \]
\[y (7)  =  7/24.\]
\end{const}


\begin{const}
\label{dual:29-38}
The following weighting certifies that the value of $DLP(T_{38},T_{29})$ is less than $7$.
\[y (23)  =  3/13,  \qquad y (25)  =  3/13,  \qquad y (34)  =  17/13,  \qquad y (67)  =  1/13,  \qquad y (68)  =  1/13, \]
\[y (1)  =  3/13,  \qquad y (2)  =  28/13,  \qquad y (6)  =  28/13.\]
\end{const}

\begin{const}
\label{dual:30-32}
The following weighting certifies that the value of $DLP(T_{32},T_{30})$ is less than $7$.
\[y (23)  =  7/22,  \qquad y (25)  =  49/66,  \qquad y (34)  =  28/33,  \qquad y (67)  =  28/33,  \qquad y (1)  =  35/33, \]
\[y (2)  =  35/33,  \qquad y (3)  =  14/33,  \qquad y (6)  =  14/33,  \qquad y (8)  =  14/33.\]
\end{const}


\begin{const}
\label{dual:31-32}
The following weighting certifies that the value of $DLP(T_{32},T_{31})$ is less than $7$.
\[y (18)  =  28/41,  \qquad y (23)  =  21/41,  \qquad y (34)  =  35/41,  \qquad y (56)  =  21/41,  \qquad y (67)  =  35/41, \]
\[y (1)  =  56/41,  \qquad y (2)  =  28/41,  \qquad y (3)  =  14/41,  \qquad y (5)  =  28/41,  \qquad y (6)  =  14/41.\]
\end{const}

\begin{const}
\label{dual:32-37}
The following weighting certifies that the value of $DLP(T_{37},T_{32})$ is less than $7$.
\[y (12)  =  49/53,  \qquad y (23)  =  49/53,  \qquad y (34)  =  49/53,  \qquad y (56)  =  7/53,  \qquad y (57)  =  7/53, \]
\[y (58)  =  7/53,  \qquad y (1)  =  14/53,  \qquad y (3)  =  14/53,  \qquad y (5)  =  161/53.\]
\end{const}


\begin{const}
\label{dual:32-43}
The following weighting certifies that the value of $DLP(T_{43},T_{32})$ is less than $7$.
\[y (12)  =  7/9,  \qquad y (23)  =  7/27,  \qquad y (34)  =  7/9,  \qquad y (2)  =  14/9,  \qquad y (3)  =  14/27, \]
\[y (5)  =  7/3.\]
\end{const}



\begin{const}
\label{dual:33-42}
The following weighting certifies that the value of $DLP(T_{42},T_{33})$ is less than $7$.
\[y (18)  =  28/43,  \qquad y (34)  =  7/43,  \qquad y (1)  =  63/43,  \qquad y (2)  =  28/43,  \qquad y (3)  =  63/43, \]
\[y (5)  =  91/43.\]
\end{const}

\begin{const}
\label{dual:34-29}
The following weighting certifies that the value of $DLP(T_{29},T_{34})$ is less than $7$.
\[y (23)  =  56/69,  \qquad y (34)  =  70/69,  \qquad y (56)  =  35/46,  \qquad y (78)  =  35/46,  \qquad y (1)  =  119/69.\]
\end{const}


\begin{const}
\label{dual:34-32}
The following weighting certifies that the value of $DLP(T_{32},T_{34})$ is less than $7$.
\[y (23)  =  2/5,  \qquad y (34)  =  4/5,  \qquad y (56)  =  4/5,  \qquad y (78)  =  4/5,  \qquad y (1)  =  8/5, \]
\[y (2)  =  3/5,  \qquad y (5)  =  3/5.\]
\end{const}

\begin{const}
\label{dual:35-37}
The following weighting certifies that the value of $DLP(T_{37},T_{35})$ is less than $7$.
\[y (17)  =  7/30,  \qquad y (18)  =  7/30,  \qquad y (23)  =  14/15,  \qquad y (34)  =  7/6,  \qquad y (56)  =  14/15, \]
\[y (1)  =  14/5,  \qquad y (2)  =  7/30,  \qquad y (5)  =  7/30.\]
\end{const}

\begin{const}
\label{dual:35-38}
The following weighting certifies that the value of $DLP(T_{38},T_{35})$ is less than $7$.
\[y (23)  =  7/25,  \qquad y (34)  =  7/25,  \qquad y (56)  =  7/5,  \qquad y (1)  =  14/5,  \qquad y (3)  =  42/25.\]
\end{const}




\begin{const}
\label{dual:41-36}
The following weighting certifies that the value of $DLP(T_{36},T_{41})$ is less than $7$.
\[y (23)  =  14/17,  \qquad y (24)  =  14/17,  \qquad y (56)  =  14/17,  \qquad y (78)  =  14/17,  \qquad y (1)  =  28/17, \]
\[y (2)  =  21/34,  \qquad y (5)  =  21/34,  \qquad y (7)  =  21/34.\]
\end{const}



\begin{const}
\label{dual:44-36}
The following weighting certifies that the value of $DLP(T_{36},T_{44})$ is less than $7$.
\[y (18)  =  21/34,  \qquad y (23)  =  14/17,  \qquad y (45)  =  14/17,  \qquad y (67)  =  14/17,  \qquad y (1)  =  28/17, \]
\[y (2)  =  21/34,  \qquad y (4)  =  21/34,  \qquad y (6)  =  21/34.\]
\end{const}


\begin{const}
\label{dual:44-38}
The following weighting certifies that the value of $DLP(T_{38},T_{44})$ is less than $7$.
\[y (18)  =  7/15,  \qquad y (23)  =  14/15,  \qquad y (45)  =  14/15,  \qquad y (67)  =  14/15,  \qquad y (1)  =  28/15, \]
\[y (2)  =  7/15,  \qquad y (4)  =  7/15,  \qquad y (6)  =  7/15.\]
\end{const}

}

\section{A List of All Small Trees}
\label{app:smallTrees}


In this appendix, we include diagrams of all non-empty trees on at most 8 vertices up to isomorphism with an arbitrary labelling of their vertices. These labellings of the vertices are used in the previous appendices to describe homomorphisms between trees as vectors. There are $47$ such trees, labelled $T_1,\dots,T_{47}$. The trees are listed in non-decreasing order in terms of their number of vertices.
The list of trees was generated using the \texttt{TreeIterator} class in SageMath~\cite{sagemath} and the pictures were generated using Graphviz~\cite{graphViz}. 


\begin{figure}[htbp]
\begin{tabular}{ccccccccccccccc}
\begin{minipage}{0.43333333333333335cm}
\centering
\includegraphics[scale=0.2]{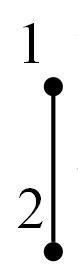}

$T_{1}$
\end{minipage}
&
\begin{minipage}{0.9388888888888889cm}
\centering
\includegraphics[scale=0.2]{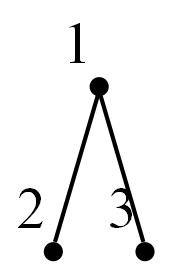}

$T_{2}$
\end{minipage}
&
\begin{minipage}{0.9388888888888889cm}
\centering
\includegraphics[scale=0.2]{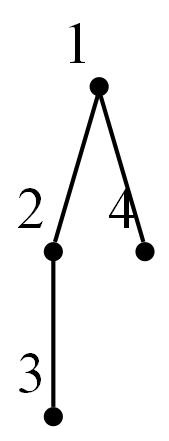}

$T_{3}$
\end{minipage}
&
\begin{minipage}{1.45cm}
\centering
\includegraphics[scale=0.2]{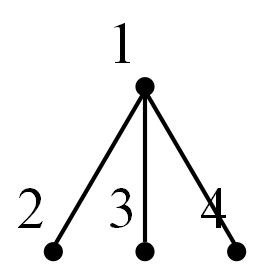}

$T_{4}$
\end{minipage}
&
\begin{minipage}{0.9388888888888889cm}
\centering
\includegraphics[scale=0.2]{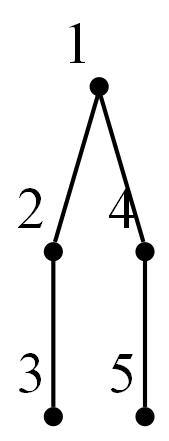}

$T_{5}$
\end{minipage}
&
\begin{minipage}{1.45cm}
\centering
\includegraphics[scale=0.2]{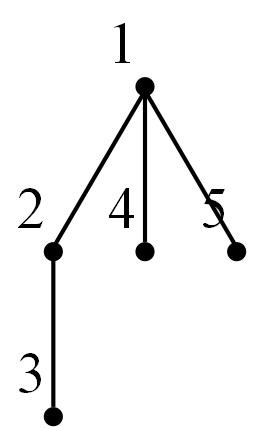}

$T_{6}$
\end{minipage}
&
\begin{minipage}{1.9611111111111112cm}
\centering
\includegraphics[scale=0.2]{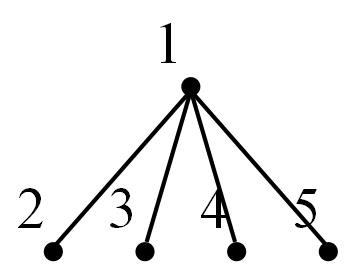}

$T_{7}$
\end{minipage}
&
\begin{minipage}{0.9388888888888889cm}
\centering
\includegraphics[scale=0.2]{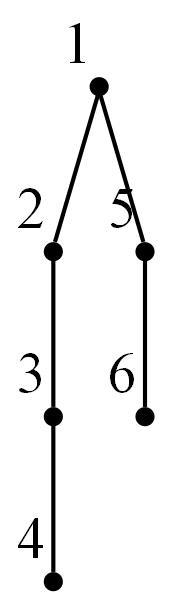}

$T_{8}$
\end{minipage}
\\
\begin{minipage}{1.45cm}
\centering
\includegraphics[scale=0.2]{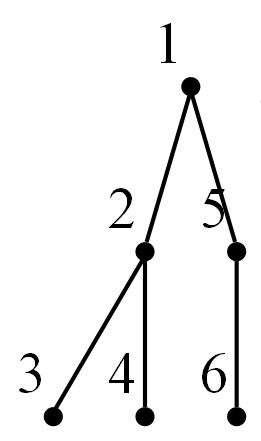}

$T_{9}$
\end{minipage}
&
\begin{minipage}{1.7055555555555557cm}
\centering
\includegraphics[scale=0.2]{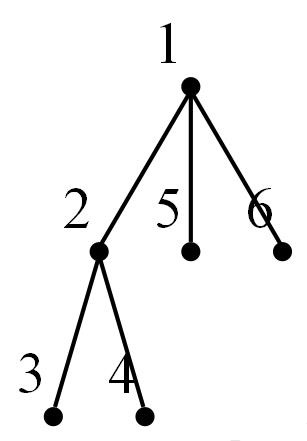}

$T_{10}$
\end{minipage}
&
\begin{minipage}{1.45cm}
\centering
\includegraphics[scale=0.2]{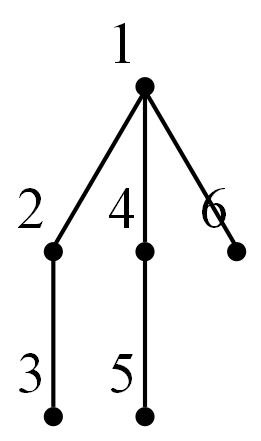}

$T_{11}$
\end{minipage}
&
\begin{minipage}{1.9611111111111112cm}
\centering
\includegraphics[scale=0.2]{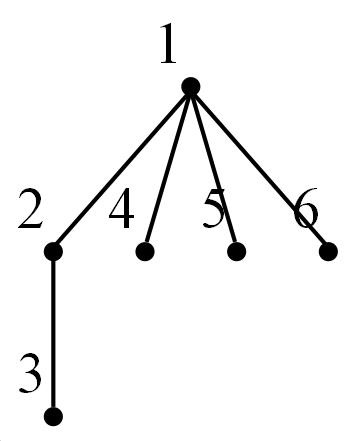}

$T_{12}$
\end{minipage}
&
\begin{minipage}{2.466666666666667cm}
\centering
\includegraphics[scale=0.2]{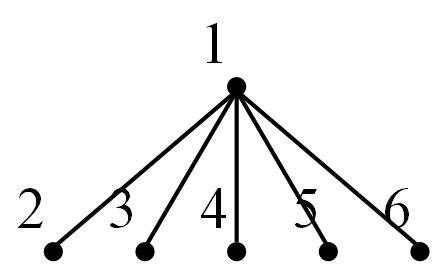}

$T_{13}$
\end{minipage}

\end{tabular}
\caption{The trees $T_1,\dots,T_{13}$. All non-empty trees on at most $6$ vertices, up to isomorphism.}
\label{fig:2to6}
\end{figure}
\begin{figure}[htbp]

\begin{tabular}{ccccccccccccccc}

\begin{minipage}{0.9388888888888889cm}
\centering
\includegraphics[scale=0.2]{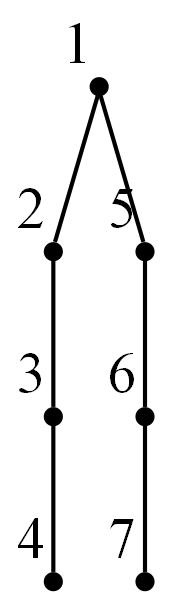}

$T_{14}$
\end{minipage}
&
\begin{minipage}{1.45cm}
\centering
\includegraphics[scale=0.2]{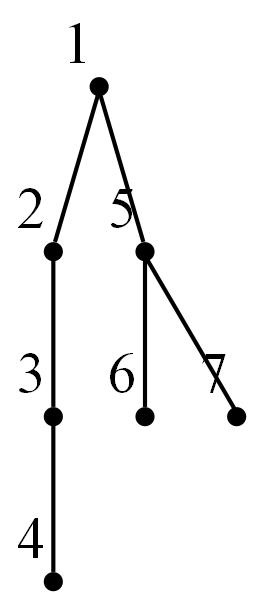}

$T_{15}$
\end{minipage}
&
\begin{minipage}{1.45cm}
\centering
\includegraphics[scale=0.2]{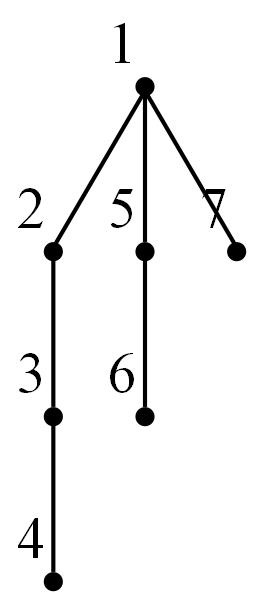}

$T_{16}$
\end{minipage}
&
\begin{minipage}{1.9611111111111112cm}
\centering
\includegraphics[scale=0.2]{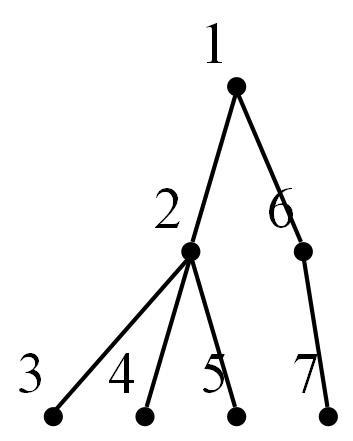}

$T_{17}$
\end{minipage}
&
\begin{minipage}{1.9611111111111112cm}
\centering
\includegraphics[scale=0.2]{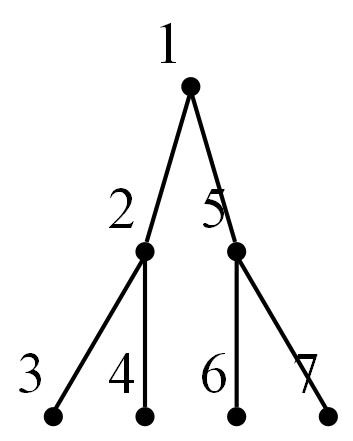}

$T_{18}$
\end{minipage}
&
\begin{minipage}{1.9611111111111112cm}
\centering
\includegraphics[scale=0.2]{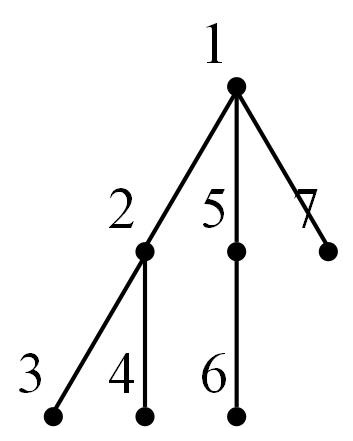}

$T_{19}$
\end{minipage}
\\
\begin{minipage}{2.2111111111111112cm}
\centering
\includegraphics[scale=0.2]{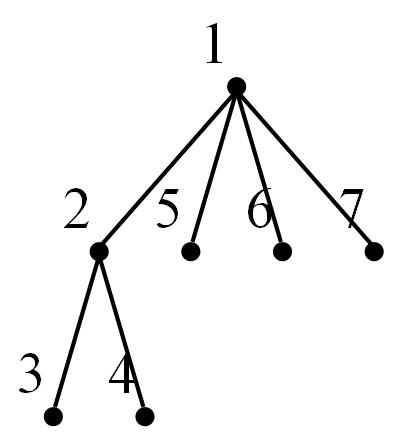}

$T_{20}$
\end{minipage}
&
\begin{minipage}{1.45cm}
\centering
\includegraphics[scale=0.2]{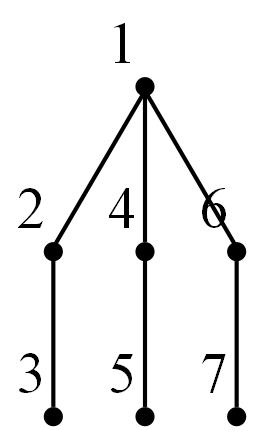}

$T_{21}$
\end{minipage}
&

\begin{minipage}{1.9611111111111112cm}
\centering
\includegraphics[scale=0.2]{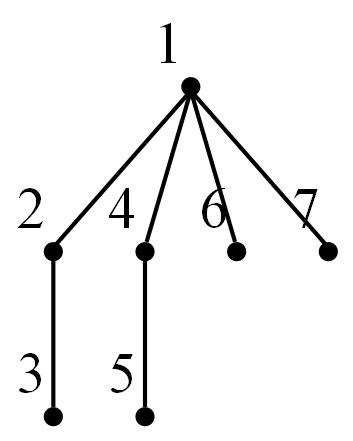}

$T_{22}$
\end{minipage}
&
\begin{minipage}{2.466666666666667cm}
\centering
\includegraphics[scale=0.2]{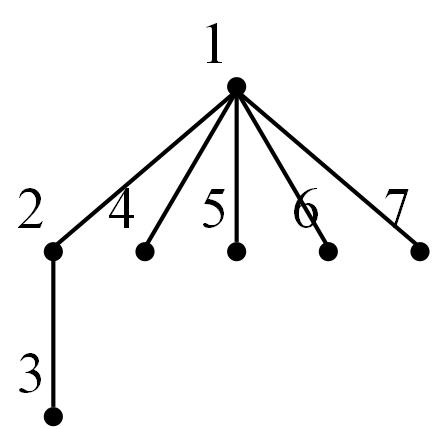}

$T_{23}$
\end{minipage}
&
\begin{minipage}{2.977777777777778cm}
\centering
\includegraphics[scale=0.2]{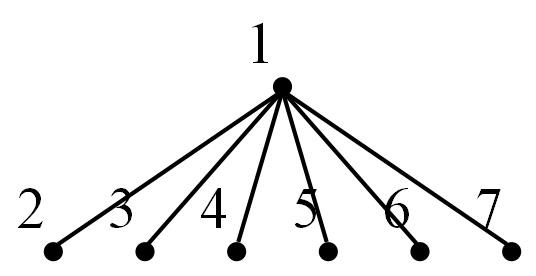}

$T_{24}$
\end{minipage}
\end{tabular}
\caption{The trees $T_{14},\dots,T_{24}$. All trees on 7 vertices, up to isomorphism. }
\label{fig:7}
\end{figure}

\begin{figure}[htbp]
\begin{tabular}{ccccccccccccc}

\begin{minipage}{0.9388888888888889cm}
\centering
\includegraphics[scale=0.2]{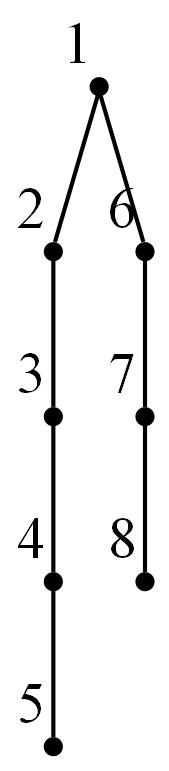}

$T_{25}$
\end{minipage}
&
\begin{minipage}{1.45cm}
\centering
\includegraphics[scale=0.2]{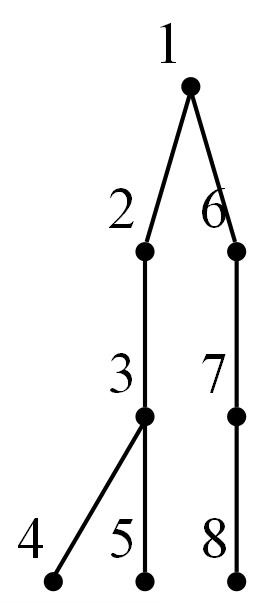}

$T_{26}$
\end{minipage}
&
\begin{minipage}{1.7055555555555557cm}
\centering
\includegraphics[scale=0.2]{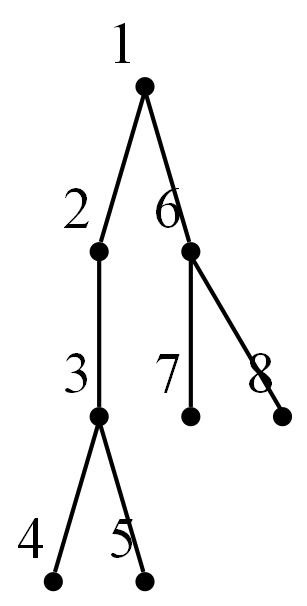}

$T_{27}$
\end{minipage}
&
\begin{minipage}{1.45cm}
\centering
\includegraphics[scale=0.2]{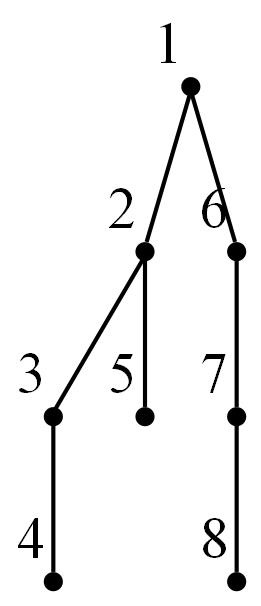}

$T_{28}$
\end{minipage}
&
\begin{minipage}{1.9611111111111112cm}
\centering
\includegraphics[scale=0.2]{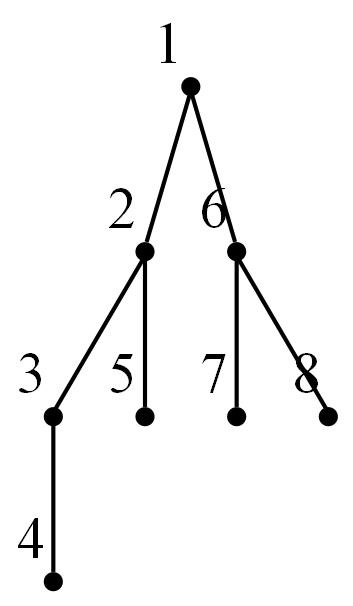}

$T_{29}$
\end{minipage}
\\
\begin{minipage}{1.9611111111111112cm}
\centering
\includegraphics[scale=0.2]{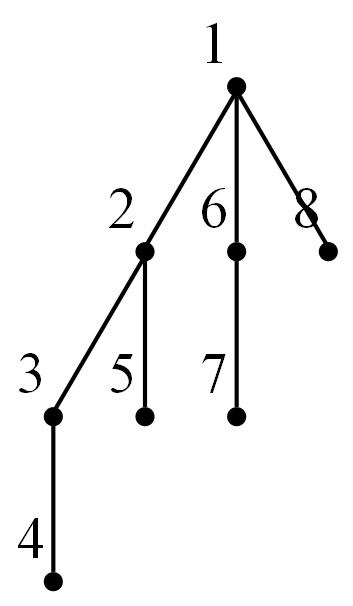}

$T_{30}$
\end{minipage}
&
\begin{minipage}{1.45cm}
\centering
\includegraphics[scale=0.2]{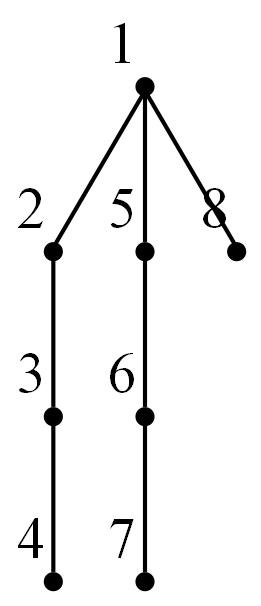}

$T_{31}$
\end{minipage}
&
\begin{minipage}{1.9611111111111112cm}
\centering
\includegraphics[scale=0.2]{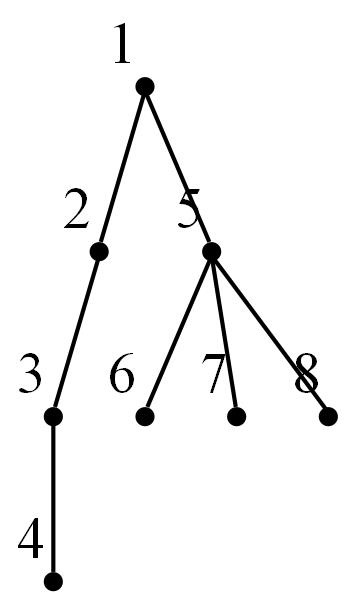}

$T_{32}$
\end{minipage}
&
\begin{minipage}{1.45cm}
\centering
\includegraphics[scale=0.2]{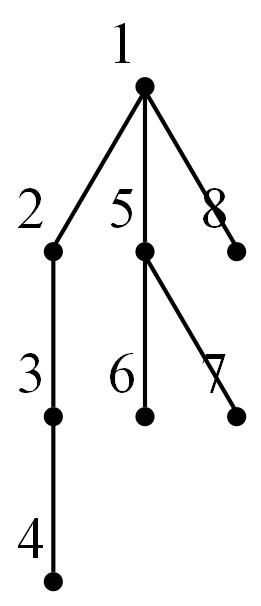}

$T_{33}$
\end{minipage}
&
\begin{minipage}{1.45cm}
\centering
\includegraphics[scale=0.2]{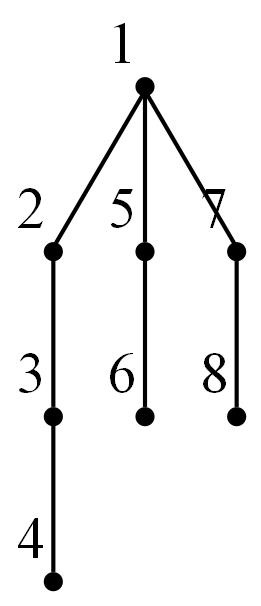}

$T_{34}$
\end{minipage}
\\
\begin{minipage}{1.9611111111111112cm}
\centering
\includegraphics[scale=0.2]{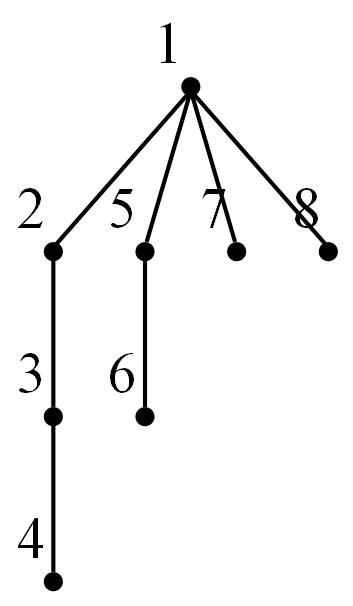}

$T_{35}$
\end{minipage}
&
\begin{minipage}{2.466666666666667cm}
\centering
\includegraphics[scale=0.2]{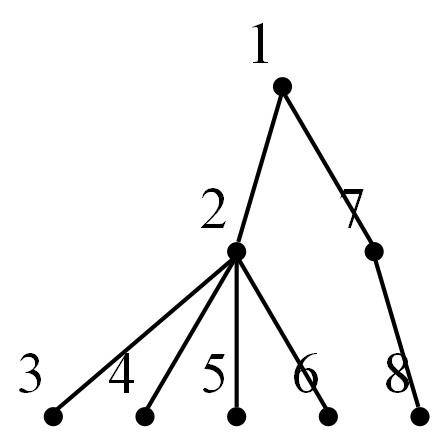}

$T_{36}$
\end{minipage}
&
\begin{minipage}{2.466666666666667cm}
\centering
\includegraphics[scale=0.2]{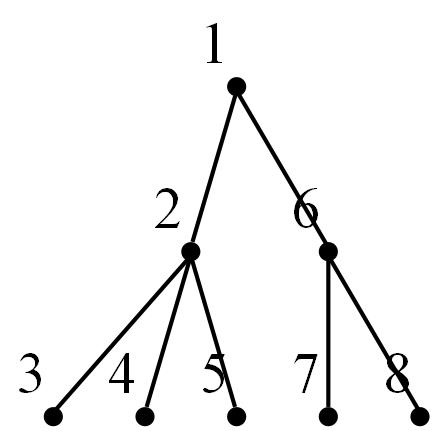}

$T_{37}$
\end{minipage}
&
\begin{minipage}{2.327777777777778cm}
\centering
\includegraphics[scale=0.2]{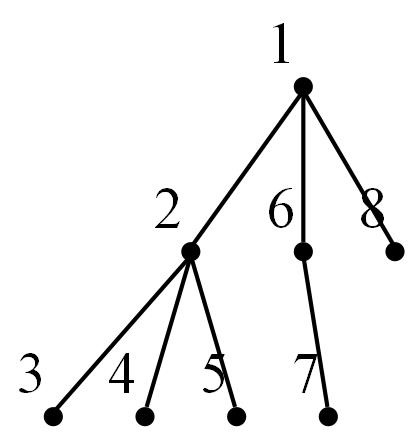}
$T_{38}$
\end{minipage}
&
\begin{minipage}{2.466666666666667cm}
\centering
\includegraphics[scale=0.2]{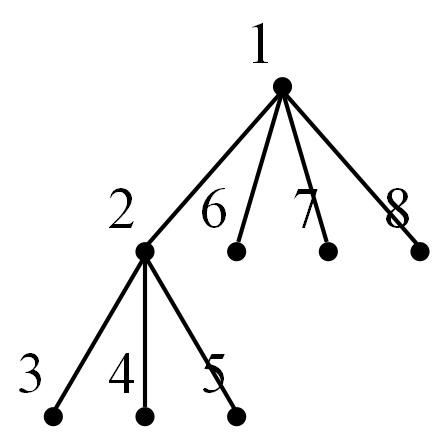}
$T_{39}$
\end{minipage}
\\
\begin{minipage}{1.9611111111111112cm}
\centering
\includegraphics[scale=0.2]{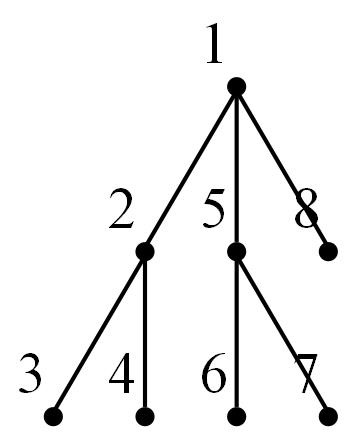}

$T_{40}$
\end{minipage}
&
\begin{minipage}{1.9611111111111112cm}
\centering
\includegraphics[scale=0.2]{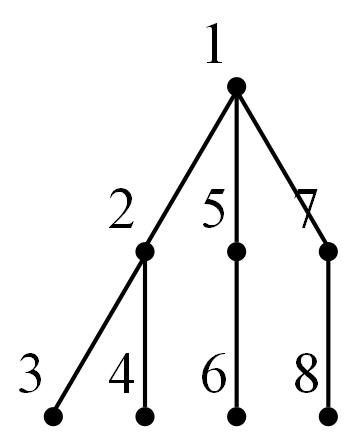}

$T_{41}$
\end{minipage}
&
\begin{minipage}{2.466666666666667cm}
\centering
\includegraphics[scale=0.2]{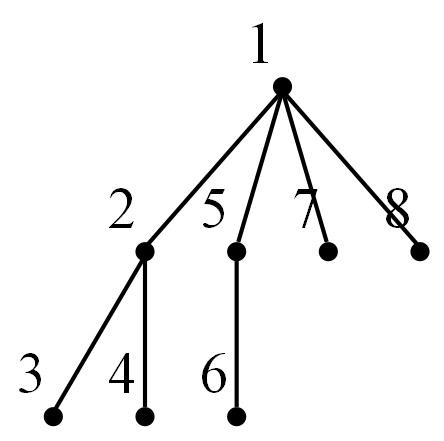}

$T_{42}$
\end{minipage}
&
\begin{minipage}{2.7222222222222223cm}
\centering
\includegraphics[scale=0.2]{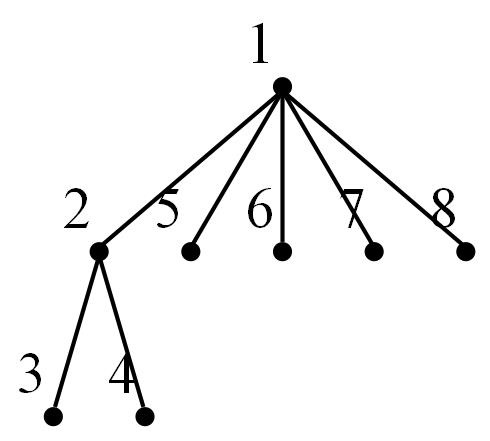}
$T_{43}$
\end{minipage}
&
\begin{minipage}{1.9611111111111112cm}
\centering
\includegraphics[scale=0.2]{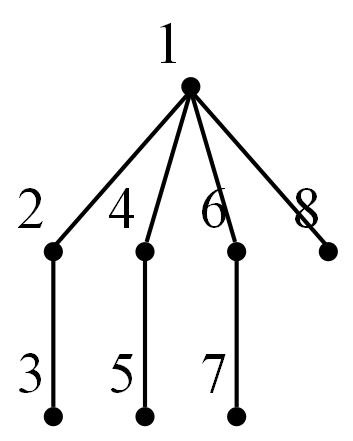}

$T_{44}$
\end{minipage}
\\
\begin{minipage}{2.466666666666667cm}
\centering
\includegraphics[scale=0.2]{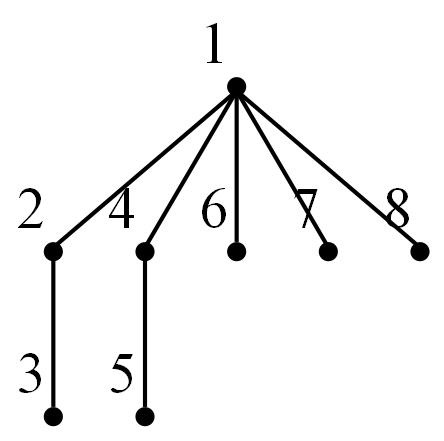}

$T_{45}$
\end{minipage}
&
\begin{minipage}{2.977777777777778cm}
\centering
\includegraphics[scale=0.2]{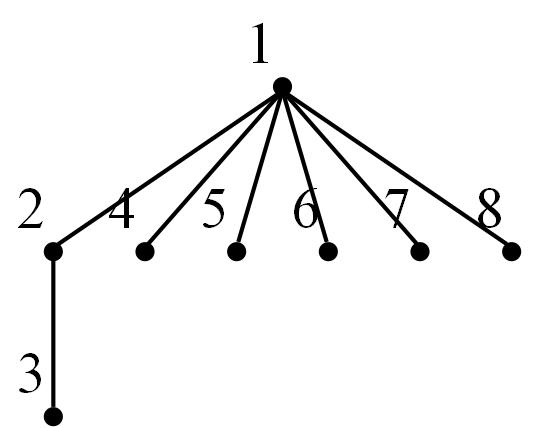}

$T_{46}$
\end{minipage}
&
\begin{minipage}{3.488888888888889cm}
\centering
\includegraphics[scale=0.2]{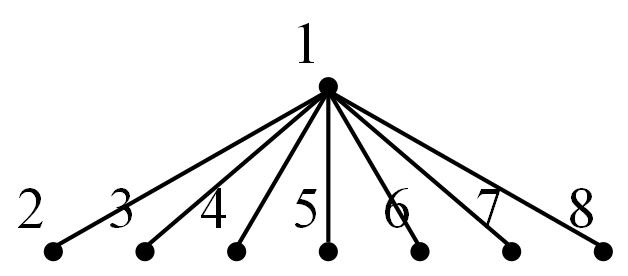}

$T_{47}$
\end{minipage}
\end{tabular}
\caption{The trees $T_{25},\dots,T_{47}$. All trees on 8 vertices up to isomorphism.}
\label{fig:8}
\end{figure}

\end{document}